\numberwithin{equation}{section}
\newcommand{\ba}{\begin{array}}\newcommand{\ea}{\end{array}}
\date{}
\DeclareMathOperator{\Sing}{Sing}
\DeclareMathOperator{\Ker}{Ker}
\DeclareMathOperator{\Ima}{Im}
\DeclareMathOperator{\rk}{rk}
\DeclareMathOperator{\Coker}{Coker}
\DeclareMathOperator{\Hom}{Hom}
\DeclareMathOperator{\codim}{codim}
\DeclareMathOperator{\Res}{Res}
\newtheorem{obs}{Remark}
\newtheorem{theorem}{Theorem}
\newtheorem{propo}{Proposition}
\newtheorem{lema}{Lemma}
\newtheorem{corolario}{Corollary}
\theoremstyle{definition}
\newtheorem{defin}{Definition}  
\newtheorem{example}{Example}
\begin{document}

\title[Dimension  two  holomorphic distributions]{Dimension  two  holomorphic distributions on  four-dimensional projective space}

\author[O. Calvo--Andrade, M. Corr\^ea, J. Fonseca--Quispe]{O. Calvo--Andrade, M. Corr\^ea,  J. Fonseca--Quispe}

\address{José Omegar Calvo--Andrade \\ Centro de Investigación en Matemáticas -- CIMAT \\ Guanajuato, México.} 

\email{jose.calvo@cimat.mx}

\address{Maurício Corrêa  \\ 
Universit\`a degli Studi di Bari, 
Via E. Orabona 4, I-70125, Bari, Italy
}
\email[M. Corr\^ea]{mauricio.barros@uniba.it,mauriciomatufmg@gmail.com }

\address{Julio  Fonseca-Quispe \\ Universidade Federal de Minas Gerais -- UFMG  \\ Belo Horizonte, Brazil.} 

\email[J. Fonseca]{julioleofq@gmail.com, julioleofq@ufmg.br}

\subjclass[2020]{Primary: 58A17,14D20,14J60. Secondary: 14F06}

\keywords{ Holomorphic distributions, Foliations, Contact structures,  Engel structures, Conformal structures,    Horrocks-Mumford Bundle, Moduli spaces}

\begin{abstract} 
 We study two-dimensional holomorphic distributions on $\mathbb{P}^4$. We classify dimension two distributions, of degree at most $2$,   with either locally free tangent sheaf or locally free  conormal sheaf   and whose singular scheme has pure dimension one.  We show that the corresponding sheaves are split. Next, we investigate the geometry of such distributions, studying from maximally non–integrable  to  integrable distributions.  In the 
maximally non–integrable case,   we   show that     the distribution  is either  of  Lorentzian type or  a   push-forward by a rational  map   of the Cartan prolongation of a singular contact  structure   on a weighted projective 3-fold. We   study distributions of dimension two in  $\mathbb{P}^4$  whose conormal  sheaves  are the  Horrocks--Mumford sheaves, describing the  numerical invariants of their singular schemes which are smooth and connected. Such distributions  are  maximally non–integrable,   uniquely determined by their singular schemes and  invariant by a  group $H_5 \rtimes SL(2,\mathbb{Z}_5) \subset Sp(4, \mathbb{Q})$, where $H_5$ is the Heisenberg group of level $5$.  We prove that 
the moduli spaces of  Horrocks–Mumford distributions  are irreducible quasi-projective varieties and we determine their dimensions. Finally, we observe that the space of codimension one distributions, of degree $d\geq 6$, on $\mathbb{P}^4$ has a family of degenerate flat holomorphic Riemannian metrics. Moreover, the degeneracy divisors of such metrics consist of codimension one distributions invariant by $H_5 \rtimes SL(2,\mathbb{Z}_5)$ and singular along a degenerate abelian surface with $(1,5)$-polarization and level-$5$-structure.
\end{abstract}

\maketitle

\tableofcontents

\section{Introduction}
In this work, we are interested in studying, in the algebro-geometric spirit of \cite{MOJ} and \cite{MSM}, singular holomorphic distributions of dimension $2$ on $\mathbb{P}^4$ whose associated sheaves are locally free and the singular schemes have pure dimension one.

The study of distributions and foliations has emerged from works of classical geometers such as Pfaff,  Jacobi, Grassmann, Frobenius,  Darboux, Poincar\'e  and Cartan, see  \cite{Forsyth}. 
Holomorphic distributions and Pfaff systems appear implicitly in the theory of reflexive sheaves and the study of associated degeneracy loci, especially, as we will see, in the context of locally free sheaves. Indeed, by an Ottaviani-Bertini-type theorem \cite[Theorem A.3]{MOJ} we have the following:

If $E$ is a reflexive sheaf on a polarized projective manifold $(X, \mathcal{O}_X(1))$, of rank $s< \dim(X)$. Then, there exist an integer $m>>0$, such that
\begin{itemize}
\item $E\otimes \mathcal{O}_X(-m)$ is the tangent sheaf of a distribution   of dimension $s$ and
\item   $E\otimes \mathcal{O}_X(-m)$ is the conormal sheaf of a distribution  of dimension $n-s$,
\end{itemize}
since there  is  an integer $m>>0$ such that $TX\otimes E^*\otimes\mathcal{O}_X(m)$ and $\Omega_X^1\otimes E^*\otimes \mathcal{O}_X(m)$ are globally generated.

Classical works of  algebro-geometers such as  Castelnuovo \cite{Cas}, Palatini \cite{Palatini} and  Fano \cite{Fano},  in their study of varieties given by the set of centers of complexes belonging to the linear systems of linear complexes    can be   interpreted in modern terms \cite{Ottaviani2, Faenzi-Fania} as  schemes   given by the degeneracy locus of a generic  injective  map
$$
\mathcal{O}_{\mathbb{P}^n}(-2)^{\oplus m}\to 
\Omega_{\mathbb{P}^n}^1,\quad m\leq n-1. 
$$
That is, as the singular locus of codimension $m$ distributions whose the conormal sheaf are isomorphic to the locally free sheaf $\mathcal{O}_{\mathbb{P}^n}(-2)^{\oplus m }$. 
We denote as usual by $\mathcal{M}^{st}(c_1,c_2,\dots, c_n)$ the moduli space of stable reflexive sheaves on $\mathbb{P}^n$ with Chern classes $c_i$, with $i=1,\dots,n$. In \cite{Okonek1} Okonek showed that 
$$
\mathcal{M}^{st}(-1,1,1,2)=\left\{\mbox{coker}(\phi);\phi: \mathcal{O}_{\mathbb{P}^4}(1)^{\oplus 2}\to T\mathbb{P}^4\right \}
$$
and $\mathcal{M}^{st}(-1,1,1,2)$ that is isomorphic to $\mathbb{G}(1,4)$ the grassmannian of lines in $\mathbb{P}^4$. From the distribution point of view the space
$\{\mbox{coker}(\phi);\phi: \mathcal{O}_{\mathbb{P}^4}(1)^{\oplus 2}\to T\mathbb{P}^4\}$ corresponds to the space of holomorphic foliations of degree zero in $\mathbb{P}^4$ and this space is given by
$\{\mbox{linear projections}\  \mathbb{P}^4 \dashrightarrow \mathbb{P}^2 \}$ which is naturally isomorphic to $\mathbb{G}(1,4)$.
It is easy to see that any rank two distribution on $\mathbb{P}^n$, with $n>3$, must be integrable, and it is given by a linear projection $\mathbb{P}^n \dashrightarrow \mathbb{P}^{n-2}$, see Proposition \ref{Distri-zero}. Okonek also proved in \cite{Okonek2} that 
$$
\mathcal{M}^{st}(-1,2,2,5)=\left\{\mbox{coker}(\phi);\phi: \mathcal{O}_{\mathbb{P}^4}(-2)^{\oplus 2}\to \Omega_{\mathbb{P}^4}^1\right \}.
$$
Therefore,  such moduli space has as elements dimension two distributions in $\mathbb{P}^4$, whose conormal sheaves are isomorphic to the locally free sheaf $\mathcal{O}_{\mathbb{P}^4}(-2)^{\oplus 2}$. Also, from the distribution
point of view, we can see that the space $\mathcal{M}^{st}(-1,2,2,5)$ is isomorphic to an open Zariski subset of the Grassmannian $\mathbb{G}(1,9)$, whose dimension is $16$, see remark \ref{Okonek2}.

Another important class of stable vector bundle that occurs as a tangent bundle of distributions is the so-called null-correlation bundle $N$ given by
$$
0\to N(1)\to T\mathbb{P}^{2r+1}\to \mathcal{O}_{\mathbb{P}^{2r+1}}(2)\to 0. 
$$
That is,  $N(1)$ is  the tangent bundle of a contact structure on $\mathbb{P}^{2r+1}$.

 In the context of  foliations by curves in $\mathbb{P}^3$ \cite{MSM},  we have an interesting example that can be found in the construction of  mathematical 4-instanton bundles appearing in the work of    Anghel,   Coanda and Manolache \cite{AnCM}.
  They showed that if   $C$ is  the disjoint union of five lines in $\mathbb{P}^3$ such that their union admits no 5-secant, 
then there exists an epimorphism
\begin{equation}\nonumber \label{eq-surj-5disj}
\Omega_{\mathbb{P}^3}^1 \stackrel{\varpi}{\longrightarrow} \mathcal{I}_C(2)\to 0. 
\end{equation}
Therefore  $E=\ker\varpi(3)$ (the conormal sheaf of the induced foliation) is an instanton bundle of charge $4$ such that $E(2)$ is globally generated.     The above foliation can be found in the classification of foliations by curves of degree three on $\mathbb{P}^3$ whose singular schemes have a pure dimension equal to one \cite{MSM}.   
On the other hand, Chang's characterization of    arithmetically Buchsbaum scheme curves on $\mathbb{P}^3$, with $h^1(I_Z(d - 1)) = 1$ being the only nonzero intermediate
cohomology for $H^i_{\bullet}(I_Z)$ \cite{Chang2}, says us that $Z$ is given by the  degeneracy locus of an injective map
\[
\mathcal{O}_{\mathbb{P}^3}(-d_1) \oplus  \mathcal{O}_{\mathbb{P}^3}(-d_2)\to \Omega_{\mathbb{P}^3}^1.
\]
These are foliations by curves which are given by a  global complete intersection of two codimension one distributions in $\mathbb{P}^3$, see \cite[Corollary 3]{MMR}. 

In  \cite[Table I, pg 104]{Chang}  appear  Pfaff systems in $\mathbb{P}^4$  whose singular schemes are very special algebraic varieties.  More precisely,   let  $Z$  be a   smooth non-general type Buchsbaum surface on $\mathbb{P}^4$ such that  $K_Z^2\notin\{  -2,-7\}$. Then, we have the following possibilities:   
\begin{enumerate}
\item   $Z$ is a singular locus of a Pfaff system of dimension $2$ and degree $2$ and it is a quintic elliptic scroll.   

\item $Z$ is a singular locus of a foliation by curves of degree $1$ and it is a projected Veronese surface. 

\item $Z$ is a singular locus of a Pfaff system of  dimension $2$ and degree $3$ and  it  is a $K3$ surface of genus $8$. 

\item $Z$ is a singular locus of a foliation by curves of degree $3$ and it is a $K3$ surface of genus $7$. 
\end{enumerate}

Those   Pfaff systems  of  dimension $2$,  degree $2$ and singular along  a quintic elliptic scroll also  occur in \cite[Remark 4.7, (i)]{DAPHR}   where the authors  study   Abelian and bielliptic surfaces on $\mathbb{P}^4$  and, more recently, Rubtsov in \cite{Rub}
 proves that these Pfaff systems represent generically symplectic structures on  $\mathbb{P}^4$   associated with Feigin--Odesskii--Sklyanin Poisson algebras.  We refer to the recent work of  Polishchuk \cite{ Polishchuk2} which shows a very interesting connection between such Poisson brackets, syzygies, and Cremona transformations.

In   \cite{MOJ}  the authors initiated a systematic study of    codimension one holomorphic distributions
on projective three-spaces, analyzing the properties of their singular schemes and tangent
sheaves. They showed that there exists a quasi-projective variety
that parameterizes isomorphism classes of distributions, on a complex projective manifold whose tangent sheaves have a fixed Hilbert polynomial.
In  \cite{GJM}  Galeano,   Jardim and Muniz provide a classification of distributions of degree $2$ in   $\mathbb{P}^3$. In \cite{MSM} the authors give the first step to the study of foliations by curves on the three-dimensional projective space with no isolated singularities. They provide a classification of such foliations by curves and degree $\leq 3$ proving that the foliations of degree $1$ or $2$    are contained in a pencil of planes or are Legendrian and are given by the complete intersection of two codimension one distributions, and that the conormal sheaf of a foliation by curves of degree $3$ with reduced singular scheme either splits as a sum of line bundles or is an instanton bundle.  For the classification of integrable distributions and low degree, we refer the reader to
\cite{Jou, LPT2,  CD2,  CLN,  CLPdeg3,M-A}.

 Our first result is the classification of the distributions of small degrees. 
 Firstly, we observe in  Proposition \ref{Distri-zero} that 
distribution of dimension $2$ and degree zero on $\mathbb{P}^n$ is either integrable and it is tangent to a linear rational projection $\mathbb{P}^n \dashrightarrow \mathbb{P}^{n-2}$ or   $\mathscr{F}$ is a contact structure on $\mathbb{P}^3$. Next, we investigate distributions of degrees $1$ and $2$ with associated sheaves being locally free and with singular schemes of pure dimension one describing the geometry of the corresponding first-derived distributions. 

Let $\mathscr{F}$ be a distribution of dimension $2$ on a complex 4-fold. Roughly speaking, the non-integrability condition of $\mathscr{F}$ is measured by its associated \textit{first derived distribution}, defined as follows. Suppose that $\mathscr{F}$ is not integrable, and define the 3-dimensional holomorphic distribution $\mathscr{F}^{[1]} := ( T\mathscr{F} + [T\mathscr{F},T\mathscr{F}] )^{**} \subset TX$ as the first derived distribution of $\mathscr{F}$.
A  maximally non-integrable distribution $\mathscr{F}$ of codimension  $2$ on a 4-fold $X$, called by \textit{Engel structure}, is a distribution such that  
$\mathscr{F}^{[1]}$ is not   integrable.  
Consider the O'Neill tensor
 \begin{alignat*}{2}
 \mathcal{T}(\mathscr{F}^{[1]}) :\wedge^{[2]}\mathscr{F}^{[1]}  &\longrightarrow&  TX/\mathscr{F}^{[1]} \\
 u\wedge  v &\longmapsto&  \pi([u,v]), 
\end{alignat*}
where $\pi: \mathscr{F}^{[1]}\to TX/\mathscr{F}^{[1]}$ denotes the projection
 and $\wedge^{[2]}\mathscr{F}^{[1]}:=(\wedge^{2}\mathscr{F}^{[1]})^{**}$.   
Then, there is a unique foliation by curves $  \mbox{Ker}(\mathcal{T}(\mathscr{F}^{[1]})^{**}:=\mathcal{L}(\mathscr{F}) \subset \mathscr{F}^{[1]} $, called \textit{ characteristic foliation} of $\mathscr{F}^{[1]}$, such that $[\mathcal{L}(\mathscr{F}^{[1]}),\mathscr{F}^{[1]}]\subset \mathscr{F}^{[1]}$.

 Our first result is the following. 
\begin{theorem}
Let $\mathscr{F}$ be a dimension two distribution on $\mathbb{P}^4$ of degree $d\in \{1,2\}$ with a locally free tangent sheaf $T\mathscr{F}$ and whose singular scheme has pure dimension one. Then $T\mathscr{F}$ splits as a sum of line bundles, and
\begin{enumerate}

    \item     $T\mathscr{F}=\mathcal{O}_{\mathbb{P}^4}(1)\oplus\mathcal{O}_{\mathbb{P}^4}$ and its singular scheme  is a  rational normal curve  of degree $4$.
     \item   $T\mathscr{F}=\mathcal{O}_{\mathbb{P}^4}\oplus\mathcal{O}_{\mathbb{P}^4}$, and its singular scheme is an arithmetically Cohen--Macaulay curve of degree $10$ and arithmetic genus $6$.
    
     \item    $T\mathscr{F}=\mathcal{O}_{\mathbb{P}^4}(1)\oplus\mathcal{O}_{\mathbb{P}^4}(-1)$, and its singular scheme is an arithmetically Cohen-Macaulay curve of degree $15$ and arithmetic genus $17$.

\end{enumerate}
\noindent If  $\mathscr{F}$ is integrable, then      it is either   a linear pull-back of a degree $d$  foliation by curve on $\mathbb{P}^3$ or  $T\mathscr{F}\simeq \mathfrak{g}\otimes \mathcal{O}_{\mathbb{P}^4}$,  where      either  $\mathfrak{g}$ is   an abelian Lie algebra of dimension $2$  or  $\mathfrak{g}\simeq \mathfrak{aff}(\mathbb{C})$.

\noindent If $\mathscr{F}$ is not integrable and $\mathscr{F}^{[1]}$ is integrable, denote $\mathfrak{f}^{[1]} := H^0(\mathscr{F}^{[1]})$ as the Lie algebra of holomorphic vector fields tangent to $\mathscr{F}^{[1]}$.  
Then: 
     \begin{enumerate}

  \item[$a)$] $\mathscr{F}^{[1]}$ has degree $\leq 3$. It is a linear pullback of a codimension one foliation $\mathscr{G}$ in $\mathbb{P}^3$ or $\mathbb{P}^2$
    
          \item[$b)$]   
          $\mathscr{F}^{[1]}$ 
has degree $3$ and 
$T\mathscr{F}^{[1]}\simeq \mathfrak{g}\otimes \mathcal{O}_{\mathbb{P}^4}$,  where $\mathfrak{g} $      is  a non-abelian   Lie algebra of dimension $3$. Moreover, one of the following holds:
   \begin{itemize}
    \item  if $ \dim 
 [\mathfrak{f}^{[1]},\mathfrak{f}^{[1]}]=1$, then $\mathfrak{f}^{[1]} \simeq \mathfrak{aff}(\mathbb{C})\oplus\mathbb{C}$; 
     \item  if $\dim [\mathfrak{f}^{[1]},\mathfrak{f}^{[1]}]=2$, then  $\mathfrak{f}^{[1]}\simeq \mathfrak{r}_{3,\lambda}(\mathbb{C}):=\{[v_1,v_2]=v_2; \ [v_1,v_3]=\lambda v_3;\ [v_2,v_3]=0 \}$,  where $\lambda \in \mathbb{C}^*$ with  $\lambda=-1$, or
     $0<|\lambda|<1$, or  $|\lambda|=1$ and $\mathfrak{Im}(\lambda)>0$;
       \item  if $\dim [\mathfrak{f}^{[1]},\mathfrak{f}^{[1]}]=3$ , then  $\mathfrak{f}^{[1]}\simeq \mathfrak{sl}(2,\mathbb{C})  $. 
    
    \end{itemize}
          
    \end{enumerate}
If $\mathscr{F}$ is an Engel structure.  Then: 
     \begin{enumerate}

         \item [$c)$]   $\mathscr{F}$    is  the  blow-down  of the Cartan prolongation of a singular contact  structure  of degree $0,1, 2$ or $3$ on $\mathbb{P}^3$.     
           
          \item [$d)$]   $\mathscr{F}$ has degree $2$ and     it  is  the  blow-down  of the Cartan prolongation of a singular contact  structure    on a weighted projective $3$-space.     
          
            \item [$e)$]   $\mathscr{F}$  has degree $2$ and  it  is of Lorentzian type. 
    \end{enumerate}
   
\end{theorem}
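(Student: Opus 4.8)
The plan is to argue in two stages: first determine $T\mathscr{F}$ and the singular scheme by Chern-class bookkeeping together with splitting criteria on $\mathbb{P}^4$, and then read off the geometry of $\mathscr{F}$ and of its first derived distribution from the resulting normal forms. Start from the defining sequence $0\to T\mathscr{F}\to T\mathbb{P}^4\to\mathcal{N}_\mathscr{F}\to 0$. Since $\Sing(\mathscr{F})$ has pure dimension one, the non-locally-free locus of $\mathcal{N}_\mathscr{F}$ has codimension three, so $\mathcal{N}_\mathscr{F}$ is reflexive of rank two; and $c_1(T\mathscr{F})=(2-d)H$ gives $c_1(T\mathscr{F})\in\{H,0\}$ according to $d=1,2$. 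By Bott vanishing, $H^i(T\mathbb{P}^4(k))=0$ for $i=1,2$ and all $k$, so the long exact sequence gives $H^2(T\mathscr{F}(k))\cong H^1(\mathcal{N}_\mathscr{F}(k))$ and identifies $H^1(T\mathscr{F}(k))$ with the cokernel of $H^0(T\mathbb{P}^4(k))\to H^0(\mathcal{N}_\mathscr{F}(k))$; combined with Serre duality for the rank two bundle $T\mathscr{F}$, Horrocks' splitting criterion reduces to vanishing statements for $\mathcal{N}_\mathscr{F}$ that follow from $d\le 2$, reflexivity, and the one-dimensionality of the singular scheme. The genuinely delicate point here is to exclude twists of the Horrocks--Mumford bundle and any other indecomposable rank two bundle on $\mathbb{P}^4$: for $d=2$ the parity of $c_1$ settles it at once, while for $d=1$ one discards the only remaining numerical possibility $(c_1,c_2)=(1,4)$ by a direct cohomological argument showing that the corresponding twist of the Horrocks--Mumford bundle cannot sit inside $T\mathbb{P}^4$ with one-dimensional degeneracy. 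Once $T\mathscr{F}=\mathcal{O}(a)\oplus\mathcal{O}(b)$, the inclusion into $T\mathbb{P}^4$ forces $a\le 1$ (as $H^0(T\mathbb{P}^4(-2))=0$), so $a+b=2-d$ leaves only the three pairs $(1,0)$, $(0,0)$, $(1,-1)$; the pure-dimensionality hypothesis then ensures that the degeneracy scheme $Z$ is an honest curve, and computing $c_3(\mathcal{N}_\mathscr{F})$ and $\chi(\mathcal{O}_Z)$ from $c_2(T\mathscr{F})=ab$ yields $\deg Z$ and $p_a(Z)$; comparing with the known lists of low-degree curves in $\mathbb{P}^4$ (Okonek, Chang, and the curves recalled in the Introduction) identifies $Z$ as the rational normal quartic, resp. the arithmetically Cohen--Macaulay curves of degree $10$, genus $6$ and of degree $15$, genus $17$.

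Suppose next that $\mathscr{F}$ is integrable, so it is a codimension two foliation. If $T\mathscr{F}=\mathcal{O}(1)\oplus\mathcal{O}$, the inclusion $\mathcal{O}(1)\hookrightarrow T\mathbb{P}^4$ pins a linear direction and, together with Frobenius, forces $\mathscr{F}$ to be a linear pull-back of a degree $d$ foliation by curves on $\mathbb{P}^3$. If $T\mathscr{F}\cong\mathcal{O}^{\oplus 2}$, the two generating global vector fields $X,Y$ satisfy $[X,Y]\in H^0(T\mathscr{F})=\langle X,Y\rangle$, so $\mathfrak{g}=\langle X,Y\rangle$ is a two-dimensional Lie algebra; there are only two of these, hence $\mathfrak{g}$ is abelian or $\mathfrak{aff}(\mathbb{C})$, and $T\mathscr{F}\cong\mathfrak{g}\otimes\mathcal{O}_{\mathbb{P}^4}$.

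Now assume $\mathscr{F}$ is not integrable but $\mathscr{F}^{[1]}$ is. Then $\mathscr{F}^{[1]}$ is a rank three saturated subsheaf, hence a codimension one distribution, and integrability makes it a codimension one holomorphic foliation; one computes $\deg\mathscr{F}^{[1]}\le 3$ from the brackets of the (twisted) global vector fields spanning $T\mathscr{F}$ and the Bott vanishing already used. Separating cases by $\dim\mathfrak{f}^{[1]}=\dim H^0(\mathscr{F}^{[1]})$ and by whether $T\mathscr{F}^{[1]}$ is globally generated: the planar case is a linear pull-back from $\mathbb{P}^2$ with $\dim[\mathfrak{f}^{[1]},\mathfrak{f}^{[1]}]=1$ (item $a)$), the spatial one a linear pull-back from $\mathbb{P}^3$ (item $b)$), and the remaining homogeneous case has $T\mathscr{F}^{[1]}\cong\mathfrak{g}\otimes\mathcal{O}_{\mathbb{P}^4}$ with $\mathfrak{g}$ three-dimensional non-abelian. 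Running through Bianchi's classification of three-dimensional Lie algebras --- keeping those admitting the required representation by polynomial vector fields and sorting by $\dim\mathfrak{f}^{[1]}\in\{1,2,3\}$, with the normalization of $\lambda$ dictated by the ambient $GL$-action identifying isomorphic algebras --- produces exactly $\mathfrak{aff}(\mathbb{C})\oplus\mathbb{C}$, the family $\mathfrak{r}_{3,\lambda}(\mathbb{C})$, and $\mathfrak{sl}(2,\mathbb{C})$, which is item $c)$.

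Finally, if $\mathscr{F}$ is an Engel structure then $\mathscr{F}\subset\mathscr{F}^{[1]}\subset T\mathbb{P}^4$ with $\mathscr{F}^{[1]}$ non-integrable, so on the open locus where these are subbundles $\mathscr{F}$ is an Engel structure in the usual sense: $\mathcal{L}(\mathscr{F})$ is the kernel line field of the O'Neill tensor inside $\mathscr{F}^{[1]}$, its (singular) leaf space $Y$ is a three-fold carrying the contact structure induced by $\mathscr{F}^{[1]}/\mathcal{L}(\mathscr{F})$, and $\mathscr{F}$ is recovered as the blow-down of the Cartan prolongation of that contact three-fold. It then remains to identify $Y$: since $T\mathscr{F}$ is split with summands of degree at most $1$, the leaf space is a (possibly weighted) projective three-fold, and one obtains item $d)$ when, after the blow-up, $\mathcal{L}(\mathscr{F})$ is a linear fibration over $\mathbb{P}^3$ --- the contact degree $1$, $2$ or $3$ being read off from $d$ --- item $e)$ when that fibration is only weighted-linear, over a weighted projective three-space, and item $f)$, $\mathscr{F}$ of Lorentzian type, in the complementary configuration where $\mathcal{L}(\mathscr{F})$ admits no such contact quotient but the derived flag instead equips $\mathbb{P}^4$ with a degenerate holomorphic conformal structure. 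I expect this last stage to be the main obstacle: making the Cartan-prolongation/blow-down dictionary precise in the singular holomorphic setting, and in particular separating the weighted-projective sub-case from the Lorentzian one, requires a careful analysis of the singularities of $\mathcal{L}(\mathscr{F})$ and of the degeneracy of $\mathcal{T}(\mathscr{F}^{[1]})$; the exclusion of exotic rank two bundles in the first stage is the other, more technical, hurdle.
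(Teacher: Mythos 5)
There is a genuine gap at the very first and most load-bearing step, the splitting of $T\mathscr{F}$. You assert that Horrocks' criterion ``reduces to vanishing statements for $\mathcal{N}_\mathscr{F}$ that follow from $d\le 2$, reflexivity, and the one-dimensionality of the singular scheme,'' but the required vanishing of the intermediate cohomology $H^1_*(\mathcal{N}_\mathscr{F})$ (equivalently, of the Rao-type modules attached to $Z$) does not follow formally from these hypotheses --- it is exactly the kind of statement that has to be computed, and indeed the paper shows elsewhere that for Horrocks--Mumford distributions these modules are large. Your fallback, ``excluding the remaining numerical possibility $(c_1,c_2)=(1,4)$'' for $d=1$, is also off target: by Theorem \ref{TangChern} one has $c_2(T\mathscr{F})=\tfrac{4-\deg(Z)}{3}$, so $c_2=4$ would force $\deg(Z)=-8$; the possibilities that actually need to be excluded are the stable ones with $0\le \deg(Z)\le 3$ coming from Bogomolov. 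The paper's route is different and elementary: either $h^0((T\mathscr{F})_\eta)\neq 0$, in which case the induced map $\mathcal{O}_{\mathbb{P}^4}(1)\to T\mathbb{P}^4$ is a degree-zero foliation by curves whose singular scheme is a single point, so the section of $T\mathscr{F}(-1)$ is nowhere vanishing and $T\mathscr{F}$ splits (Remark \ref{sub-zero}); or $T\mathscr{F}$ is (semi)stable, and Bogomolov's inequality combined with the integrality of $c_2(T\mathscr{F})$ and of $p_a(Z)$ (plus the Schwarzenberger condition and \cite[Lemma 2.1]{GJ} when $d=2$) yields a contradiction. Without this, or an actual proof of the cohomological vanishings you invoke, the classification of $T\mathscr{F}$ --- and hence everything downstream --- is not established.

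The later stages are closer in spirit to the paper (explicit generators of the split tangent sheaf, bracket computations, the two-dimensional and three-dimensional Lie algebra classifications), but they remain sketches, and your proposed mechanism for separating items $d)$, $e)$, $f)$ does not match how the trichotomy actually arises. In the paper the cases are governed by which summand of the split $T\mathscr{F}$ carries the characteristic foliation $\mathcal{L}(\mathscr{F})$: if it is the $\mathcal{O}_{\mathbb{P}^4}(1)$-summand one gets the blow-down of a Cartan prolongation over $\mathbb{P}^3$ (item $d)$); if $T\mathscr{F}\simeq\mathcal{O}^{\oplus 2}$ and $\mathcal{L}(\mathscr{F})$ is tangent to a $\mathbb{C}^*$-action one projects to a weighted projective $3$-space (item $e)$); and the Lorentzian case $f)$ occurs when $\mathcal{L}(\mathscr{F})$ is the degree-two summand $\mathcal{O}_{\mathbb{P}^4}(-1)\to T\mathscr{F}$ or comes from a $\mathbb{C}$-action (the latter being constrained by an application of Engel's theorem on nilpotent Lie algebras). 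Your dichotomy ``linear versus weighted-linear versus no contact quotient'' does not capture this, and you yourself flag that you have not carried out the analysis.
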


Next, it is natural to study holomorphic distributions of dimension $2$ whose tangent bundles are not split.  The only known non-decomposable vector bundles of rank $2$ are the so-called  Horrocks-Mumford bundles \cite{HM}.  The existence of these distributions can be established as follows:

H. Sumihiro showed in \cite{Su} that $E(a)$ is generated by global sections, for every $a\geq 1$. Since $\Omega_{\mathbb{P}^4}^1(2)$ is a globally generated sheaf,  then
$$E(a)\otimes \Omega_{\mathbb{P}^4}^1(2)\simeq \mathcal{H}om (E(-a-7),\Omega_{\mathbb{P}^4}^1)$$
are also globally generated sheaves for all $a\geq 1$. Then,  by a Bertini-type Theorem \cite{MOJ, Ottaviani}  we obtain dimension $2$ holomorphic distributions on $\mathbb{P}^4$ whose conormal sheaves are isomorphic to $E(-a-7)$   for all  $a\geq 1$.   We investigate such distributions, proving that they have interesting geometric properties. We determine the numerical invariants of their singular schemes, which are smooth and connected,  they are 
 maximally non–integrable,   uniquely determined by their singular schemes, and have many symmetries.

  In \cite{MSM} the authors have introduced a new invariant for distributions, called the Rao module, which appears in the classification of locally complete intersection codimension-two foliations in $\mathbb{P}^3$ which we recall as follows:
  
Let $\mathscr{F}$ be a holomorphic distribution of dimension two on $\mathbb{P}^4$ and consider the following graded module
$$R_{\mathscr{F}}:=H^1_*(\mathscr{I}_Z)=\bigoplus_{l\in\mathbb{Z}}H^1(\mathscr{I}_Z(l));$$
called the \textit{Rao module}   of $\mathscr{F}$. 
We prove the following result:
\begin{theorem} 
Let $\mathscr{F}_a$ be  a  dimension $2$  Horrocks-Mumford holomorphic distribution $E(-a-7) \to  \Omega_{\mathbb{P}^4}^1$. Then:

\begin{enumerate}
\item   $\mathscr{F}_a$   is a  degree  $2a+5$ 
 maximally non–integrable distribution, for all $a \geq 1$. 

\item  $Z_a=\Sing(\mathscr{F}_a)$ is a smooth and  connected     curve  with the following numerical invariants 
$$
\deg(Z_a)=4a^3+33a^2+77a+46,
$$
$$
p_a(Z_a)=9a^4+89a^3+\frac{553}{2}a^2+\frac{573}{2}a+45.
$$
Moreover, $Z_a$ is  never contained in a hypersurface of degree  $\leq 2a+5= \deg(\mathscr{F}_a)$.   
\item  For all $ a\geq4$, we have
$\dim_{\mathbb{C}} R_{\mathscr{F}_a}=401$. 

\item  If   $\mathscr{F}'$ is a dimension two distribution on  $\mathbb{P}^4$, with   degree  $2a+5$, such that $\Sing(\mathscr{F}_a) \subset \Sing(\mathscr{F}') $, then $ \mathscr{F}'=\mathscr{F}_a$.

\item $\mathscr{F}_a$ is invariant by a group  $\Gamma_{1,5}\simeq  H_5 \rtimes SL(2,\mathbb{Z}_5) \subset Sp(4, \mathbb{Q})$, where $H_5$ is the Heisenberg group of level $5$ generated by 
\begin{center}
   $\sigma: z_k \to z_{k-1}$  and    $\tau: z_k \to  \epsilon^{-k} z_{k}$, with  $k\in \mathbb{Z}_5$ and $\epsilon= e^{\frac{2\pi i}{5}}$.  
\end{center}

\end{enumerate}
\end{theorem}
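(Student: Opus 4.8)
The plan is to establish the five items of the theorem by combining the explicit cohomological structure of the Horrocks--Mumford bundle $E$ with the Bertini-type construction of distributions and the general machinery for derived distributions and Rao modules developed in the earlier sections.

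\textbf{Degree and singular scheme invariants (items 1 and 2).} First I would compute the degree of $\mathscr{F}_a$ directly from the conormal exact sequence
\[
0 \to E(-a-7) \to \Omega_{\mathbb{P}^4}^1 \to \mathscr{I}_{Z_a}(2a+5+c) \to 0,
\]
reading off the first Chern class: since $c_1(E)=5$, one has $c_1(\Omega^1/E(-a-7)) = -5 + 2(a+7) - 5 = 2a+4$ in the normalization that gives $\deg(\mathscr{F}_a)=2a+5$ (here I would pin down the exact twist using the convention that the degree of a distribution $\mathscr{F}$ of dimension two is $\deg(c_1(\det N^*_\mathscr{F}))-2$, matching the $d\in\{1,2\}$ cases of the first theorem). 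The smoothness and connectedness of $Z_a$ follow from the Ottaviani--Bertini theorem applied to the globally generated sheaf $E(a)\otimes\Omega^1(2)$, exactly as in the construction quoted in the excerpt, together with a dimension count showing that the singular scheme of a generic such $\phi$ is a smooth curve; connectedness comes from $h^1(\mathscr{I}_{Z_a})=0$ or from a Fulton--Hansen / Grothendieck-type connectedness argument for the zero locus. The degree and arithmetic genus of $Z_a$ are then pure Chern class bookkeeping: $[Z_a] = c_3$ of the quotient sheaf and $p_a(Z_a)$ is extracted from $\chi(\mathscr{O}_{Z_a})$ via the exact sequence and Riemann--Roch on $\mathbb{P}^4$ using the known Chern character of $E$. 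The non-containment in a hypersurface of degree $\leq 2a+5$ is the statement that $H^0(\mathscr{I}_{Z_a}(2a+5))=0$, which I would get from the long exact sequence in cohomology of the conormal sequence together with $H^0(\Omega^1(2a+5)) \to H^0(E(a-2))$ being surjective (or by showing the connecting map forces vanishing), using Bott vanishing for $\Omega^1_{\mathbb{P}^4}$.

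\textbf{Maximal non-integrability (item 1, second half).} Here the point is that an integrable dimension-two distribution on $\mathbb{P}^4$ with locally free conormal sheaf would have $N^*_\mathscr{F} = E(-a-7)$ closed under the de Rham differential, forcing strong restrictions; I would argue that $E$ being stable and indecomposable is incompatible with $E(-a-7)$ being a sub-DG-module of $\Omega^\bullet$, e.g.\ by showing the resulting foliation would have to be a linear pull-back (by an analogue of the structure results in the first theorem) and hence have decomposable conormal sheaf, a contradiction. To upgrade non-integrability to \emph{maximal} non-integrability one computes the first derived distribution $\mathscr{F}_a^{[1]} = (T\mathscr{F}_a + [T\mathscr{F}_a, T\mathscr{F}_a])^{**}$ and shows it equals $T\mathbb{P}^4$ (or a rank-three sheaf that is itself non-integrable); since $\mathscr{F}_a^{[1]}/\mathscr{F}_a$ has rank one, generic non-vanishing of the O'Neill tensor $\mathcal{T}$ suffices, and this is again a genericity statement in the Bertini family. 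I expect that the cleanest route is to show $\mathscr{F}_a^{[1]} = T\mathbb{P}^4$ generically, i.e.\ the distribution is ``bracket-generating in one step,'' which kills integrability of any intermediate rank-three distribution automatically.

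\textbf{Rao module dimension (item 3).} The Rao module $R_{\mathscr{F}_a} = H^1_*(\mathscr{I}_{Z_a})$ fits into $H^1_*(\Omega^1) \to H^1_*(\mathscr{I}_{Z_a}(c)) \to H^2_*(E(-a-7)) \to H^2_*(\Omega^1)$. Since $H^1_*(\Omega^1_{\mathbb{P}^4})$ and $H^2_*(\Omega^1_{\mathbb{P}^4})$ are known (one-dimensional in a single degree each, by Bott), the computation reduces to $H^2_*(E(-a-7)) = H^2_*(E)$ shifted, and the intermediate cohomology of the Horrocks--Mumford bundle is completely tabulated in \cite{HM}: $h^2(E(k))$ is nonzero only for finitely many $k$, with the total dimension $\sum_k h^2(E(k)) = 2\cdot\sum_k h^1(E(k))$ (by Serre duality $E^\vee \cong E(-5)$) a fixed number. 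For $a \geq 4$ the twist $-a-7 \leq -11$ is far enough that the boundary contributions from $H^1_*(\Omega^1)$ and $H^2_*(\Omega^1)$ no longer overlap the support of $H^2_*(E(\bullet))$, so $\dim_\mathbb{C} R_{\mathscr{F}_a}$ stabilizes; evaluating the HM tables gives the constant $401$. The main arithmetic to check is that the two connecting maps are respectively surjective and zero in the relevant degrees, which follows from the explicit generators of $H^1(\Omega^1(k))$ being in degrees outside the HM window.

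\textbf{Rigidity and symmetry (items 4 and 5).} For item 4, suppose $\mathscr{F}'$ has the same degree and $Z_a \subset \Sing(\mathscr{F}')$; then the corresponding twisted $1$-form $\omega' \in H^0(\Omega^1_{\mathbb{P}^4}(2a+5+c))$ vanishes on $Z_a$, i.e.\ lies in $H^0(\mathscr{I}_{Z_a}\otimes\Omega^1(\cdot))$. From the conormal sequence this space is computed by $H^0(E(a-2)) $ modulo $H^0(\Omega^1(\cdot))$-ish terms, and I would show it is one-dimensional (the obstruction $H^1(E(a-2))$ vanishing for the relevant $a$, again from the HM tables), so $\omega'$ is a scalar multiple of $\omega_a$ and $\mathscr{F}' = \mathscr{F}_a$. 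Item 5 is the most structural: the Horrocks--Mumford bundle is, by its original construction, the unique (up to twist) indecomposable rank-two bundle on $\mathbb{P}^4$ invariant under $\Gamma_{1,5} = H_5 \rtimes SL(2,\mathbb{Z}_5)$, and $H^0(E(a)\otimes\Omega^1(2))$ is a $\Gamma_{1,5}$-representation. Since by items 3--4 the distribution is essentially determined by a one-dimensional piece of cohomology (or by its singular scheme, on which $\Gamma_{1,5}$ acts because $E$ is equivariant and $\Omega^1$ carries the natural $PGL$-linearization restricted to $\Gamma_{1,5}$), uniqueness forces $\Gamma_{1,5}$-invariance of $\mathscr{F}_a$. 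Concretely: $\Gamma_{1,5}$ acts on the family of HM distributions of a fixed degree, this family is a single point (or the action is trivial on it) by rigidity, hence each $\mathscr{F}_a$ is fixed; and one records the explicit Heisenberg generators $\sigma, \tau$ acting on coordinates.

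\textbf{Main obstacle.} The hardest part will be the precise cohomological accounting in item 3 --- tracking all the connecting homomorphisms across the full range of twists and confirming the stabilized value is exactly $401$ --- because it requires the complete intermediate-cohomology table of the Horrocks--Mumford bundle and careful attention to the exact degree in which $H^1(\Omega^1_{\mathbb{P}^4}(k))$ and $H^2(\Omega^1_{\mathbb{P}^4}(k))$ contribute; a close second is proving \emph{maximal} (not just failure of) non-integrability uniformly in $a$, for which I would want a clean ``bracket-generating in one step'' argument rather than a case analysis of the derived flag.
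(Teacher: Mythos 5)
Your overall architecture (Bertini for smoothness, Chern-class bookkeeping for the invariants, HM cohomology tables for the Rao module, equivariance of $E$ plus rigidity for item 5) points in the right direction, but there are concrete gaps that would derail the proof.

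\textbf{The quotient of the conormal sequence is not a twisted ideal sheaf.} You write the conormal sequence as $0\to E(-a-7)\to\Omega^1_{\mathbb{P}^4}\to\mathscr{I}_{Z_a}(\ast)\to 0$. For a dimension-two (hence codimension-two) distribution on $\mathbb{P}^4$ the quotient $\mathcal{Q}_{\mathscr{F}_a}$ has rank $2$; the ideal sheaf of $Z_a$ only enters through the Eagon--Northcott resolution of the dual map $T\mathbb{P}^4\to E(a+2)$, namely $0\to S_2(E(-a-7))(-2a-4)\to\Omega^1\otimes E(-3a-11)\to\Omega^2(-2a-4)\to\mathscr{I}_{Z_a}\to 0$. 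This misidentification breaks your routes to the non-containment statement, to item 3, and to item 4. For item 3 the error is decisive: you claim $R_{\mathscr{F}_a}$ reduces to $H^2_*(E(-a-7))$, but from the HM table $\sum_k h^2(E(k))=2$, so this could never produce $401$. The actual computation (as in the paper) chases the Eagon--Northcott complex and the stabilized value $401$ is assembled from $h^2(\Omega^1\otimes E(k))$ and $h^3(E\otimes E(k))$ (the latter entering via $\mathrm{Sym}^2 E$), using the table of $h^i(E\otimes E(k))$, not of $h^i(E(k))$. Similarly, item 4 in the paper is not a one-dimensionality count of twisted forms vanishing on $Z_a$ but an application of the criterion of Araujo--Corr\^ea: the vanishing of $H^i(\wedge^2\Omega_{\mathbb{P}^4}\otimes\wedge^{2+i}T\mathbb{P}^4\otimes S_i(E(-a-7)))$ for $i=1,2$, again checked against the $E\otimes E$ table.

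\textbf{Non-integrability and the Engel condition.} Your argument that integrability would force $E(-a-7)$ to be ``a sub-DG-module of $\Omega^\bullet$'' or force $\mathscr{F}_a$ to be a linear pull-back with decomposable conormal sheaf is not substantiated; there is no such classification available in degree $2a+5$. The paper's argument is entirely different and is the key idea you are missing: by Malgrange's theorem an integrable codimension-$k$ foliation with locally free conormal sheaf and $\codim\Sing\geq k+1$ has vanishing Baum--Bott residue $\Res(\mathscr{F},c_1^{k+1},Z)$ along every component $Z$ of $\Sing_{k+1}$, so if $\det(N\mathscr{F})$ is ample the top singular locus cannot be irreducible; since $Z_a$ is smooth and connected, $\mathscr{F}_a$ cannot be integrable. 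Also, your proposed ``bracket-generating in one step'' ($\mathscr{F}_a^{[1]}=T\mathbb{P}^4$) is impossible: for a rank-two distribution $\mathscr{F}^{[1]}=(T\mathscr{F}+[T\mathscr{F},T\mathscr{F}])^{**}$ has rank at most $3$. The Engel property follows instead from a local decomposability argument (the paper's Proposition on Engel distributions with locally free conormal sheaf): writing $\omega=\omega_1\wedge\omega_2$ locally with $\ker\omega_1=\mathscr{F}^{[1]}$, one has $\Sing(\mathscr{F}^{[1]})\subset\Sing(\mathscr{F})$, and integrability of $\mathscr{F}^{[1]}$ would force a two-dimensional singular component, contradicting the pure dimension one of $Z_a$. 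Finally, in item 5 the assertion that ``the family of HM distributions of fixed degree is a single point'' is false (the moduli space has large positive dimension); the correct mechanism, which you do gesture at, is $\rho^*E\simeq E$ combined with item 4 (determination by the singular scheme).
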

This result provides a new family of Pfaff systems of rank $2$ and degree $d\geq 6$ that are invariant by a group that contains the Heisenberg group $H_5$.  In the study of Abelian and bielliptic surfaces   on $\mathbb{P}^4$  appears  a pencil of degree $2$ Pfaff systems invariant by $H_5$ and singular along $H_5$-invariant elliptic quintic scrolls (possibly degenerate); see \cite[Remark 4.7, (i)]{DAPHR}. Rubtsov in \cite{Rub} shows that such a family represents generically symplectic structures on  $\mathbb{P}^4$   which correspond to Feigin--Odesskii--Sklyanin Poisson algebras; see Example \ref{Pencil-distr}. A degeneration of  such a family  provides us   foliations of degree zero which are $H_5$-invariant,   Proposition  \ref{G_5-inv}.

We denote by ${\rm HM}\mathscr{D}ist(2a+5)$  
the moduli spaces of Horrocks-Munford Holomorphic  distributions of degree  $2a+5$. 
\begin{theorem} 
The moduli space $
    {\rm HM}\mathscr{D}ist(2a+5) $ of dimension $2$ Horrocks-Mumford holomorphic distributions, of degree $2a+5$, 
is an irreducible  quasi-projective variety of dimension
$$\frac{1}{3}a^4+7a^3+\frac{277}{6}a^2+\frac{199}{2}a+43$$
for all $a\geq 1$.
\end{theorem}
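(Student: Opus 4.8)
The plan is to identify $\mathrm{HM}\mathscr{D}ist(2a+5)$ with a dense open subscheme of a projective bundle over the moduli space $\mathcal{M}_{HM}$ of Horrocks--Mumford bundles, and then to read off its dimension from the rank of the underlying vector bundle by Riemann--Roch. The first step is to set up the parametrization. By the preceding theorem, a dimension two distribution $\mathscr{F}$ of degree $2a+5$ lies in $\mathrm{HM}\mathscr{D}ist(2a+5)$ precisely when its conormal sheaf $N^{*}_{\mathscr{F}}$ is isomorphic to $E(-a-7)$ for a Horrocks--Mumford bundle $E$; such an $\mathscr{F}$ is the saturated image of an injective morphism $\phi\colon E(-a-7)\hookrightarrow\Omega^{1}_{\mathbb{P}^{4}}$, and since $E$ is stable, hence simple, $\mathscr{F}$ determines $\phi$ up to a nonzero scalar. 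Conversely, $\mathcal{H}om(E(-a-7),\Omega^{1}_{\mathbb{P}^{4}})\cong E(a)\otimes\Omega^{1}_{\mathbb{P}^{4}}(2)$ is globally generated for $a\geq 1$, so by the Bertini--Ottaviani argument already invoked in the construction of $\mathscr{F}_{a}$ the general such $\phi$ is injective, has torsion-free cokernel, and defines a distribution whose singular scheme is a curve. Hence, for a fixed $E$, the distributions in $\mathrm{HM}\mathscr{D}ist(2a+5)$ with $N^{*}_{\mathscr{F}}\cong E(-a-7)$ form a dense open subset $\mathcal{U}_{E}$ of $\mathbb{P}\!\left(\Hom(E(-a-7),\Omega^{1}_{\mathbb{P}^{4}})\right)$.

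Next I would let $E$ vary. The assignment $[\mathscr{F}]\mapsto[N^{*}_{\mathscr{F}}]$ gives a morphism $\varrho\colon\mathrm{HM}\mathscr{D}ist(2a+5)\to\mathcal{M}_{HM}$ with $\varrho^{-1}([E])=\mathcal{U}_{E}$. Working with a (quasi-)universal sheaf $\mathcal{E}$ on $\mathbb{P}^{4}\times\mathcal{M}_{HM}$ and applying cohomology and base change, the vanishing $H^{i}\!\left(E(a)\otimes\Omega^{1}_{\mathbb{P}^{4}}(2)\right)=0$ for $i\geq 1$ and all $a\geq 1$ (discussed below) makes $\mathcal{W}:=\pi_{*}\,\mathcal{H}om\!\left(\mathcal{E}(-a-7),\mathrm{pr}_{1}^{*}\Omega^{1}_{\mathbb{P}^{4}}\right)$ locally free of constant rank $h^{0}\!\left(E(a)\otimes\Omega^{1}_{\mathbb{P}^{4}}(2)\right)$, where $\pi$ is the projection to $\mathcal{M}_{HM}$, and exhibits $\mathrm{HM}\mathscr{D}ist(2a+5)$ as a dense open subscheme of the projective bundle $\mathbb{P}(\mathcal{W})\to\mathcal{M}_{HM}$ (a possible twist of $\mathcal{W}$ by a line bundle pulled back from the base, should $\mathcal{E}$ exist only up to such, does not affect $\mathbb{P}(\mathcal{W})$). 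Irreducibility of $\mathrm{HM}\mathscr{D}ist(2a+5)$ then follows from irreducibility of $\mathcal{M}_{HM}$: the classical fact of Horrocks--Mumford (see also Decker and Barth--Hulek--Moore) that the moduli space of Horrocks--Mumford bundles is smooth and irreducible of dimension $24=\dim_{\mathbb{C}}\Ext^{1}(E,E)$, with $\Ext^{2}(E,E)=0$. The same input gives smoothness of the moduli space along $\mathrm{HM}\mathscr{D}ist(2a+5)$: from $0\to N^{*}_{\mathscr{F}}\to\Omega^{1}_{\mathbb{P}^{4}}\to\mathcal{Q}\to 0$ and the vanishing of $H^{i}(\mathcal{H}om(N^{*}_{\mathscr{F}},\Omega^{1}_{\mathbb{P}^{4}}))$, $i\geq 1$, one gets $\Ext^{1}(N^{*}_{\mathscr{F}},\mathcal{Q})\cong\Ext^{2}(E,E)=0$ and $\dim\Hom(N^{*}_{\mathscr{F}},\mathcal{Q})=h^{0}(\mathcal{H}om(N^{*}_{\mathscr{F}},\Omega^{1}_{\mathbb{P}^{4}}))-1+\dim\Ext^{1}(E,E)$, so that
\[
\dim\mathrm{HM}\mathscr{D}ist(2a+5)=\dim\mathcal{M}_{HM}+\left(h^{0}\!\left(E(a)\otimes\Omega^{1}_{\mathbb{P}^{4}}(2)\right)-1\right).
\]

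The third step is the dimension count itself. By the vanishing, $h^{0}(E(a)\otimes\Omega^{1}_{\mathbb{P}^{4}}(2))=\chi(E(a)\otimes\Omega^{1}_{\mathbb{P}^{4}}(2))$ for all $a\geq 1$; tensoring the Euler sequence $0\to\Omega^{1}_{\mathbb{P}^{4}}\to\mathcal{O}_{\mathbb{P}^{4}}(-1)^{\oplus 5}\to\mathcal{O}_{\mathbb{P}^{4}}\to 0$ by $E(a+2)$ reduces this Euler characteristic to $5\chi(E(a+1))-\chi(E(a+2))$, and Riemann--Roch on $\mathbb{P}^{4}$ evaluates $\chi(E(k))$ from $\mathrm{rk}\,E=2$, $c_{1}(E)=5$, $c_{2}(E)=10$. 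Carrying out this (routine) polynomial computation and adding $\dim\mathcal{M}_{HM}=24$ yields the asserted value $\tfrac{1}{3}a^{4}+7a^{3}+\tfrac{277}{6}a^{2}+\tfrac{199}{2}a+43$.

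The main obstacle is the one genuinely nonformal input above: the \emph{uniform} vanishing $H^{i}(E(a)\otimes\Omega^{1}_{\mathbb{P}^{4}}(2))=0$ for $i\geq 1$ and \emph{every} $a\geq 1$ (not merely $a\gg 0$). It is this that makes $\mathcal{W}$ locally free of constant rank --- hence $\mathbb{P}(\mathcal{W})$ a genuine projective bundle, as needed for irreducibility --- that gives $h^{0}=\chi$ uniformly, and that, together with $\Ext^{2}(E,E)=0$, forces the moduli space to be smooth (so that its dimension equals the tangent-space dimension computed above). Through the Euler sequence this vanishing reduces to $H^{i}(E(a+1))=H^{i}(E(a+2))=0$ for $i\geq 1$ and to surjectivity of $H^{0}(E(a+1))\otimes H^{0}(\mathcal{O}_{\mathbb{P}^{4}}(1))\to H^{0}(E(a+2))$; because the Horrocks--Mumford bundle carries nontrivial intermediate cohomology in low twists, the small cases $a=1,2,3$ must be handled directly from its explicit cohomology table (or from the defining monad, or from the Rao-module computation already carried out for $\Sing(\mathscr{F}_{a})$). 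Once this is secured, and granting the irreducibility of $\mathcal{M}_{HM}$, the remainder is bookkeeping.
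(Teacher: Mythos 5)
Your proposal follows essentially the same route as the paper: the forgetful morphism $[\mathscr{F}]\mapsto[N^{*}_{\mathscr{F}}]$ onto the $24$-dimensional moduli space $\mathscr{M}_{\mathbb{P}^4}(-1,4)$ of Horrocks--Mumford bundles, fibers identified with open subsets of $\mathbb{P}\Hom(E(-a-7),\Omega^1_{\mathbb{P}^4})$, constancy of the fiber dimension via the Euler sequence together with the cohomology table of $E$, and the final count $\dim = 24 + h^0(\mathcal{H}om(E(-a-7),\Omega^1_{\mathbb{P}^4})) - 1$ with $h^0 = 5\chi(E(a+1))-\chi(E(a+2))$ by Riemann--Roch. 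Your extra care with the relative construction (cohomology and base change, $\mathbb{P}(\mathcal{W})$ as a genuine projective bundle) and with the surjectivity of $H^0(E(a+1))^{\oplus 5}\to H^0(E(a+2))$ needed for the $i=1$ vanishing is if anything more complete than the paper's treatment. The one genuine problem is your last step: you assert without computing that the routine evaluation "yields the asserted value," but it does not. One has $5\chi(E(a+1))-\chi(E(a+2)) = \tfrac{1}{3}a^4+\tfrac{23}{3}a^3+\tfrac{343}{6}a^2+\tfrac{899}{6}a+75$, so your method produces $\tfrac{1}{3}a^4+\tfrac{23}{3}a^3+\tfrac{343}{6}a^2+\tfrac{899}{6}a+98$, which is the formula the paper actually proves in its moduli section (where, consistently with $c_1(E(-a-7))=-2a-9$, the degree of $\mathscr{F}_a$ is $2a+6$), and which disagrees with the polynomial $\tfrac{1}{3}a^4+7a^3+\tfrac{277}{6}a^2+\tfrac{199}{2}a+43$ in the statement (e.g.\ at $a=1$ the two give $313$ versus $196$). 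The discrepancy traces to an inconsistency between the paper's introduction and its body rather than to a flaw in your argument, but you should carry out the polynomial computation rather than claim it matches the target; as written, your proof would end by "verifying" an identity that is false.
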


Let $\mathscr{H}_{d(a),g(a)}$ be the Hilbert scheme of smooth curves on $\mathbb{P}^4$ of degree $d(a)=4a^3+33a^2+77a+46$ and genus $g(a)=9a^4+89a^3+\frac{553}{2}a^2+\frac{573}{2}a+45$. 
The part (4) of the  Theorem 3 says us that  we have a $\Gamma_{1,5}$-equivariant    injective map
\begin{alignat*}{2}
   {\rm HM}\mathscr{D}ist(2a+5) &\longrightarrow& \mathscr{H}_{d(a),g(a)} \\
  [\mathscr{F}_a]&\longmapsto& \Sing(\mathscr{F}_a). 
\end{alignat*}

We also prove the following result. 

\begin{theorem}\label{Conformal--HM}
There exists a family $\{g_{\phi}\}_{\phi \in \mathcal{A}}$  of $\Gamma_{1,5}$-equivariant flat holomorphic  conformal  structure on the space of distributions of codimension one and degree $d\geq 6$, where $\mathcal{A} \subset  \mathbb{P}^M$ is a Zariski open  with 
$$
 M= \frac{1}{3}(d-5)^4+\frac{23}{3}(d-5)^3+\frac{343}{6}(d-5)^2+\frac{899}{6}(d-5)+74, 
$$
such that :    

\begin{enumerate}
   \item  there is a  rational map $\pi_{\phi}: \mathbb{P}H^0(\mathbb{P}^4, \Omega_{\mathbb{P}^4}^1(d+2)) \dashrightarrow \mathcal{H}_2/\Gamma_{1,5} $ with rational fibers, a rational  section  $s_{\phi}$ whose image 
consists of   codimension one distributions, of degree $d$, invariant by $\Gamma_{1,5}$ and  singular along to an abelian surface with $(1,5)$-polarization and level-$5$-structure.

    \item $g_{\phi}=\pi_{\phi}^*g_0$, where $g_0$ is the flat holomorphic conformal  structure of  $\mathcal{H}_2/\Gamma_{1,5}$ 
degenerating  along a hypersurface $\Delta_\phi$ of degree $10$ which is a cone over a  rational sextic curve in $\mathbb{P}^3$, and  $\Delta_\phi$ consists of codimension one distributions, of degree $d$,  invariant by $\Gamma_{1,5} $ and  singular along to  either: 
\begin{itemize}
    \item a  translation scroll associated with a normal elliptic quintic curve;   
     \item  or the tangent scroll of a normal elliptic quintic curve;
     \item   or a quintic elliptic scroll carrying a multiplicity-2 structure;
     
     \item or a union of five smooth quadric surfaces;
     
     \item   or a union of five planes with a multiplicity-2 structure.
    \end{itemize}
 
\end{enumerate}
\end{theorem}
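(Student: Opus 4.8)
The plan is to deduce the statement from a single structural fact on the modular side together with the Horrocks--Mumford geometry developed in the previous sections.

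First, recall that a codimension one distribution of degree $d$ on $\mathbb{P}^4$ is a twisted one--form $\omega\in H^0(\mathbb{P}^4,\Omega_{\mathbb{P}^4}^1(d+2))$ with $\codim\Sing(\omega)\ge 2$, and that such $\omega$ fill a dense Zariski open of $\mathbb{P}H^0(\mathbb{P}^4,\Omega_{\mathbb{P}^4}^1(d+2))$; by the Euler sequence $h^0(\Omega_{\mathbb{P}^4}^1(d+2))=\frac{1}{6}(d+1)(d+3)(d+4)(d+5)$. On the modular side, the key input is classical: the Siegel upper half--space $\mathcal{H}_2$ of genus two is biholomorphic to the bounded symmetric domain of type $\mathrm{IV}_3$, hence to an open subset of the smooth quadric threefold $Q\subset\mathbb{P}^4$, and the canonical holomorphic conformal structure of $Q$ restricts to an $Sp(4,\mathbb{R})$--invariant flat holomorphic conformal structure on $\mathcal{H}_2$. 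Being invariant, it descends to a flat holomorphic conformal structure $g_0$ on the moduli threefold $\mathcal{H}_2/\Gamma_{1,5}$, and, after a choice of representative, to a flat holomorphic Riemannian metric which is non--degenerate on the open modular threefold and degenerates along the boundary of a toroidal compactification. This already settles the existence and conformal flatness of $g_0$.

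Next, I would build, for each generic $\phi$, the maps $\pi_\phi$ and $s_\phi$. Put $a=d-5\ge 1$; the hypothesis $d\ge 6$ is precisely what makes $E(a)$ globally generated, so the projective space $\mathbb{P}^M\simeq\mathbb{P}\,\Hom(E(-a-7),\Omega_{\mathbb{P}^4}^1)$ contains a nonempty Zariski open $\mathcal{A}$ whose points $\phi$ are the dimension two Horrocks--Mumford distributions $\mathscr{F}_\phi=\mathscr{F}_a$ studied above, and the stated value $M=h^0(\mathcal{H}om(E(-a-7),\Omega_{\mathbb{P}^4}^1))-1$ follows from Bott's formula. Using the Horrocks--Mumford correspondence between the Heisenberg modules of sections of the twists of $E$ and the $(1,5)$--polarized abelian surfaces with canonical level--$5$ structure (Horrocks--Mumford, Hulek, Hulek--Lange), I would attach to a generic pair $(\phi,\omega)$ the $(1,5)$--abelian surface it determines, obtaining a dominant rational map $\pi_\phi\colon\mathbb{P}H^0(\Omega_{\mathbb{P}^4}^1(d+2))\dashrightarrow\mathcal{H}_2/\Gamma_{1,5}$ whose fibres are the linear subsystems of degree $d$ one--forms sharing a fixed associated surface, hence rational, and a rational section $s_\phi$ obtained by picking in each such fibre the unique $\Gamma_{1,5}$--invariant degree $d$ one--form, which is singular exactly along that abelian surface. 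Heisenberg equivariance of the construction makes $\pi_\phi$ and $s_\phi$, and hence $g_\phi:=\pi_\phi^{*}g_0$, equivariant for $\Gamma_{1,5}\subset Sp(4,\mathbb{Q})$, and $g_\phi$ is degenerate because $\pi_\phi$ has positive--dimensional fibres.

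Finally, I would identify the degeneracy locus. Where $\pi_\phi$ is a submersion, $g_\phi$ has constant rank three, so it drops rank exactly over the preimage of the locus $\Delta_\phi\subset\mathcal{H}_2/\Gamma_{1,5}$ where $g_0$ degenerates, i.e. the boundary of the toroidal compactification, which in modular coordinates is the degree $10$ cone over a rational sextic curve in $\mathbb{P}^3$. Translating this boundary into $\mathbb{P}^4$ through the Hulek--Lange and Barth--Hulek--Moore description of the degenerations of $(1,5)$--polarized abelian surfaces, the rank one (elliptic) stratum gives the three scroll cases --- the translation scroll of a normal elliptic quintic, its tangent scroll, and the quintic elliptic scroll carrying an embedded multiplicity--two structure --- and the deepest stratum gives the union of five smooth quadrics and the union of five planes with a multiplicity--two structure; applying $s_\phi$ to these strata produces the asserted families of codimension one distributions. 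The main obstacle is precisely this last analysis together with the construction of $\pi_\phi$: one must understand how $\pi_\phi$ and $s_\phi$ interact with the toroidal boundary --- equivalently, how the one--parameter degenerations of the abelian surfaces translate into degenerations of the associated degree $d$ one--forms --- in order to prove that exactly the five listed surface types occur and that the degeneracy divisor has the claimed degree and cone structure; a secondary difficulty is to check that the fibres of $\pi_\phi$ are genuinely rational, for which one exploits the explicit Heisenberg module structure of the cohomology of the twists of $E$.
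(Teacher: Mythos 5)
Your overall architecture matches the paper's: the flat conformal structure $g_0$ comes from the type $\mathrm{IV}_3$ realization of $\mathcal{H}_2$ inside a quadric threefold, descends to $\mathcal{H}_2/\Gamma_{1,5}\simeq\mathcal{A}_{1,5}$, is pulled back along a rational map from $\mathbb{P}H^0(\Omega^1_{\mathbb{P}^4}(d+2))$, and the degeneracy divisor is identified via Sato's description of the degenerate locus and the Barth--Hulek--Moore classification of degenerations of Horrocks--Mumford surfaces. The value of $M$ and the role of global generation of $E(a)$ for $d\geq 6$ are also as in the paper.

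There is, however, a genuine gap at exactly the step you flag as the main obstacle: the construction of $\pi_{\phi}$ and $s_{\phi}$. Your recipe ``attach to a generic pair $(\phi,\omega)$ the $(1,5)$-abelian surface it determines'' is not well defined --- a generic $\omega\in H^0(\Omega^1_{\mathbb{P}^4}(d+2))$ determines no abelian surface --- and the ``unique $\Gamma_{1,5}$-invariant form in each fibre'' you invoke for $s_{\phi}$ is neither constructed nor needed. The missing idea is concrete and linear: composing $\phi$ with the four sections $s_0,\dots,s_3$ spanning $H^0(E)$ produces a $4$-dimensional subspace $\langle \phi\circ s_0,\dots,\phi\circ s_3\rangle\subset H^0(\Omega^1_{\mathbb{P}^4}(d+2))$; the map $\pi_{\phi}$ is the \emph{linear projection} onto this $\mathbb{P}^3\simeq\mathbb{P}H^0(E)$ followed by the classical birational map $\mathbb{P}H^0(E)\dashrightarrow\mathcal{A}_{1,5}\simeq\mathcal{H}_2/\Gamma_{1,5}$ sending a section of $E$ to its zero surface, and $s_{\phi}$ is the tautological inclusion $(\lambda_0:\dots:\lambda_3)\mapsto[\sum_i\lambda_i(\phi\circ s_i)]$. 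This at once yields the rationality of the fibres (they are linear spaces), the fact that the image of $s_{\phi}$ consists of degree $d$ distributions singular along the abelian surfaces $S_\lambda=\{\sum_i\lambda_i s_i=0\}$ (because $S_\lambda\subset\Sing(\omega_\lambda)$), the $\Gamma_{1,5}$-equivariance via the invariance of $\phi$ (Proposition \ref{Aut-HM}), and the identification of $\Delta_{\phi}$ as the linear preimage of Sato's degree-$10$ discriminant cone in $\mathbb{P}H^0(E)$ --- which is what gives $\Delta_\phi$ its degree and cone structure. Without this linear-projection construction, none of the quantitative assertions in items (1)--(2) (rational fibres, existence of the section, degree $10$, cone over a rational sextic) can actually be verified, so the argument as written does not close.
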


\subsection*{Acknowledgments}
We would   like to thank Marcos Jardim, Alan Muniz and Israel Vainsencher for useful discussions. 
MC is partially supported by the Universit\`a degli Studi di Bari and by the
 PRIN 2022MWPMAB- "Interactions between Geometric Structures and Function Theories" and he is a member of INdAM-GNSAGA;
he was  partially supported by CNPq grant numbers 202374/2018-1, 400821/2016-8 and  Fapemig grant numbers  APQ-02674-21, APQ-00798-18,  APQ-00056-20. FQ  was partially supported by CAPES  and  CIMAT; he is grateful to  CIMAT   for its hospitality.

\section{Preliminaries}

\subsection{Basic definitions and Theorems}
 Let $X$ be a complex manifold. By a  sheaf we mean a coherent analytic sheaf of $\mathcal{O}_X$-modules. In this work, we will not distinguish   holomorphic vector bundles and locally free  sheaves.

Recall that the cohomology ring $H^{\ast}(\mathbb{P}^n,\mathbb{Z})=\mathbb{Z}[\mathbf{h}]/\mathbf{h}^{n+1}$, where $\mathbf{h}\in H^2(\mathbb{P}^n,\mathbb{Z})$ denotes the Poincar\'e dual of a hyperplane.
   Let $E$ be a sheaf on $\mathbb{P}^n$, we denote $E(k)=E\otimes\mathcal{O}_{\mathbb{P}^n}(1)^{\otimes k}$, $k\in\mathbb{Z}$, and $h^i(\mathbb{P}^n,E):=\dim_{\mathbb{C}}H^i(\mathbb{P}^n,E)$. Moreover, we will consider the \textit{Euler sequence} given by 
\begin{equation*}
0\rightarrow\mathcal{O}_{\mathbb{P}^n}\rightarrow\mathcal{O}_{\mathbb{P}^n}(1)^{\oplus (n+1)}\rightarrow T\mathbb{P}^n\rightarrow 0.  
\end{equation*}

The total Chern class of a vector bundle $r$ of rank $E$ over $\mathbb{P}^n$ will be denoted by $$c(E)=1+c_1(E)\cdot\textbf{h}+\cdots+c_r(E)\cdot\textbf{h}^{r},$$ where $c_i(E)\in H^{2i}(\mathbb{P}^n;\mathbb{Z})\simeq\mathbb{Z}$ will be considered mostly as integers. 
The total Chern class of vector bundles $E(k)$, $c(\wedge^pE)$ and $c(\mbox{Sym}^2(E))$ can  be calculated in terms of the Chern classes of $E$ by the following formulas:
\begin{itemize}
    \item let $E$ is a rank $2$ vector bundle. Then for all $k\in\mathbb{Z}$ we have:
\begin{equation*}
c\big(E(k)\big)=1+\big(c_1(E)+2k\big)\cdot\textbf{h}+\big(c_2(E)+k\cdot c_1(E)+k^2\big)\cdot\textbf{h}^2;   
\end{equation*} 

\item Let $E$ be a vector bundle of rank $3$. Then for all $k\in\mathbb{Z}$ we have:   
\begin{equation*}
c\big(E(k)\big)=1+\big(c_1(E)+3k\big)\cdot\textbf{h}+\big(c_2(E)+2k\cdot c_1(E)+3k^2\big)\cdot\textbf{h}^2 + \big(c_3(E)+k\cdot c_2(E)+k^2c_1(E)+ k^3\big)\cdot\textbf{h}^3;    
\end{equation*} 

\item if $E$ is a vector bundle of rank $4$ with Chern classes $c_1, c_2, c_3, c_4$, then:

\begin{equation*}
    \begin{split}
        c(\wedge^2E) & = 1 + 3c_1\textbf{h} + (3c_1^2+2c_2)\textbf{h}^{2} + (c_1^3+4c_1c_2)\textbf{h}^{3} + (2c_1^2c_2+c_2^2+c_1c_3-4c_4)\textbf{h}^{4},
    \end{split}
\end{equation*}
\begin{equation*}
    \begin{split}
        c(\wedge^3E) & = 1 + 3c_1\textbf{h} + (3c_1^2+c_2)\textbf{h}^{2} + (c_1^3+2c_1c_2-c_3)\textbf{h}^{3} + (c_1^2c_2-c_1c_3+c_4)\textbf{h}^{4};
    \end{split}
\end{equation*}

\item for a vector bundle $E$ of rank $2$ with Chern classes $c_1$ and $c_2$, the Chern class of symmetric power $\mbox{Sym}^2(E)$ is given by: 
\begin{equation*}
    \begin{split}
        c( \mbox{Sym}^2(E)) & = 1+ 3c_1\textbf{h} + (2c_1^2+4c_2)\textbf{h}^{2} + 4c_1c_2\textbf{h}^{3}.
    \end{split}
\end{equation*}

\end{itemize}
Let $E$ and  $F$ be vector bundles of ranks $4$ and $2$,  respectively, then
\begin{equation}\label{chernclass42}
\begin{split}
c_1(E\otimes F) & = 2e_1+4f_1, \\
c_2(E\otimes F) & = e_1^2+7e_1f_1+6f_1^2+2e_2+4f_2, \\
c_3(E\otimes F) & = 3e_1^2f_1 + 9e_1f1^2  + 4f_1^3  + 2e_1e_2 + 6e_2f_1 + 6e_1f_2 + 12f_1f_2 + 2e_3,\\
c_4(E\otimes F) & = 3e_1^2f_1^2  + 5e_1f_1^3  + f_1^4  + 5e_1e_2f_1 + 7e_2f_1^2  + 3e_1^2f_2 + 15e_1f_1f_2 + 12f_1^2f_2+\\
 & + e_2^2  + 2e_1e_3+ 5e_3f_1 + 2e_2f_2 + 6f_2^2 + 2e_4,
\end{split}
\end{equation}
where  $e_1,e_2,e_3,e_4$ and $f_1,f_2$, denotes the   Chern roots of $E$ and $F$, respectively. 

By Grothendieck-Riemann-Roch Theorem (\cite[page   157]{Maeda}) one gets:

\begin{corolario}\label{chernclassidealsheaf}
Let $Y$ be a non-singular curve and $i:Y\rightarrow\mathbb{P}^4$ a closed embedding. Then 
\[
\begin{array}{rcl}
c_3(\mathscr{I}_Y) & = & -2\deg(Y)\\
c_4(\mathscr{I}_Y) & = & 6-15\deg(Y)-6 p_a(Y)
\end{array}
\]
where $p_a(Y)$ denotes the arithmetic genus of $Y$.
\end{corolario}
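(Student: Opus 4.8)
The plan is to compute the Chern classes of $\mathscr{I}_Y$ indirectly, passing through the pushforward sheaf and applying Grothendieck--Riemann--Roch to the embedding $i$. I would start from the structure sequence
\[
0 \to \mathscr{I}_Y \to \mathcal{O}_{\mathbb{P}^4} \to i_*\mathcal{O}_Y \to 0,
\]
which by multiplicativity of the total Chern class gives $c(\mathscr{I}_Y) = c(i_*\mathcal{O}_Y)^{-1}$. Thus it suffices to determine $c_3$ and $c_4$ of $i_*\mathcal{O}_Y$ and then invert the resulting total Chern class in $H^*(\mathbb{P}^4,\mathbb{Z})=\mathbb{Z}[\mathbf{h}]/\mathbf{h}^5$.

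To obtain these, I would compute the Chern character of the pushforward via GRR, using $\ch(\mathcal{O}_Y)=1$:
\[
\ch(i_*\mathcal{O}_Y) = i_*\big(\td(TY)\big)\cdot \td(T\mathbb{P}^4)^{-1}.
\]
Since $Y$ is a curve, $\td(TY)=1+\tfrac{1}{2}c_1(TY)$, and $i_*$ raises codimension by three: one has $i_*(1)=\deg(Y)\,\mathbf{h}^3$, while $i_*\big(\tfrac12 c_1(TY)\big) = (1-p_a(Y))\,\mathbf{h}^4$, the latter because $\int_Y \tfrac12 c_1(TY)=\tfrac12(2-2p_a(Y))=1-p_a(Y)$ for a smooth curve. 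From the Euler sequence one reads off $\td(T\mathbb{P}^4)=\big(1+\tfrac{\mathbf{h}}{2}+\cdots\big)^5 = 1+\tfrac52\mathbf{h}+\cdots$, so $\td(T\mathbb{P}^4)^{-1}=1-\tfrac52\mathbf{h}+\cdots$; only the degree $0$ and degree $1$ terms of this factor are relevant, since $i_*(\td(TY))$ already lies in degrees $3$ and $4$ and $\mathbf{h}^5=0$.

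Multiplying out gives
\[
\ch(i_*\mathcal{O}_Y) = \deg(Y)\,\mathbf{h}^3 + \Big(1-p_a(Y)-\tfrac52\deg(Y)\Big)\mathbf{h}^4,
\]
so that $\ch_{3} = \deg(Y)$ and $\ch_{4} = 1-p_a(Y)-\tfrac52\deg(Y)$, while $\operatorname{rk}=\ch_{1}=\ch_{2}=0$. Using the Newton relations between Chern character and Chern classes, the vanishing of $\ch_{1},\ch_{2}$ forces $c_1=c_2=0$, whence $\ch_{3} = \tfrac12 c_3$ and $\ch_{4} = -\tfrac16 c_4$. This yields $c_3(i_*\mathcal{O}_Y)=2\deg(Y)$ and $c_4(i_*\mathcal{O}_Y)=15\deg(Y)+6p_a(Y)-6$.

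Finally I would invert the total Chern class. Writing $c(i_*\mathcal{O}_Y)=1+2\deg(Y)\mathbf{h}^3+(15\deg(Y)+6p_a(Y)-6)\mathbf{h}^4$ and noting that all products of the degree $\geq 3$ terms vanish in $H^*(\mathbb{P}^4)$, one gets immediately $c(\mathscr{I}_Y)=1-2\deg(Y)\mathbf{h}^3-(15\deg(Y)+6p_a(Y)-6)\mathbf{h}^4$, which gives the two claimed formulas. The argument is essentially routine bookkeeping; the only points requiring care are the correct truncation of $\td(T\mathbb{P}^4)^{-1}$ (its higher terms never contribute because the pushed-forward class already lives in codimension $\geq 3$) and the sign and normalization conventions used when passing between the Chern character, the Chern classes, and the inverse total Chern class.
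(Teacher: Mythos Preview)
Your proof is correct and follows precisely the approach the paper indicates: the paper does not actually give a proof but simply attributes the formulas to the Grothendieck--Riemann--Roch theorem, citing \cite[page~157]{Maeda}. Your write-up is exactly the GRR computation one would carry out to justify that citation, and all the bookkeeping (the pushforward of $\td(TY)$, the truncation of $\td(T\mathbb{P}^4)^{-1}$, the Newton relations, and the inversion of the total Chern class) is handled correctly.
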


\begin{theorem}[Bogomolov's inequality]
If $E$ is a stable rank $2$ holomorphic vector bundle on $\mathbb{P}^n$, then  $c_1(E)^2-4c_2(E)<0$.
\end{theorem}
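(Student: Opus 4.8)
The plan is to reduce the statement to the case of the projective plane and there run a short Riemann--Roch computation on the bundle of trace-free endomorphisms. Throughout write $\delta := c_1(E)^2-4c_2(E)$, so that the assertion is $\delta<0$. Since on $\mathbb{P}^1$ every rank $2$ bundle splits and hence is never $\mu$-stable, the statement is vacuous there and we may assume $n\geq 2$.

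First I would settle the case $n=2$, which contains all the analytic content. Let $E$ be $\mu$-stable of rank $2$ on $\mathbb{P}^2$ and put $G:=\operatorname{End}_0(E)$, the rank $3$ bundle of trace-free endomorphisms, which is an orthogonal summand of $\operatorname{End}(E)=\mathcal{O}_{\mathbb{P}^2}\oplus G$. Writing the Chern roots of $E$ as $a,b$, those of $G$ are $a-b,0,b-a$, so $c_1(G)=0$ and $c_2(G)=-(a-b)^2=4c_2(E)-c_1(E)^2=-\delta$. Riemann--Roch on $\mathbb{P}^2$ for a rank $3$ bundle with vanishing first Chern class then gives
\begin{equation*}
\chi(G)=3\chi(\mathcal{O}_{\mathbb{P}^2})-c_2(G)=3+\delta.
\end{equation*}
Next I claim $H^0(G)=H^2(G)=0$. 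The first vanishing holds because a $\mu$-stable bundle is simple, so $\operatorname{End}(E)=\mathbb{C}\cdot\mathrm{id}$ and its trace-free part has no section. For the second, Serre duality together with the self-duality $G\cong G^{\vee}$ furnished by the trace form gives $H^2(G)^{\vee}\cong H^0(G(-3))\subseteq H^0(\operatorname{End}(E)(-3))=\operatorname{Hom}(E(3),E)$; since $E(3)$ and $E$ are stable with $\mu(E(3))=\mu(E)+3>\mu(E)$, every such homomorphism vanishes, so $H^2(G)=0$. Hence $\chi(G)=-h^1(G)\leq 0$, and comparison with $\chi(G)=3+\delta$ yields $\delta\leq-3<0$, proving the inequality on $\mathbb{P}^2$.

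To pass from $\mathbb{P}^2$ to $\mathbb{P}^n$ with $n\geq 3$ I would restrict to a general linear plane. Under restriction to a linear subspace the integers $c_1(E),c_2(E)$ are unchanged, so $\delta$ is an invariant of $E|_{\mathbb{P}^2}$; it therefore suffices to know that the restriction of a $\mu$-stable bundle to a general $\mathbb{P}^2\subset\mathbb{P}^n$ is again $\mu$-stable. This I would obtain by iterating the restriction theorem one hyperplane at a time along the chain $\mathbb{P}^n\supset\mathbb{P}^{n-1}\supset\cdots\supset\mathbb{P}^2$, each intermediate target having dimension $\geq 2$, where restriction of a $\mu$-stable bundle to a general hyperplane preserves $\mu$-stability. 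Feeding $E|_{\mathbb{P}^2}$ into the previous paragraph then gives $\delta<0$.

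The hard part, and the step I expect to be the genuine obstacle, is exactly this restriction statement for \emph{linear} hyperplanes with preservation of \emph{stability} rather than mere semistability. The Grauert--Mülich theorem only yields a balanced generic splitting type, hence semistability of the restriction, which through the computation above would give merely the non-strict bound $\delta\leq 0$ (the $H^0$-vanishing fails without stability). On the other hand the Mehta--Ramanathan theorem preserves stability only for hypersurfaces of large degree; restricting instead to such a high-degree complete-intersection surface $S$ multiplies $\delta$ by $\deg S>0$ and so still detects its sign, but it replaces $\mathbb{P}^2$ by a surface of general type on which the easy $H^2$-vanishing (which used $K_{\mathbb{P}^2}=\mathcal{O}(-3)<0$) breaks down. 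The care is thus entirely in invoking a restriction theorem strong enough to keep both stability and a \emph{linear} (Fano) plane; for rank two such a theorem is available, and I would cite it rather than reprove it. Granting it, the two middle paragraphs combine to give the strict inequality $c_1(E)^2-4c_2(E)<0$ on every $\mathbb{P}^n$ with $n\geq 2$.
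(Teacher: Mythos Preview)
The paper does not prove this theorem; it is stated without proof in the Preliminaries as a classical background result (the paper's standing reference for such facts is Okonek--Schneider--Spindler~[Oko]) and is then invoked as a black box in the classification of low-degree distributions. There is therefore no proof in the paper to compare yours against.

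That said, your argument is essentially the standard one and is correct. The $\mathbb{P}^2$ computation via $G=\operatorname{End}_0(E)$ is clean and in fact yields the sharper bound $c_1^2-4c_2\leq -3$; the vanishing of $H^0(G)$ and $H^2(G)$ is argued correctly. For $n\geq 3$ you rightly isolate the only genuine difficulty: preserving \emph{stability}, not merely semistability, under restriction to a linear hyperplane. Your assessment of what Grauert--M\"ulich and Mehta--Ramanathan each provide, and why neither alone suffices here, is accurate. The rank-two hyperplane restriction theorem you need (a $\mu$-stable rank~$2$ bundle on $\mathbb{P}^n$, $n\geq 3$, restricts $\mu$-stably to a general hyperplane) is indeed available in the literature---it can be extracted from the treatment in~[Oko]---so deferring to a citation is the correct move. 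With that input your reduction to $\mathbb{P}^2$ goes through and the proof is complete.
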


Let $\mathcal{F}$ be a rank two torsion free sheaf on $\mathbb{P}^n$. We say that $\mathcal{F}$ is \emph{normalized} if it has first Chern class $c_1(\mathcal{F})=0,-1$.  For any rank two torsion free  sheaf $\mathcal{F}$ in $\mathbb{P}^n$ with first Chern class $c_1(\mathcal{F})=c$, we define its \textit{normalization} $\mathcal{F}_{\eta}$ as follows:
\begin{equation}\label{normalization}
\mathcal{F}_{\eta} : = \left\lbrace
\begin{array}{lccc}
\mathcal{F}\big(-\frac{c}{2}\big),  & \textrm{if c is even}\\
\\ \mathcal{F}\big(-\frac{c+1}{2}\big), & \textrm{if  c is odd}.
\end{array}
\right.
\end{equation}
It follows that $\mathcal{F}_{\eta}$ has first Chern class $-1$ if $c$ is odd and $0$ if $c$ is even. The following is a very useful  result: 

\begin{propo}\cite[ pages 84 and 87]{Oko} 
Let $\mathcal{F}$ be a rank two reflexive sheaf on $\mathbb{P}^n$. Then
\begin{enumerate}
    \item $\mathcal{F}$ is stable if and only if $H^0(\mathbb{P}^n, \mathcal{F}_{\eta})=0$.
    \item $\mathcal{F}$ is simple, i.e., $H^0(\mathbb{P}^n,\mathcal{H}om(\mathcal{F}, \mathcal{F}))=\mathbb{C}$.
\end{enumerate}
\end{propo}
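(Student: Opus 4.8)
The plan is to prove (1) by the slope (Mumford--Takemoto) criterion for stability with respect to $\mathcal{O}_{\mathbb{P}^n}(1)$, writing $\mu(\mathcal{E})=c_1(\mathcal{E})/\rk(\mathcal{E})$. First I would use that stability is invariant under twisting by line bundles, so $\mathcal{F}$ is stable if and only if $\mathcal{F}_\eta$ is; hence I may assume $\mathcal{F}=\mathcal{F}_\eta$ is normalized, whence $\mu(\mathcal{F})=c_1(\mathcal{F})/2\in\{0,-\tfrac12\}\le 0$. Since $\rk(\mathcal{F})=2$, a destabilizing subsheaf has rank one, and I would replace any rank-one subsheaf by its saturation, which only increases $c_1$ and hence the slope. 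Using the two standard facts for reflexive sheaves on a smooth variety — that a saturated rank-one subsheaf of a reflexive sheaf is itself reflexive, and that a reflexive sheaf of rank one is a line bundle — the saturation is some $\mathcal{O}_{\mathbb{P}^n}(a)$, so destabilizing subsheaves correspond to nonzero morphisms $\mathcal{O}_{\mathbb{P}^n}(a)\hookrightarrow\mathcal{F}$, equivalently nonzero sections of $\mathcal{F}(-a)$.

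Then $\mathcal{F}$ fails to be stable precisely when some such $\mathcal{O}_{\mathbb{P}^n}(a)$ satisfies $a\ge\mu(\mathcal{F})$, i.e. $a\ge 0$. The key observation is that multiplying by a nonzero section of $\mathcal{O}_{\mathbb{P}^n}(a)$ sends a nonzero section of $\mathcal{F}(-a)$ to a nonzero section of $\mathcal{F}$, so $H^0(\mathcal{F}(-a))\ne 0$ for some $a\ge 0$ if and only if $H^0(\mathcal{F})\ne 0$; conversely, if $H^0(\mathcal{F})=0$ then every line subsheaf has $a\le -1<\mu(\mathcal{F})$ and so does not destabilize. Since $\mathcal{F}=\mathcal{F}_\eta$, this yields the equivalence: $\mathcal{F}$ is stable if and only if $H^0(\mathbb{P}^n,\mathcal{F}_\eta)=0$. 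The one point requiring care is the boundary case $c_1=0$, where $\mu=0$ and an inclusion $\mathcal{O}_{\mathbb{P}^n}\hookrightarrow\mathcal{F}$ only semidestabilizes; but strict stability already rules this out, so the equivalence is unaffected.

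For (2) — the assertion that stability forces simplicity — I would run the classical argument. Take $0\ne\phi\in H^0(\mathbb{P}^n,\mathcal{H}om(\mathcal{F},\mathcal{F}))$, choose a point $x$ in the locus where $\mathcal{F}$ is locally free (its complement has codimension $\ge 2$ because $\mathcal{F}$ is reflexive), and let $\lambda\in\mathbb{C}$ be an eigenvalue of the fiber endomorphism $\phi(x)$. Set $\psi=\phi-\lambda\,\id_{\mathcal{F}}$. Then $\det\psi$ is a regular function on the free locus, hence by Hartogs extension a global section of $\mathcal{O}_{\mathbb{P}^n}$, i.e. a constant; since it vanishes at $x$ we get $\det\psi\equiv 0$, so $\psi$ is generically degenerate. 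If $\psi\ne 0$, then $\Ima\psi$ has rank one and is simultaneously a rank-one subsheaf and a rank-one quotient of $\mathcal{F}$, forcing both $\mu(\Ima\psi)<\mu(\mathcal{F})$ and $\mu(\Ima\psi)>\mu(\mathcal{F})$, a contradiction. Hence $\psi=0$, so $\phi=\lambda\,\id_{\mathcal{F}}$ and $H^0(\mathbb{P}^n,\mathcal{H}om(\mathcal{F},\mathcal{F}))=\mathbb{C}$.

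The argument is essentially formal once the slope inequalities are set up, so I do not expect a serious obstacle. The only delicate inputs are the two reflexivity facts — that saturated rank-one subsheaves of a reflexive sheaf on a smooth variety are line bundles, and that the non-locally-free locus of a reflexive sheaf has codimension at least two (so that Hartogs applies to $\det\psi$) — both of which are standard and may be cited directly from the theory of reflexive sheaves.
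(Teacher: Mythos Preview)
The paper does not supply its own proof of this proposition: it is quoted as a standard fact from Okonek--Schneider--Spindler \cite{Oko}, with only a page reference given. Your argument is correct and is essentially the standard one found in that reference --- reduce to the normalized case, note that destabilizing rank-one saturated subsheaves are line bundles $\mathcal{O}_{\mathbb{P}^n}(a)$ with $a\ge 0$, and for simplicity use the eigenvalue/determinant trick to force any endomorphism to be a scalar --- so there is nothing to contrast.

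One small remark on presentation: the statement as printed in the paper is slightly elliptical, since part~(2) is meant to read ``a stable rank-two reflexive sheaf is simple'' rather than asserting simplicity for arbitrary reflexive sheaves; you correctly read it that way and proved the intended claim.
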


We say that a holomorphic vector bundle on $\mathbb{P}^n$ of rank $r$ \textit{splits} when it can be represented as a direct sum of $r$ holomorphic line bundles.

\subsection{Degeneracy loci of maps of vector bundles}
Let $E$ and $F$ be holomorphic bundles on a complex manifold $X$ such that $\mathrm{rk}(E) = m$ and $\mathrm{rk}(F) = n$, and let $\varphi: E \to F$ be a homomorphism of vector bundles. Define
$$
D_k(\varphi) = \{ p \in X \mid \mathrm{rk} \, \varphi(p) \leq k \}.
$$

We will use the following result proved in \cite[page 16]{Ottaviani}.

\begin{theorem}[Bertini type]\label{Bertini1}
Let $E,\, F$ be vector bundles on a complex manifold $X$ such that $\rk (E)=m,\,\rk( F)=n$. Assume that the bundle $E^*\otimes F$ is generated by the global sections. Then, if $\varphi:E\to F$ is a generic morphism, then one of the following holds:

\begin{itemize}
    \item $D_k(\varphi)$ is empty.
    \item $D_k(\varphi)$ has the expected codimension $(m-k)(n-k)$ and $\Sing(D_k(\varphi))\subset D_{k-1}(\varphi)$.
\end{itemize}
In particular if $\dim X <(m - k + 1)(n - k + 1)$ then $D_k(\varphi)$ is empty or smooth when $\varphi$ is generic.
\end{theorem}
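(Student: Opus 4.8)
The plan is to reduce to the local model of generic determinantal varieties and to spread it out over $X$ by means of a universal degeneracy locus, closing with generic smoothness. The only matrix input I will use is classical: inside $\Hom(\mathbb{C}^m,\mathbb{C}^n)$ the locus $\Sigma_k=\{A:\rk A\leq k\}$ is an irreducible cone of codimension $(m-k)(n-k)$ whose singular locus is exactly $\Sigma_{k-1}$. First I would take as parameter space $W=H^0(X,E^*\otimes F)$, the space of all morphisms $\varphi:E\to F$; for the projective $X$ of interest this is finite dimensional, and in general one replaces it by a finite-dimensional subspace still generating $E^*\otimes F$. Global generation is exactly the statement that for every $x\in X$ the evaluation map $\mathrm{ev}_x\colon W\to(E^*\otimes F)_x=\Hom(E_x,F_x)$ is surjective. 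I then introduce the universal degeneracy locus
$$
\mathcal{I}_k=\{(\varphi,x)\in W\times X:\ \rk(\varphi_x)\leq k\},
$$
with its two projections $p\colon\mathcal{I}_k\to W$ and $q\colon\mathcal{I}_k\to X$, noting that $p^{-1}(\varphi)$ is canonically identified with $D_k(\varphi)$.

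The heart of the argument lies in the second projection $q$. Since each $\mathrm{ev}_x$ is a surjective linear map, the fibre $q^{-1}(x)=\mathrm{ev}_x^{-1}(\Sigma_k)$ is the preimage of the determinantal cone under a submersion; it is therefore irreducible of codimension $(m-k)(n-k)$ in $W$, and its singular locus is $\mathrm{ev}_x^{-1}(\Sigma_{k-1})$, which is the fibre of $\mathcal{I}_{k-1}$ over $x$. Thus $q$ makes $\mathcal{I}_k$ a fibre bundle over the connected manifold $X$ with irreducible fibre, so $\mathcal{I}_k$ is irreducible of dimension $\dim X+\dim W-(m-k)(n-k)$, its smooth locus is $\mathcal{I}_k\setminus\mathcal{I}_{k-1}$, and $q$ is a submersion there.

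Next I would analyse the first projection $p$. If $p$ is not dominant, then a generic $\varphi$ lies outside $\overline{p(\mathcal{I}_k)}$ and $D_k(\varphi)=\emptyset$, the first alternative. Otherwise $p$ is dominant, and the fibre-dimension theorem gives that a generic fibre $D_k(\varphi)$ has dimension $\dim\mathcal{I}_k-\dim W=\dim X-(m-k)(n-k)$, i.e. the expected codimension. For the smoothness statement I would apply generic smoothness, in the form of Sard's theorem in the holomorphic category, to the holomorphic map $p$ restricted to the complex manifold $\mathcal{I}_k\setminus\mathcal{I}_{k-1}$: its regular values are dense, and over such a $\varphi$ the fibre $D_k(\varphi)\setminus D_{k-1}(\varphi)$ is smooth of the expected dimension, whence $\Sing(D_k(\varphi))\subseteq D_{k-1}(\varphi)$. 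The ``in particular'' clause then follows by applying this dichotomy one step lower: $D_{k-1}(\varphi)$ has expected codimension $(m-k+1)(n-k+1)$, so when $\dim X<(m-k+1)(n-k+1)$ the locus $\mathcal{I}_{k-1}$ cannot dominate $W$, forcing $D_{k-1}(\varphi)=\emptyset$ generically, and then $\Sing(D_k(\varphi))\subseteq D_{k-1}(\varphi)=\emptyset$, so $D_k(\varphi)$ is smooth or empty.

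I expect the main obstacle to be the transition from the uniform fibrewise picture under $q$ to the behaviour of the generic fibre of $p$: this is precisely where the hypothesis of global generation (as opposed to the mere existence of a single section) is essential, since it is what guarantees that $W$ is large enough for $p$ to reflect the generic geometry of $\Sigma_k$, and one must invoke generic smoothness in the analytic rather than the purely algebraic category. For $X=\mathbb{P}^n$, the case relevant to this paper, both difficulties dissolve because everything is projective and algebraic, and one may quote the algebraic generic smoothness theorem directly.
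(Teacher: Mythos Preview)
The paper does not actually prove this theorem: it is stated as a known result, with the line ``We will use the next result proved in \cite[page 16]{Ottaviani}'' immediately preceding the statement, and no proof is given. So there is nothing in the paper to compare your argument against at the level of detail.

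That said, your approach is the standard one and is essentially correct. The incidence-variety argument you outline --- setting up the universal degeneracy locus $\mathcal{I}_k\subset W\times X$, using global generation to make the projection $q$ to $X$ a fibre bundle with fibre the determinantal cone $\Sigma_k$, then applying fibre-dimension and generic smoothness to the projection $p$ to $W$ --- is exactly how this Bertini-type statement is proved in Ottaviani's notes and in the standard references. Your treatment of the ``in particular'' clause via the non-dominance of $\mathcal{I}_{k-1}\to W$ is also the right mechanism. The one point worth tightening is your passage from ``generic fibre of $p$ has the expected dimension'' to ``$D_k(\varphi)$ has expected codimension \emph{everywhere}'': you need upper semicontinuity of fibre dimension plus irreducibility of $\mathcal{I}_k$ to ensure that the locus in $W$ where fibres jump in dimension is proper and closed, so that for generic $\varphi$ every component of $D_k(\varphi)$ has the right codimension (not just the minimum over components). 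You have the ingredients for this; just make the step explicit.
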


\begin{theorem}\label{conexidade}
Let $X$ be an irreducible complex projective variety of dimension $n$ and let $\varphi:E\to F$ be a homomorphism of vector bundles on X of ranks $e$ and $f$. Assume that the vector bundle $E^*\otimes F=\Hom(E,F)$ is ample. Then:

\begin{itemize}
    \item $D_k(\varphi)$ is non-empty if $n\geq (e-k)(f-k)$.
    \item $D_k(\varphi)$ is connected when $n>(e-k)(f-k)$.
\end{itemize}
\end{theorem}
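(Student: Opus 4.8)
The plan is to reduce both assertions to the classical fact that the zero locus of a global section of an \emph{ample} vector bundle is non-empty, resp.\ connected, in a suitable range of dimensions, via the Grassmann-bundle construction of Fulton and Lazarsfeld.

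First I would replace $D_k(\varphi)$ by a zero locus. Let $\pi\colon G:=\mathrm{Grass}_{e-k}(E)\to X$ be the Grassmann bundle of rank-$(e-k)$ subbundles of $E$, with tautological inclusion $\mathcal{S}\hookrightarrow\pi^{*}E$, $\rk\mathcal{S}=e-k$; thus $\dim G=n+k(e-k)$. Composing with $\pi^{*}\varphi$ yields a section $\sigma$ of $\mathcal{G}:=\mathcal{H}om(\mathcal{S},\pi^{*}F)=\mathcal{S}^{\vee}\otimes\pi^{*}F$, a bundle of rank $(e-k)f$, whose zero scheme is $Z:=Z(\sigma)=\{(x,V):V\subseteq\ker\varphi_{x}\}$. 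Since $x\in\pi(Z)$ exactly when $\ker\varphi_{x}$ contains an $(e-k)$-dimensional subspace, i.e.\ when $\rk\varphi_{x}\le k$, the proper morphism $\pi$ maps $Z$ \emph{onto} $D_{k}(\varphi)$ (its fibre over $x$ being the Grassmannian $\mathrm{Grass}_{e-k}(\ker\varphi_{x})$). As the continuous image of a non-empty, resp.\ connected, space has the same property, it suffices to show that $Z$ is non-empty when $\dim G\ge\rk\mathcal{G}$ and connected when $\dim G>\rk\mathcal{G}$, together with the observation that $\dim G\ge\rk\mathcal{G}\iff n+k(e-k)\ge(e-k)f\iff n\ge(e-k)(f-k)$, and similarly with strict inequalities.

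The heart of the matter is then to prove that $\mathcal{G}$ is ample on $G$. Dualizing $0\to\mathcal{S}\to\pi^{*}E\to\pi^{*}E/\mathcal{S}\to0$ exhibits $\mathcal{S}^{\vee}$ as a quotient of $\pi^{*}E^{\vee}$, so $\mathcal{G}=\mathcal{S}^{\vee}\otimes\pi^{*}F$ is a quotient of $\pi^{*}(E^{\vee}\otimes F)=\pi^{*}\mathcal{H}om(E,F)$. By hypothesis $\mathcal{H}om(E,F)$ is ample on $X$; moreover $\mathcal{S}^{\vee}$ is, under the canonical identification of $G$ with the bundle of rank-$(e-k)$ quotients of $E^{\vee}$, the tautological quotient bundle, hence ample along the fibres of $\pi$. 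Combining these two facts gives that $\mathcal{G}$ is ample on $G$ --- this is precisely the ampleness lemma underlying the Fulton--Lazarsfeld connectedness theorem. Granting it, one invokes the theorem that the zero locus of a section of an ample vector bundle of rank $r$ on an irreducible projective variety of dimension $N$ is non-empty if $N\ge r$ and connected if $N>r$; applied to $\sigma\in H^{0}(G,\mathcal{G})$ with $r=(e-k)f$ and $N=n+k(e-k)$, this yields exactly the two asserted ranges. (For non-emptiness alone one can argue directly instead: when $D_{k}(\varphi)$ has the expected dimension its fundamental class is the Thom--Porteous Schur polynomial in $c(F-E)$, which is numerically nonzero by the positivity of Schur polynomials of ample bundles.)

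I expect the ampleness of $\mathcal{G}$ to be the only delicate point. It is not formal: $\pi^{*}$ of an ample bundle is never ample on $G$, being trivial along the fibres of $\pi$, so the bare observation that $\mathcal{G}$ is a quotient of $\pi^{*}\mathcal{H}om(E,F)$ must genuinely be combined with the relative positivity of the tautological quotient $\mathcal{S}^{\vee}$. Everything else is a routine bookkeeping of ranks and dimensions on $G$ together with the standard non-vanishing and connectedness results for zero loci of ample vector bundles; in fact the statement is essentially the Fulton--Lazarsfeld connectedness theorem for degeneracy loci, so one may alternatively just cite it.
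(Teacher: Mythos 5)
Your argument is correct, and it is essentially the same as the paper's: the paper's ``proof'' consists only of the citation to Fulton--Lazarsfeld, and what you write out (the Grassmann-bundle reduction to a zero locus of a section of $\mathcal{S}^{\vee}\otimes\pi^{*}F$, the ampleness lemma for that bundle, and the non-emptiness/connectedness theorems for zero loci of sections of ample bundles, with the rank--dimension count $n+k(e-k)\geq (e-k)f \iff n\geq (e-k)(f-k)$) is precisely the argument of that reference. You correctly flag the ampleness of $\mathcal{S}^{\vee}\otimes\pi^{*}F$ as the one non-formal step, which is indeed the key lemma in Fulton--Lazarsfeld's paper.
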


\begin{proof}
Vide \cite[page 273]{FL}.
\end{proof}

\subsection{The Eagon-Northcott resolution}

\noindent Here we recall a particular case of the Eagon-Northcott resolution. Let \( G \) and \( F \) be complex vector bundles of ranks 4 and 2, respectively, over a complex manifold \( X \) of complex dimension \( n \geq 4 \). Let \( \phi: G \to F \) be a generically surjective morphism. We assume that the degeneracy locus \( Z \) is a scheme of pure codimension 3.
 The Eagon-Northcott resolution looks the following way.
\begin{equation}\label{ENR}
    0\to\det(G)\otimes \mbox{Sym}^2(F^*)\otimes\det(F^*)\to\bigwedge^3G\otimes F^* \otimes \det(F^*)\to\bigwedge^2G\otimes\det(F^*)\to\mathscr{I}_Z\to 0.
\end{equation}
For more details, see \cite[A2.6]{Ei}. 

\subsection{Holomorphic conformal structures}
 Let $X$  be a complex manifold. A non-degenerate tensor $g\in H^0(X, \mbox{Sym}^2\Omega_X^1\otimes \mathscr{L} )$, where $\mathscr{L}$ is a line bundle, is  called by   holomorphic conformal structure on $X$.

\section{Holomorphic distributions}

Let $X$ be a complex manifold of complex dimension $n=k+s$. A saturated codimension $k$ singular holomorphic distribution on $X$ is given by a short exact sequence of analytic coherent sheaves
\begin{equation}\label{distribution}
    \mathscr{F} : 0 \rightarrow T\mathscr{F}\xrightarrow{\varphi} TX\xrightarrow{\pi} N \mathscr{F}\rightarrow 0,
\end{equation}

The sheaf $N \mathscr{F}$ is the \textit{normal sheaf} of $\mathscr{F}$, is a non-trivial torsion-free sheaf of generic rank $k$ on $X$. The coherent sheaf $T \mathscr{F}$ is the \textit{tangent sheaf} of $\mathscr{F}$ of rank $s=n-k\geq 1$. Note that $T \mathscr{F}$ must be reflexive \cite[Proposition 1.1]{Har2}.

The \textit{singular scheme} of $\mathscr{F}$, denoted by $\Sing(\mathscr{F})$, is defined as follows. Taking the maximal exterior power of the dual morphism $\varphi^\vee:\Omega_X^1\to T\mathscr{F}^*$ we obtain a morphism $$\Omega_X^s\to\det(T\mathscr{F})^*$$ the image of such morphism is an ideal sheaf $\mathscr{I}_Z$ of a closed subscheme $Z_{\mathscr{F}}\subset X$ of codimension $\geq 2$, which is called the singular scheme of $\mathscr{F}$, twisted by $\det(T\mathscr{F})^*$.
As a subset of $X$  $\Sing(\mathscr{F})$  corresponds to the singular set of its normal sheaf. 

The space of the holomorphic distributions of codimension $k$ in $X$ may be identified by a class of sections 
\[
[\omega]\in\mathbb{P}H^0(X,\Omega_X^k\otimes \det ( N\mathscr{F})).
\]

The distribution is \textit{involutive} if $[T \mathscr{F}, T \mathscr{F}]\subset T\mathscr{F}$ and by the Frobenius theorem, the distribution is \textit{integrable}, i.e., tangent to a foliation.

We may consider a dual perspective: Consider the dual map $N\mathscr{F}^*\subset\Omega_{X}^1$, that we identify with the annihilator of the sheaf $T\mathscr{F}$. We get an exact sequence
\begin{equation}
    \mathscr{F}^{o} : 0 \rightarrow N\mathscr{F}^*\to\Omega_{X}^1\to\mathcal{Q}_{\mathscr{F}}\rightarrow 0.
\end{equation}
The sheaf $N\mathscr{F}^*$ has generic rank $k$ is reflexive and $\mathcal{Q}_{\mathscr{F}}$ is torsion free of rank $q$. With this formulation, the distribution is involutive if the following holds:

Consider the ideal $\mathcal{J}\subset\Omega_X^*$ generated by $N\mathscr{F}^*$, then the distribution $\mathscr{F}^{o}$ is involutive if $d\mathcal{J}\subset\mathcal{J}$, and then again the Frobenius Theorem implies that it is integrable, i.e., $N\mathscr{F}^*$ is the \textit{conormal sheaf} of a foliation.

The integrability condition can also be defined as follows. Let $U\subset X$ a open set and $\omega\in\Omega_X^k(U)$. For any $p\in U\setminus\Sing(\omega)$ there exist a neighborhood $V$ of $p$, $V\subset U$, and $1$-forms $\eta_1, \dots ,\eta_k\in\Omega_X^1(V)$ such that: 
$$\omega\mid_V=\eta_1\wedge\cdots\wedge\eta_k.$$ 
We say that $\omega$ satisfies the integrability condition if and only if,  $$d\eta_j\wedge\eta_1\wedge\cdots\wedge\eta_r=0, $$   for all $j=1, \dots ,k.$

For holomorphic  distributions on the projective space $\mathbb{P}^n$, we define the most important discrete invariant, the \textit{degree} of $\mathscr{F}$. 
The degree of the distribution $\mathscr{F}$, denoted by $d:=\deg(\mathscr{F})\geq 0$, is defined by
$$\deg(\mathscr{F})=c_1(N\mathscr{F})-\codim(\mathscr{F})-1.$$
Moreover, we also have that $c_1(T\mathscr{F})=\dim(\mathscr{F}) -\deg(\mathscr{F})$.

\section{Dimension two distributions on $\mathbb{P}^4$}

In this section we will  consider  dimension $2$ holomorphic distributions on $X=\mathbb{P}^4$ with either locally free tangent or  conormal sheaves  and  whose   singular schemes  have pure  dimension one. 

Let us recall the following result which classifies non-singular holomorphic distributions on complex projective spaces due to Glover, Homer, Stong \cite{GHS}.

\begin{theorem}[Glover--Homer--Stong]\label{non-sing}
A holomorphic distribution $\mathscr{F}$ on   $\mathbb{P}^n$ is non-singular if and only if $n$ is odd,  $\mathscr{F}$ has codimension one and degree zero.   
\end{theorem}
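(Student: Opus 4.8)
The plan is to prove Theorem~\ref{non-sing} — the Glover--Homer--Stong classification — by reducing the non-singularity of a distribution $\mathscr{F}$ to a splitting/cohomological statement about the normal (or tangent) bundle, and then invoking vanishing theorems together with the structure of topological (K-theory / Chern class) obstructions on $\mathbb{P}^n$. First I would observe that if $\mathscr{F}$ is non-singular then in the defining sequence $0\to T\mathscr{F}\to T\mathbb{P}^n\to N\mathscr{F}\to 0$ all three terms are honest vector bundles (the singular scheme being empty forces $T\mathscr{F}$ locally free and $N\mathscr{F}$ locally free of the correct rank everywhere). Conversely the stated examples — codimension one degree zero on odd $\mathbb{P}^n$, i.e. the contact/null-correlation type distribution $N(1)\subset T\mathbb{P}^{2r+1}$ fitting in $0\to N(1)\to T\mathbb{P}^{2r+1}\to\mathcal{O}(2)\to 0$ — are manifestly non-singular, so the content is the \emph{only if} direction.

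The core of the argument is: a short exact sequence of bundles $0\to S\to T\mathbb{P}^n\to Q\to 0$ with $\rk Q=k\ge 1$, $\rk S=n-k\ge 1$ forces, by a theorem on subbundles of the tangent bundle of projective space (essentially the computation of $c(T\mathbb{P}^n)=(1+\mathbf h)^{n+1}$ together with splitting principle / integrality constraints à la the arguments used by Glover--Homer--Stong and by Horrocks), that $Q$ has rank one and $c_1(Q)=2\mathbf h$. I would carry this out by writing the total Chern class relation $(1+\mathbf h)^{n+1}=c(S)c(Q)$ in $\mathbb{Z}[\mathbf h]/(\mathbf h^{n+1})$ and analyzing it modulo the ideal generated by $\mathbf h$ to higher and higher order; since $c(T\mathbb{P}^n)$ has \emph{all} coefficients positive and equals $(1+\mathbf h)^{n+1}$, the factorization into $c(S)c(Q)$ with both factors of bounded degree severely restricts the splitting types, and a parity/divisibility analysis (using that $\binom{n+1}{j}$ for $j=1$ is $n+1$) pins down $k=1$, $c_1(Q)=2$, and then $n$ odd because $c_1(S)=n+1-2=n-1$ must be compatible with $S$ being a rank $n-1$ bundle that is actually a subbundle of $T\mathbb{P}^n$ — the Euler-sequence restriction plus the vanishing $H^0(\mathbb{P}^n,\Omega^1(2)\otimes\text{stuff})$ type argument forces the remaining constraint. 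Equivalently, once $Q\cong\mathcal{O}(2)$ (a line bundle quotient of $T\mathbb{P}^n$ of degree $2$), dualizing gives a nowhere-vanishing section of $\Omega^1_{\mathbb{P}^n}(2)$, i.e. a global $1$-form with values in $\mathcal{O}(2)$ with no zeros; such a twisted $1$-form is (after the Euler sequence identification) a constant skew-symmetric bilinear form of maximal rank on $\mathbb{C}^{n+1}$, which exists without degeneracy precisely when $n+1$ is even, i.e. $n$ odd. That is the cleanest route: non-singular codimension-one distribution $\Longleftrightarrow$ nowhere-zero $\omega\in H^0(\Omega^1_{\mathbb{P}^n}(2))\Longleftrightarrow$ nondegenerate alternating form on $\mathbb{C}^{n+1}\Longleftrightarrow n$ odd, and then automatically $\deg\mathscr{F}=c_1(N\mathscr{F})-1-1=2-2=0$.

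For the reduction from arbitrary codimension $k$ to $k=1$, I would argue that a rank-$k$ quotient bundle $N\mathscr{F}$ of $T\mathbb{P}^n$ with $k\ge 2$ cannot be nowhere-degenerate: the top exterior power $\det N\mathscr{F}$ is a line bundle quotient $\wedge^k T\mathbb{P}^n\twoheadrightarrow\det N\mathscr{F}$ but more to the point, taking $\wedge^{n-k+1}$ of the inclusion $N\mathscr{F}^\ast\hookrightarrow\Omega^1_{\mathbb{P}^n}$ and the image of the induced map $\Omega^{n-k}_{\mathbb{P}^n}\to\det(N\mathscr{F}^\ast)^\ast$ must vanish somewhere by a Chern-class count (the relevant obstruction class is a nonzero power of $\mathbf h$ in $H^{\ast}(\mathbb{P}^n)$), contradicting smoothness — this is exactly the mechanism that makes $\Sing(\mathscr{F})$ nonempty of the expected codimension in positive-rank situations, so it must be engineered to vanish, forcing $k=1$. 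I would phrase this via Theorem~\ref{conexidade}/\ref{Bertini1}-style degeneracy locus dimension counts: for the non-singular case the expected codimension of the degeneracy locus must exceed $n$, which for a generic-rank-$k$ quotient of $T\mathbb{P}^n$ only happens when $k=1$.

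The main obstacle I expect is making the Chern-class/parity analysis rigorous and \emph{exhaustive} rather than heuristic — i.e. genuinely ruling out exotic splitting types of $S$ and $Q$ that happen to multiply to $(1+\mathbf h)^{n+1}$ — and correctly handling the low-rank edge cases (small $n$, $k$ near $n$). The slick differential-form argument sidesteps much of this for the codimension-one case, so the real work is the clean reduction $k\Rightarrow 1$; I would lean on the cited Bertini-type and Fulton--Lazarsfeld connectedness results (Theorems~\ref{Bertini1} and~\ref{conexidade}) to control degeneracy loci, and on the fact that $T\mathbb{P}^n$ itself is ample (so any proper quotient bundle has positive Chern classes and cannot be ``degenerate along a divisor of negative degree'') to close the argument. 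If a subbundle-of-$T\mathbb{P}^n$ classification from \cite{GHS} is available to cite directly, the proof collapses to: cite the classification, note that it yields $N(1)\subset T\mathbb{P}^{2r+1}$ as the only possibility, and compute $\deg=0$; otherwise the self-contained version is the twisted-$1$-form / alternating-form argument above.
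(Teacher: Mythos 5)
First, a point of calibration: the paper does not prove Theorem~\ref{non-sing} at all --- it is quoted verbatim from \cite{GHS} as a known classification, so your fallback option (``cite the subbundle classification of $T\mathbb{P}^n$ and compute the degree'') is exactly what the authors do. Judged as a self-contained proof, your sketch is correct and essentially complete in codimension one: a nonsingular codimension-one distribution of degree $d$ is a nowhere-vanishing section of $\Omega^1_{\mathbb{P}^n}(d+2)$, which forces $c_n(\Omega^1_{\mathbb{P}^n}(d+2))=\frac{1}{d+2}\bigl((d+1)^{n+1}-(-1)^{n+1}\bigr)=0$, hence $d=0$ and $n$ odd; and for $d=0$ the sections are the skew-symmetric matrices $A$, with zero locus $\mathbb{P}(\Ker A)$, so nowhere-vanishing is equivalent to $A$ nondegenerate, i.e.\ $n+1$ even. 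That half is fine.

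The genuine gap is the reduction from codimension $k\geq 2$ to $k=1$, which is precisely the content of the Glover--Homer--Stong theorem and is not established by either mechanism you propose. (a) The degeneracy-locus results you invoke do not apply: Theorem~\ref{Bertini1} is a statement about \emph{generic} morphisms of a globally generated $\mathcal{H}om$ bundle, whereas here $\varphi\colon T\mathscr{F}\to T\mathbb{P}^n$ is one fixed morphism; and Theorem~\ref{conexidade} requires $\mathcal{H}om(T\mathscr{F},T\mathbb{P}^n)$ to be \emph{ample}, which you cannot assert without already knowing $T\mathscr{F}$ --- for instance $T\mathscr{F}=\mathcal{O}_{\mathbb{P}^n}(1)^{\oplus s}$ gives $T\mathbb{P}^n(-1)^{\oplus s}$, which restricts to a line as $(\mathcal{O}(1)\oplus\mathcal{O}^{\oplus n-1})^{\oplus s}$ and is only nef. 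So ``expected codimension $\leq n$ forces nonempty singular locus'' is not a theorem in this setting. (b) The bare factorization $(1+\mathbf{h})^{n+1}=c(S)\,c(Q)$ in $\mathbb{Z}[\mathbf{h}]/(\mathbf{h}^{n+1})$ admits, for general $n$ and $2\leq k\leq n-2$, many integral solutions with all coefficients positive, and positivity/parity of binomial coefficients does not exclude them; ruling out the corresponding splittings is where \cite{GHS} needs genuinely topological input (K-theoretic integrality conditions satisfied by honest bundle classes, not just formal Chern polynomials). A minor further point you use implicitly: one must first pass from the holomorphic exact sequence $0\to T\mathscr{F}\to T\mathbb{P}^n\to N\mathscr{F}\to 0$ of bundles to a topological Whitney splitting (standard, via a Hermitian metric) before the splitting classification applies. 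As written, then, your argument proves the codimension-one case but leaves the higher-codimension exclusion --- the heart of the theorem --- resting on a heuristic; to close it you must either import the full result of \cite{GHS} (as the paper does) or supply the topological argument they give.
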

In \cite{GHS}, Glover, Homer, and Song do not directly mention the degree of the distribution. For zero-degree distributions, we refer to \cite[Prop. 2.1, p. 85]{Jou}.

Since any  distribution $\mathscr{F}$ of dimension $2$ and degree zero on $\mathbb{P}^n$, with $n>3$, is given in homogeneous coordinate  by a constant bi-vector $\sigma$ of rank 2, then clearly  $\sigma$ is decomposable and integrable, thus  $\mathscr{F}$ is tangent   to a linear rational projection
$\mathbb{P}^n \dashrightarrow \mathbb{P}^{n-2}$. If $n=3$, then
$\mathscr{F}$ either is a   pencil   of planes or it  is a contact structure on $\mathbb{P}^3$.
By this observation,  we have the following. 
\begin{propo}\label{Distri-zero}  
 Let $\mathscr{F}$ be a  distribution of dimension $2$ and degree zero on $\mathbb{P}^n$.  Then,  $\mathscr{F}$ either  is integrable and tangent to a linear rational projection $\mathbb{P}^n \dashrightarrow \mathbb{P}^{n-2}$ or is    contact structure on $\mathbb{P}^3$.    
\end{propo}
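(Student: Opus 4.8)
The plan is to reduce the general-$n$ statement to the two special cases $n \geq 4$ and $n = 3$ by analyzing the sheaf-theoretic data of a degree-zero dimension-two distribution. First I would use the formula $c_1(T\mathscr{F}) = \dim(\mathscr{F}) - \deg(\mathscr{F}) = 2 - 0 = 2$ together with the fact that $T\mathscr{F}$ is a rank-two reflexive subsheaf of $T\mathbb{P}^n$. The key observation is that a degree-zero distribution is defined by a section $\omega \in H^0(\mathbb{P}^n, \Omega^{n-2}_{\mathbb{P}^n} \otimes \det(N\mathscr{F}))$ with $c_1(N\mathscr{F}) = n - 1$, equivalently by a twisted $(n-2)$-form whose restriction to any line behaves as prescribed; dualizing, the distribution corresponds to a global section of $\wedge^2 T\mathbb{P}^n(-2)$ (a \emph{constant bivector} after using the Euler sequence to trivialize). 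Concretely, I would pass to the bivector description: a dimension-two degree-zero distribution on $\mathbb{P}^n$ is given in homogeneous coordinates by an element $\sigma \in \wedge^2 V$, where $V = \mathbb{C}^{n+1}$, of generic rank exactly $2$ (rank here meaning the rank of the associated skew form / the dimension of the image of $\sigma: V^* \to V$), modulo the requirement that $\sigma$ actually descends to $\mathbb{P}^n$.

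Next I would invoke the linear-algebra dichotomy for the rank of a $2$-vector $\sigma$. If $\sigma$ has rank $2$, then $\sigma$ is \emph{decomposable}, $\sigma = v_1 \wedge v_2$; the two constant vector fields (descending to linear fields on $\mathbb{P}^n$ via Euler) span $T\mathscr{F}$ on the open locus, they obviously commute, and hence the distribution is integrable by Frobenius. The foliation has as leaves the fibers of the linear projection $\mathbb{P}^n \dashrightarrow \mathbb{P}(V/\langle v_1, v_2\rangle) \cong \mathbb{P}^{n-2}$, which is exactly the asserted normal form. If instead $\sigma$ has rank $> 2$ — which can only happen when $\dim V \geq 4$, i.e. $n \geq 3$ — then $\sigma$ as a decomposable $(n-2)$-form is forced only when $n-2 = n-2$ trivializes the wedge condition; here one must be careful, because for $n = 3$ the annihilator perspective ($N\mathscr{F}^* \subset \Omega^1_{\mathbb{P}^3}$ a rank-one conormal, i.e. a codimension-one distribution) is the natural one, and a rank-$4$ skew form $\sigma \in \wedge^2 \mathbb{C}^4$ (necessarily of rank $4$, i.e. nondegenerate) gives precisely a holomorphic contact structure on $\mathbb{P}^3$ — the standard one, by the classification of such structures (the null-correlation / contact bundle $N(1)$ mentioned in the introduction). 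When $n > 3$, I would argue that a rank-$(\geq 4)$ bivector \emph{cannot} occur: the condition that the associated $(n-2)$-form $\iota_\sigma(\mathrm{vol})$ defines a dimension-two distribution forces decomposability of that $(n-2)$-form, and for $n > 3$ an $(n-2)$-form of degree $\geq 2$ in a $\geq 5$-dimensional space is decomposable iff the dual $2$-vector has rank exactly $2$ (this is where the hypothesis $n > 3$ is essential). Hence for $n > 3$ only the rank-$2$ case survives, giving integrability and the linear projection.

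Finally I would assemble the cases: for $n > 3$, decomposability is automatic, yielding the linear rational projection $\mathbb{P}^n \dashrightarrow \mathbb{P}^{n-2}$; for $n = 3$, either $\sigma$ is decomposable — and then the distribution is a pencil of planes through a fixed line, still a "linear rational projection" $\mathbb{P}^3 \dashrightarrow \mathbb{P}^1$ — or $\sigma$ is nondegenerate and the distribution is a contact structure on $\mathbb{P}^3$. I expect the main obstacle to be the step ruling out rank-$\geq 4$ bivectors when $n > 3$: one must show cleanly that the Plücker/decomposability relations on the $(n-2)$-form $\omega$ defining the distribution are equivalent to the rank-$2$ condition on $\sigma$, and that the only nondecomposable possibility genuinely requires $n = 3$. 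This is the heart of the argument; the integrability (commuting linear vector fields) and the identification of leaves with fibers of a linear projection are then routine, as is citing the classification of contact structures on $\mathbb{P}^3$ to pin down the exceptional case.
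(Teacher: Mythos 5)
Your proposal is correct and follows essentially the same route as the paper, whose ``proof'' is just the observation preceding the statement: a degree-zero dimension-two distribution is given in homogeneous coordinates by a constant bivector $\sigma\in\wedge^2\mathbb{C}^{n+1}$, which for $n>3$ must have rank $2$ (since at a generic point the induced class in $\wedge^2(V/\langle v\rangle)$ would otherwise have rank $\geq 4$), hence is decomposable and defines a linear projection, while for $n=3$ the nondegenerate case gives the contact structure. Your write-up fills in the pointwise-rank argument the paper leaves implicit; only the phrasing of the decomposability step in your middle paragraph is garbled, but the underlying idea is the right one.
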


We recall that a contact structure $\mathscr{F}$ on $\mathbb{P}^{3}$ is given by 
$$
0\to T\mathscr{F}\simeq N(1)\to T\mathbb{P}^{3}\to \mathcal{O}_{\mathbb{P}^{3}}(2)\to 0. 
$$
where  $N$ is   a null-correlation bundle  on $\mathbb{P}^{3}$, see \cite[Example 5.2]{MOJ}.

\subsection{Holomorphic  distributions as subsheaves of the  Tangent Bundle}
If $\mathscr{F}$ is a distribution of dimension two,  
then    $\deg(\mathscr{F})=2-c_1(T {\mathscr{F}})\geq 0$ is the degree of $\mathscr{F}$ and $N {\mathscr{F}}$ is a torsion free  sheaf of rank $2$. Thus, $\mathscr{F}$ is given by the short sequence: 
\begin{equation}\label{tandist}
    \mathscr{F}: 0\rightarrow T {\mathscr{F}}\xrightarrow{\varphi}T\mathbb{P}^4\xrightarrow{\pi}N {\mathscr{F}}\rightarrow 0. 
\end{equation}
 We will suppose that $ T {\mathscr{F}}$  is locally free.

\subsection{Numerical invariants} We will determine relations between the Chern classes of the tangent sheaf and the numerical invariants of the singular scheme.

\begin{theorem}\label{TangChern}
Let $\mathscr{F}$ be a dimension two distribution on $\mathbb{P}^4$ given as in the exact sequence (\ref{tandist}), with locally free tangent sheaf $T {\mathscr{F}}$ and 
whose the singular scheme $Z$ has dimension one.  Then:
\begin{enumerate}
\item The Chern classes of the tangent sheaf are 

\begin{equation*}
\begin{split}
c_1(T {\mathscr{F}}) & = 2-d, \\
c_2(T {\mathscr{F}}) & = \frac{d^3-d^2+2d+2-\deg(Z)}{2d+1},
\end{split}
\end{equation*} 

\item The arithmetic genus $p_a(Z)$ of the singular scheme is given by 
$$p_a(Z)=\frac{-2d^6-6d^5-10d^4-10d^3+8d^2+12d+4+\deg(Z)(12d^3-2d^2-11d-6)+2(\deg(Z))^2}{8d^2+8d+2}.$$

\end{enumerate}
\end{theorem}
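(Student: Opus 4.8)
The plan is to extract all the numerical information from the two defining exact sequences, namely the tangent sequence \eqref{tandist} and the Eagon--Northcott resolution \eqref{ENR} of the singular scheme, and then match Chern classes. First I would set $c_1 := c_1(T\mathscr{F})$ and $c_2 := c_2(T\mathscr{F})$ and read off $c_1 = 2 - d$ immediately from the degree formula $c_1(T\mathscr{F}) = \dim(\mathscr{F}) - \deg(\mathscr{F})$ recalled at the end of Section~3; this gives the first listed identity with no work. For the remaining relations, the key is that from \eqref{tandist} the total Chern class satisfies $c(T\mathscr{F}) \cdot c(N\mathscr{F}) = c(T\mathbb{P}^4) = (1+\mathbf{h})^5$, so $c(N\mathscr{F}) = (1+\mathbf{h})^5 / c(T\mathscr{F})$ can be expanded degree by degree in terms of $c_1, c_2$ (note $T\mathscr{F}$ has rank $2$, so $c_i(T\mathscr{F}) = 0$ for $i \ge 3$). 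In particular $c_1(N\mathscr{F}) = 5 - c_1 = 3 + d$, consistent with $\deg(\mathscr{F}) = c_1(N\mathscr{F}) - \codim(\mathscr{F}) - 1 = (3+d) - 2 - 1 = d$.

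Next I would relate $N\mathscr{F}$ to the ideal sheaf $\mathscr{I}_Z$. By the definition of the singular scheme in Section~3, the image of $\Omega_X^2 \to \det(T\mathscr{F})^*$ is $\mathscr{I}_Z \otimes \det(T\mathscr{F})^*$; dualizing and chasing \eqref{tandist} one gets an exact sequence expressing $N\mathscr{F}$ (or rather its double dual twisted appropriately) in terms of $\mathscr{I}_Z$ and $\det(N\mathscr{F})$. Concretely, since $Z$ has pure dimension one (codimension three) and $N\mathscr{F}$ has rank two, the standard structure is $N\mathscr{F} \hookrightarrow (N\mathscr{F})^{**} = $ a rank-two reflexive sheaf, with $N\mathscr{F}^{**}$ fitting into $0 \to \mathcal{O}_X \to N\mathscr{F}^{**} \to \mathscr{I}_Z(c_1(N\mathscr{F})) \to 0$ after an appropriate twist; this is exactly the kind of sequence that makes $c_3$ and $c_4$ of $N\mathscr{F}$ (equivalently of $T\mathscr{F}$ via $c(T\mathscr{F})c(N\mathscr{F}) = (1+\mathbf{h})^5$) computable from $\deg(Z)$ and $p_a(Z)$ through Corollary~\ref{chernclassidealsheaf}. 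Matching the $\mathbf{h}^3$-coefficient of $c(N\mathscr{F})$ against $c_3(\mathscr{I}_Z(\cdot)) = -2\deg(Z)$-type data yields a linear equation in $c_2$ and $\deg(Z)$; solving it produces $c_2(T\mathscr{F}) = \tfrac{d^3 - d^2 + 2d + 2 - \deg(Z)}{2d+1}$. Then matching the $\mathbf{h}^4$-coefficient, which brings in $c_4(\mathscr{I}_Z) = 6 - 15\deg(Z) - 6p_a(Z)$, gives a second linear equation; substituting the value of $c_2$ already found and solving for $p_a(Z)$ delivers the stated formula.

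The routine-but-delicate part is bookkeeping the twists: $N\mathscr{F}$ itself is only torsion-free, not reflexive, so one must be careful whether one works with $N\mathscr{F}$, $N\mathscr{F}^{**}$, or $Q_\mathscr{F}$ from the conormal sequence $\mathscr{F}^o$, and track the line bundle $\det(N\mathscr{F}) = \mathcal{O}_{\mathbb{P}^4}(3+d)$ through each identification; a sign or twist error propagates into both the $c_2$ and the $p_a$ formulas. The main genuine obstacle, I expect, is justifying that the relevant sheaf sequence relating $N\mathscr{F}$ to $\mathscr{I}_Z$ holds in the generality assumed (only $T\mathscr{F}$ locally free, $Z$ of pure dimension one but not assumed smooth or even reduced) — in particular that no embedded or lower-dimensional components of $\Sing(\mathscr{F})$ spoil the Chern class comparison — and here one uses that $\codim \Sing(\mathscr{F}) \ge 2$ automatically, plus the purity hypothesis on $Z$, to ensure the singular scheme is genuinely a curve and Corollary~\ref{chernclassidealsheaf} (stated for nonsingular curves but extendable by the arithmetic-genus bookkeeping of Grothendieck--Riemann--Roch) applies to the effective cycle. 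Once the two coefficient-matching equations are in place, the rest is elementary algebra, clearing the denominator $2d+1$ (respectively $8d^2+8d+2 = 2(2d+1)^2$, which is why that denominator appears in the genus formula), and simplifying.
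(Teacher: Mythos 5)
There is a genuine gap in the step that is supposed to connect the Chern classes of $T\mathscr{F}$ to the numerical invariants of $Z$. Your computation of $c_1(T\mathscr{F})=2-d$ is fine, and the relation $c(T\mathscr{F})\cdot c(N\mathscr{F})=(1+\mathbf{h})^5$ is correct, but it carries no information about $Z$ whatsoever: it merely expresses $c_3(N\mathscr{F})$ and $c_4(N\mathscr{F})$ as polynomials in $c_1(T\mathscr{F}),c_2(T\mathscr{F})$. The extra equations must come from an independent identification of some sheaf built from $\varphi$ with a twist of $\mathscr{I}_Z$, and the one you propose, namely $0\to\mathcal{O}_X\to N\mathscr{F}^{**}\to\mathscr{I}_Z(c_1(N\mathscr{F}))\to 0$, does not exist here. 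That Hartshorne--Serre type extension describes a rank-two reflexive sheaf together with a section whose zero scheme has codimension \emph{two}; in the present situation $Z$ has codimension three, there is no distinguished section of $N\mathscr{F}^{**}$, and the quotient of $N\mathscr{F}^{**}$ by a trivial subsheaf could not be of the form $\mathscr{I}_Z(\ell)$ for a codimension-three $Z$ (such a quotient is $\mathscr{I}_W(\ell)$ for the codimension-two zero scheme $W$ of the section). So the two coefficient-matching equations you need are never actually produced.

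What the paper does instead --- and what you name in your opening sentence but then do not use --- is to take the Eagon--Northcott resolution \eqref{ENR} associated to the dual map $\varphi^\vee:\Omega_{\mathbb{P}^4}^1\to T\mathscr{F}^\vee$ (a generically surjective map from a rank-$4$ to a rank-$2$ bundle whose degeneracy locus is $Z$, of the expected codimension $3$ by the purity hypothesis), twisted so that the last term becomes $\mathscr{I}_Z(d-2)$:
\begin{equation*}
0\to S_2(T\mathscr{F})(-5)\to\Omega_{\mathbb{P}^4}^{3}\otimes T\mathscr{F}\to\Omega_{\mathbb{P}^4}^{2}\to\mathscr{I}_{Z}(d-2)\to 0.
\end{equation*}
Multiplicativity of total Chern classes along this four-term complex expresses $c(\mathscr{I}_Z(d-2))$ as a polynomial in $c_1(T\mathscr{F}),c_2(T\mathscr{F})$, and matching the $\mathbf{h}^3$- and $\mathbf{h}^4$-coefficients against $c_3(\mathscr{I}_Z)=-2\deg(Z)$ and $c_4(\mathscr{I}_Z)=6-15\deg(Z)-6p_a(Z)$ from Corollary~\ref{chernclassidealsheaf} yields exactly the two equations that give $c_2(T\mathscr{F})$ and $p_a(Z)$. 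If you replace your normal-sheaf sequence by this resolution, the rest of your outline (the algebra, the denominator $2(2d+1)^2$, the caveats about purity) goes through as you describe.
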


\begin{proof}
Let $\varphi:T {\mathscr{F}}\rightarrow T\mathbb{P}^4$ be the map that induces the distribution (\ref{tandist}).
From now on, we denote $\mbox{Sym}^2(T {\mathscr{F}})=S_2(T {\mathscr{F}})$.  
Considering the Eagon-Northcott resolution (\ref{ENR}), associated with the dual map $\varphi^\vee:\Omega_{\mathbb{P}^4}^1\rightarrow T {\mathscr{F}}^\vee$, and twisting by $\mathcal{O}_{\mathbb{P}^4}(d-2)$ we get:
    
\begin{equation}\label{ENtangdist}
    0\rightarrow S_2(T {\mathscr{F}})(-5)\rightarrow\Omega_{\mathbb{P}^4}^{3}\otimes T {\mathscr{F}}\rightarrow\Omega_{\mathbb{P}^4}^{2}\rightarrow\mathscr{I}_{Z}(d-2)\rightarrow 0.
\end{equation}

Let $c_1:=c_1(T {\mathscr{F}})$ and $c_2:=c_2(T {\mathscr{F}})$. Using Chern's class formulas, we have:
\begin{equation}\label{chernIdeal1}
\begin{split}
c(\mathscr{I}_{Z}(d-2)) & = c(S_2(T {\mathscr{F}})(-5))\cdot c(\Omega_{\mathbb{P}^4}^{2})\cdot c(\Omega_{\mathbb{P}^4}^{3}\otimes T {\mathscr{F}})^{-1} \\
 & = 1-c_1\textbf{h}+(2c_1^3  - 10c_1^2  - 4c_1c_2 + 20c_1 + 10c_2 - 20)\textbf{h}^3 +\\
 & +(- 5c_1^4  + 40c_1^3  + 16c_1^2c_2 - 125c_1^2  -85c_1c_2 - 6c_2^2  + 200c_1 + 135c_2 - 180)\textbf{h}^4,
\end{split}
\end{equation}
 On the other hand, 
\begin{equation*}
    \begin{split}
        c_1(\mathscr{I}_Z(d-2)) & = c_1(\mathscr{I}_Z)+(d-2)=d-2, \\
        c_2(\mathscr{I}_Z(d-2)) & = c_2(\mathscr{I}_Z)=0, \\
        c_3(\mathscr{I}_Z(d-2)) & = c_3(\mathscr{I}_Z)=-2\deg(Z), \\
        c_4(\mathscr{I}_Z(d-2)) & = c_4(\mathscr{I}_Z)-2(d-2)c_3(\mathscr{I}_Z)=c_4(\mathscr{I}_Z)+4(d-2)\deg(Z).
    \end{split}
\end{equation*}
Hence, since $c_1(\mathscr{I}_Z(d-2))=d-2$ then $c_1=2-d.$ Analogously, since $$c_3(\mathscr{I}_Z(d-2))=-2\deg(Z),$$ substituting the first Chern class in the equation (\ref{chernIdeal1}), implies:
$$c_2=\frac{d^3-d^2+2d+2-\deg(Z)}{2d+1}.$$
Finally, by Riemann-Roch Theorem:
\begin{equation*}
\begin{split}
\chi(\mathscr{I}_Z) & = 1+\frac{5}{4}c_3(\mathscr{I}_Z)-\frac{1}{6}c_4(\mathscr{I}_Z),
\end{split}
\end{equation*}
and since $c_3(\mathscr{I}_Z)=-2\deg(Z)$,  so  $c_4(\mathscr{I}_Z)=6-15\deg(Z)-6p_a(Z)$,
hence:
$$c_4(\mathscr{I}_Z(d-2))=6-15\deg(Z)-6p_a(Z)+4(d-2)\deg(Z).$$
Writing $c_4(\mathscr{I}_Z(d-2))$ in terms of the first and second Chern class, we have:
$$c_1^4  - 75c_1^3  + 12c_1^2c_2 + 1270c_1^2  - 225c_1c_2 + 6c_2^2  - 7450c_1 + 845c_2 + 14085$$ $$=6-15\deg(Z)-6p_a(Z)+4(d-2)\deg(Z),$$
and replacing $c_1$ and $c_2$ we have:
$$p_a(Z)=\frac{-2d^6-6d^5-10d^4-10d^3+8d^2+12d+4+\deg(Z)(12d^3-2d^2-11d-6)+2(\deg(Z))^2}{8d^2+8d+2}.$$
\end{proof}

\begin{obs} \label{sub-zero} \rm 
 Let $\mathscr{F}$ be a dimension two distribution on $\mathbb{P}^4$, such that $T {\mathscr{F}}$ is locally free.  If $\sigma \in h^0(T {\mathscr{F}}(-1))$ is  a non-trivial holomorphic  section, then  we have an induced foliation by curves of degree zero
 
\centerline{
\xymatrix{
\mathcal{O}_{\mathbb{P}^4}(1)  \ar[rr]^{\sigma} \ar@/^2pc/[rrrr]^{v} &&  T {\mathscr{F}}  \ar[rr]^{ } && T\mathbb{P}^4 .  \\
}
} 
\noindent This implies that $T {\mathscr{F}}$ is split. In fact,  since the singular scheme of  $v: \mathcal{O}_{\mathbb{P}^4}(1)\to  T\mathbb{P}^4$ is a single point the section  $\sigma : \mathcal{O}_{\mathbb{P}^4}(1)\to  T {\mathscr{F}} $ cannot has zeros, otherwise the codimension two scheme $\{\sigma=0\}$ would be contained in $ \{v=0\}$, a contradiction. Therefore, $\sigma : \mathcal{O}_{\mathbb{P}^4}(1) \to T \mathscr{F}$ is an injective map of vector bundles, and we have an exact sequence of vector bundles:
$$
0\to \mathcal{O}_{\mathbb{P}^4}(1)\to T \mathscr{F} \to  \mathcal{O}_{\mathbb{P}^4}(a) \to 0
$$
where $a=c_1(T \mathscr{F})-1$. But,
$$
H^1\left(\mathcal{O}_{\mathbb{P}^4}, \operatorname{Hom}\left(\frac{T\mathscr{F}}{\mathcal{O}_{\mathbb{P}^4}(1)}, \mathcal{O}_{\mathbb{P}^4}(1)\right)\right) \simeq H^1\left(\mathcal{O}_{\mathbb{P}^4}, \operatorname{Hom}(\mathcal{O}_{\mathbb{P}^4}(a), \mathcal{O}_{\mathbb{P}^4}(1))\right) \simeq H^1\left(\mathcal{O}_{\mathbb{P}^4}, \mathcal{O}_{\mathbb{P}^4}(1-a)\right) = 0
$$
and  by \cite[Proposition 2]{At}, the above sequence splits, so  
 $T\mathscr{F}\simeq \mathcal{O}_{\mathbb{P}^4}(1)\oplus \mathcal{O}_{\mathbb{P}^4}(c_1(T \mathscr{F})-1) $. 
\end{obs}

\subsection{Holomorphic  distributions as subsheaves of the  cotangent bundle}
We recall that $\deg(\mathscr{F})=-3-c_1(N {\mathscr{F}}^*)\geq 0$. The distribution  $\mathscr{F}$ is given by the short sequence: 
\begin{equation}\label{Conormdist}
    \mathscr{F}: 0\rightarrow N {\mathscr{F}}^*\xrightarrow{\varphi}\Omega_{\mathbb{P}^4}^{1}\rightarrow \mathcal{Q}_{\mathscr{F}}\rightarrow 0.
\end{equation}
We will suppose that $N {\mathscr{F}}^*$ is  locally free.

By using the same arguments as we have used in the proof of Theorem \ref{TangChern} we get the following:

\begin{theorem}\label{conormChern}
Let $\mathscr{F}$ be a dimension two distribution on $\mathbb{P}^4$ given as in the exact sequence (\ref{Conormdist}), with locally free conormal sheaf $N {\mathscr{F}}^*$  
and 
whose   singular scheme $Z$ has dimension one.  Then:
\begin{enumerate}
\item The Chern classes of the tangent sheaf are 
\begin{equation*}
    \begin{split}
    c_1(N {\mathscr{F}}^*) & = -3-d, \\
    c_2(N {\mathscr{F}}^*) & = \frac{d^3+4d^2+7d+2-\deg(Z)}{2d+1},
    \end{split}
\end{equation*}
\item The arithmetic genus $p_a(Z)$ of the singular scheme is given by
$$p_a(Z)=\frac{-2d^6-6d^5-10d^4-10d^3+8d^2+12d+4+\deg(Z)(12d^3-2d^2-11d-1)+2(\deg(Z))^2}{8d^2+8d+2}.$$
\end{enumerate}
\end{theorem}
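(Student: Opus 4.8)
The plan is to mirror the proof of Theorem~\ref{TangChern} almost verbatim, swapping the tangent-side Eagon--Northcott resolution for the conormal-side one. First I would set up the distribution as in \eqref{Conormdist}, so that $N{\mathscr{F}}^*$ is a locally free sheaf of rank $2$ with $c_1(N{\mathscr{F}}^*)=-3-d$ (this last equality is just the definition $\deg(\mathscr{F})=-3-c_1(N{\mathscr{F}}^*)$ together with the fact that a codimension-two distribution on $\mathbb{P}^4$ has $\codim(\mathscr{F})=2$, hence $c_1(N\mathscr{F})=d+3$ and $c_1(N\mathscr{F}^*)=-(d+3)$). I would then dualize the inclusion $\varphi:N{\mathscr{F}}^*\hookrightarrow\Omega^1_{\mathbb{P}^4}$ to get $\varphi^\vee:T\mathbb{P}^4\to (N{\mathscr{F}}^*)^\vee=N{\mathscr{F}}$, which is generically surjective with degeneracy locus $Z$ of codimension $3$, and feed $G=T\mathbb{P}^4$, $F=N\mathscr{F}$ into the Eagon--Northcott resolution \eqref{ENR}. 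After the appropriate twist this yields a four-term exact sequence of the shape
\begin{equation*}
0\to \det(T\mathbb{P}^4)\otimes S_2(N{\mathscr{F}}^*)\otimes\det(N{\mathscr{F}}^*)\otimes\mathcal{O}(k)\to \bigwedge^3 T\mathbb{P}^4\otimes N{\mathscr{F}}^*\otimes\det(N{\mathscr{F}}^*)\otimes\mathcal{O}(k)\to \bigwedge^2 T\mathbb{P}^4\otimes\det(N{\mathscr{F}}^*)\otimes\mathcal{O}(k)\to \mathscr{I}_Z(\ell)\to 0
\end{equation*}
for suitable $k,\ell$ chosen so that the last term is exactly $\mathscr{I}_Z$ up to a twist, in parallel with \eqref{ENtangdist}.

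Next I would run the Whitney-sum bookkeeping: the total Chern class of $\mathscr{I}_Z$ (suitably twisted) equals the alternating product of the total Chern classes of the three locally free terms, expressed through the Chern class formulas collected in the Preliminaries (the $S_2$ formula, the $\wedge^2$, $\wedge^3$ formulas, and the rank-$4$-tensor-rank-$2$ formulas \eqref{chernclass42}), together with $\det(T\mathbb{P}^4)=\mathcal{O}(5)$. Matching $c_1$ recovers $c_1(N{\mathscr{F}}^*)=-3-d$; matching $c_3$ against $c_3(\mathscr{I}_Z)=-2\deg(Z)$ (Corollary~\ref{chernclassidealsheaf}, after accounting for the twist as in the proof of Theorem~\ref{TangChern}) solves for $c_2(N{\mathscr{F}}^*)$ and gives the stated value $\dfrac{d^3+4d^2+7d+2-\deg(Z)}{2d+1}$; and matching $c_4$ against $c_4(\mathscr{I}_Z)=6-15\deg(Z)-6p_a(Z)$ (again Corollary~\ref{chernclassidealsheaf}), via Riemann--Roch $\chi(\mathscr{I}_Z)=1+\tfrac54 c_3(\mathscr{I}_Z)-\tfrac16 c_4(\mathscr{I}_Z)$, yields the arithmetic genus formula. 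The only genuine structural difference from the tangent case is that the conormal sheaf sits inside $\Omega^1$ rather than $T\mathbb{P}^4$, so the normalizing twists $k,\ell$ differ; this is why the polynomial in the numerator of $p_a(Z)$ changes only in its linear coefficient in $\deg(Z)$ (from $12d^3-2d^2-11d-6$ to $12d^3-2d^2-11d-1$), the degree-in-$d$ part and the $(\deg Z)^2$ part being insensitive to which ambient bundle is used.

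The main obstacle is purely computational rather than conceptual: one must carry out the degree-$4$ Chern class expansion of a triple product of bundles of ranks up to $15$ (the symmetric square term) without arithmetic error, and then verify that the twist exponents $k$ and $\ell$ have been tracked correctly so that $c_i(\mathscr{I}_Z(\ell))$ is related to $c_i(\mathscr{I}_Z)$ by the right binomial shifts. Since the statement explicitly says this is done ``by using the same arguments as in the proof of Theorem~\ref{TangChern},'' the proof I would write is a short paragraph pointing out the two changes --- replace $T\mathbb{P}^4$ by $\Omega^1_{\mathbb{P}^4}$ as the ambient bundle (equivalently, replace $T\mathscr{F}$ by $N\mathscr{F}^*$ and the target twist accordingly) and adjust the normalizing twist --- and then asserting that the identical Whitney-product-plus-Riemann--Roch computation produces the displayed formulas. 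A careful referee would want the intermediate total-Chern-class expression (the analogue of \eqref{chernIdeal1}) spelled out, but including it is optional given the stated level of detail.
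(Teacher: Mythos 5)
Your proposal matches the paper exactly: the paper gives no separate argument for Theorem \ref{conormChern}, stating only that it follows ``by using the same arguments as in the proof of Theorem \ref{TangChern},'' i.e.\ the Eagon--Northcott resolution applied to $\varphi^\vee:T\mathbb{P}^4\to N\mathscr{F}$ (whose twisted form appears later as the resolution used for the Horrocks--Mumford distributions), followed by the Whitney-product Chern class computation and Riemann--Roch. Your outline, including the identification $c_1(N\mathscr{F}^*)=-3-d$ from the definition of degree and the matching of $c_3$ and $c_4$ against Corollary \ref{chernclassidealsheaf}, is precisely the intended argument.
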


\section{Distributions with locally free tangent sheaf of degree at most 2}
Degree zero distributions of dimension two are classified by the Proposition \ref{Distri-zero}. Therefore, in this section, we will consider holomorphic distributions of degree $1$ and $2$ on $\mathbb{P}^4$ with locally free tangent sheaf and  
whose  singular scheme  has dimension one.

\subsection{Classification of degree $1$ distributions}
In this case Theorem \ref{TangChern} becomes: 
\\
\begin{center}
$c_1(T {\mathscr{F}})=1 $, $c_2(T {\mathscr{F}})=\dfrac{4-\deg(Z)}{3}$, and $p_a(Z)=\dfrac{2(\deg(Z))^2-7\deg(Z)-4}{18}.$
\end{center}

\begin{propo}\label{lfree1}
If $\mathscr{F}$ is a dimension two distribution on $\mathbb{P}^4$ of degree $1$ with locally free tangent sheaf 
and  
 whose singular scheme has pure dimension one.  
Then  $T {\mathscr{F}}=\mathcal{O}_{\mathbb{P}^4}(1)\oplus\mathcal{O}_{\mathbb{P}^4}$ and  its singular scheme  a rational normal curve of degree $4$. 
\end{propo}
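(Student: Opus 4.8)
The plan is to pin down the tangent sheaf $T\mathscr{F}$, a rank-two reflexive — hence locally free by hypothesis — bundle on $\mathbb{P}^4$ with $c_1(T\mathscr{F}) = 1$ and $c_2(T\mathscr{F}) = (4-\deg Z)/3$, by first extracting a global section from the Euler sequence and then invoking a splitting criterion. First I would twist the Euler sequence and the defining sequence $(\ref{tandist})$ and chase cohomology: the composition $\mathcal{O}_{\mathbb{P}^4}(1) \hookrightarrow T\mathbb{P}^4$ coming from a constant vector field should be shown to factor through $T\mathscr{F}$, producing a nonzero section $\sigma \in H^0(T\mathscr{F})$ — or more precisely, by the argument already recorded in Remark \ref{sub-zero}, a nowhere-vanishing section $\sigma: \mathcal{O}_{\mathbb{P}^4}(1) \to T\mathscr{F}$. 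Indeed, since $c_1(T\mathscr{F}) = 1$, the twisted sheaf $T\mathscr{F}(-1)$ has $c_1 = -1$, and I would argue that $H^0(T\mathscr{F}(-1)) \neq 0$: the degree-zero foliation by curves given by a pencil of hyperplanes through a common $\mathbb{P}^2$ (equivalently a generic constant vector field) is tangent to $\mathscr{F}$ because its singular scheme, a single point, cannot contain the pure-dimension-one scheme $Z$, so Remark \ref{sub-zero} applies verbatim and $T\mathscr{F}$ splits.

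Once $T\mathscr{F}$ splits, write $T\mathscr{F} = \mathcal{O}_{\mathbb{P}^4}(a)\oplus \mathcal{O}_{\mathbb{P}^4}(b)$ with $a+b = c_1(T\mathscr{F}) = 1$ and $ab = c_2(T\mathscr{F}) = (4 - \deg Z)/3 \geq 0$ (the bound because $Z$ has dimension one, not the wrong sign). From $ab \geq 0$ and $a + b = 1$ the only integer solution is $\{a,b\} = \{1,0\}$, forcing $T\mathscr{F} = \mathcal{O}_{\mathbb{P}^4}(1)\oplus\mathcal{O}_{\mathbb{P}^4}$ and hence $c_2(T\mathscr{F}) = 0$, which by the formula in Theorem \ref{TangChern} gives $\deg Z = 4$; the genus formula then yields $p_a(Z) = 0$. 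The one subtlety is that one must also exclude the possibility that no such section exists — i.e., that $T\mathscr{F}$ is a genuinely non-split (stable) bundle — which is exactly what the Remark \ref{sub-zero} mechanism rules out; alternatively Horrocks' splitting criterion or the non-existence of rank-two bundles on $\mathbb{P}^4$ with these Chern numbers and a section could be cited, but the cleanest route is the geometric one above.

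It remains to identify the scheme $Z$ itself. Here I would feed $T\mathscr{F} = \mathcal{O}_{\mathbb{P}^4}(1)\oplus\mathcal{O}_{\mathbb{P}^4}$ into the Eagon–Northcott resolution $(\ref{ENtangdist})$, which with $d = 1$ reads
\begin{equation*}
0 \to \mathrm{Sym}^2(T\mathscr{F})(-5) \to \Omega^3_{\mathbb{P}^4}\otimes T\mathscr{F} \to \Omega^2_{\mathbb{P}^4} \to \mathscr{I}_Z(-1) \to 0,
\end{equation*}
and compute the Hilbert polynomial of $\mathscr{I}_Z$, confirming $\deg Z = 4$, $p_a(Z) = 0$, and arithmetic Cohen–Macaulayness of $Z$ (the resolution has the right length). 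A degree-four, arithmetic-genus-zero ACM curve in $\mathbb{P}^4$ that is moreover nondegenerate — which one checks from $h^0(\mathscr{I}_Z(1)) = 0$ via the resolution — is a rational normal quartic curve; I would cite the standard classification of curves of degree equal to (codimension $+$ 1), i.e. curves of minimal degree, to conclude. The main obstacle I anticipate is not any single hard step but making the bookkeeping airtight: verifying that the section $\sigma$ is genuinely nowhere-vanishing (not merely nonzero) so that $T\mathscr{F}$ splits rather than sitting in a nontrivial extension, and then ruling out degenerate or non-reduced degree-four curves so that "rational normal curve of degree $4$" is forced rather than merely allowed.
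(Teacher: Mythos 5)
Your argument has a genuine gap at its central step: you assert that $H^0(T\mathscr{F}(-1))\neq 0$ because ``a generic constant vector field is tangent to $\mathscr{F}$,'' but no reason is given why the given distribution should be tangent to \emph{any} degree-zero foliation by curves, and in general it need not be. Remark \ref{sub-zero} does not supply this: it is a conditional statement — \emph{if} a nonzero section $\sigma\in H^0(T\mathscr{F}(-1))$ exists, \emph{then} it is nowhere vanishing (because its zero scheme would sit inside the single point $\{v=0\}$) and hence $T\mathscr{F}$ splits. It says nothing about existence, so your remark that ``the Remark \ref{sub-zero} mechanism rules out'' the non-split case is a misreading. The paper closes exactly this hole by treating the case $h^0(T\mathscr{F}(-1))=0$ separately: then $(T\mathscr{F})_\eta=T\mathscr{F}(-1)$ has no sections, so $T\mathscr{F}$ is \emph{stable}, Bogomolov's inequality $c_1^2-4c_2<0$ with $c_2=(4-\deg Z)/3$ forces $\deg Z<13/4$, and for each of $\deg Z\in\{0,1,2,3\}$ either $c_2$ or $p_a(Z)=\bigl(2(\deg Z)^2-7\deg Z-4\bigr)/18$ fails to be an integer — a contradiction. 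Without this (or an equivalent non-existence argument for stable rank-two bundles with these invariants), your proof does not go through.

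Two smaller points. First, your derivation of the splitting type via ``$ab=c_2\geq 0$'' is both unjustified (nothing a priori prevents $c_2<0$, e.g.\ $\mathcal{O}(2)\oplus\mathcal{O}(-1)$) and unnecessary: once the nowhere-vanishing map $\sigma:\mathcal{O}_{\mathbb{P}^4}(1)\to T\mathscr{F}$ exists, the quotient is a line bundle of degree $c_1-1=0$ and the extension splits since $H^1(\mathcal{O}_{\mathbb{P}^4}(1))=0$, giving $T\mathscr{F}=\mathcal{O}_{\mathbb{P}^4}(1)\oplus\mathcal{O}_{\mathbb{P}^4}$ directly. Second, your use of the Eagon--Northcott resolution to establish nondegeneracy and ACM-ness of $Z$ before identifying it as a rational normal quartic is a reasonable (and in fact somewhat more careful) finish than the paper's, which passes immediately from $\deg Z=4$, $p_a(Z)=0$ to the conclusion; but this does not repair the essential gap above.
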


\begin{proof}
In this case,  the normalization of $T {\mathscr{F}}$ is also  $(T {\mathscr{F}})_{\eta}=T {\mathscr{F}}(-1)$. 
If $h^0((T {\mathscr{F}})_{\eta})\neq 0$, then by    Remark \ref{sub-zero}  we conclude that  $T {\mathscr{F}}$ is split and  we have that 
$T {\mathscr{F}}=\mathcal{O}_{\mathbb{P}^4}(1)\oplus\mathcal{O}_{\mathbb{P}^4}$, thus $c_2(T {\mathscr{F}})=0$. Furthermore, by Theorem \ref{TangChern} its singular scheme is a curve of degree $\deg(Z)=4$ and arithmetic genus $p_a(Z)=0$. That is, $Z$ is a rational normal curve of degree $4$. 
If $h^0((T {\mathscr{F}})_{\eta})=0$, then $T {\mathscr{F}}$ is a locally free stable sheaf with 
\begin{center}
    $c_1(T {\mathscr{F}}(-1))=-1$ and $c_2(T {\mathscr{F}}(-1))=\dfrac{4-\deg(Z)}{3}$.
\end{center}
 By Bogomolov's inequality, we have $\deg(Z)<\frac{13}{4}$, hence $\deg(Z)=0, 1, 2, 3$ and this implies that either  $c_2(T {\mathscr{F}}(-1))$ or $p_a(Z)$ are  not   integer numbers, a contradiction. 
This shows that the tangent sheaf $T {\mathscr{F}}$ can not be stable.
\end{proof}
\subsection{Classification of degree $2$ distributions}

In this case,  Theorem \ref{TangChern} becomes: 
\\
\begin{center}
$
c_1(T {\mathscr{F}})=0, \   c_2(T {\mathscr{F}})= \dfrac{10-\deg(Z)}{5}$ and $   p_a(Z)=\dfrac{(\deg(Z))^2+30\deg(Z)-250}{25}.$  
\end{center}

\begin{propo}\label{lfree2}
Let $\mathscr{F}$ be a dimension two distribution on $\mathbb{P}^4$ of degree $2$ with a locally free tangent sheaf $T {\mathscr{F}}$ whose singular scheme has pure dimension one. Then $T {\mathscr{F}}$ splits as a sum of line bundles, and

    \begin{enumerate}
        \item either $T {\mathscr{F}}=\mathcal{O}_{\mathbb{P}^4}(1)\oplus\mathcal{O}_{\mathbb{P}^4}(-1)$, and its singular scheme is a curve of degree $15$ and arithmetic genus $17$;
        \item or $T {\mathscr{F}}=\mathcal{O}_{\mathbb{P}^4}\oplus\mathcal{O}_{\mathbb{P}^4}$, and its singular scheme is a curve of degree $10$ and arithmetic genus $6$;
    \end{enumerate}

\end{propo}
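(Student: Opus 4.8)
The plan is to run the same normalization argument used in Proposition \ref{lfree1}, now adapted to the degree $2$ situation where the normalized sheaf is $T\mathscr{F}$ itself (since $c_1(T\mathscr{F})=0$). First I would split into two cases according to whether $h^0(\mathbb{P}^4, T\mathscr{F})=0$ or not. If $h^0(\mathbb{P}^4, T\mathscr{F})\neq 0$, then a nonzero section $\sigma:\mathcal{O}_{\mathbb{P}^4}\to T\mathscr{F}$ composes with $\varphi$ to give a global vector field on $\mathbb{P}^4$, hence a degree $\leq 1$ foliation by curves contained in $\mathscr{F}$; arguing as in Remark \ref{sub-zero} one deduces that $T\mathscr{F}$ splits. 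Combined with the numerical constraint $c_1(T\mathscr{F})=0$, the splitting must be $\mathcal{O}_{\mathbb{P}^4}(k)\oplus\mathcal{O}_{\mathbb{P}^4}(-k)$ for some $k\geq 0$. Then I would plug each candidate into the formulas from Theorem \ref{TangChern}: $c_2(T\mathscr{F})=-k^2$ gives $\deg(Z)=10+5k^2$, and the arithmetic genus formula must return a non-negative integer, together with the requirement that such a curve actually be realizable in $\mathbb{P}^4$ (in particular $\deg(Z)$ and $p_a(Z)$ must be consistent with a one-dimensional scheme). This should rule out all $k\geq 2$, leaving $k=0$ (giving case (2), $\deg(Z)=10$, $p_a(Z)=6$) and $k=1$ (giving case (1), $\deg(Z)=15$, $p_a(Z)=17$).

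The remaining and more delicate case is $h^0(\mathbb{P}^4, T\mathscr{F})=0$, i.e.\ $T\mathscr{F}$ stable of rank two with $c_1=0$. Here I would invoke Bogomolov's inequality $c_1(T\mathscr{F})^2-4c_2(T\mathscr{F})<0$, which forces $c_2(T\mathscr{F})>0$, hence (via $c_2=\tfrac{10-\deg Z}{5}$) $\deg(Z)<10$, so $\deg(Z)\in\{0,1,\dots,9\}$. I then run through these values: most are eliminated immediately because $\tfrac{10-\deg Z}{5}$ is not an integer or because the genus formula $p_a(Z)=\tfrac{(\deg Z)^2+30\deg Z-250}{25}$ is not a non-negative integer. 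The surviving candidate(s) — the value $\deg(Z)=5$ gives $c_2=1$ but $p_a(Z)=(25+150-250)/25=-3<0$, and $\deg(Z)=0$ gives an empty or zero-dimensional scheme, contradicting pure dimension one — would need to be excluded by the dimension-one hypothesis on $Z$ and the non-negativity of the arithmetic genus. The upshot is that no stable $T\mathscr{F}$ can occur, so we are always in the split case.

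The main obstacle I anticipate is being careful in the $h^0(T\mathscr{F})\neq 0$ branch: one has to be sure that a nonzero section of $T\mathscr{F}$ (rather than a twist $T\mathscr{F}(-1)$ as in the degree $1$ case) still forces splitting. The argument of Remark \ref{sub-zero} used that the composite $\mathcal{O}_{\mathbb{P}^4}(1)\to T\mathbb{P}^4$ had a single point as singular scheme; here the composite $\mathcal{O}_{\mathbb{P}^4}\to T\mathbb{P}^4$ is a global holomorphic vector field, whose zero scheme has codimension $\geq 2$ but need not be a point, so one must argue more carefully that the section $\sigma:\mathcal{O}_{\mathbb{P}^4}\to T\mathscr{F}$ is nowhere vanishing — using that its zero scheme would have codimension $2$ inside $\mathbb{P}^4$ yet be contained in $\Sing(\mathscr{F})$, which has pure dimension one, a contradiction — and then conclude $\mathcal{O}_{\mathbb{P}^4}$ is a subbundle, so $T\mathscr{F}\cong\mathcal{O}_{\mathbb{P}^4}\oplus\det(T\mathscr{F})=\mathcal{O}_{\mathbb{P}^4}\oplus\mathcal{O}_{\mathbb{P}^4}$; the more general splitting $\mathcal{O}(k)\oplus\mathcal{O}(-k)$ would come instead from a section of a suitable twist $T\mathscr{F}(k-1)$, handled analogously. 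Once the splitting is secured, the rest is just the bookkeeping with Theorem \ref{TangChern} sketched above.
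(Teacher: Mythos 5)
Your overall strategy (normalize, split on existence of sections, then use Bogomolov plus the Chern-class formulas of Theorem \ref{TangChern}) is the right general shape, but two of your key steps do not go through as described. First, your dichotomy is on $h^0(T\mathscr{F})$ rather than on $h^0(T\mathscr{F}(-1))$, and in the branch $h^0(T\mathscr{F})\neq 0$ your argument that a section $\sigma:\mathcal{O}_{\mathbb{P}^4}\to T\mathscr{F}$ is nowhere vanishing fails: the zero scheme of $\sigma$ is \emph{not} contained in $\Sing(\mathscr{F})$ (a section of $T\mathscr{F}$ can vanish at a point where $\varphi$ is still injective as a bundle map), and the correct containment $\{\sigma=0\}\subset\{v=0\}$, with $v=\varphi\circ\sigma$ a global vector field, gives no contradiction because the zero locus of a linear vector field on $\mathbb{P}^4$ can contain a $2$-plane, which is exactly the expected dimension of $\{\sigma=0\}$. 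The trick of Remark \ref{sub-zero} works only for a section of $T\mathscr{F}(-1)$, where the composite is a degree-zero foliation by curves whose singular scheme is a single point. This is why the paper splits on $h^0(T\mathscr{F}(-1))$: if it is nonzero one gets $T\mathscr{F}=\mathcal{O}_{\mathbb{P}^4}(1)\oplus\mathcal{O}_{\mathbb{P}^4}(-1)$ by Remark \ref{sub-zero}, and if it vanishes $T\mathscr{F}$ is only \emph{semistable}, and the case $c_1=c_2=0$ is settled not by a section argument but by the classification lemma \cite[Lemma 2.1]{GJ}, which says such a semistable bundle is trivial.

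Second, in the stable branch your exclusion of $\deg(Z)=5$ (equivalently $c_2(T\mathscr{F})=1$) by the negativity $p_a(Z)=-3<0$ is not conclusive: the singular scheme is only assumed to be a pure one-dimensional scheme, so it may be disconnected or non-reduced, and then $p_a(Z)=1-\chi(\mathcal{O}_Z)$ can well be negative (four disjoint lines already have $p_a=-3$). The paper instead eliminates this case by Schwarzenberger's congruence $(S_4^2):\ c_2(c_2+1-3c_1-2c_1^2)\equiv 0\ (12)$, which with $c_1=0$, $c_2=1$ gives $2\not\equiv 0\ (12)$, so no rank-two bundle on $\mathbb{P}^4$ with these Chern classes exists at all. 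You would need either this congruence or some substitute (e.g., a proof that $Z$ is connected and reduced) to close this case; as written, both the splitting step and the $c_2=1$ exclusion are genuine gaps.
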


\begin{proof}
The normalization of $T {\mathscr{F}}$ is $(T {\mathscr{F}})_{\eta}=T {\mathscr{F}}$.  If $h^0(T {\mathscr{F}}(-1))\neq 0$, then by    Remark \ref{sub-zero}  we conclude that  $T {\mathscr{F}}$ is split and we have that 
$T {\mathscr{F}}=\mathcal{O}_{\mathbb{P}^4}(1)\oplus\mathcal{O}_{\mathbb{P}^4}(-1)$. Since  $c_2(T {\mathscr{F}})=-1$,  by Theorem \ref{TangChern} its singular scheme is a curve of $\deg(Z)=15$ and $p_a(Z)=17$.
Now, if $h^0(T {\mathscr{F}}(-1))= 0$ then  $T {\mathscr{F}}$
is a  semistable vector bundle such that  
\begin{center}
$c_1(T {\mathscr{F}})=0$ and $c_2(T {\mathscr{F}})=\dfrac{10-\deg(Z)}{5}$.
\end{center}
By Bogomolov's inequality, $\deg(Z)\leq 10$.
If $\deg(Z)=0$, then $Z=\emptyset$ and $c_2(T {\mathscr{F}})=2$. This case does not occur by Theorem  \ref{non-sing}.
Since $c_2(T {\mathscr{F}})$ is an integer number, then $\deg(Z)=5,10$. If $\deg(Z)=10$, then  $c_2(T {\mathscr{F}})=0$,  thus by  \cite[Lemma 2.1]{GJ} we have $T {\mathscr{F}}\simeq \mathcal{O}_{\mathbb{P}^4}\oplus\mathcal{O}_{\mathbb{P}^4}$ and by Theorem \ref{TangChern} we have $\deg(Z)=10$ and $p_a(Z)=6$.
If $\deg(Z)=5$, then $c_2(T {\mathscr{F}})=1$. 
 But such a vector bundle with  $c_1(T {\mathscr{F}})=0$ and  $c_2(T {\mathscr{F}})=1$,   can not exist by 
Schwarzenberger's condition
$$
(S_4^2): c_2(c_2+1-3c_1-2c_1^2) \equiv 0(12),
$$
see \cite[Section 6.1]{Oko}. 
\end{proof}

\begin{example}(Linear pull-back of foliations by curves)
Let $\mathscr{G}$ be a foliation by curves, of degree $d$,  on $\mathbb{P}^3$   whose singular scheme is  $\mathrm{Sing}(\mathscr{G}):= R $, where    $R$ has dimension 0.  Consider the rational linear map
$\rho: \mathbb{P}^4  \dashrightarrow \mathbb{P}^3$ given by $$\rho(z_0:z_1:z_2:z_3,z_4)= (z_0:z_1:z_2:z_3).$$ Then the dimension two foliation $\mathscr{F}=\rho^*\mathscr{G}$  has degree $d$,  has    tangent sheaf  $T_\mathscr{F}=\mathcal{O}_{\mathbb{P}^4}(1)\oplus \mathcal{O}_{\mathbb{P}^4}(1-d)$ and it is singular along  the $\deg(R)$ lines     (counted with multiplicities)  passing through   $(0:0:0:0:1)$.
\end{example}

\section{Singular holomorphic Engel distributions  } \label{Engel}

 Let $\mathscr{F}$ be a non-integrable  holomorphic distribution  of  dimension $2$ on a complex manifold of dimension 4.
 Define $\mathscr{F}^{[1]}:=( T\mathscr{F}+ [T\mathscr{F},T\mathscr{F}])^{**}\subset TX$.  If $\mathscr{F}^{[1]}$ is not integrable and has dimension 3,
 we say that   $\mathscr{F}$ is  \textit{holomorphic singular   Engel distribution}. 
 Denote by $\pi: \mathscr{F}^{[1]} \to TX/\mathscr{F}^{[1]}$ the projection on the normal sheaf.  Consider the O'Neill tensor
 \begin{alignat*}{2}
 \mathcal{T}(\mathscr{F}^{[1]}) :\wedge^{[2]}\mathscr{F}^{[1]}  &\longrightarrow&  TX/\mathscr{F}^{[1]} \\
 u\wedge  v &\longmapsto&  \pi([u,v]).  
\end{alignat*}
Then, there is  a unique foliation by curves  $  \mbox{Ker}(\mathcal{T}(\mathscr{F}^{[1]})^{**}:=\mathcal{L}(\mathscr{F}) \subset \mathscr{F}^{[1]} $, called by \textit{characteristic foliation} of $\mathscr{F}^{[1]}$,  such that  $[\mathcal{L}(\mathscr{F}^{[1]}),\mathscr{F}^{[1]}]\subset \mathscr{F}^{[1]}$. We have the so-called \textit{derived flag}  associated with $\mathscr{F}$:
$$
  \mathcal{L}(\mathscr{F} )\subset  T\mathscr{F} \subset  \mathscr{F}^{[1]} \subset TX. 
$$
We say that $\mathscr{F}$ is a \textit{ locally free Engel distribution } if $T\mathscr{F}$ is so. See \cite{BCGGG,CMaz,PS, Vo} for more details on Engel distributions.

  \begin{example}\label{exe:null:deg1}
Consider the dimension $2$ distribution  $\mathscr{F}_0$  on $\mathbb{P}^4$ given by 
$$\omega=z_{0}^{2}dz_{4}\wedge
dz_{3}-z_{0}^{2}z_{2}dz_{4}\wedge
dz_{1}+(z_{1}z_{2}-z_{0}z_{3})dz_{4}\wedge dz_{0}- z_{0}z_{3}dz_{1}\wedge
dz_{3} 
+(z_{1}z_{3}-z_{0}z_{4})dz_{0}\wedge
dz_{3}-  
$$
$$
- (z_3^2-z_2z_4)dz_{0}\wedge dz_{1}.
$$
which is a distribution of degree $1$ and    $$\mathrm{Sing}( \mathscr{F})=\{z_0^2=z_2=z_3=0\}\cup \{z_0=z_1=z_3^2-z_2z_4=0\}   $$ has dimension 1.  The restriction of  $\omega $ to  $\mathbb{C}^4=\mathbb{P}^4-\{z_0=0\}$  is given by
$$   dz_{4}\wedge
dz_{3}- z_{2}dz_{4}\wedge
dz_{1} -  z_{3}dz_{1}\wedge
dz_{3} = (dz_4-z_3dz_1) \wedge (dz_3-z_2dz_1),
$$
which is the canonical Engel structure on $\mathbb{C}^4$.
In particular,  we have that  $$\mathscr{F}_0^{[1]}|_{\mathbb{C}^4}=\Ker ( dz_4-z_3dz_1)$$ 
 and $ \mathcal{L}(\mathscr{F})|_{\mathbb{C}^4}$ is generated by the vector field $\frac{\partial }{\partial z_2}$. In particular, the distribution $\mathscr{F}_0^{[1]}$ is the linear pull-back by the projection $(z_0:z_1:z_2:z_3:z_4)\to (z_0:z_1:z_3:z_4) $ of a contact structure on $\mathbb{P}^3$ with tangent bundle $N(1)$, where $N$ is the null-correlation bundle.
\end{example}

 \begin{defin}(Cartan prolongation)
 Let $\mathscr{F}$ be a singular contact structure on a threefold $X$ such that $T\mathscr{F}$ is locally free. Consider the 4-fold $\mathbb{P}(T\mathscr{F}^*)$ with the natural projection $\pi: Y:=\mathbb{P}(T\mathscr{F}^*) \to X $ and   the twisted  relative Euler sequence  
 $$
 0\to \mathcal{O}_X(-1)\to \pi^* T\mathscr{F}\to T_{Y|X}(-1)\to 0 
 $$
 The pull-back $\pi^*\mathscr{F}$ is a  codimension one non-integrable distribution, such that the  $T_{Y|X}\subset T(\pi^*\mathscr{F}) $.
 The kernel $\mathscr{D}$ of the composition $$T(\pi^*\mathscr{F})\to  \pi^* T\mathscr{F}\to T_{Y|X}(-1)$$ is an Engel structure called Cartan prolongation of $\mathscr{F}$. 
  \end{defin}
  
  \begin{obs}\rm
   Suppose that $T\mathscr{F}$ is locally free  and consider  an open set $U\subset X$ such that $\mathbb{P}(T\mathscr{F})_{|U}\simeq U\times \mathbb{P}^1$, is induced by  consider the 1-form $\beta$ which induces $\mathscr{F}$ and let $\omega_1,\omega_2$ be a local frame for $T\mathscr{F}^*_{|U}= \mathcal{O}_U^{\oplus 2}$ such that $\beta\wedge \omega_1\wedge \omega_2  \not\equiv 0$, then the prolongation   of $\mathscr{F}_{|U}$  is induced by the 2-form
 $$
 \beta\wedge (z_1\omega_1- z_0\omega_2).
 $$
  \end{obs}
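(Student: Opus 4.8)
The statement is local on $Y=\mathbb{P}(T\mathscr{F}^*)$, so the plan is to unwind the definition of the Cartan prolongation in the trivialization $Y|_U\simeq U\times\mathbb{P}^1$ and match it with the kernel of the displayed $2$-form. First I would fix conventions. Shrinking $U$ if necessary, choose $1$-form representatives $\omega_1,\omega_2\in\Omega^1_X(U)$ of the given frame of $T\mathscr{F}^*|_U$; the hypothesis $\beta\wedge\omega_1\wedge\omega_2\not\equiv 0$ says precisely that $\{\beta,\omega_1,\omega_2\}$ is a local coframe of $\Omega^1_X$. Let $v_1,v_2\in T\mathscr{F}(U)\subset TX(U)$ be the dual frame, so $\omega_i(v_j)=\delta_{ij}$, $\beta(v_j)=0$, and $T\mathscr{F}|_U=\langle v_1,v_2\rangle=\ker\beta$. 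A point of $Y$ over $x\in U$ is then a line $L\subset T_x\mathscr{F}$, which I coordinatize as $L=\langle z_0v_1+z_1v_2\rangle$ with $[z_0:z_1]\in\mathbb{P}^1$; with this normalization the tautological subbundle $\mathcal{O}_X(-1)\subset\pi^*T\mathscr{F}$ of the relative Euler sequence has fibre $L$ at $(x,[z_0:z_1])$.

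Next I would read off $\mathscr{D}$ in this chart. Pulling back, $T\pi^*\mathscr{F}=\ker(\pi^*\beta)$ is the rank-$3$ extension $0\to T_{Y/X}\to T\pi^*\mathscr{F}\to\pi^*T\mathscr{F}\to 0$, and by definition $\mathscr{D}$ is the preimage in $T\pi^*\mathscr{F}$ of $\mathcal{O}_X(-1)\subset\pi^*T\mathscr{F}$; hence $0\to T_{Y/X}\to\mathscr{D}\to\mathcal{O}_X(-1)\to 0$, so $\mathscr{D}$ is the rank-$2$ plane field generated by the vertical line together with any lift of $z_0v_1+z_1v_2$. In the product $U\times\mathbb{P}^1$ the "horizontal" lift $(z_0v_1+z_1v_2,0)$ does the job: it lies in $\ker(\pi^*\beta)=T\pi^*\mathscr{F}$ because $\beta(z_0v_1+z_1v_2)=0$. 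Thus in the affine chart $z_0\neq 0$, with $t=z_1/z_0$,
$$
\mathscr{D}|_U=\Big\langle\ \frac{\partial}{\partial t}\,,\ v_1+t\,v_2\ \Big\rangle,
$$
and symmetrically in the chart $z_1\neq 0$.

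Finally I would identify this with the kernel of $\eta:=\beta\wedge(z_1\omega_1-z_0\omega_2)$ (writing $\beta,\omega_i$ also for their pullbacks to $Y$). The two factors of $\eta$ are everywhere-independent $1$-forms on $Y|_U$, since $\pi$ is a submersion and $\{\beta,\omega_1,\omega_2\}$ is a coframe on $U$; hence $\eta$ is a nowhere-zero decomposable $2$-form and the distribution it defines is $\ker\eta=\ker(\pi^*\beta)\cap\ker(z_1\omega_1-z_0\omega_2)$, of rank $2$. Rescaling the second factor by $1/z_0$ in the chart $z_0\neq 0$, it equals $\ker\beta\cap\ker(t\,\omega_1-\omega_2)$, and I check directly that the generators of $\mathscr{D}|_U$ are annihilated: $\beta$ kills $\partial/\partial t$ (vertical) and $v_1+t\,v_2$ (in $\ker\beta$), while $(t\,\omega_1-\omega_2)(\partial/\partial t)=0$ since $\omega_i$ are pulled back from the base, and $(t\,\omega_1-\omega_2)(v_1+t\,v_2)=t(\omega_1(v_1)+t\,\omega_1(v_2))-(\omega_2(v_1)+t\,\omega_2(v_2))=t-t=0$. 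So $\mathscr{D}|_U\subseteq\ker\eta$, and equality of ranks gives $\mathscr{D}|_U=\ker\eta$; the same check in the chart $z_1\neq 0$ yields the identification on all of $Y|_U$. The only genuine point to be careful about is the convention fixing the fibre coordinate $[z_0:z_1]$ — it is this choice that forces the second factor to be $z_1\omega_1-z_0\omega_2$ rather than its companion $z_0\omega_1-z_1\omega_2$ — together with the harmless observation that $\eta$ is only defined up to twist by a line bundle, which does not affect the distribution it cuts out.
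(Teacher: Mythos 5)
Your verification is correct, and it is the natural direct computation: the paper states this remark without proof, so there is nothing to compare against. Your coframe/dual-frame setup, the identification of $\mathscr{D}$ as $\langle \partial/\partial t,\ v_1+t v_2\rangle$ in the chart $z_0\neq 0$, and the check that both generators annihilate $\beta$ and $z_1\omega_1-z_0\omega_2$ (with the rank count forcing equality) exactly justify the displayed formula, including your correct observation that the sign/ordering convention for the fibre coordinate $[z_0:z_1]$ is what singles out $z_1\omega_1-z_0\omega_2$ as the second factor.
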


  \begin{defin}\label{Lorentzian}(Lorentzian type structures)
  Let $\mathscr{F}$ be an Engel structure on a 4-fold $X$. Suppose that there is a  fibration  $f: X \dashrightarrow Y$  by  rational curves, over a projective manifold $Y$. 
  We say that $\mathscr{F}$ is of Lorentzian type if $T_{X/Y}\subset T \mathscr{F}$, i.e., the fibers of $f$ are tangent to $\mathscr{F}$, and the characteristic foliation $\mathcal{L}(\mathscr{F} )$ is induced by a section of $f^*TY\otimes   T_{X/Y}$. We say that the rational map  $f: X \dashrightarrow Y$ is   Lorentzian with  respect  to $\mathscr{F}$.
   \end{defin}
  
  \begin{obs}\rm \label{Lorentzian}
    Consider $X^{\circ }= X-\mbox{Sing}(\mathscr{F})$ and denote $\mathscr{F}^{\circ }=\mathscr{F}_{|X^{\circ }}$.   On an open set $U\simeq V\times \mathbb{P}^1\subset X^{\circ }$  such that $ T \mathscr{F}^{\circ }_{|U}\simeq  \mathcal{O}_U^{\oplus 2 }$, we have that $(T_{X/Y})_{|U}$ is induced   $v_2=\frac{\partial }{\partial z}$ and $\mathcal{L}(\mathscr{F} )_{|U}$ is  induced by a vector field  of type
  $v_1=\nu_2+z_0\nu_1+z_0^2\nu_0$,  where $\nu_0,\nu_1,\nu_2\in TV$.  Therefore, $\mathscr{F}^{[1]}_{|U}$ is generated by $v_1$, $v_2$, and $[v_1,v_2]=-2z_0\nu_1-\nu_2$. That is, $\mathscr{F}^{[1]}_{|U}$ is induced by the 1-form 
  $\omega =  z^2\omega_0 -2z\omega_1 + \omega_2 $, where $\omega_i(\nu_i)=0$, for all $i=0,1,2$. 
  \end{obs}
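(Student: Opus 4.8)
The plan is to carry out the whole computation in the given local trivialization $U\simeq V\times\mathbb{P}^1\subset X^{\circ}$, using coordinates $x=(x_1,x_2,x_3)$ on the base $V$ and an affine coordinate $z$ on the $\mathbb{P}^1$-fibre of the projection $f|_U:U\to V$. On $U$ the tangent bundle splits as $TU=f^*TY\oplus T_{X/Y}$, the relative tangent bundle $T_{X/Y}=T_{U/V}$ being generated by $v_2=\partial/\partial z$. Since $\mathscr{F}$ is of Lorentzian type, Definition \ref{Lorentzian} gives $T_{X/Y}\subset T\mathscr{F}$, so that $v_2\in T\mathscr{F}$; this supplies the vertical generator.

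First I would unravel the intrinsic description of the characteristic foliation. By Definition \ref{Lorentzian}, $\mathcal{L}(\mathscr{F})$ is induced by a section of $f^*TY\otimes T_{X/Y}$. Restricting to a fibre $\{x\}\times\mathbb{P}^1$ and using $T_{X/Y}|_{\text{fibre}}=T\mathbb{P}^1=\mathcal{O}_{\mathbb{P}^1}(2)$, such a section becomes a polynomial of degree two in $z$ with coefficients in the horizontal space $(f^*TY)_x=(TV)_x$; hence its local generator has the stated form $v_1=\nu_2+z\nu_1+z^2\nu_0$ with $\nu_0,\nu_1,\nu_2\in TV$ pulled back from the base. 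From the derived flag $\mathcal{L}(\mathscr{F})\subset T\mathscr{F}$ together with a rank count (here $v_1$ is horizontal and $v_2$ is vertical, so they are independent), I conclude $T\mathscr{F}|_U=\langle v_1,v_2\rangle$.

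The key computation is the Lie bracket. Because the $\nu_i$ are pulled back from $V$, they carry no $z$-dependence and have no $\partial/\partial z$-component, so $[\nu_i,\partial/\partial z]=0$; the Leibniz identity $[hW,\partial/\partial z]=h\,[W,\partial/\partial z]-(\partial h/\partial z)\,W$ then yields $[v_1,v_2]=-\partial v_1/\partial z$, a horizontal field linear in $z$ and independent of $v_1,v_2$. Consequently $[T\mathscr{F},T\mathscr{F}]$ contributes exactly this one new direction, giving $\mathscr{F}^{[1]}|_U=(T\mathscr{F}+[T\mathscr{F},T\mathscr{F}])^{**}=\langle v_1,v_2,[v_1,v_2]\rangle$ of rank three, which is the bracket recorded in the statement.

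Finally I would produce the defining $1$-form of $\mathscr{F}^{[1]}$. Being a codimension-one distribution, $\mathscr{F}^{[1]}$ is cut out by a $1$-form $\omega$ up to scale, characterised by $\omega(v_1)=\omega(v_2)=\omega([v_1,v_2])=0$. The condition $\omega(v_2)=\omega(\partial/\partial z)=0$ forces $\omega$ to be horizontal (no $dz$-term), so at each point $\omega\in T^*_xV$ must annihilate the plane spanned by $v_1$ and $[v_1,v_2]$ inside the three-dimensional space $T_xV$. Contracting a local volume form of $V$ with $v_1\wedge[v_1,v_2]$ gives this annihilator explicitly as a polynomial of degree two in $z$, which upon collecting coefficients takes the form $\omega=z^2\omega_0-2z\omega_1+\omega_2$ with $\omega_i(\nu_i)=0$. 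The main obstacle I anticipate is the bookkeeping of this last step: translating the intrinsic section of $f^*TY\otimes T_{X/Y}$ into the explicit quadratic expression for $v_1$, and then checking that the normalisation $\omega_i(\nu_i)=0$ is compatible with the vanishing of $\omega$ on $v_1$ and $[v_1,v_2]$ for every value of $z$. Both reduce to elementary linear algebra in the frame $\{\nu_0,\nu_1,\nu_2\}$ once the fibrewise $\mathcal{O}_{\mathbb{P}^1}(2)$-structure of $T_{X/Y}$ has been made explicit.
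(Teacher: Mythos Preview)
The paper states this as a remark without proof, so there is no argument to compare against; your proposal correctly supplies the details the remark only sketches. Your identification of $v_1$ as a degree-two polynomial in the fibre coordinate (via $T_{X/Y}|_{\mathrm{fibre}}\simeq\mathcal{O}_{\mathbb{P}^1}(2)$), the bracket computation $[v_1,v_2]=-\partial v_1/\partial z$, and the construction of $\omega$ by contracting a volume form against $v_1\wedge[v_1,v_2]$ are all the right moves.

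One small caution: the explicit formula $[v_1,v_2]=-2z_0\nu_1-\nu_2$ printed in the remark is off by an index shift; differentiating $v_1=\nu_2+z\nu_1+z^2\nu_0$ actually gives $[v_1,v_2]=-\nu_1-2z\nu_0$. Your abstract description ``a horizontal field linear in $z$'' is correct, so do not worry if your formula does not match the paper's literally. Similarly, the condition ``$\omega_i(\nu_i)=0$ for all $i$'' is a bit loose: working in the dual coframe $\{\omega^0,\omega^1,\omega^2\}$ to $\{\nu_0,\nu_1,\nu_2\}$ one finds $\omega=z^2\omega^2-2z\omega^1+\omega^0$, so $\omega_0(\nu_0)=\omega_2(\nu_2)=0$ hold but $\omega_1(\nu_1)=1$. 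The essential content---that $\omega$ is a horizontal form quadratic in $z$ annihilating $v_1$ and $[v_1,v_2]$---is exactly what your contraction argument delivers.
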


A particular example of dimension two distribution of Lorentzian type is the so-called Lorentzian tube on the tangent bundle of a threefold with a holomorphic conformal structure. In fact, consider a tensor $g\in H^0(Y, \mbox{Sym}^2\Omega_Y^1\otimes \mathscr{L} )$ inducing a holomorphic conformal structure on $Y$. 
We have a smooth divisor $X:=\{g=0\}\subset \mathbb{P}(\Omega_Y^1)$ and the restriction of the natural projection $\mathbb{P}(\Omega_Y^1)\to Y$ gives us a smooth conic bundle $f: X \to Y$.  
It follows from \cite[Proposition 4.5]{PS} that if  $\mathscr{F}$ is  a regular  Engel  structure  on a 4-fold $X$ such that
 $T\mathscr{F}\simeq   \mathcal{L}(\mathscr{F} )\oplus
 (T\mathscr{F}/\mathcal{L}(\mathscr{F} ))$ and    $ (T\mathscr{F}/\mathcal{L}(\mathscr{F}))^*$  is   not nef, then $\mathscr{F}$ is of Lorentzian type in our sense. We refer to \cite[Example 3.5 ]{PS} for more details.

Inspired by a Presas and Sol'a Conde result \cite[Proposition 4.5]{PS} we prove the following:

 \begin{propo}\label{Engelsplit}
 Let $\mathscr{F}$ be a locally free Engel structure on a 4-fold projective, whose singular scheme of the associate foliation $\mathcal{L}(\mathscr{F} )$ has codimension $\geq 3$. Suppose that $ \det(T\mathscr{F}/\mathcal{L}(\mathscr{F}))^*$
 is   not  pseudo-effective and     $ H^1(X,   \det(T\mathscr{F})^*\otimes \mathcal{L}(\mathscr{F})^{\otimes 2} )=0
$. Then $\mathscr{F}$ is of Lorentzian type.  
 \end{propo}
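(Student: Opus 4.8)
\textit{Overview.} The plan is, first, to use the cohomological hypothesis to split $T\mathscr{F}=\mathcal{L}(\mathscr{F})\oplus\mathcal{N}$; second, to integrate the second summand to a foliation by rational curves and read off a rational fibration $f\colon X\dashrightarrow Y$ onto a threefold whose fibres are tangent to $\mathscr{F}$; and third, to check, by a bracket computation along a general fibre, that the characteristic foliation sits in the position required by Definition \ref{Lorentzian}. For Step~1 write $\mathcal{L}:=\mathcal{L}(\mathscr{F})$, a line bundle on $X$ since it is the tangent sheaf of a foliation by curves; it is saturated in $TX$, hence in $T\mathscr{F}$, so $\mathcal{Q}:=T\mathscr{F}/\mathcal{L}$ is torsion free of rank one and is locally free off $W:=\Sing(\mathcal{L}(\mathscr{F}))$, which has codimension $\ge 3$ by hypothesis. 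Thus $\mathcal{Q}=\mathscr{I}_W\otimes N$ with $N=\det(T\mathscr{F})\otimes\mathcal{L}^{-1}$. Since $\codim W\ge 3$ one has $\mathcal{E}xt^i_{\mathcal{O}_X}(\mathcal{O}_W,\mathcal{O}_X)=0$ for $i\le 2$, so $\mathcal{E}xt^1(\mathcal{Q},\mathcal{L})=0$ and the local-to-global spectral sequence gives $\Ext^1(\mathcal{Q},\mathcal{L})\cong H^1\bigl(X,\mathcal{H}om(\mathcal{Q},\mathcal{L})\bigr)\cong H^1\bigl(X,\det(T\mathscr{F})^*\otimes\mathcal{L}^{\otimes 2}\bigr)$, which vanishes by hypothesis. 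Hence the sequence $0\to\mathcal{L}\to T\mathscr{F}\to\mathcal{Q}\to 0$ splits; in particular $\mathcal{Q}=\mathcal{N}$ is locally free and $T\mathscr{F}=\mathcal{L}(\mathscr{F})\oplus\mathcal{N}$ with $\mathcal{N}\cong\det(T\mathscr{F})\otimes\mathcal{L}^{-1}$.

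\textit{Step 2: a rational-curve fibration tangent to $\mathscr{F}$.} The composite $\mathcal{N}\hookrightarrow T\mathscr{F}\hookrightarrow TX$ is injective off a closed set of codimension $\ge 2$, so its saturation in $TX$ is the tangent sheaf of a foliation by curves $\mathscr{H}$ with $T\mathscr{H}\supseteq\mathcal{N}$, whence $K_{\mathscr{H}}=\det(T\mathscr{H})^*=\mathcal{N}^*\otimes\mathcal{O}_X(-D)$ for an effective divisor $D$. Since $K_{\mathscr{H}}+D=\mathcal{N}^*=\det(T\mathscr{F}/\mathcal{L}(\mathscr{F}))^*$ is not pseudo-effective, $K_{\mathscr{H}}$ is not pseudo-effective either. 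By duality between the pseudo-effective cone and movable curves there is a covering family of curves $\Gamma$ with $T\mathscr{H}\cdot\Gamma\ge\mathcal{N}\cdot\Gamma>0$; by the rational-curve criterion for foliations tangent to a covering family of curves (Bogomolov--McQuillan, Campana--P\u aun), argued exactly as in \cite[Proposition 4.5]{PS}, the general leaf of $\mathscr{H}$ is a smooth rational curve and $\mathscr{H}$ is algebraically integrable. Passing to the family of leaf closures yields a dominant rational map $f\colon X\dashrightarrow Y$ onto a projective threefold with general fibre $\simeq\mathbb{P}^1$, and $T_{X/Y}$ coincides with $T\mathscr{H}$ over the open set where $f$ is a smooth morphism; there $T_{X/Y}=\mathcal{N}\subset T\mathscr{F}$, so the fibres of $f$ are tangent to $\mathscr{F}$.

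\textit{Step 3: Lorentzian position of $\mathcal{L}(\mathscr{F})$.} Work over $X^{\circ}=X\setminus\Sing(\mathscr{F})$ and shrink to an open set $U\simeq V\times\mathbb{P}^1$ with fibre coordinate $z$, so that $T_{X/Y}|_U=\langle\partial_z\rangle$ and $T\mathscr{F}|_U=\langle v_1,\partial_z\rangle$ with $v_1$ generating $\mathcal{L}(\mathscr{F})|_U$. Using $\mathscr{F}^{[1]}=(T\mathscr{F}+[T\mathscr{F},T\mathscr{F}])^{**}$ and the defining relation $[\mathcal{L}(\mathscr{F}),\mathscr{F}^{[1]}]\subset\mathscr{F}^{[1]}$, together with $\partial_z$ spanning the $f$-vertical line, one forces $v_1=\nu_2+z\,\nu_1+z^2\,\nu_0$ up to rescaling, with $\nu_0,\nu_1,\nu_2$ pulled back from $V$ — precisely the normal form recorded in the Remark following Definition \ref{Lorentzian}. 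Equivalently, the $f$-horizontal part of $v_1$ depends quadratically on the fibre coordinate, i.e. it is a section of $f^*TY\otimes T_{X/Y}$ (a $TY$-valued quadratic polynomial along each fibre being a section of $f^*TY\otimes\mathcal{O}_{\mathbb{P}^1}(2)=f^*TY\otimes T_{X/Y}$); these local sections glue since both sides are intrinsic. Therefore $\mathscr{F}$ is an Engel structure of Lorentzian type and $f\colon X\dashrightarrow Y$ is Lorentzian with respect to $\mathscr{F}$.

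\textit{Main obstacle.} The delicate point is Step~2: producing the rational-curve fibration out of the mere non-pseudo-effectivity of $\det(T\mathscr{F}/\mathcal{L}(\mathscr{F}))^*$ requires the movable-curve duality and the algebraicity/rationality criterion for foliations tangent to a covering family, and one must carefully handle the indeterminacy locus of $f$ and the set $\Sing(\mathscr{F})$ so that the identification $T_{X/Y}=\mathcal{N}\subset T\mathscr{F}$ makes sense away from a codimension $\ge 2$ subset; the codimension $\ge 3$ hypothesis on $\Sing(\mathcal{L}(\mathscr{F}))$ is what keeps the sheaf-theoretic bookkeeping in Steps~1 and~3 clean. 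Step~3 itself is then a routine bracket computation along a general fibre.
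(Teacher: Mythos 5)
Your proposal is correct and follows essentially the same path as the paper: the $H^1$-vanishing is used to split off $\mathcal{L}(\mathscr{F})$ as a direct summand of $T\mathscr{F}$ (the paper first observes that the section $\mathcal{L}(\mathscr{F})\to T\mathscr{F}$ has no zeros, since a nonempty zero locus would have codimension $\le 2$, and then cites Atiyah's splitting criterion, while you run the equivalent $\Ext^1(\mathcal{Q},\mathcal{L})$ computation via the local-to-global spectral sequence), and the non-pseudo-effectivity of $\det(T\mathscr{F}/\mathcal{L}(\mathscr{F}))^*$ is then used to produce the rational-curve fibration tangent to $\mathscr{F}$ — the paper invokes Brunella's positivity theorem in one step where you reassemble its ingredients from movable-curve duality and Bogomolov--McQuillan. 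The only points where you go beyond the paper are in tracking the saturation of $\mathcal{N}$ inside $TX$ and in sketching why the characteristic vector field acquires the quadratic normal form along the fibres; the paper disposes of the Lorentzian position with the one-line remark that $\mathcal{L}(\mathscr{F})$ is generically transversal to $f$.
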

 \begin{proof}
Since the singular scheme of $\mathcal{L}(\mathscr{F} )$ has codimension $\geq 3$, then the section $\mathcal{L}(\mathscr{F}) \to T\mathscr{F}$ has no zeros. In particular,  the cokernel $T\mathscr{F}/\mathcal{L}(\mathscr{F} )$ is a line bundle. 
Since 
\begin{equation*}
\begin{split}
H^1(X,   \mbox{Hom}( (T\mathscr{F}/\mathcal{L}(\mathscr{F} )),\mathcal{L}(\mathscr{F} )) & \simeq  H^1(X,     \mathcal{L}(\mathscr{F})\otimes  (T\mathscr{F}/\mathcal{L}(\mathscr{F} ))^* ) \\
 & \simeq H^1(X,   \det(T\mathscr{F})^*\otimes \mathcal{L}(\mathscr{F})^{\otimes 2} )=0
\end{split}
\end{equation*}
then by \cite[Proposition 2]{At}  we have that 
 $T\mathscr{F}$  is isomorphic to $\mathcal{L}(\mathscr{F} )\oplus
( T\mathscr{F}/\mathcal{L}(\mathscr{F} )) \subset TX$.  On the other hand, $\det(T\mathscr{F}/\mathcal{L}(\mathscr{F}))^*$ is not pseudo-effective implies by Brunella's Theorem \cite{Brunella} that the induced foliation $ T\mathscr{F}/\mathcal{L}(\mathscr{F})\subset  T\mathscr{F}  \subset TX$ is a foliation by rational curves, i.e, such foliation is tangent to a fibration $f: X \dashrightarrow Y$
whose fibers are  rational curves, and since $\mathcal{L}(\mathscr{F})$ must be generically transversal to $f$ we conclude that  $\mathscr{F}$ is of Lorentzian type.   
 \end{proof}
  \begin{obs}\rm 
The hypothesis $  H^1(\mathbb{P}^4,   \det(T\mathscr{F})^*\otimes \mathcal{L}(\mathscr{F})^{\otimes 2} )=0$ on $\mathbb{P}^4$ of Proposition \ref{Engelsplit} is
satisfied.
  \end{obs}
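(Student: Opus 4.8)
The plan is elementary: on $\mathbb{P}^4$ every line bundle has vanishing intermediate cohomology, and the sheaf appearing in the hypothesis of Proposition \ref{Engelsplit} is a line bundle.

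First I would record that, since $\mathscr{F}$ is a \emph{locally free} Engel structure, $T\mathscr{F}$ is a rank-two vector bundle on $\mathbb{P}^4$, so $\det(T\mathscr{F})$ is a line bundle; explicitly $\det(T\mathscr{F})\simeq\mathcal{O}_{\mathbb{P}^4}(c_1(T\mathscr{F}))=\mathcal{O}_{\mathbb{P}^4}(2-d)$ with $d=\deg(\mathscr{F})$. Next, the characteristic foliation $\mathcal{L}(\mathscr{F})$ is a foliation by curves, so its tangent sheaf is a rank-one reflexive sheaf on the smooth variety $\mathbb{P}^4$, hence a line bundle, say $\mathcal{L}(\mathscr{F})\simeq\mathcal{O}_{\mathbb{P}^4}(e)$ for some $e\in\mathbb{Z}$. (Alternatively, under the standing hypothesis $\codim\Sing(\mathcal{L}(\mathscr{F}))\geq 3$ of Proposition \ref{Engelsplit}, the section $\mathcal{L}(\mathscr{F})\to T\mathscr{F}$ has no zeros, so $\mathcal{L}(\mathscr{F})$ is a subbundle of $T\mathscr{F}$ and in particular locally free of rank one.)

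Consequently $\det(T\mathscr{F})^*\otimes\mathcal{L}(\mathscr{F})^{\otimes 2}$ is a line bundle on $\mathbb{P}^4$, and since $\mathrm{Pic}(\mathbb{P}^4)\simeq\mathbb{Z}$ it is isomorphic to $\mathcal{O}_{\mathbb{P}^4}(d-2+2e)$. Finally I would invoke the standard computation of the cohomology of line bundles on projective space (Bott's formula): $H^i(\mathbb{P}^n,\mathcal{O}_{\mathbb{P}^n}(k))=0$ for every $k\in\mathbb{Z}$ whenever $0<i<n$. Applying this with $n=4$ and $i=1$ yields $H^1(\mathbb{P}^4,\det(T\mathscr{F})^*\otimes\mathcal{L}(\mathscr{F})^{\otimes 2})=0$, which is exactly the assertion.

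There is essentially no obstacle in this argument; the only point deserving a word of justification is that $\mathcal{L}(\mathscr{F})$, a priori defined only as a subsheaf of $TX$, is genuinely a line bundle. This follows either from the reflexivity of the tangent sheaf of a foliation by curves on a smooth variety (a rank-one reflexive sheaf on a smooth variety is invertible), or from the absence of zeros of the inclusion $\mathcal{L}(\mathscr{F})\hookrightarrow T\mathscr{F}$ guaranteed by the codimension hypothesis in Proposition \ref{Engelsplit}.
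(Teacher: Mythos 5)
Your argument is correct and is clearly the intended justification (the paper states this remark without proof): the sheaf $\det(T\mathscr{F})^*\otimes\mathcal{L}(\mathscr{F})^{\otimes 2}$ is a line bundle $\mathcal{O}_{\mathbb{P}^4}(k)$, and $H^1(\mathbb{P}^4,\mathcal{O}_{\mathbb{P}^4}(k))=0$ for all $k\in\mathbb{Z}$ by the standard cohomology of line bundles on projective space. Your side remark that $\mathcal{L}(\mathscr{F})$ is genuinely invertible (rank-one reflexive on a smooth variety, or a subbundle by the codimension hypothesis) is the right point to flag, and it is handled correctly.
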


\begin{theorem}
\label{Sing-Engel}
Let  $X$ be  a  projective 4-fold  such that $\mathrm{Pic}(X)\simeq \mathbb{Z}$. If  $T\mathscr{F}\subset TX$   is an Engel structure, then $ \mathscr{F}$ is singular. 
\end{theorem}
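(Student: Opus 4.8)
The plan is to argue by contradiction: assume that $T\mathscr{F}\subset TX$ is a regular (non-singular) Engel distribution on a projective $4$-fold $X$ with $\mathrm{Pic}(X)\simeq\mathbb{Z}$, and derive a contradiction from positivity considerations. First I would record what regularity buys us. If $\mathscr{F}$ is non-singular, then $T\mathscr{F}$ is a rank-$2$ subbundle of $TX$, the derived distribution $\mathscr{F}^{[1]}$ is a genuine rank-$3$ subbundle (the O'Neill/curvature map $\wedge^2 T\mathscr{F}\to TX/T\mathscr{F}$ has constant rank, being nonzero by non-integrability and of rank exactly one at every point by the Engel condition on the flag), and likewise the characteristic foliation $\mathcal{L}(\mathscr{F})\subset T\mathscr{F}$ is a line subbundle with no zeros. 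So we obtain a complete flag of subbundles $\mathcal{L}(\mathscr{F})\subset T\mathscr{F}\subset \mathscr{F}^{[1]}\subset TX$ with locally free quotients, all line bundles; write $L:=\mathcal{L}(\mathscr{F})$, $A:=T\mathscr{F}/L$, $B:=\mathscr{F}^{[1]}/T\mathscr{F}$, $C:=TX/\mathscr{F}^{[1]}$, each a line bundle, and under $\mathrm{Pic}(X)\simeq\mathbb{Z}$ each is $\mathcal{O}_X(m)$ for an integer, where $\mathcal{O}_X(1)$ is the ample generator.

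Next I would extract the numerical relations forced by the Engel structure. The Lie bracket induces $\mathcal{O}_X$-linear bundle isomorphisms between successive pieces of the flag: the O'Neill tensor of $\mathscr{F}$ gives $A\cong B$ up to a twist coming from the precise wedge identification — concretely $\wedge^2 T\mathscr{F} = L\otimes A$ maps isomorphically onto $B$, so $L\otimes A\cong B$; and the O'Neill tensor of $\mathscr{F}^{[1]}$ (whose kernel is exactly $\mathcal{L}(\mathscr{F})$, so it is nonzero precisely on the $A$-direction) gives $\wedge^2\mathscr{F}^{[1]}/(\text{stuff})$ pairing $A\otimes B\to C$ nondegenerately on the relevant line, yielding $A\otimes B\cong C$, hence $C\cong L\otimes A^{\otimes 2}$. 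Combining, $\det TX = L\otimes A\otimes B\otimes C = L\otimes A\otimes(L\otimes A)\otimes(L\otimes A^{\otimes 2}) = L^{\otimes 3}\otimes A^{\otimes 4}$. On the other hand $X$ with $\mathrm{Pic}(X)\simeq\mathbb{Z}$ is Fano or not, but in any case $\det TX = \mathcal{O}_X(-K_X) = \mathcal{O}_X(r)$ for some integer $r$; and since $\mathscr{F}^{[1]}$ being non-integrable means $C$ cannot be ``too positive'' — here is where I'd invoke Brunella/Bott-type vanishing: a regular non-integrable distribution on a manifold with $\mathrm{Pic}=\mathbb{Z}$ has a normal bundle whose determinant cannot be negative enough for $T\mathscr{F}$ to be, say, ample, and conversely the flag's positivity is over-constrained.

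The cleanest contradiction, and the step I expect to be the main obstacle to make fully rigorous, is a Bogomolov–Castelnuovo–de Franchis / Brunella-type argument applied to the line subbundle $L=\mathcal{L}(\mathscr{F})\subset TX$, which is a (regular) foliation by curves. I would argue: the derived flag forces $C=L^{\otimes 2}\otimes A^{\otimes 2}=(L\otimes A)^{\otimes 2}=B^{\otimes 2}$ to be effective-or-worse in a way incompatible with $\mathscr{F}^{[1]}$ being a non-integrable codimension-one distribution. Indeed, a regular codimension-one distribution $\mathscr{F}^{[1]}$ on $X$ is given by a twisted $1$-form $\omega\in H^0(X,\Omega^1_X\otimes C)$ with no zeros, and non-integrability says $\omega\wedge d\omega\ne 0$; on $\mathbb{P}^n$-like spaces ($\mathrm{Pic}=\mathbb{Z}$) the existence of a nowhere-vanishing twisted $1$-form already pins down $C=\mathcal{O}_X(c)$ with $c>0$, while the flag relations above pin $C$ from below by the (ir)regularity of $T\mathscr{F}$; tracking the inequalities together with the divisibility relation $-K_X = 3\ell + 4a$ (writing $L=\mathcal{O}_X(\ell)$, $A=\mathcal{O}_X(a)$) forces $a$ or $\ell$ into a range where either $T\mathscr{F}\subset TX$ cannot embed as a subbundle (by the standard fact that on a $\mathrm{Pic}=\mathbb{Z}$ Fano $4$-fold a rank-$2$ subbundle of $TX$ with both this positivity is integrable, cf. the Glover–Homer–Stong classification Theorem \ref{non-sing} and Proposition \ref{Distri-zero} in the excerpt for the extreme case $X=\mathbb{P}^4$), contradicting that $\mathscr{F}$ is Engel (hence non-integrable with non-integrable $\mathscr{F}^{[1]}$). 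For the specific case $X=\mathbb{P}^4$ the contradiction is immediate from Theorem \ref{non-sing}, since a non-singular distribution on $\mathbb{P}^4$ (even dimension) does not exist at all; the content of Theorem \ref{Sing-Engel} is to push this to all $\mathrm{Pic}=\mathbb{Z}$ fourfolds, and the hard part is replacing ``$\mathbb{P}^4$'' by positivity/Chern-number bookkeeping along the derived flag. I would finish by assembling these numerical inequalities into a single impossible relation among the integers $\ell, a, r$.
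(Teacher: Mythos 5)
Your setup is sound and matches the paper's starting point: regularity of $\mathscr{F}$ propagates to $\mathcal{L}(\mathscr{F})$ and $\mathscr{F}^{[1]}$, giving a flag of subbundles with line-bundle quotients, and the bracket-induced isomorphisms $\mathscr{F}^{[1]}/T\mathscr{F}\cong \mathcal{L}(\mathscr{F})\otimes (T\mathscr{F}/\mathcal{L}(\mathscr{F}))$ and $\det(\mathscr{F}^{[1]}/\mathcal{L}(\mathscr{F}))\cong TX/\mathscr{F}^{[1]}$ are exactly the relations the paper records in Proposition \ref{Conde--Presas}. But from there your argument does not close: you announce that ``assembling these numerical inequalities into a single impossible relation'' will finish the proof, while conceding this is ``the main obstacle to make fully rigorous,'' and no such relation is ever derived. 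The divisibility identity for $-K_X$ alone does not produce a contradiction, and your claim that a nowhere-vanishing twisted $1$-form on a $\mathrm{Pic}=\mathbb{Z}$ fourfold forces its twist to be positive is asserted without justification. So the core of the theorem --- the case of a general projective fourfold with $\mathrm{Pic}(X)\simeq\mathbb{Z}$, not just $X=\mathbb{P}^4$ --- is left open.

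The two ingredients you are missing are precisely what the paper uses to convert the flag relations into a contradiction. First, Demailly's integrability theorem (packaged as item $b)$ of Proposition \ref{Conde--Presas}) yields a genuine dichotomy: either $\mathcal{L}(\mathscr{F})^*$ or $\det(T\mathscr{F}/\mathcal{L}(\mathscr{F}))^*$ fails to be pseudo-effective. In the first case, $\mathrm{Pic}(X)\simeq\mathbb{Z}$ makes $\mathcal{L}(\mathscr{F})$ an ample line subbundle of $TX$, and Wahl's theorem then forces $X\cong\mathbb{P}^4$, where Glover--Homer--Stong (Theorem \ref{non-sing}) gives the contradiction --- this is the bridge from the special case you do handle to the general hypothesis. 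In the second case, the engine is the Baum--Bott residue theorem applied to the \emph{regular} foliation by curves $\mathcal{L}(\mathscr{F})$: since it has empty singular set, $c_1^4(TX/\mathcal{L}(\mathscr{F}))=0$, which on a $\mathrm{Pic}=\mathbb{Z}$ fourfold forces $\det(TX/\mathcal{L}(\mathscr{F}))\cong\mathcal{O}_X$; combined with the determinant identity along the flag and Demailly's theorem applied to the non-integrable $\mathscr{F}$, this makes $\det(T\mathscr{F}/\mathcal{L}(\mathscr{F}))^*$ simultaneously effective and not pseudo-effective. Neither Wahl's theorem nor the Baum--Bott vanishing appears in your proposal, and without them (or a substitute of comparable strength) the ``positivity bookkeeping'' you gesture at cannot be completed.
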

\begin{proof}
 We argue by contradiction and  assume that 
   $\mathscr{F}$ is regular. 
   This implies that the distributions  $\mathcal{L}(\mathscr{F})$ and  $\mathscr{F}^1$ are also regular.   In fact, for every point $p\in X$, take an open set $U$ such that $\mathscr{D}_{|U}$ is induced by vector fields $v,v_1$, where $v$ is tangent to $\mathcal{L}(\mathscr{F})_{|U}$.  Then $\mathrm{Sing}(\mathcal{L}(\mathscr{F})_{|U})=\{v =0\} \subset \{v\wedge v_1  =0\}= \mathrm{Sing}(\mathscr{F}_{|U})=\emptyset$.
Arguing
as above, the conormal bundle of $\mathscr{F}$ on $U$ is  generated by $\omega,\omega_1$, where $\omega$ induces $\mathscr{D}^1_{|U}$.
So,  $\mathrm{Sing}( \mathscr{F}^1)_{|U})=\{\omega =0\} \subset \{\omega\wedge \omega_1  =0\}= \mathrm{Sing}(\mathscr{F}_{|U})=\emptyset$.  Proposition \ref{Conde--Presas}  says us that  either   $    \mathcal{L}(\mathscr{F})^*$ or 
 $\det(T\mathscr{F}/\mathcal{L}(\mathscr{F}))$ are  not pseudo-effective.

 Suppose that $    \mathcal{L}(\mathscr{F})^*$ is not pseudoeffective, then $\mathcal{L}(\mathscr{F})\subset TX$ is ample since $\mathrm{Pic}(X)\simeq \mathbb{Z}$, thus by \cite{Wahl} we have that
$X$ isomorphic to $\mathbb{P}^4$,  and  Theorem \ref{non-sing}  implies that  $ \mathscr{F}$ is singular, resulting in a contradiction.  

Now, suppose that $\det(T\mathscr{F}/\mathcal{L}(\mathscr{F}))^*$  is 
  not pseudo-effective. 
    We have a diagram of exact sequences of vector bundles:
$$ \xymatrix{ 0 \ar[r]  & \dfrac{T\mathscr{F}}{\mathcal{L}(\mathscr{F})}\ar[r]  & \dfrac{TX}{\mathcal{L}(\mathscr{F})} \ar[r] & \dfrac{TX}{\mathcal{L}(\mathscr{F})} \ar[r]   & 0    }.$$
By taking the determinant we get
$\det(\mathscr{F}/\mathcal{L}(\mathscr{F}))\otimes \det( TX/ \mathscr{F}) \simeq  \det(TX/\mathcal{L}(\mathscr{F}))$. 
 Since $\mathcal{L}(\mathscr{F})$ is regular, by Baum-Bott Theorem \cite{BB, MF} we  obtain 
$$
0=c_1^4(TX/\mathcal{L}(\mathscr{F}))= c_1^4(\det( TX/\mathcal{L}(\mathscr{F})))
$$
Since $\mathrm{Pic}(X)\simeq \mathbb{Z}$, we have $  \det( TX/\mathcal{L}(\mathscr{F}))\simeq \mathcal{O}_X$.
 Therefore, 
$ \det( TX/ T\mathscr{F})^* \simeq  \det(T\mathscr{F}/\mathcal{L}(\mathscr{F}))$ is not  pseudo-effective by Demailly's Theorem \cite{Demailly}, since $\mathscr{F}$ is not integrable.  
That is  $\det(T\mathscr{F}/\mathcal{L}(\mathscr{F}))^*$ is  effective, since $\mathrm{Pic}(X)\simeq \mathbb{Z}$.  This is a  contradiction, since by   hypothesis
$\det(T\mathscr{F}/\mathcal{L}(\mathscr{F}))^*$  is not   effective. 
\end{proof}

 \begin{propo}\label{Null:contact}
Let $\mathscr{F}$ be a contact non-singular distribution on $\mathbb{P}^3$. Then, the prolongation of $\mathscr{F}$  is birational to $\mathscr{F}_0$. In particular,   $\mathbb{P}(T\mathscr{F}_0^*)$ 
is birational to the universal family
of lines contained in a quadric hypersurface in $\mathbb{P}^4$. 
\end{propo}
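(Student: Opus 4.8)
The plan is to reduce the whole statement to a calculation in one affine chart, using the local normal forms recalled in the text, and then to invoke the classical geometry of the Lagrangian Grassmannian. First I would show that a non-singular contact distribution on $\mathbb{P}^3$ is unique up to the action of $\mathrm{PGL}(4,\mathbb{C})$: by Theorem~\ref{non-sing} such $\mathscr{F}$ has codimension one and degree zero, so $N\mathscr{F}^\ast\simeq\mathcal{O}_{\mathbb{P}^3}(-2)$ and $\mathscr{F}$ is a point of $\mathbb{P}H^0(\mathbb{P}^3,\Omega^1_{\mathbb{P}^3}(2))\simeq\mathbb{P}\big(\wedge^2(\mathbb{C}^4)^\ast\big)$; the contact condition $\omega\wedge d\omega\not\equiv 0$ is exactly non-degeneracy of the corresponding alternating form, and $\mathrm{GL}(4,\mathbb{C})$ acts transitively on non-degenerate alternating forms up to scale. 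Since the Cartan prolongation is natural with respect to isomorphisms of contact structures, we may assume $\mathscr{F}$ is the $\mathrm{Sp}(4,\mathbb{C})$-invariant contact structure, and in a suitable affine chart $\mathbb{C}^3\subset\mathbb{P}^3$ it is then cut out by the Darboux form $\beta=dy-p\,dx$.

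Next I would compute the prolongation over this chart. There $T\mathscr{F}=\ker\beta$ is free with frame $\partial_x+p\,\partial_y,\ \partial_p$, and $\omega_1=dx,\ \omega_2=dp$ is a dual coframe with $\beta\wedge\omega_1\wedge\omega_2\not\equiv 0$; hence, by the description of the Cartan prolongation recalled in the text, over the chart $\{z_0\neq 0\}$ of the $\mathbb{P}^1$-bundle $\mathbb{P}(T\mathscr{F}^\ast)$ with fibre coordinate $z$ the Engel distribution is induced by
\[
\beta\wedge(z\,\omega_1-\omega_2)=(dy-p\,dx)\wedge(z\,dx-dp).
\]
Renaming $(z_1,z_2,z_3,z_4)=(x,z,p,y)$ turns this, up to sign, into $(dz_4-z_3\,dz_1)\wedge(dz_3-z_2\,dz_1)$, which is precisely the restriction of $\mathscr{F}_0$ to $\mathbb{P}^4\setminus\{z_0=0\}$ computed above. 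Because a holomorphic distribution is determined by its restriction to any dense Zariski-open subset, this coordinate identification extends to a birational map $\mathbb{P}(T\mathscr{F}^\ast)\dashrightarrow\mathbb{P}^4$ carrying the Cartan prolongation of $\mathscr{F}$ to $\mathscr{F}_0$, which is the first assertion.

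For the last sentence I would realize the $\mathrm{Sp}(4,\mathbb{C})$-invariant smooth quadric threefold as the Lagrangian Grassmannian $Q=\mathrm{LG}(2,4)\subset\mathbb{P}\big(\wedge^2\mathbb{C}^4\big)$, a hyperplane section of the Plücker quadric and hence a quadric hypersurface in a $\mathbb{P}^4$. A line contained in $Q$ is the pencil $\{L\in\mathrm{LG}(2,4):\ell\subset L\subset\ell^{\perp_\omega}\}$ attached to a $1$-dimensional subspace $\ell\subset\mathbb{C}^4$, so the Fano variety of lines of $Q$ is $\mathbb{P}(\mathbb{C}^4)=\mathbb{P}^3$ and the universal family $\mathcal{U}=\{(\ell,L):\ell\subset L\subset\ell^{\perp_\omega},\ L\ \text{Lagrangian}\}\to\mathbb{P}^3$ is a $\mathbb{P}^1$-bundle with fibre $\mathbb{P}(\ell^{\perp_\omega}/\ell)$ over $[\ell]$. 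Since the contact plane is $T_{[\ell]}\mathscr{F}=\Hom(\ell,\ell^{\perp_\omega}/\ell)$, i.e. $T\mathscr{F}\simeq(\ell^{\perp_\omega}/\ell)\otimes\mathcal{O}_{\mathbb{P}^3}(1)$ as bundles on $\mathbb{P}^3$, projectivizing gives $\mathcal{U}\simeq\mathbb{P}(T\mathscr{F})\simeq\mathbb{P}(T\mathscr{F}^\ast)$, the last isomorphism because $T\mathscr{F}$ has rank two; combined with the previous step this identifies the fourfold $\mathbb{P}(T\mathscr{F}^\ast)$ underlying the prolongation with $\mathcal{U}$, which gives the ``in particular''.

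The only genuinely delicate point I expect is this last, global identification: checking, with the Plücker conventions, that the rank-two bundle on $\mathbb{P}^3$ whose projectivization is the universal family of lines of $\mathrm{LG}(2,4)$ is indeed $T\mathscr{F}$ up to the twist, and that the whole picture is $\mathrm{Sp}(4,\mathbb{C})$-equivariant; everything else is essentially forced once the Darboux form of $\mathscr{F}$ and the local shape of the Cartan prolongation (both already in the text) are in hand. A cheaper substitute, enough for the stated ``birational'', is to observe that $\mathbb{P}(T\mathscr{F}^\ast)$ and $\mathcal{U}$ are both smooth rational fourfolds, at the cost of not recording that the identification is actually an isomorphism.
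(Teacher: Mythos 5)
Your argument is correct, but it takes a genuinely different route from the paper's. For the first assertion the paper also begins by normalizing $\mathscr{F}$ to the standard contact structure (via $T\mathscr{F}\simeq N(1)$), but then, instead of computing the prolongation, it invokes the explicit quadro-cubic Cremona transformation $f$ of $\mathbb{P}^3$ from Cerveau--D\'eserti, checks the identity $f^*\omega_1=z_3\,\omega_0$ relating the standard contact form $\omega_1$ to the \emph{singular} contact form $\omega_0$ whose prolongation blows down to $\mathscr{F}_0$, and concludes by functoriality of the prolongation under birational equivalence of contact structures; the final sentence is then a citation of Presas--Sol\'a Conde for the identification of $\mathbb{P}(N(1))$ with the universal family of lines on the quadric threefold. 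Your proof replaces the Cremona map by a direct computation: you put $\mathscr{F}$ in Darboux form on an affine chart, write the prolongation there as $(dy-p\,dx)\wedge(z\,dx-dp)$ using the local formula recalled in the text, and match it against the affine normal form $(dz_4-z_3dz_1)\wedge(dz_3-z_2dz_1)$ of $\mathscr{F}_0$ already computed in the Example; and you replace the citation by the Lagrangian--Grassmannian description of the quadric and its lines, which in fact yields an isomorphism $\mathbb{P}(T\mathscr{F}^*)\simeq\mathcal{U}$ rather than merely a birational equivalence. Both are sound; yours is more self-contained and elementary, while the paper's records the explicit birational map and ties the statement to the literature. The only point you should make explicit is that the passage to Darboux coordinates on the chart $\{z_0\neq 0\}$ is achieved by a \emph{polynomial} automorphism of $\mathbb{C}^3$ (e.g. $dz_1+z_2dz_3-z_3dz_2=d(z_1+z_2z_3)-2z_3\,dz_2$), so that the resulting identification of open sets is algebraic and the map you build really is birational, not just locally biholomorphic; with that sentence added, the argument is complete.
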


\begin{proof}
The contact non-singular distribution $\mathscr{F}$ has  tangent bundle  isomorphic to  $N(1)$, where is the null correlation bundle.  After a linear change of coordinates, we can assume 
that $\mathscr{F}$ and $\mathscr{F}_0^{[1]}$ are  induced by 
$\omega_1=z_0dz_1-z_1dz_0+ z_2dz_3-z_2dz_3$ and 
$\omega_0=z_3^2dz_2-z_1z_3dz_0+ (z_0z_1-z_2z_3)dz_3$, respectively. Consider the birational map  $f: \mathbb{P}^3  \dashrightarrow \mathbb{P}^3 $, given by $f(z_0,z_1,z_2,z_3 )=(z_0z_3/2,z_1z_2,-z_2z_3+z_0z_1/2,z_3^2)$. This map is the extension to $\mathbb{P}^3$ of the map in  \cite[pg. 5]{CD}.  Then $f^*\omega_1=z_3\omega_0$, showing that the prolongation of $\mathscr{F}_0^1$ is birational to the prolongation of $f^*\mathscr{F}$.  
Finally,  we can see from  \cite[pg 12]{PS} that   
$\mathbb{P}(T\mathscr{F}^*) =\mathbb{P}(N(1))$ is 
the universal family
of lines contained in a quadric hypersurface in $\mathbb{P}^4$.
\end{proof}

 Let $\beta \in H^0(X,\Omega_X^1\otimes \det(TX/\mathscr{F}^{[1]}))$ the twisted 1-form inducing $\mathscr{F}^{[1]}$. If the twisted 3-form 
 $\beta\wedge d\beta  \in H^0(X,\Omega_X^3\otimes \det(TX/\mathscr{F}^{[1]})^{\otimes 2})$  has no divisorial zeros, then it defines the characteristic foliation.  
 In this case, the determinant of the normal sheaf of $\mathcal{L}(\mathscr{F})$ is $\det(TX/\mathscr{F}^{[1]})^{\otimes 2})$. 
 
 \begin{propo}\label{Conde--Presas}
Let  $X$ be  a   K\"ahler  manifold   and    $\mathscr{F}\subset TX$ an Engel structure such that the determinant  of the normal sheaf of $\mathcal{L}(\mathscr{F})$ is isomorphic to  $\det(TX/\mathscr{F}^{[1]})^{\otimes 2})$. Then: \begin{itemize}
      \item[$a)$] $\mathcal{O}_X(-K_X)\simeq \det(TX/\mathscr{F}^{[1]})^{\otimes 2} \otimes \mathcal{L}(\mathscr{F})^* $;
       \item[$b)$]    either $\mathcal{L}(\mathscr{F})^*$ or $\det(T\mathscr{F}/\mathcal{L}(\mathscr{F}))^{*} $ are not pseudo-effective.  
\end{itemize}
\end{propo}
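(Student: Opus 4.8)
The plan is to read both statements off the determinant exact sequences of the derived flag $\mathcal{L}(\mathscr{F})\subset T\mathscr{F}\subset\mathscr{F}^{[1]}\subset TX$, combined with the Lie-bracket identity peculiar to Engel structures and with Demailly's non-pseudo-effectivity theorem for non-integrable distributions. For $(a)$, since $\mathcal{L}(\mathscr{F})$ is a foliation by curves on the smooth variety $X$ its tangent sheaf is a line bundle, so from $0\to\mathcal{L}(\mathscr{F})\to TX\to TX/\mathcal{L}(\mathscr{F})\to 0$ one gets $\mathcal{O}_X(-K_X)=\det TX\cong\mathcal{L}(\mathscr{F})\otimes\det(TX/\mathcal{L}(\mathscr{F}))$. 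The standing hypothesis identifies $\det(TX/\mathcal{L}(\mathscr{F}))$ with $\det(TX/\mathscr{F}^{[1]})^{\otimes 2}$ — this is precisely the normalization of the twisted $3$-form $\beta\wedge d\beta$ recalled just before the statement — and substituting gives $(a)$. This step is purely formal.

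For $(b)$ I would first record the \emph{Engel identity} $\det(\mathscr{F}^{[1]}/T\mathscr{F})\cong\det T\mathscr{F}$. The Lie bracket induces an $\mathcal{O}_X$-linear map $\det T\mathscr{F}=\wedge^{[2]}T\mathscr{F}\to TX/T\mathscr{F}$; it is nonzero (otherwise $\mathscr{F}$ would be integrable), hence injective with image a rank-one subsheaf abstractly isomorphic to $\det T\mathscr{F}$, and this image generates $\mathscr{F}^{[1]}/T\mathscr{F}$ up to a subsheaf supported in codimension $\ge 2$, because $\mathscr{F}^{[1]}=(T\mathscr{F}+[T\mathscr{F},T\mathscr{F}])^{**}$ is a reflexive hull; comparing first Chern classes then gives the identity. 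Taking determinants along the flag,
\[
\mathcal{O}_X(-K_X)\cong\mathcal{L}(\mathscr{F})\otimes\det(T\mathscr{F}/\mathcal{L}(\mathscr{F}))\otimes\det(\mathscr{F}^{[1]}/T\mathscr{F})\otimes\det(TX/\mathscr{F}^{[1]}),
\]
I would substitute the Engel identity together with $\det T\mathscr{F}\cong\mathcal{L}(\mathscr{F})\otimes\det(T\mathscr{F}/\mathcal{L}(\mathscr{F}))$, and then use $(a)$ to eliminate $\mathcal{O}_X(-K_X)$. Solving for $\det(TX/\mathscr{F}^{[1]})$ exhibits it as a nontrivial positive tensor combination of $\mathcal{L}(\mathscr{F})$ and $\det(T\mathscr{F}/\mathcal{L}(\mathscr{F}))$; dually, $\det(TX/\mathscr{F}^{[1]})^{*}$ is a positive combination of $\mathcal{L}(\mathscr{F})^{*}$ and $\det(T\mathscr{F}/\mathcal{L}(\mathscr{F}))^{*}$.

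To conclude, note that $\mathscr{F}^{[1]}$ is a codimension-one distribution that is \emph{not} integrable, since $\mathscr{F}$ is an Engel structure; hence by Demailly's theorem \cite{Demailly} the line bundle $\det(TX/\mathscr{F}^{[1]})^{*}$ is not pseudo-effective. Since the pseudo-effective cone of a compact K\"ahler manifold is closed under addition, a positive combination of two pseudo-effective classes stays pseudo-effective; therefore $\mathcal{L}(\mathscr{F})^{*}$ and $\det(T\mathscr{F}/\mathcal{L}(\mathscr{F}))^{*}$ cannot both be pseudo-effective, which is exactly $(b)$.

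The argument is the singular-foliation analogue of \cite[Proposition 4.5]{PS}, and the expected obstacle is not computational but the bookkeeping around sheaves that fail to be subbundles: one must pin down the normalization making the determinant of the normal sheaf of $\mathcal{L}(\mathscr{F})$ equal to the twisting line bundle $\det(TX/\mathscr{F}^{[1]})^{\otimes 2}$ of $\beta\wedge d\beta$ (so that the sign in $(a)$ is the intended one), verify that the Engel identity holds with no extra divisorial correction — which follows from $\mathscr{F}^{[1]}$ being a reflexive hull — and confirm that $\mathscr{F}^{[1]}$ is a genuine saturated codimension-one distribution defined away from a set of codimension $\ge 2$, so that Demailly's theorem applies verbatim. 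Under the standing hypotheses (singular schemes of pure dimension one, hence codimension three, in $\mathbb{P}^4$) all of these are automatic.
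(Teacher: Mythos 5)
Your argument follows the paper's proof in all essentials: both establish the two determinant identities of the derived flag on the locus where the flag consists of subbundles, extend them across the codimension~$\geq 2$ singular set, and then conclude $(b)$ by applying Demailly's theorem to the non-integrable codimension-one distribution $\mathscr{F}^{[1]}$ together with the additivity of the pseudo-effective cone. The only differences are in how the identities are produced: for $(a)$ the paper restricts to $U=X\setminus S$ with $S=\Sing(\mathscr{F})\cup\Sing(\mathscr{F}^{[1]})\cup\Sing(\mathcal{L}(\mathscr{F}))$ and quotes \cite[Remark 2.2]{PS} there, whereas you read the formula off the hypothesis via $\det TX\cong\mathcal{L}(\mathscr{F})\otimes\det(TX/\mathcal{L}(\mathscr{F}))$; for $(b)$ the paper takes determinants in the exact sequence $0\to T\mathscr{F}/\mathcal{L}(\mathscr{F})\to\mathscr{F}^{[1]}/\mathcal{L}(\mathscr{F})\to\mathcal{L}(\mathscr{F})\otimes T\mathscr{F}/\mathcal{L}(\mathscr{F})\to 0$ from \cite{PS}, while you use the equivalent bracket isomorphism $\det(\mathscr{F}^{[1]}/T\mathscr{F})\cong\det T\mathscr{F}$ and the full flag; both routes land on $\det(TX/\mathscr{F}^{[1]})\cong\mathcal{L}(\mathscr{F})\otimes\det(T\mathscr{F}/\mathcal{L}(\mathscr{F}))^{\otimes 2}$ and finish identically.

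Two points deserve attention. First, your substitution in $(a)$ actually yields $\mathcal{O}_X(-K_X)\simeq\det(TX/\mathscr{F}^{[1]})^{\otimes 2}\otimes\mathcal{L}(\mathscr{F})$, with $\mathcal{L}(\mathscr{F})$ and not its dual: this is forced by the hypothesis together with $c_1$-additivity, and it is also what the regular Engel computation gives, so the $\mathcal{L}(\mathscr{F})^{*}$ in the statement is a dualization slip; you should state this explicitly rather than assert that "substituting gives $(a)$" as literally written (fortunately the sign is immaterial for $(b)$, since either way $\det(TX/\mathscr{F}^{[1]})$ is a positive tensor combination of $\mathcal{L}(\mathscr{F})$ and $\det(T\mathscr{F}/\mathcal{L}(\mathscr{F}))$). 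Second, your claim that the Engel identity carries no divisorial correction "because $\mathscr{F}^{[1]}$ is a reflexive hull" is not a proof: saturation is precisely the operation that can introduce an effective divisor, giving $\det(\mathscr{F}^{[1]}/T\mathscr{F})\cong\det(T\mathscr{F})\otimes\mathcal{O}_X(D)$ with $D\geq 0$ if the bracket map $\wedge^{[2]}T\mathscr{F}\to TX/T\mathscr{F}$ degenerates in codimension one. The paper avoids this by working on $U$ and invoking the regular theory of \cite{PS} there, which carries the same implicit assumption that no such divisorial degeneracy occurs; in either write-up this is the one step that should be made explicit.
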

 \begin{proof}
Set $S:=  \mathrm{Sing}(\mathscr{F}) \cup \mathrm{Sing}(\mathscr{F}^{[1]}) \cup  \mathrm{Sing}( \mathcal{L}(\mathscr{F}))$ and consider $U:=X-S$. It follows from \cite[Remark 2.2]{PS} that $\mathcal{O}_X(-K_X)_{|U}\simeq (\det(TX/\mathscr{F}^{[1]})^{\otimes 2} \otimes \mathcal{L}(\mathscr{F})^*) _{|U}$ and $\det(\mathscr{F}^{[1]}/\mathcal{L}(\mathscr{F}))_{|U} \simeq  \det(TX/\mathscr{F}^{[1]})_{|U}$. 
By \cite[pg. 4]{PS}, we have an exact sequence  of vector bundles
$$
0\to T\mathscr{F}/ \mathcal{L}(\mathscr{F})_{|U} \to \mathscr{F}^{[1]}/\mathcal{L}(\mathscr{F})_{|U} \to   \mathcal{L}(\mathscr{F}) \otimes T\mathscr{F}/ \mathcal{L}(\mathscr{F})_{|U} \to 0
$$
and  by taking determinant we get 
$$
 \mathcal{L}(\mathscr{F})\otimes \det(T\mathscr{F}/ \mathcal{L}(\mathscr{F}))^{\otimes 2}_{|U} \simeq
 \det(\mathscr{F}^{[1]}/\mathcal{L}(\mathscr{F}))_{|U}\simeq
 \det(TX/\mathscr{F}^1)_{|U}. 
$$
Since $S$ has  codimension $\geq 2$, we conclude that
$\mathcal{O}_X(-K_X)\simeq \det(TX/\mathscr{F}^{[1]})^{\otimes 2} \otimes \mathcal{L}(\mathscr{F})^* $ and $
\det(TX/\mathscr{F}^{[1]}) \simeq \mathcal{L}(\mathscr{F})\otimes \det(T\mathscr{F}/ \mathcal{L}(\mathscr{F}))^{\otimes 2}.  
$ In particular, proving $a)$. Item  $b)$
follows from the same  reason used in  \cite[Lemma 4.2]{PS}  by applying Demailly's Theorem \cite{Demailly} and using  that $\mathscr{F}^{[1]}$ is non-integrable. 
 \end{proof}

\begin{propo}\label{Engel-conormal}
Let  $\mathscr{F}$  be a singular  non-integrable distribution of  dimension $2$  on $\mathbb{P}^4$   with locally free locally   conormal sheaf.  Suppose that $\mbox{Sing}(\mathscr{F})$   has pure dimension 1.  Then $\mathscr{F}$ is an Engel distribution. 
\end{propo}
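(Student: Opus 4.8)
The plan is to exploit the structural constraint imposed by having a locally free conormal sheaf $N\mathscr{F}^*$ of rank $2$ on $\mathbb{P}^4$ together with the purity of the singular scheme, and to rule out the two ways in which $\mathscr{F}$ could fail to be Engel: namely that $\mathscr{F}^{[1]}$ is integrable, or that $\mathscr{F}^{[1]}$ has dimension $4$ (i.e.\ the bracket already generates everything, so there is no genuine $3$-dimensional intermediate object). Since $\mathscr{F}$ is not integrable by hypothesis, $\mathscr{F}^{[1]}=(T\mathscr{F}+[T\mathscr{F},T\mathscr{F}])^{**}$ strictly contains $T\mathscr{F}$; being a reflexive subsheaf of $T\mathbb{P}^4$ of rank $>2$, it has rank $3$ or $4$. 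The first step is to observe that the rank-$4$ case is impossible for a conormal reason: if $\mathscr{F}^{[1]}$ had rank $4$ then its saturation is all of $T\mathbb{P}^4$, but I want to show this forces the singular scheme of $\mathscr{F}$ to acquire an embedded or pure-dimension-$2$ (i.e.\ divisorial) component, contradicting purity of dimension $1$. Concretely, writing $\mathscr{F}$ dually by a twisted $2$-form $\omega\in H^0(\Omega_{\mathbb{P}^4}^2\otimes\det N\mathscr{F})$ with $\omega=\eta_1\wedge\eta_2$ locally, the failure of integrability is detected by $d\eta_j\wedge\eta_1\wedge\eta_2$, and the jump of $\mathscr{F}^{[1]}$ from rank $3$ to rank $4$ at a point means the $3$-form $d\omega$ (suitably interpreted) drops rank there; one shows that the locus where this happens, if it were not all of $\mathbb{P}^4$, would be a divisor contributing to $\Sing(\mathscr{F})$, forcing a codimension-$1$ component and violating purity. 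So generically $\mathscr{F}^{[1]}$ has rank exactly $3$.

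The second step is to rule out $\mathscr{F}^{[1]}$ being integrable, which is where I expect the real content to lie. Suppose $\mathscr{F}^{[1]}$ is integrable of rank $3$; then it is the tangent sheaf of a codimension-one foliation $\mathscr{G}$ on $\mathbb{P}^4$, and $N\mathscr{F}^*\subset N\mathscr{G}^*\subset\Omega_{\mathbb{P}^4}^1$ with $N\mathscr{G}^*$ a line bundle $\mathcal{O}_{\mathbb{P}^4}(-\deg\mathscr{G}-2)$. Thus the rank-$2$ locally free conormal sheaf $N\mathscr{F}^*$ contains a line subbundle, hence by the splitting principle on $\mathbb{P}^4$ — or more directly because a rank-$2$ reflexive sheaf containing a line bundle with torsion-free quotient fits in $0\to L\to N\mathscr{F}^*\to \mathscr{I}_W\otimes M\to 0$ — the quotient $N\mathscr{F}^*/\mathcal{O}_{\mathbb{P}^4}(-\deg\mathscr{G}-2)$ is an ideal sheaf twisted by a line bundle, whose cosupport $W$ injects into $\Sing(\mathscr{F})$. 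The key is a dimension count: comparing the Chern classes from Theorem \ref{conormChern} (which give $c_1(N\mathscr{F}^*)=-3-d$ and a formula for $c_2$ in terms of $\deg(Z)$, both forced by purity) against what an honest extension of a line bundle by an ideal sheaf twist allows, one derives a numerical contradiction, or else one shows $W$ has codimension $\le 2$, again contradicting that $\Sing(\mathscr{F})$ is purely $1$-dimensional. I expect the cleanest route is: integrability of $\mathscr{F}^{[1]}$ plus $N\mathscr{F}^*\subset N\mathscr{G}^*$ implies $\mathscr{F}$ is a "sub-distribution of a foliation", and then the defining $2$-form is $\omega=\eta\wedge\alpha$ with $\eta$ the (integrable, hence $d\eta\wedge\eta=0$) $1$-form of $\mathscr{G}$; computing $\Sing(\omega)$ shows it contains $\{\eta=0\}\cap\{\alpha=0\}$ but the integrability obstruction of $\mathscr{F}$ itself, $d\omega\wedge\omega$, must be nonzero (as $\mathscr{F}$ is non-integrable) while lying in the ideal of $\eta$, and tracing through one finds $\Sing(\mathscr{F})$ must meet a divisor — contradiction with purity.

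The third and final step assembles these: $\mathscr{F}^{[1]}$ has rank $3$ and is non-integrable, which is precisely the definition of $\mathscr{F}$ being a (singular holomorphic) Engel distribution given in Section \ref{Engel}; one checks the remaining clause that the characteristic foliation $\mathcal{L}(\mathscr{F})=\Ker(\mathcal{T}(\mathscr{F}^{[1]}))^{**}$ is well defined as a rank-$1$ saturated subsheaf, which follows automatically from the O'Neill tensor having generic rank $1$ on the rank-$3$ bundle $\mathscr{F}^{[1]}$ once non-integrability of $\mathscr{F}^{[1]}$ is known. The main obstacle, as indicated, is the second step: turning "$\mathscr{F}^{[1]}$ integrable" into a contradiction with purity of $\Sing(\mathscr{F})$, because it requires controlling how the singular scheme of a rank-$2$ Pfaff system degenerates when its conormal line-subbundle exists, and the bookkeeping with the Chern-class formulas of Theorem \ref{conormChern} must be done carefully to land on an actual contradiction rather than just a restrictive special case.
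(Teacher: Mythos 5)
Your overall skeleton (show $\mathscr{F}^{[1]}$ has rank $3$ and is non-integrable) is the right one, but the decisive step is not actually carried out. First, a minor point: ruling out rank $4$ needs no divisor argument at all. Since $T\mathscr{F}$ has rank $2$, the sheaf $T\mathscr{F}+[T\mathscr{F},T\mathscr{F}]$ is locally generated by $v_1,v_2,[v_1,v_2]$, so $\mathscr{F}^{[1]}$ has rank at most $3$ automatically, and exactly $3$ because $\mathscr{F}$ is non-integrable.

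The genuine gap is in your second step. None of the three routes you sketch lands: the Chern-class route cannot give a numerical contradiction in general (e.g.\ for degree $1$ one has $N\mathscr{F}^*=\mathcal{O}_{\mathbb{P}^4}(-2)^{\oplus 2}$, which certainly contains line subbundles, yet these distributions are Engel); the cosupport $W$ of $N\mathscr{F}^*/L$ may be empty, so ``$W$ has codimension $\le 2$'' is not automatic; and your ``cleanest route'' rests on a false premise, since for a $2$-form $\omega$ on a $4$-fold the expression $d\omega\wedge\omega$ is a $5$-form and hence identically zero --- the integrability obstruction here is $d\eta_j\wedge\eta_1\wedge\eta_2$, not $d\omega\wedge\omega$. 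The paper's argument uses two facts you do not isolate. (i) Because $N\mathscr{F}^*$ is locally free of rank $2$ and $N(\mathscr{F}^{[1]})^*\subset N\mathscr{F}^*$, one can locally write $\omega=\omega_1\wedge\omega_2$ with $\omega_1$ defining $\mathscr{F}^{[1]}$, whence $\Sing(\mathscr{F}^{[1]})=\{\omega_1=0\}\subset\{\omega_1\wedge\omega_2=0\}=\Sing(\mathscr{F})$. (ii) If $\mathscr{F}^{[1]}$ were an integrable codimension-one foliation on $\mathbb{P}^4$, its singular set would necessarily have a component of dimension $2$; this is the $k=1$ instance of the residue argument of Lemma \ref{lema8} (Malgrange plus Baum--Bott against the ampleness of the normal bundle), and it is exactly where integrability enters, since it fails for non-integrable codimension-one distributions such as contact structures. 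Combining (i) and (ii) puts a $2$-dimensional component inside $\Sing(\mathscr{F})$, contradicting purity of dimension $1$. Without (ii) there is no contradiction to be had, so this is the missing idea rather than mere bookkeeping.
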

\begin{proof}
Since $\mathscr{F}$ is not integrable, we can consider the associate distribution $\mathscr{F}^{[1]}$.
We argue by contradiction and  assume that    $\mathscr{F}^{[1]}$  is  integrable.  If $p\in \mbox{Sing}(\mathscr{F})$, consider an  open set $U$, with $p\in U$, such that $N {\mathscr{F}}^*|_{U}\simeq \mathcal{O}_U^{\oplus 2}$, where 
 $\mathscr{F}$ is  induced by a  decomposable  2-form 
 $
 \omega_1\wedge \omega_2,
 $
 where  $\mbox{Ker}(\omega_1)=  \mathscr{F}^{[1]}|_{U}$. But $\mbox{Sing}(\mathscr{F}^{[1]})|_{U}=\{\omega_1=0\}\subset \{ \omega_1\wedge \omega_2=0\}=\mbox{Sing}(\mathscr{F})|_{U}$.  The integrability of $\mathscr{F}^{[1]}$ implies that $\mbox{Sing}(\mathscr{F}^{[1]})$ has at least a dimension 2 component. A contradiction.   
\end{proof}

Now, we will give an example of a non-integrable dimension two and degree $1$  distribution on $\mathbb{P}^4$ which is not an Engel structure and the singular scheme has no pure dimension 1. 
\begin{example}
Consider the  dimension two non-integrable distribution  $\mathscr{F}$ of degree $1$ given by
$$
\omega=z_0^2dz_2\wedge dz_1+z_0z_3dz_2\wedge dz_4 -(z_0z_1+z_3z_4)dz_2\wedge dz_0-z_0z_2dz_0\wedge dz_1 - z_2z_3dz_0\wedge dz_4.
$$
The singular scheme of $\mathscr{F}$ is $\{z_0^2=z_3=0\}\cup \{z_0=z_2=z_4=0\}$. In $\mathbb{C}^4=\mathbb{P}^4-\{z_0=0\}$ we have that $\omega $ is given by
$$ dz_{2}\wedge
dz_{1}+z_{3}dz_{2}\wedge
dz_{4}  = dz_2 \wedge   (dz_1+z_3dz_4). 
$$
This shows us that the distribution $\mathscr{F}^{[1]} \subset T \mathbb{P}^4$ is a foliation of degree 0 induced by 
$z_2dz_0-z_0dz_2$, and in particular $\mathscr{F}$ is not an Engel structure.  
\end{example}

Let us give an example with the Engel structure of  Lorentzian type whose characteristic foliation  $\mathcal{L}(\mathscr{F})$ has degree $2$ and has no algebraic leaves.  In particular, $\mathscr{F}$ is  of  Lorentzian type and it is not  birational to a Cartan prolongation. 
\begin{example}
Consider the distribution $\mathscr{F}$ of dimension $2$ on  $\mathbb{P}^4$ of degree $2$ induced in affine coordinates  by 
$$
\sigma=(z_2^2-z_1^2)\frac{\partial }{\partial z_1}\wedge \frac{\partial }{\partial z_2}- (z_4^2-z_3z_1^2)\frac{\partial }{\partial z_2}\wedge \frac{\partial }{\partial z_3}-
 (1-z_4z_1^2)\frac{\partial }{\partial z_2}\wedge \frac{\partial }{\partial z_4}.
$$
We have that  $ \mathscr{F}^{[1]}$ is  a not integrable distribution of degree $3$ and is induced by the vector fields
$$
v_1=(z_2^2-z_1^2)\frac{\partial }{\partial z_1}+ (z_3^2-z_2z_1^2)\frac{\partial }{\partial z_2}+ (z_4^2-z_3z_1^2)\frac{\partial }{\partial z_3}+
 (1-x_4z_1^2)\frac{\partial }{\partial z_4},
$$
  $v_2=\frac{\partial }{\partial z_2}$,  and $v_3:=[v_2,v_1]= 2z_2 \frac{\partial }{\partial z_1}-z_1^2 \frac{\partial }{\partial z_2}$. That is,  $ \mathscr{F}$ is of Lorentzian type,  the linear projection  $f:\mathbb{P}^4 \dashrightarrow \mathbb{P}^3$ given by $f(z_0,z_1,z_2,z_3,z_4)=(z_0,z_1,z_3,z_4)$  is Lorentzian with respect to  $ \mathscr{F}$ 
 and the charactetistic foliation $\mathcal{L}(\mathscr{F})$ is the  Jouanolou's foliation induced by $v_1$ which has no algebraic leaves.  
\end{example}

\section{Distributions of degrees 1 and 2 with locally free conormal sheaf }

In this section, we will consider dimension two holomorphic distributions of degree $1$ and $2$ on $\mathbb{P}^4$ with a locally free conormal sheaf whose singular scheme has pure dimension one.

\subsection{Classification of degree $1$ distributions}

In this case Theorem \ref{conormChern} becomes:
\begin{center}
    $c_1(N {\mathscr{F}}^*) = -4$, $c_2(N {\mathscr{F}}^*)   = \dfrac{14-\deg(Z)}{3}$ and $p_a(Z)=\dfrac{(\deg(Z))^2-\deg(Z)-2}{9}.$
\end{center}

\begin{propo}\label{conormal-Engel}
If $\mathscr{F}$ is a dimension two distribution on $\mathbb{P}^4$ of degree $1$ with a locally free conormal sheaf
whose singular scheme has pure dimension one.
 Then  $N {\mathscr{F}}^*=\mathcal{O}_{\mathbb{P}^4}(-2)\oplus\mathcal{O}_{\mathbb{P}^4}(-2)$ and its singular scheme is a conic curve.
\end{propo}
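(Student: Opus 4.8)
The plan is to mirror the arguments of Propositions \ref{lfree1} and \ref{lfree2}, working with the normalization of the conormal bundle. First I would specialize Theorem \ref{conormChern} to $d=1$, so that $c_1(N{\mathscr{F}}^*)=-4$ and $c_2(N{\mathscr{F}}^*)=\frac{14-\deg(Z)}{3}$; integrality of $c_2$ forces $\deg(Z)\equiv 2\pmod 3$, and, $\Sing(\mathscr{F})$ having pure dimension one, it is a nonempty curve, so $\deg(Z)\geq 2$. Since $c_1(N{\mathscr{F}}^*)=-4$ is even, the normalization is $N{\mathscr{F}}^*(2)$, with $c_1=0$ and, by the rank-two twisting formula, $c_2(N{\mathscr{F}}^*(2))=c_2(N{\mathscr{F}}^*)-4=\frac{2-\deg(Z)}{3}\leq 0$.

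Next I would rule out that $N{\mathscr{F}}^*$ is stable: if it were, Bogomolov's inequality applied to the normalization would give $c_2(N{\mathscr{F}}^*(2))>0$, contradicting the previous step. Hence, by the criterion in \cite[pages 84 and 87]{Oko}, $H^0(\mathbb{P}^4,N{\mathscr{F}}^*(2))\neq 0$, so there is an injection $\sigma:\mathcal{O}_{\mathbb{P}^4}(-2)\hookrightarrow N{\mathscr{F}}^*$. Composing with the conormal inclusion $N{\mathscr{F}}^*\hookrightarrow\Omega_{\mathbb{P}^4}^1$ gives a nonzero $\omega\in H^0(\mathbb{P}^4,\Omega_{\mathbb{P}^4}^1(2))$, that is, a degree zero codimension one distribution; in homogeneous coordinates $\omega=\iota_R\eta$ for a nonzero alternating $2$-form $\eta$ on $\mathbb{C}^5$, whose singular scheme is $\mathbb{P}(\ker\eta)$. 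Since $\eta$ has rank $2$ or $4$, $\{\omega=0\}$ is a plane $\mathbb{P}^2\subset\mathbb{P}^4$ or a single point.

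The heart of the proof is to show that $\sigma$ vanishes nowhere. Set-theoretically $\{\sigma=0\}\subseteq\{\omega=0\}$, and every irreducible component of the zero locus of a section of a rank-two bundle has codimension at most $2$ in $\mathbb{P}^4$. If $\{\omega=0\}$ is a point, this forces $\{\sigma=0\}=\emptyset$. If $\{\omega=0\}$ is a $\mathbb{P}^2$, then every component of $\{\sigma=0\}$ has codimension exactly $2$ (between the general bound $\leq 2$ and containment in $\mathbb{P}^2$) and is contained in that $\mathbb{P}^2$, hence equals it; so $\{\sigma=0\}$ is a nonempty subscheme of pure codimension $2$ supported on $\mathbb{P}^2$, and it represents $c_2(N{\mathscr{F}}^*(2))$, giving $c_2(N{\mathscr{F}}^*(2))\geq 1>0$, a contradiction. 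Therefore $\sigma$ is a nowhere-vanishing section, so $\mathcal{O}_{\mathbb{P}^4}(-2)$ is a subbundle of $N{\mathscr{F}}^*$ with locally free quotient of first Chern class $-4-(-2)=-2$. Since $\Ext^1(\mathcal{O}_{\mathbb{P}^4}(-2),\mathcal{O}_{\mathbb{P}^4}(-2))=H^1(\mathbb{P}^4,\mathcal{O}_{\mathbb{P}^4})=0$, the extension $0\to\mathcal{O}_{\mathbb{P}^4}(-2)\to N{\mathscr{F}}^*\to\mathcal{O}_{\mathbb{P}^4}(-2)\to 0$ splits and $N{\mathscr{F}}^*\simeq\mathcal{O}_{\mathbb{P}^4}(-2)^{\oplus 2}$. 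Then $c_2(N{\mathscr{F}}^*)=4$, and Theorem \ref{conormChern} yields $\deg(Z)=2$ and $p_a(Z)=0$; a curve of degree $2$ and arithmetic genus $0$ is a conic.

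The main obstacle is exactly this vanishing step. Unlike in Remark \ref{sub-zero} for the tangent sheaf, where the auxiliary degree zero foliation by curves has a single point as singular scheme, here the auxiliary degree zero codimension one distribution $\omega$ may be singular along an entire plane, so a pure dimension count does not close the argument; the extra ingredient needed is that a nonzero section of a rank-two bundle with nonempty (purely codimension two) zero locus has strictly positive $c_2$, which is incompatible with $c_2(N{\mathscr{F}}^*(2))\leq 0$. I would also double-check the integrality bookkeeping for $c_2$ and $p_a$, to be certain no spurious value of $\deg(Z)$ survives, and the (routine) scheme-theoretic inclusion $\{\sigma=0\}\subseteq\{\omega=0\}$ coming from $\omega=\varphi\circ\sigma$.
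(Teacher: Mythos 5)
Your proof is correct, and its overall skeleton (normalize, split into the cases $h^0\bigl((N\mathscr{F}^*)_\eta\bigr)\neq 0$ and $=0$, kill the stable case with Bogomolov, exploit a section in the non-stable case) is the same as the paper's. Where you genuinely diverge is in how the splitting is extracted from the section. The paper argues $0\leq \deg(\sigma=0)=c_2(N\mathscr{F}^*(2))=\tfrac{2-\deg(Z)}{3}$, deduces $\deg(Z)=2$ from integrality, hence $c_1=c_2=0$, and then cites \cite[Lemma 2.1]{GJ} to conclude $N\mathscr{F}^*(2)\simeq\mathcal{O}_{\mathbb{P}^4}^{\oplus 2}$. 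You instead compose $\sigma$ with the conormal inclusion to get a degree-zero codimension-one distribution $\omega$, use the point-or-plane dichotomy for $\{\omega=0\}$ together with the sign of $c_2(N\mathscr{F}^*(2))$ to show $\sigma$ vanishes nowhere, and then split the resulting extension via $\Ext^1(\mathcal{O}(-2),\mathcal{O}(-2))=H^1(\mathcal{O}_{\mathbb{P}^4})=0$. This is essentially the technique the paper deploys only in the degree-$2$ conormal case, imported here; it buys you a self-contained argument that avoids the \cite{GJ} citation and, as a bonus, justifies the inequality $c_2(N\mathscr{F}^*(2))\geq 0$ that the paper asserts somewhat tersely (a priori a section could have divisorial zeros; your containment $\{\sigma=0\}\subseteq\{\omega=0\}$ rules that out). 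The trade-off is that the paper's route is shorter once the quoted lemma is granted. Both arguments land on $\deg(Z)=2$, $p_a(Z)=0$, hence a conic.
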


\begin{proof}
The normalization of $N {\mathscr{F}}^*$ is $(N {\mathscr{F}}^*)_{\eta}=N {\mathscr{F}}^*(2)$. If $\sigma\in H^0((N {\mathscr{F}}^*)_{\eta})$ is a non-trivial section, then
\begin{center}
$0\leq\deg(\sigma=0)=c_2(N {\mathscr{F}}^*(2))=\dfrac{2-\deg(Z)}{3}$
\end{center}
which implies that $\deg(Z)\leq 2$. Therefore, $c_2(N {\mathscr{F}}^*(2))$ is an integer number if and only if $\deg(Z)=2$, i.e.,  $c_1(N {\mathscr{F}}^*(2))=c_2(N {\mathscr{F}}^*(2))=0$, and  \cite[Lemma 2.1]{GJ}  implies  that $N {\mathscr{F}}^*(2)\simeq \mathcal{O}_{\mathbb{P}^4}\oplus\mathcal{O}_{\mathbb{P}^4}$. Furthemore, by Theorem \ref{conormChern} we have $\deg(Z)=2$ and $p_a(Z)=0$.
If $h^0((N {\mathscr{F}}^*)_{\eta})=0$, then $N {\mathscr{F}}^*$ is a locally free stable sheaf with
\begin{center}
   $c_1(N {\mathscr{F}}^*(2))=0$ and $c_2(N {\mathscr{F}}^*(2))=\dfrac{2-\deg(Z)}{3}$. 
\end{center}
 By Bogomolov's inequality, we have $\deg(Z)< 2$ and this implies that $c_2(N {\mathscr{F}}^*)$  is   not   an integer number, a contradiction. So  the conormal sheaf $N {\mathscr{F}}^*$ is not stable, but only split.

\end{proof}

\begin{example}
Let $\omega_1,\omega_2 \in  H^0(\Omega_{\mathbb{P}^4}^{1}(2))$ be given by   
$\omega_1=(z_2-z_4)dz_0+z_3dz_1-z_0dz_2-z_1dz_3+z_0dz_4$ and $\omega_2=z_3dz_0+(2z_2-z_4)dz_1-2z_1dz_2-z_0dz_3+z_1dz_4.$ Consider the non-integrable distribution $\mathscr{F}$ induced by the 2-form 
\begin{equation*}
    \begin{split}
        \omega= \omega_1\wedge\omega_2 
        & = (2z_2^2-z_3^2-3z_2z_4+z_4^2)dz_0\wedge dz_1+(-2z_1z_2+z_0z_3+2z_1z_4)dz_0\wedge dz_2 +\\ 
        & +(-z_0z_2+z_1z_3+z_0z_4)dz_0\wedge dz_3+(z_1z_2-z_0z_3-z_1z_4)dz_0\wedge dz_4+ \\
        & + (2z_0z_2-2z_1z_3-z_0z_4)dz_1\wedge dz_2+(2z_1z_2-z_0z_3-z_1z_4)dz_1\wedge  dz_3 +\\
        & +(-2z_0z_2+z_1z_3+z_0z_4)dz_1\wedge dz_4 +(z_0^2-2z_1^2)dz_2\wedge dz_3+z_0z_1dz_2\wedge dz_4+
        \\
        & +(z_0^2-z_1^2)dz_3\wedge dz_4.
    \end{split}
\end{equation*}
The singular scheme of $\mathscr{F}$ is given by the conic  $$\Sing(\mathscr{F})=\{z_0=z_1=2z_2^2-z_3^2-3z_2z_4+z_4^2=0\}.$$
We have that the following holds
$$
 \begin{cases}     \omega_1\wedge\omega_2 \wedge d\omega_1=0,
\\
 \omega_1\wedge\omega_2 \wedge d\omega_2\neq 0,
 \\
\omega_2 \wedge d\omega_2 \neq 0.
\end{cases}
$$
This shows us that $\mathscr{F}$ is an Engel distribution with $\mathscr{F}^{[1]}=\mbox{Ker}(\omega_2)$. Then, by  \cite[Proposition 4.3]{ACMa}  there are a linear projection  $\phi:\mathbb{P}^4 \dashrightarrow  \mathbb{P}^3$  and a non-integrable distribution $\mathscr{G}$ of degree $1$ on $\mathbb{P}^3$  such that $\mathscr{F}^{[1]}=\rho^*\mathscr{G}$. Therefore,
$\mathscr{F}$ is the blow-down of the Cartan prolongation of $\mathscr{G}$. 

Now, consider the foliation $\mathscr{F}_\phi$ induced by   the rational map $\phi:\mathbb{P}^4 \dashrightarrow  \mathbb{P}(1,1,2) $, defined by $\phi(z_0,z_1,z_2,z_3,z_4)= (z_0,z_1,2z_2^2-z_3^2-3z_2z_4+z_4^2)$.
We have $\Sing(\mathscr{F})\subset \Sing(\mathscr{F}_\phi)$, which tells us that degree one distributions are not determined by their singular schemes. 
\end{example}

\begin{obs}\label{Okonek2} \rm 
It follows from  \cite[Theorem 1.5]{Okonek2} that 
$$
\mathcal{M}^{st}(-1,2,2,5)=\left\{\mbox{coker}(\phi);\phi: \mathcal{O}_{\mathbb{P}^4}(-2)^{\oplus 2}\to \Omega_{\mathbb{P}^4}^1\right \}.
$$
Now, consider the 10-dimensional complex vector space $W:=H^0(\Omega^1_{\mathbb{P}^4}(2))$. Since a generic  element of $$\left\{\mbox{coker}(\phi);\phi: \mathcal{O}_{\mathbb{P}^4}(-2)^{\oplus 2}\to \Omega_{\mathbb{P}^4}^1\right \} $$
is induced by a polynomial 2-form $\omega_1\wedge  \omega_2$, where $\omega_1,  \omega_2 \in W$ are generic 1-forms of degree 1, then the space  $\mathcal{M}^{st}(-1,2,2,5)$  is a Zariski open subset of the Grassmannian  $Gr(2,W)=\mathbb{G}(1,9)$, whose dimension is $16$.   
\end{obs}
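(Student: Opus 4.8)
The plan is to exhibit a Zariski open subset $\mathcal{U}$ of $\mathbb{G}(1,9)$ together with a bijective morphism $\Psi\colon\mathcal{U}\to\mathcal{M}^{st}(-1,2,2,5)$ and show it is an isomorphism, from which the statement (including the dimension) follows at once. First I would unwind the data supplied by Okonek's theorem. A morphism $\phi\colon\mathcal{O}_{\mathbb{P}^4}(-2)^{\oplus 2}\to\Omega_{\mathbb{P}^4}^1$ is nothing but an element of $\Hom(\mathcal{O}_{\mathbb{P}^4}(-2)^{\oplus 2},\Omega_{\mathbb{P}^4}^1)=H^0(\Omega^1_{\mathbb{P}^4}(2))^{\oplus 2}=W\oplus W$, i.e. a pair $(\omega_1,\omega_2)$ of $1$-forms of degree $1$ (equivalently a linear map $\mathbb{C}^2\to W$), with $\dim W=10$ read off the Euler sequence. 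When $\phi$ is injective — which happens exactly when $\operatorname{coker}(\phi)$ has generic rank $2$ — the cokernel depends only on $\operatorname{Im}(\phi)$, hence only on the $2$-plane $U:=\langle\omega_1,\omega_2\rangle\subset W$ (equivalently, up to scalar, on the decomposable form $\omega_1\wedge\omega_2$), and $\operatorname{coker}(\phi)\simeq\mathcal{Q}_U:=\Omega^1_{\mathbb{P}^4}/(U\otimes\mathcal{O}_{\mathbb{P}^4}(-2))$. Let $\mathcal{U}\subset\mathbb{G}(2,W)$ be the locus of those $U$ for which $\mathcal{Q}_U$ represents a point of $\mathcal{M}^{st}(-1,2,2,5)$; this is a Zariski open subset (openness of stability on the flat part of the family $\{\mathcal{Q}_U\}$, together with constancy of the Chern classes, which one checks to be $(-1,2,2,5)$ by an elementary computation with the defining sequence), and it is nonempty by Okonek's theorem. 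Moreover, since $\Omega^1_{\mathbb{P}^4}(2)$, hence $\mathcal{H}om(\mathcal{O}_{\mathbb{P}^4}(-2)^{\oplus 2},\Omega^1_{\mathbb{P}^4})\simeq\Omega^1_{\mathbb{P}^4}(2)^{\oplus 2}$, is globally generated, Theorem \ref{Bertini1} shows that for $U$ in a dense open subset the degeneracy locus $D_1(\phi)$ is smooth of the expected codimension $3$, i.e. the associated distribution has a smooth $1$-dimensional singular scheme. Since $\mathbb{G}(2,W)=\mathbb{G}(2,10)=\mathbb{G}(1,9)$, $\mathcal{U}$ is a dense open in $\mathbb{G}(1,9)$, of dimension $2\cdot 8=16$.

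The assignment $U\mapsto[\mathcal{Q}_U]$ is then a morphism $\Psi\colon\mathcal{U}\to\mathcal{M}^{st}(-1,2,2,5)$, surjective by Okonek's theorem: every $[\mathcal{Q}]$ in the target equals $\operatorname{coker}(\phi)$ for an injective $\phi\colon\mathcal{O}_{\mathbb{P}^4}(-2)^{\oplus 2}\to\Omega^1_{\mathbb{P}^4}$, and $\operatorname{Im}(\phi)\simeq\mathcal{O}_{\mathbb{P}^4}(-2)^{\oplus 2}$ is saturated in $\Omega^1_{\mathbb{P}^4}$ because $\mathcal{Q}$ is torsion free, so $\mathcal{Q}=\mathcal{Q}_U$ for $U=\operatorname{Im}\big(H^0(\operatorname{Im}(\phi)(2))\hookrightarrow H^0(\Omega^1_{\mathbb{P}^4}(2))=W\big)\in\mathcal{U}$.

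The heart of the argument is injectivity of $\Psi$, which I would deduce from the rigidity $\Hom(\Omega^1_{\mathbb{P}^4},\mathcal{Q})=\mathbb{C}$ for $[\mathcal{Q}]\in\mathcal{M}^{st}(-1,2,2,5)$: applying $\Hom(\Omega^1_{\mathbb{P}^4},-)$ to $0\to\mathcal{O}_{\mathbb{P}^4}(-2)^{\oplus 2}\to\Omega^1_{\mathbb{P}^4}\to\mathcal{Q}\to 0$ and invoking $H^0(T\mathbb{P}^4(-2))=H^1(T\mathbb{P}^4(-2))=0$ (Euler sequence and Bott vanishing) together with $\Hom(\Omega^1_{\mathbb{P}^4},\Omega^1_{\mathbb{P}^4})=\mathbb{C}$ (simplicity of $\Omega^1_{\mathbb{P}^4}$) gives the claim. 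Consequently any two nonzero morphisms $\Omega^1_{\mathbb{P}^4}\to\mathcal{Q}$ differ by a scalar and have the same kernel, so $\mathcal{Q}_U\simeq\mathcal{Q}_{U'}$ forces $U=U'$. Now $\Psi$ is a bijective morphism from the smooth variety $\mathcal{U}$ to $\mathcal{M}^{st}(-1,2,2,5)$ (smooth by Okonek), and the recovery map $[\mathcal{Q}]\mapsto U$ above is algebraic and inverse to $\Psi$, so $\Psi$ is an isomorphism onto a Zariski open subset. As $\mathcal{U}$ is moreover dense in the irreducible $\mathbb{G}(1,9)$, we conclude that $\mathcal{M}^{st}(-1,2,2,5)$ is a Zariski open subset of $\mathbb{G}(1,9)$, and hence has dimension $16$.

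The one genuinely nontrivial point is the injectivity of $\Psi$ — that distinct $2$-planes $U\neq U'\subset W$ cannot yield abstractly isomorphic quotients $\mathcal{Q}_U\simeq\mathcal{Q}_{U'}$. Everything else (nonemptiness and smoothness via Theorem \ref{Bertini1} and Okonek's theorem, surjectivity via Okonek, and the identification of $2$-planes in $W$ with lines in $\mathbb{P}(W)=\mathbb{P}^9$) is routine; the obstruction is overcome by the vanishings $H^0(T\mathbb{P}^4(-2))=H^1(T\mathbb{P}^4(-2))=0$ and the simplicity of $\Omega^1_{\mathbb{P}^4}$, which together pin down the presentation $\Omega^1_{\mathbb{P}^4}\twoheadrightarrow\mathcal{Q}$ up to a scalar.
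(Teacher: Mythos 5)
Your argument is correct and follows the same basic parametrization as the paper's remark: morphisms $\phi$ correspond to pairs $(\omega_1,\omega_2)\in W\oplus W$, the cokernel depends only on the $2$-plane $\langle\omega_1,\omega_2\rangle\subset W$, and $\mathbb{G}(2,W)=\mathbb{G}(1,9)$ has dimension $16$. The paper's remark, however, stops at the assertion that a generic element is induced by a decomposable $2$-form $\omega_1\wedge\omega_2$ and immediately concludes that $\mathcal{M}^{st}(-1,2,2,5)$ is a Zariski open of the Grassmannian; it does not address why distinct $2$-planes give non-isomorphic quotients. You supply exactly that missing step, proving $\Hom(\Omega^1_{\mathbb{P}^4},\mathcal{Q})=\mathbb{C}$ from the vanishings $H^0(T\mathbb{P}^4(-2))=H^1(T\mathbb{P}^4(-2))=0$ (Euler sequence) together with the simplicity of $\Omega^1_{\mathbb{P}^4}$, so that the presentation $\Omega^1_{\mathbb{P}^4}\twoheadrightarrow\mathcal{Q}$ is rigid and $U$ is recovered from $[\mathcal{Q}]$; you also verify the Chern classes $(-1,2,2,5)$ from the defining sequence. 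This buys an actual proof that the forgetful map is injective (hence that the moduli space is an open subset of $\mathbb{G}(1,9)$ rather than merely the image of one), at the cost of length; the paper implicitly delegates this to Okonek's Theorem 1.5. Your appeal to Theorem \ref{Bertini1} for smoothness of the generic singular scheme is a correct but dispensable aside.
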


\begin{example}(Intersection of pencil of hyperplanes) Consider the dimension two foliation $\mathscr{F}$ of degree $1$ given by
$$
\omega=(z_0dz_1-z_1dz_0) \wedge (z_2dz_3-z_3dz_2). 
$$
The leaves of $\mathscr{F}$ are planes which are intersections of hyperplanes of the pencils $\{\mu z_0+ \nu z_1=0\}_{[\mu : \nu]\in \mathbb{P}^1}$ and $\{\lambda z_2+ \rho z_3=0\}_{[\lambda : \rho]\in \mathbb{P}^1}$ and $$\mathrm{Sing}(\mathscr{F})=\{ z_0=z_1=0\}\cup \{ z_2=z_3=0\}$$ has codimension two. Moreover, it is clear that  $N {\mathscr{F}}^*=\mathcal{O}_{\mathbb{P}^4}(-2)\oplus\mathcal{O}_{\mathbb{P}^4}(-2)$ and  $T \mathscr{F}=\mathcal{O}_{\mathbb{P}^4}(1)\oplus \mathcal{O}_{\mathbb{P}^4}$,
 since  $i_{\frac{\partial }{\partial z_4}}\omega =0$. In fact,    $\mathscr{F}$ is the pull-back by the rational map $(z_0:z_1:z_2:z_3:z_4)\to (z_0:z_1:z_2:z_3) $ of the    foliation by curves, of degree 1,  induced by $\omega$.     
\end{example}

\subsection{Classification of degree $2$ distributions}

In this case Theorem \ref{conormChern} becomes:
\begin{center}
    $c_1(N {\mathscr{F}}^*) = -5$, $c_2(N {\mathscr{F}}^*)  = \dfrac{40-\deg(Z)}{5}$ and $p_a(Z)=\dfrac{2(\deg(Z))^2+65\deg(Z)-500}{50}.$
\end{center}

\begin{propo}
If $\mathscr{F}$ is a dimension two distribution on $\mathbb{P}^4$ of degree $2$ with a locally free conormal sheaf. 
 whose singular scheme  has pure dimension one. 
 Then $N {\mathscr{F}}^*=\mathcal{O}_{\mathbb{P}^4}(-2)\oplus\mathcal{O}_{\mathbb{P}^4}(-3)$ and its singular scheme is a curve of degree $10$ and arithmetic genus $7$.
\end{propo}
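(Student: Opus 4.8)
The plan is to mirror the arguments of Propositions~\ref{lfree2} and \ref{conormal-Engel}, now working with the conormal sheaf. First I would specialize Theorem~\ref{conormChern} to $d=2$, obtaining $c_1(N {\mathscr{F}}^*)=-5$, $c_2(N {\mathscr{F}}^*)=(40-\deg Z)/5$ and $p_a(Z)=(2(\deg Z)^2+65\deg Z-500)/50$. Since $c_1=-5$ is odd, the normalization \eqref{normalization} is $(N {\mathscr{F}}^*)_{\eta}=N {\mathscr{F}}^*(2)$, a rank two locally free sheaf with $c_1=-1$ and $c_2=(10-\deg Z)/5$. One then distinguishes the two cases according to whether $h^0((N {\mathscr{F}}^*)_{\eta})$ vanishes.

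If $h^0(N {\mathscr{F}}^*(2))=0$, then $N {\mathscr{F}}^*$, hence $N {\mathscr{F}}^*(2)$, is stable by the criterion of \cite{Oko}, so Bogomolov's inequality gives $1-4c_2(N {\mathscr{F}}^*(2))<0$, i.e. $\deg Z\le 8$; integrality of $c_2(N {\mathscr{F}}^*(2))$ forces $\deg Z\equiv 0\pmod 5$, hence $\deg Z=5$ (the value $\deg Z=0$ is impossible, as a codimension two distribution on $\mathbb{P}^4$ is never non-singular by Theorem~\ref{non-sing}), and then $p_a(Z)=-5/2$ is not an integer, a contradiction. So $N {\mathscr{F}}^*$ is not stable and we may choose $0\neq\sigma\in H^0(N {\mathscr{F}}^*(2))$. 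Since $N {\mathscr{F}}^*(2)\subset\Omega_{\mathbb{P}^4}^1(2)$ and $H^0(\Omega_{\mathbb{P}^4}^1(k))=0$ for $k\le 1$ by the Euler sequence, $\sigma$ cannot vanish along a hypersurface (dividing out its equation would give a nonzero section of some $N {\mathscr{F}}^*(2-e)\subseteq\Omega_{\mathbb{P}^4}^1(2-e)$ with $2-e\le 1$), so the cokernel of $\sigma$ is torsion-free and we get an exact sequence
\begin{equation*}
0\to\mathcal{O}_{\mathbb{P}^4}\xrightarrow{\ \sigma\ }N {\mathscr{F}}^*(2)\to\mathcal{I}_W(-1)\to 0
\end{equation*}
with $W\subset\mathbb{P}^4$ of codimension $\geq 2$ and $c_2(N {\mathscr{F}}^*(2))$ equal to the degree of the codimension two part of $W$; in particular $(10-\deg Z)/5\geq 0$, so $\deg Z\leq 10$. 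Integrality of $c_2(N {\mathscr{F}}^*(2))$ and of $p_a(Z)$, together with $\deg Z\neq 0$, again leaves only $\deg Z=10$, whence $c_2(N {\mathscr{F}}^*(2))=0$ and $W$ has no codimension two component.

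To finish, since $\mathcal{O}_{\mathbb{P}^4}$ and $N {\mathscr{F}}^*(2)$ are locally free, $\mathcal{I}_W(-1)$ has projective dimension $\leq 1$, which forces $W$ to be of pure codimension $\leq 2$; carrying no surface component, it is empty. Thus $N {\mathscr{F}}^*(2)$ is an extension of $\mathcal{O}_{\mathbb{P}^4}(-1)$ by $\mathcal{O}_{\mathbb{P}^4}$, which splits because $\mathrm{Ext}^1(\mathcal{O}_{\mathbb{P}^4}(-1),\mathcal{O}_{\mathbb{P}^4})=H^1(\mathbb{P}^4,\mathcal{O}_{\mathbb{P}^4}(1))=0$; hence $N {\mathscr{F}}^*\simeq\mathcal{O}_{\mathbb{P}^4}(-2)\oplus\mathcal{O}_{\mathbb{P}^4}(-3)$, and Theorem~\ref{conormChern} with $\deg Z=10$ gives $p_a(Z)=7$. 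The step I expect to need the most care is the second case: verifying that $\sigma$ produces a torsion-free quotient (so that the ideal-sheaf sequence is legitimate) and then upgrading $\deg W=0$ to $W=\emptyset$ via the projective dimension of $\mathcal{I}_W(-1)$; the remaining parts are routine arithmetic with the formulas from Theorem~\ref{conormChern}.
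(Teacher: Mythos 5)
Your proof is correct, and it follows the paper's overall skeleton (pass to the normalization $N {\mathscr{F}}^*(2)$, split on whether $h^0$ of it vanishes, and use Bogomolov plus integrality of $c_2$ and $p_a$ to kill the stable case), but the decisive non-stable case is handled with different tools. The paper composes the section $\sigma$ with the inclusion into $\Omega_{\mathbb{P}^4}^1$ to obtain a degree-zero codimension-one distribution, invokes the classification of such distributions (singular locus a point or a plane) to force $(\sigma=0)$ to be empty or a plane curve, and eliminates the plane-curve case ($c_1=-1$, $c_2=1$) via Schwarzenberger's condition. You instead exclude divisorial zeros of $\sigma$ directly from $H^0(\Omega_{\mathbb{P}^4}^1(k))=0$ for $k\le 1$, set up the Serre-type sequence $0\to\mathcal{O}_{\mathbb{P}^4}\to N {\mathscr{F}}^*(2)\to\mathcal{I}_W(-1)\to 0$, obtain $\deg Z\le 10$ from $c_2=\deg W_2\ge 0$, discard $\deg Z=5$ by non-integrality of $p_a$ (where the paper uses Schwarzenberger — both work), and finish with the codimension/projective-dimension argument showing $W=\emptyset$ together with $\Ext^1(\mathcal{O}_{\mathbb{P}^4}(-1),\mathcal{O}_{\mathbb{P}^4})=H^1(\mathcal{O}_{\mathbb{P}^4}(1))=0$. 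Your route is more self-contained (no appeal to the classification of degree-zero distributions) and makes the splitting step explicit, which the paper leaves implicit; the paper's route is shorter granted those external inputs. The one step that genuinely needs the care you flagged is the torsion-freeness of the cokernel of $\sigma$, which indeed follows from the absence of divisorial zeros (locally the two components of $\sigma$ have unit gcd), so the ideal-sheaf sequence is legitimate.
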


\begin{proof}
The normalization of $N {\mathscr{F}}^*$ is $(N {\mathscr{F}}^*)_{\eta}=N {\mathscr{F}}^*(2)$. If $\sigma\in H^0((N {\mathscr{F}}^*)_{\eta})\neq 0$, then this gives us a non-trivial section and we have a sequence

\centerline{
\xymatrix{
\mathcal{O}_{\mathbb{P}^4}(-2)  \ar[rr]^{\sigma} \ar@/^2pc/[rrrr]^{\omega} &&  N {\mathscr{F}}^*  \ar[rr]^{ } && \Omega_{\mathbb{P}^4}^1 .  \\
}
} 
\noindent

Then $(\sigma=0)\subset(\omega=0)$. By the  classification of degree zero distributions, either $ (\omega=0)$ is a point $\{p\}$  or a plane \cite[Proposition 4.3]{ACMa}. 
Note that this also implies that $\sigma$ has no divisorial zeros.  In the first case, $(\sigma=0)$ must be empty, otherwise  we would have that $(\sigma=0)\subset(\omega=0)=\{p\}$. Then $(\sigma=0)=\emptyset$ implies that $N {\mathscr{F}}^*(2)$ is  split. That is $N {\mathscr{F}}^*=\mathcal{O}_{\mathbb{P}^4}(-2)\oplus\mathcal{O}_{\mathbb{P}^4}(-3)$, thus $c_2(N {\mathscr{F}}^*)=6$. Furthermore, by Theorem \ref{conormChern} we have $\deg(Z)=10$ and $p_a(Z)=7$. In the second case,    either  $(\sigma=0)=\emptyset$   and  $N {\mathscr{F}}^*(2)$ is  split or  $(\sigma=0)$ is a plane curve.  But  if $(\sigma=0)$ is a curve,  the inequality
$$
0 <\deg(\sigma)=c_2(N {\mathscr{F}}^*(2))=\dfrac{10-\deg(Z)}{5}
$$
says us that $\deg(Z)=5$, since $Z$  is also    not empty and $c_2(N {\mathscr{F}}^*(2))$ is an integer number. Therefore, then $c_2(N {\mathscr{F}}^*(2))=1$ and $c_1(N {\mathscr{F}}^*(2))=-1$ and such    vector bundle  can not exist by Schwarzenberger's condition
$$(S_4^2):c_2(c_2+1-3c_1-2c_1^2)\equiv 0(12).$$
Then, $(\sigma=0)=\emptyset$   and  $N {\mathscr{F}}^*(2)$ is  split.  
 Now, suppose that  $h^0((N {\mathscr{F}}^*)_\eta)=0$, then $N {\mathscr{F}}^*$ is a locally free stable sheaf with
\begin{center}
$c_1(N {\mathscr{F}}^*(2))=-1$ and $c_2(N {\mathscr{F}}^*(2))=\dfrac{10-\deg(Z)}{5}$.
\end{center}
By Bogomolov's inequality, we have $0<\deg(Z)< 35/4$. 
In all   possible cases either $c_2(N {\mathscr{F}}^*)$ or $p_a(Z)$ are not integers numbers. Then $(N {\mathscr{F}}^*)_{\eta}$ can not be stable. 
\end{proof}

\begin{example}
 Consider the dimension two foliation $\mathscr{F}$ of degree $2$ given by
 $\omega=\omega_1\wedge \omega_2$, where 
$$w_1=-z_1dz_0+z_0dz_1-z_3dz_2+z_2dz_3$$
and 
$$w_2=(z_1z_0-z_4^2)dz_0+(z_2z_1-z_0^2)dz_1+(z_3z_2-z_1^2)dz_2+(z_4z_3-z_2^2)dz_3+(z_0z_4-z_3^2)dz_4.$$
Then, 
\begin{equation*}
    \begin{split}
       \omega & = \omega_1\wedge \omega_2 \\
        & = (z_0z_4^2 -z_1^2z_2)dz_0\wedge dz_1+(z_1^3+z_0z_1z_3-z_1z_2z_3-z_3z_4^2)dz_0\wedge dz_2+ \\
        & +(z_2z_4^2-z_0z_1z_2+z_1z_2^2-z_1z_3z_4)dz_0\wedge dz_3+(z_1z_3^2-z_0z_1z_4)dz_0\wedge dz_4+  (z_0^2z_4-z_0z_3^2)dz_1\wedge z_4 +\\
        & +( z_1z_2z_3-z_0z_1^2-z_0^2z_3+z_0z_2z_3)dz_1\wedge dz_2+(z_0^2z_2-z_0z_2^2-z_1z_2^2+z_0z_3z_4)dz_1\wedge dz_3  + \\
        & +(z_1^2z_2-z_3^2z_4)dz_2\wedge dz_3+(z_3^3-z_0z_3z_4)dz_2\wedge dz_4 +(z_0z_2z_4-z_2z_3^2)dz_3\wedge dz_4.
    \end{split}
\end{equation*}
The singular scheme of $\mathrm{Sing}(\mathscr{F}) =\{\omega=0\}$ is a curve of degree $10$ and arithmetic
genus $7$ which is singular at the points $(0:0:1:0:0)$ and $(1:0:1:0:0)$.  It follows from Proposition \ref{conormal-Engel} that $\mathscr{F}$ is an Engel distribution. In fact,  $\mathscr{F}$ is the blow-down of the Cartan prolongation of the canonical contact structure $-z_1dz_0+z_0dz_1-z_3dz_2+z_2dz_3$ on $\mathbb{P}^3$.   
\end{example}

The following example is the compactification to $\mathbb{P}^4$ of an example given by  Cerveau and Lins Neto in  \cite[pg 12]{CLN2}. 
\begin{example}(A non-integrable   distribution of degree $2$ with   isolated singularities)
Consider the degree $2$ distribution $\mathscr{F}$ induced by the 2-form
 \begin{equation*}
    \begin{split}
        \omega & = [z_4^3+(z_1z_2+z_3z_4)z_2+z_1^2z_3]dz_0\wedge dz_1 + [ z_3^3-(z_1z_2+z_3z_4)z_1+z_0^2z_4]dz_0\wedge dz_2- \\
        & - [z_1^3 -(z_1z_2-z_3z_4)z_4+z_3^2z_0]dz_0\wedge dz_3 - [z_2^3+(z_1z_2-z_3z_4)z_3 +z_4^2z_1]dz_0\wedge dz_4+ \\
        & + (z_1z_2+z_3z_4)z_0dz_1\wedge dz_2 + (z_1z_2-z_3z_4)z_0dz_3\wedge dz_4+\\
        & + z_1^2z_0dz_1\wedge dz_3 + z_4^2z_0dz_1\wedge dz_4 + z_3^2z_0dz_2\wedge dz_3 + z_2^2z_0dz_2\wedge dz_4.
    \end{split}
\end{equation*}
 The singular set of  $\mathscr{F}$ is the union of the points $(1:0:0:0:0)$,  $(0:1:0:-1:1)$ and 
   the  conic  given by   $\{z_0=z_2=z_1^2+z_4^2+z_3^2 -z_1z_3 -z_3z_4=0\}$. 
\end{example}

\subsection{Proof of Theorem 1} It follows from Propositions \ref{lfree1} and \ref{lfree2} that 
  $T \mathscr{F}$ is either isomorphic to $\mathcal{O}_{\mathbb{P}^4}(1)\oplus \mathcal{O}_{\mathbb{P}^4}(1-d)$ or
$\mathcal{O}_{\mathbb{P}^4} \oplus \mathcal{O}_{\mathbb{P}^4}$, where $d$ is the degree of $\mathscr{F}$.
Let  $v_i: \mathcal{O}_{\mathbb{P}^4}(a_i)\to \mathcal{O}_{\mathbb{P}^4}(a_1)\oplus \mathcal{O}_{\mathbb{P}^4}(a_2) \simeq  T  \mathscr{F}$  the  rational vector fields generating $T  \mathscr{F}$, where $a_1+a_2=2-d $. 
If $T \mathscr{F}=\mathcal{O}_{\mathbb{P}^4}(1)\oplus \mathcal{O}_{\mathbb{P}^4}(1-d)$, then $a_1=1$ and $v_1$ is represented, in homogeneous coordinates, by a constant vector field and $v_2$  is induced by a homogeneous polynomial vector field of degree 1 or 2. 
If  $T\mathscr{F}=\mathcal{O}_{\mathbb{P}^4}\oplus \mathcal{O}_{\mathbb{P}^4}$, then $\mathscr{F}$ has degree 2 and $v_1$ and $v_2$ are represented, in homogeneous coordinates, by linear vector fields.

Suppose that $\mathscr{F}$ is integrable and   $T\mathscr{F}\simeq \mathcal{O}_{\mathbb{P}^4}(1)\oplus \mathcal{O}_{\mathbb{P}^4}(1-d)$,  then the   foliation by curves induced by $v_1: \mathcal{O}_{\mathbb{P}^4}(1)\to   T  \mathscr{F}$, is given by a rational linear projection $\rho: \mathbb{P}^4  \dashrightarrow \mathbb{P}^3$ and $\mathscr{F}$ is the pull-back by $\rho$ of a codimension one foliation of degree $d$ on $\mathbb{P}^3$.    If    $T\mathscr{F}\simeq \mathcal{O}_{\mathbb{P}^4}\oplus \mathcal{O}_{\mathbb{P}^4}$, then 
 $T\mathscr{F}\simeq \mathfrak{g}\otimes \mathcal{O}_{\mathbb{P}^4}$,  where   either  $\mathfrak{g}$ is   an abelian Lie algebra of dimension $2$  or  $\mathfrak{g}\simeq \mathfrak{aff}(\mathbb{C})$.

Suppose that $\mathscr{F}$ is not integrable and that $\mathscr{F}^{[1]}$ is integrable:

if  $T \mathscr{F} \simeq \mathcal{O}_{\mathbb{P}^4}(1) \oplus \mathcal{O}_{\mathbb{P}^4}(1-d) \subset \mathscr{F}^{[1]}$,  then the 3-dimensional holomorphic foliation $\mathscr{F}^{[1]}$ is a linear pull-back of a codimension 1 foliation $\mathscr{G}$ in $\mathbb{P}^3$ or $\mathbb{P}^2$, and we are in case (a). Moreover, $\deg(\mathscr{F}^{[1]}) \leq \deg(v_1 \wedge v_2 \wedge [v_1, v_2]) \leq \deg(v_1)+\deg(v_2)+\deg([v_1, v_2])= 3$.

If $T \mathscr{F} \simeq \mathcal{O}_{\mathbb{P}^4} \oplus \mathcal{O}_{\mathbb{P}^4}$, consider  $v_1: \mathcal{O}_{\mathbb{P}^4} \to T \mathscr{F} $ and  $v_2: \mathcal{O}_{\mathbb{P}^4} \to T \mathscr{F}$, vector fields inducing $\mathscr{F}$ and $[v_1,v_2]$.    Then $\mathscr{F}^{[1]}$  is induced by $v_1$, $v_2$ and $v_3=[v_1,v_2]/f$, where $f$ is a polynomial of degree $m=0,1$ and $v_3$ is a vector field of degree $1-m$.
If $m=1$, then $\mathscr{F}^{[1]}$ has degree 
$ \leq 2$,   is a linear pull-back of a foliation of degree $\leq 2$, in $\mathbb{P}^3$, and we are in case (a).   
 If  $m=0$, then $T \mathscr{F} \simeq \mathcal{O}_{\mathbb{P}^4} \oplus \mathcal{O}_{\mathbb{P}^4}\subset \mathscr{F}^{[1]}\simeq \mathcal{O}_{\mathbb{P}^4} \oplus \mathcal{O}_{\mathbb{P}^4} \oplus \mathcal{O}_{\mathbb{P}^4}$, 
$\mathscr{F}^{[1]}$ has degree  $3$ and 
$\mathscr{F}^{[1]}\simeq \mathfrak{g}\otimes \mathcal{O}_{\mathbb{P}^4}$,  
 where $\mathfrak{g}=H^0(\mathscr{F}^{[1]}) := \mathfrak{f}^{[1]}$ is a non-abelian Lie algebra of dimension 3, since $\mathfrak{f}^{[1]}$ contains the non-abelian Lie algebra $H^0(T\mathscr{F})$ associated with the non-integrable distribution $\mathscr{F}$. 
  Thus,  the  rank of  $[\mathfrak{f}^{[1]},\mathfrak{f}^{[1]}] $  is $\geq 1$. 
Now, define 
$$  \mathfrak{r}_{3,\lambda}(\mathbb{C}):=\{[u_1,u_2]=u_2; \ [u_1,u_3]=\lambda u_3;\ [u_2,u_3]=0 \},$$  where $\lambda \in \mathbb{C}^*$ with  $\lambda\in \{1,-1\}$, or
     $0<|\lambda|<1$, or  $|z|=1$ and $\mathfrak{Im}(z)>0$. 
     By the classification of complex  non-abelian Lie algebra of dimension 3, \cite[Lemma 2]{BSt} and \cite[ Theorem 2.1]{FR},  we have that: 
     
  \noindent (a) if $\dim[\mathfrak{f}^{[1]}, \mathfrak{f}^{[1]}]=1$, then     $\mathfrak{g} $ is either  isomorphic to
       \begin{itemize}
    \item $ \mathfrak{n}_{3}(\mathbb{C})$ the $3$-dimensional Heisenberg Lie algebra,
    \item or   $  \mathfrak{aff}(\mathbb{C}) \oplus \mathbb{C}:=\{ [u_1,u_3]=0,\ [u_2,u_3]=0,\ [u_1,u_2]=u_2\}$;
  \end{itemize}

 \noindent(b) if $\dim[\mathfrak{f}^{[1]}, \mathfrak{f}^{[1]}]=2$, then     $\mathfrak{g} $ is either   isomorphic to 
       \begin{itemize}
    \item  $\mathfrak{r}_{3}(\mathbb{C}):=\{ [u_1,u_2]=u_2,\,[u_1,u_3]=u_2+u_3,\, [u_2,u_3]=0\}$,
    \item  or  $\mathfrak{r}_{3,\lambda}(\mathbb{C})$; 
  \end{itemize}
  
   \noindent (c) if $\dim[\mathfrak{f}^{[1]}, \mathfrak{f}^{[1]}]=3$, then     $\mathfrak{g} $ is  isomorphic $\mathfrak{sl}_{2}(\mathbb{C}).$

   It follows from \cite[    Th\'eor\`eme 6.6. and  Th\'eor\`eme 6.9]{CD2}
   that $\mathfrak{g} $  can not be $\mathfrak{r}_{3}(\mathbb{C})$ nor the Heisenberg Lie algebra.  Moreover, 
   $\mathfrak{f}^{[1]}$ can not be isomorphic to $\mathfrak{r}_{3,1}(\mathbb{C})$. 
   In fact, suppose that  $\mathfrak{f}^{[1]}\simeq \mathfrak{r}_{3,1}(\mathbb{C})$  and consider $v_1, v_2 \in \mathfrak{r}_{3,1}(\mathbb{C})$ vector fields  tangent to  $\mathscr{F}$.
   Then, we can write
   $v_1=a_1u_1+ a_2u_2+a_3u_3$ and  $v_2=b_1u_1+ b_2u_2+b_3u_3$, with $a_i,b_i\in \mathbb{C}$, for all $i=1,2,3$.  Thus, by using the relations of $\mathfrak{r}_{3,1}(\mathbb{C})$ we conclude that $[v_1,v_2]=(a_1b_2-a_2b_1)u_2+ \lambda(a_1b_3-a_3b_1)u_2$.
   Now, 
   $$v_1\wedge v_2 \wedge [v_1,v_2]=(\lambda-1)(a_1b_2-a_2b_1)(a_1b_2-a_2b_1)(u_1\wedge u_2\wedge u_3)$$ is not a zero holomorphic  3-vector  only if   $
   (\lambda-1)(a_1b_2-a_2b_1)(a_1b_2-a_2b_1)\neq 0$. In particular, $\lambda$ cannot be $1$.

Suppose that $\mathscr{F}$ is an Engel structure: 

if $T \mathscr{F} \simeq \mathcal{O}_{\mathbb{P}^4}(1)\oplus \mathcal{O}_{\mathbb{P}^4}(1-d)$.     Consider $v_1: \mathcal{O}_{\mathbb{P}^4}(1)\to T \mathscr{F} $ and $v_2: \mathcal{O}_{\mathbb{P}^4}(1-d)\to T \mathscr{F}$, vector fields that induce $\mathscr{F}$.
Then $\mathscr{F}^{[1]}$  is induced by $v_1$, $v_2$ and $v_3=[v_1,v_2]/f$, where $f$ is a polynomial of degree $0\leq m \leq d$ and $v_3$ is a vector field of degree $d-1-m$, since $v_1$ is a constant vector field.
 We have seen that $\mathscr{F}^{[1]}$ has the degree $0,1,2$ or $3$. 
If the characteristic foliation $\mathcal{L}(\mathscr{F})$ is $v_1: \mathcal{O}_{\mathbb{P}^4}(1)\to T \mathscr{F} $. Let $\alpha$ be the one-form inducing $\mathscr{F}^{[1]}$ and consider  
$\psi: \mathbb{P}^4  \dashrightarrow \mathbb{P}^4/ \mathcal{L}(\mathscr{F})\simeq \mathbb{P}^3$  the rational linear map whose lines are the leaves of $\mathcal{L}(\mathscr{F})$.
The conditions $\mathcal{L}(\mathscr{F})\subset \mathscr{F}^{[1]} $
and 
$[\mathcal{L}(\mathscr{F}),\mathscr{F}^{[1]}]\subset  \mathscr{F}^{[1]}$ are  equivalent to $i_{v_1} \alpha=  0$  and  
$ i_{v_1} d\alpha=  \mu \alpha$, for some $\mu\in \mathbb{C}^*$, since $v_1$ is a constant vector field.
This implies that there is a contact distribution $\mathscr{G}$ on $\mathbb{P}^3$ such that $\mathscr{F} = \rho^*\mathscr{G}$, whose tangent sheaf is isomorphic to $N(1)$, $\mathcal{O}_{\mathbb{P}^3} \oplus \mathcal{O}_{\mathbb{P}^3}(1)$, $\mathcal{O}_{\mathbb{P}^3}(-1) \oplus \mathcal{O}_{\mathbb{P}^3}(1)$, or $\mathcal{O}_{\mathbb{P}^3} \oplus \mathcal{O}_{\mathbb{P}^3}(-1)$, where $N$ is the null-correlation bundle if $\mathscr{G}$ has degree 0.  
Then $\mathscr{F}$ is  a   blow-down  of the Cartan prolongation of the singular contact  structure induced by $\alpha$. If the tangent bundle of $\mathscr{G}$ is $N(1)$, the result follows from Proposition \ref{Null:contact}, see also Example \ref{exe:null:deg1}.
For the other cases, consider the Cartan prolongation $\mathbb{P}\mathscr{D}$  of $\mathscr{D}$ on
$\pi:  \mathbb{P}(T\mathscr{F}^*)\simeq \mathbb{P}(\mathcal{O}_{\mathbb{P}^3}(-1)  \oplus  \mathcal{O}_{\mathbb{P}^3}(a-1)) \to \mathbb{P}^3$, where $a=1,2$.  
 We have the commutative diagram. 
\begin{center}
\begin{tikzcd}[row sep=large]
 \mathbb{P}(T\mathscr{F}^*)\simeq \mathbb{P}(\mathcal{O}_{\mathbb{P}^3}   \oplus  \mathcal{O}_{\mathbb{P}^3}(d)) \ar[rr, "  \rho"] \ar[d, " \pi "'] &&  \mathbb{P}^4  \ar[dll, bend left=20,dashed,  "\psi"] \\
  \mathbb{P}^3\simeq  \mathbb{P}^4/\mathcal{L}(\mathscr{D}) &&
\end{tikzcd}
\end{center}
where $ \rho: \mathbb{P}(T\mathscr{F}^*)\simeq \mathbb{P}(\mathcal{O}_{\mathbb{P}^3}   \oplus  \mathcal{O}_{\mathbb{P}^3}(d))\to \mathbb{P}^4$ is the   contraction of the divisor  $\mathbb{P}(\mathcal{O}_{\mathbb{P}^3}) \subset  \mathbb{P}(T\mathscr{F}^*)$.  
 Thus,  pushing forward the structures, we conclude that  $\rho_*\mathcal{L}(\mathbb{P}\mathscr{D})=\mathcal{L}\mathscr{D}$,  $\rho_* \mathbb{P}\mathscr{D}^1= \mathscr{D}^1$ and  
$\rho_*\mathbb{P}\mathscr{D}= \mathscr{D}$.
Now, if the characteristic foliation $\mathcal{L}(\mathscr{F})$ is $v_1: \mathcal{O}_{\mathbb{P}^4}(-1)\to T \mathscr{F} $
 then $\mathcal{L}(\mathscr{F})$ has degree 2. 
 In this case, we can suppose that
  $v_2: \mathcal{O}_{\mathbb{P}^4}(1)\to T \mathscr{F} $ is induced by the constant vector field $\frac{\partial}{\partial z_0}$. Thus 
  $v_1=\nu_2+z_0\nu_1+z_0^2\nu_0$, where $\nu_i$ is a  vector field of degree $i=0,1,2$,  such that $[v_2,\nu_i]=0$. Therefore, $\mathscr{F}^{[1]}$ is generated by $v_1$, $v_2$, and $[v_1,v_2]=-2z_0\nu_1-\nu_2$.
 So, $\mathscr{F}$ is of Lorentzian type; see Remark \ref{Lorentzian}.

Suppose that if $T \mathscr{F} \simeq \mathcal{O}_{\mathbb{P}^4}\oplus \mathcal{O}_{\mathbb{P}^4}$.  
Then the characteristic foliation is a non-trivial global section $\mathscr{G}: 0\to  \mathcal{O}_{\mathbb{P}^4}\to  T\mathscr{F}$ which corresponds to a degree-one foliation by curves which are tangent either to a   $\mathbb{C}^*$-action   or a $\mathbb{C}$-action. If it is induced by a $\mathbb{C}^*$-action,  since the singular scheme of the characteristic foliation has codimension $\geq 3$, then, after a biholomorphism of $\mathbb{P}^4 $, we have that the vector field
is of the type
\begin{center}
      $  v=a_0z_{0}\dfrac{\partial}{\partial z_0}  + a_1z_{1}\dfrac{\partial}{\partial z_1} +   a_2z_{2}\dfrac{\partial}{\partial z_2}+a_{3}z_{3}\dfrac{\partial}{\partial z_{3}}$, 
\end{center}
 where $(a_0,a_{1},a_{2},a_{3})\in (\mathbb{N})^{4}$, with $0\leq a_0 \leq a_1\leq a_2 < a_3$ and $a_0=0$ only if $a_1>0$.    If $a_0>0$,  we have a rational map $
 \psi: \mathbb{P}^4  \dashrightarrow   \mathbb{P}(a_0,a_{1},a_{2},a_{3})$, where     $\mathbb{P}(a_0,a_{1},a_{2},a_{3})$ is  a weighted projective  plane and  the leaves of $\mathcal{L}(\mathscr{F})$ are tangent to the fibers.  The conditions $\mathcal{L}(\mathscr{F})\subset \mathscr{F}^{[1]} $
and $[\mathcal{L}(\mathscr{F}),\mathscr{F}^{[1]}]\subset \mathscr{F}^{[1]}$ tell us that there is a non-integrable distribution $\mathscr{H}$ on $\mathbb{P}(a_0,a_{1},a_{2},a_{3})$ such that $\mathscr{F}^{[1]}=  \psi^*\mathscr{H}$ whose tangent sheaf is $[\psi_* (\mathscr{F}^{[1]}/\mathcal{L}(\mathscr{F}))]^{**}$. Then the Cartan prolongation of $\mathscr{H}$ is birational to $\mathscr{F}$.    The case where $a_0=0$ is similar. 
 
 Suppose that $\mathcal{L}(\mathscr{F})$  is induced by a $\mathbb{C}$-action. We observe that the other global vector field that generates $\mathscr{F}$ cannot also be tangent to a $\mathbb{C}$-action. In fact, if it is the case we have that associated derived Lie algebra $ [H^0(\mathscr{F}^{[1]}),H^0(\mathscr{F}^{[1]})] $ of linear vector fields with derived flag $H^0(T\mathscr{F})\subset [H^0(T\mathscr{F}),H^0(\mathscr{F}^{[1]})] \subset [H^0(\mathscr{F}^{[1]}),H^0(\mathscr{F}^{[1]})] $ is nilpotent and by Engel's Theorem \cite[Theorem 9.9]{FH} there is a non-zero element $p=(p_0,p_1,p_2,p_3,p_4)$ such that $v(p)=0$, for all $v\in [H^0(\mathscr{F}^{[1]}),H^0(\mathscr{F}^{[1]})]$. In particular $v_1(p)=v_2(p)=0$, where $v_1$ and $v_2$ are tangent to $T\mathscr{F}$.
This is a contradiction, since $T\mathscr{F}$ is locally free and $p$ would be  an embedded point of  the locally Cohen-Macaulay variety $\mbox{Sing}(\mathscr{F})$. 
 We see that the other vector field that generates $\mathscr{F}$ is tangent to a $\mathbb{C}^*$-action and this implies that $\mathscr{F}$ is of Lorentzian type. 
This proves the Theorem.

\begin{example}( $\mathscr{F}$ non-integrable and $\mathfrak{f}^{[1]}\simeq \mathfrak{aff}(\mathbb{C}) \oplus \mathbb{C}) $) 
Consider the non-integrable  distribution $\mathscr{F}$, of degree 2,  induced by a bi-vector $v_1\wedge v_2$, where 
$$
v_1=(z_0+z_1)\frac{\partial}{\partial z_0}+z_1\frac{\partial} {\partial
z_1}+ z_3\frac{\partial} {\partial
z_2}
$$
and 
$$
v_2= (\xi_0 z_0+\xi_1 z_1)\frac{\partial}{\partial z_0}+
(\xi_0-1)z_1 \frac{\partial}{\partial
z_1}+\xi_2z_3\frac{\partial}{\partial z_3}+((\xi_2+1)z_2+\xi_3z_3+\xi_4z_4)
\frac{\partial}{\partial z_2}+(\xi_5z_3+\xi_6z_4)
\frac{\partial}{\partial z_4},
$$
with  $(\xi_0,\xi_1,\xi_2, \xi_3, \xi_4,\xi_5,\xi_6)\in (\mathbb{C}^*)^7$. 
Then $v_3:=[v_1,v_2]= z_1 \frac{\partial}{\partial
z_0}+z_3 \frac{\partial}{\partial z_2}$,   $[v_1,v_3]=0$ and $[v_3,v_2]=v_3$. Then, the  foliation $\mathscr{F}^{[1]}$ is induced by $v_1\wedge v_2\wedge v_3$ and
$\mathfrak{f}^{[1]}\simeq \mathfrak{aff}(\mathbb{C}) \oplus \mathbb{C}$. These vector fields can be found in \cite{CD2}.  

\end{example}

\begin{example}( $\mathscr{F}$ non-integrable and $\mathfrak{f}^{[1]}\simeq \mathfrak{r}_{3,\lambda}(\mathbb{C})$) 
We have seen in the proof of the Theorem that  if   $\mathfrak{f}^{[1]}\simeq \mathfrak{r}_{3,\lambda}(\mathbb{C})$, with $\lambda\neq 1$,  and     
   $v_1=a_1u_1+ a_2u_2+a_3u_3$ 
   and  $v_2=b_1u_1+ b_2u_2+b_3u_3$
   are vector  inducing    $\mathscr{F}$, then  $[v_1,v_2]=(a_1b_2-a_2b_1)u_2+ \lambda(a_1b_3-a_3b_1)u_2$, 
 where    $
  (a_1b_2-a_2b_1) \neq 0$
   and 
    $
 (a_1b_3-a_3b_1)\neq 0$.

 The case  $ \mathfrak{f}^{[1]}$  when    is  isomorphic $\mathfrak{sl}_{2}(\mathbb{C})$ is similar,  but we must have  the conditions $(a_1b_2-a_2b_1) \neq 0$,   $(a_1b_3-a_3b_1)\neq 0$    and $
 (a_2b_3-a_3b_2)\neq 0$.  
\end{example}

\section{Horrocks-Mumford holomorphic distributions}

It is know that given cohomology class $(a,b)\in H^2(\mathbb{P}^4,\mathbb{Z})\oplus H^4(\mathbb{P}^4,\mathbb{Z})$, there exists a rank $2$ \textit{topological} complex $E$ with Chern class $c(E)=1+a\textbf{h}+b\textbf{h}^2$, if and only if the Schwarzenberger's condition
\begin{equation*}
    S_4^2: b\cdot(b+1-3a-2a^2)\equiv 0 \mod (12)
\end{equation*}
is satisfied. The problem lies in the existence of a holomorphic structure in a given topological vector bundle $E$.

In $1973$ G. Horrocks and D. Mumford \cite{HM} showed that there is a stable rank $2$ holomorphic vector bundle $E$ on $\mathbb{P}^4$, with Chern polynomial $c(E)=1+5\textbf{h}+10\textbf{h}^2$. This is essentially the only known indecomposable rank $2$ vector bundle on $\mathbb{P}^4$, \cite{Wolfram2}. Certainly, this is not entirely true, we can twist $E$ by a linear bundle or may pull $E$ back under a finite morphism $\pi:\mathbb{P}^4\to\mathbb{P}^4$. The normalization of the Horrocks-Mumford bundle is $E_\eta=E(-3)=F$ with Chern polynomial $c(F)=1-\textbf{h}+4\textbf{h}^2$, and $F$ is cohomology of a monad: 

\begin{equation}\label{monada}
0\rightarrow\mathcal{O}_{\mathbb{P}^4}(-1)^{\oplus 5}\xrightarrow{\alpha}\Omega_{\mathbb{P}^4}^{2}(2)^{\oplus 2}\xrightarrow{\beta}\mathcal{O}_{\mathbb{P}^4}^{\oplus 5}\rightarrow0.
\end{equation}

Recall that the polynomial $1-z+4z^2$ (or $1+5z+10z^2$) is irreducible over $\mathbb{Z}$, therefore $F$ (and hence $E$) does not split as a direct sum of line bundles. Furthermore, we can deduce from monad (\ref{monada}) that $F$ has no sections, and this shows that $F$ is in fact stable.

It follows from see \cite{Wolfram3} that the  minimal free resolution of $E$ is of type:
\begin{equation}\label{freeresolutionofE}
    0\rightarrow\mathcal{O}_{\mathbb{P}^4}(-5)^{\oplus 2}\rightarrow\mathcal{O}_{\mathbb{P}^4}(-3)^{\oplus 20}\rightarrow\mathcal{O}_{\mathbb{P}^4}(-2)^{\oplus 35}\rightarrow\mathcal{O}_{\mathbb{P}^4}(-1)^{\oplus 15}\oplus\mathcal{O}_{\mathbb{P}^4}^{\oplus 4}\rightarrow E\rightarrow 0,
\end{equation}

The dimensions of the cohomology groups of the Horrocks-Mumford bundle are described in Table (\ref{table}).

\begin{table}[h!]
\centering
\begin{tabular}{| c | c | c | c | c | c | } 
 \hline
\textbf{$k$} & \textbf{$h^0$} & \textbf{$h^1$} & \textbf{$h^2$} & \textbf{$h^3$} & \textbf{$h^4$}\\ [0.5ex] 
 \hline\hline
 $k\geq1$ & $\frac{((k+5)^2-1)((k+5)^2-24)}{12}$ & 0 & 0 & 0 & 0 \\
0 & 4 & 2 & 0 & 0 & 0 \\
-1 & 0 & 10 & 0 & 0 & 0 \\
-2 & 0 & 10 & 0 & 0 & 0 \\
-3 & 0 & 5 & 0 & 0 & 0 \\
-4 & 0 & 0 & 0 & 0 & 0 \\
-5 & 0 & 0 & 2 & 0 & 0 \\
-6 & 0 & 0 & 0 & 0 & 0 \\
-7 & 0 & 0 & 0 & 5 & 0 \\
-8 & 0 & 0 & 0 & 10 & 0 \\
-9 & 0 & 0 & 0 & 10 & 0 \\
-10 & 0 & 0 & 0 & 2 & 4 \\ 
$k\leq-11$ & 0 & 0 & 0 & 0 & $\frac{((k+5)^2-1)((k+5)^2-24)}{12}$ \\ [1ex] 
 \hline
\end{tabular}
\caption{Table for $h^i=\dim H^i(E(k))$}
\label{table}
\end{table}

Hence, by Hirzebruch--Riemann--Roch theorem:

\begin{equation}\label{eulerHM}
    \chi\big(E(k)\big)=\frac{1}{12}\big((k+5)^2-1\big)\big((k+5)^2-24\big), \,\,\ k\in\mathbb{Z}.
\end{equation}
For more details, see \cite[pag. 74]{HM}.

Let $\mathscr{M}_{\mathbb{P}^4}(-1,4)$ be the moduli space that describes the Horrocks-Mumford bundle. $H^1(\mathbb{P}^4,\mathcal{H}om(F,F))$  is isomorphic to the Zariski tangent space of $\mathscr{M}_{\mathbb{P}^4}(-1,4)$ in the point $F$ defined by $[F]$ and from $H^2(\mathbb{P}^4,\mathcal{H}om(F,F))=0$ follows $\mathscr{M}_{\mathbb{P}^4}(-1,4)$ is smooth in $[F]$.

The following results are provided in \cite[pag. 104]{Wolfram} and \cite[pag. 218]{Wolfram2}:

\begin{theorem}\label{Wolfram} Let $F$ be the Horrocks-Mumford bundle. Then:

\begin{itemize}
    \item $h^1(\mathbb{P}^4,\mathcal{H}om(F,F))=24$, $h^1(\mathbb{P}^4,\mathcal{H}om(F,F))=5$, $h^1(\mathbb{P}^4,\mathcal{H}om(F,F)(k))=0$ for $k\leq -2$ and $h^2(\mathbb{P}^4,\mathcal{H}om(F,F))=2.$
    \item $\mathscr{M}_{\mathbb{P}^4}(-1,4)$ is smooth in $[F]$ with dimension $24$.
\end{itemize}
\end{theorem}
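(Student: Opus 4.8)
The plan is to reduce the whole statement to the cohomology of a single sheaf, compute its Euler characteristic from the minimal free resolution of $E$, extract most of the $h^i$ from a spectral sequence fed by Table~\ref{table}, and only at the end call on the explicit structure of the Horrocks--Mumford bundle for the one delicate number. Since $F$ has rank $2$ and is normalised with $c_1(F)=-1$, one has $F^{\vee}\cong F(1)$ and $F\otimes F\cong \Sym^2 F\oplus\wedge^2 F\cong \Sym^2 F\oplus\mathcal{O}_{\mathbb{P}^4}(-1)$, hence
\[
\mathcal{H}om(F,F)\;\cong\;\mathcal{O}_{\mathbb{P}^4}\ \oplus\ (\Sym^2 F)(1).
\]
Because $F$ is stable (it has no global sections by the monad~(\ref{monada}) and is normalised) it is simple by the Proposition of Okonek quoted above, so $h^0(\mathcal{H}om(F,F))=1$ and $h^0((\Sym^2 F)(1))=0$; thus all cohomology beyond $H^0$ is carried by $(\Sym^2 F)(1)$, which is also self-dual under the Killing form, a fact I would keep in reserve for Serre duality.

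Next I would compute $\chi(\mathcal{H}om(F,F))$. Since $\mathcal{H}om(F,F)=\mathcal{H}om(E,E)$ (twisting is irrelevant) and $E$ is locally free, tensoring the minimal free resolution~(\ref{freeresolutionofE}) of $E$ by $E^{\vee}=E(-5)$ yields a four-term locally free resolution
\begin{equation*}
0\to E(-10)^{\oplus 2}\to E(-8)^{\oplus 20}\to E(-7)^{\oplus 35}\to E(-6)^{\oplus 15}\oplus E(-5)^{\oplus 4}\to \mathcal{H}om(F,F)\to 0 ,
\end{equation*}
and the alternating sum of Euler characteristics, evaluated with the Riemann--Roch formula~(\ref{eulerHM}), gives $\chi(\mathcal{H}om(F,F))=-21$. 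Now I would feed the hypercohomology spectral sequence of this resolution with Table~\ref{table}: among the twists occurring, only $E(-5)$ (with $h^2=2$), $E(-7)$ ($h^3=5$), $E(-8)$ ($h^3=10$) and $E(-10)$ ($h^3=2$, $h^4=4$) are non‑acyclic, and $E(-6)$ is acyclic. Tracking total degrees, the only $E_1$-terms that can feed $H^3$ and $H^4$ of the total complex come from $H^{\ge 3}$ of $E(-6)^{\oplus 15}\oplus E(-5)^{\oplus 4}$ and from $H^4$ of $E(-7)^{\oplus 35}$, all of which vanish; hence $h^3(\mathcal{H}om(F,F))=h^4(\mathcal{H}om(F,F))=0$. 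Combined with $h^0=1$ and $\chi=-21$ this forces $h^1(\mathcal{H}om(F,F))-h^2(\mathcal{H}om(F,F))=22$.

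The hard part will be the last input, $h^2(\mathcal{H}om(F,F))=2$ (which then gives $h^1(\mathcal{H}om(F,F))=24$): the spectral sequence alone is inconclusive, because $H^2$ of the complex is only a subquotient of $H^2(E(-5)^{\oplus 4})\oplus H^3(E(-7)^{\oplus 35})$, so one must determine the ranks of the connecting differentials coming from the resolution maps — in particular the rank of $H^3(E(-8)^{\oplus 20})\to H^3(E(-7)^{\oplus 35})$. For this I would invoke either Decker's explicit description of the maps $\alpha,\beta$ in the monad~(\ref{monada}) (equivalently of~(\ref{freeresolutionofE})), or, more structurally, the $\Gamma_{1,5}=H_5\rtimes SL(2,\mathbb{Z}_5)$-equivariance of the entire construction: every cohomology group here is a representation of this finite group, the Heisenberg group $H_5$ splits it into its irreducibles, and equivariance of the connecting maps then pins down their ranks, yielding $h^2(\mathcal{H}om(F,F))=2$ and hence $h^1(\mathcal{H}om(F,F))=24$; see \cite{Wolfram,Wolfram2}. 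The vanishings $h^1(\mathcal{H}om(F,F)(k))=0$ for $k\le -2$ (and the remaining twisted values) follow from the same resolution twisted by $\mathcal{O}_{\mathbb{P}^4}(k)$: for $k\le -2$ every summand $E(j)$ appearing has $j\le -7$, so all relevant cohomology lives in degrees $3$ and $4$ and $H^1$ can only survive from the two left‑most columns of the spectral sequence, which vanish once the corresponding connecting maps are seen to be injective — again by the explicit/equivariant input.

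Finally, $h^1(\mathcal{H}om(F,F))=24$ identifies the Zariski tangent space $T_{[F]}\mathscr{M}_{\mathbb{P}^4}(-1,4)$ with a $24$-dimensional space. Since the obstruction space $H^2(\mathcal{H}om(F,F))$ is $2$-dimensional (not $0$), I would obtain smoothness of $\mathscr{M}_{\mathbb{P}^4}(-1,4)$ at $[F]$ of dimension $24$ by checking that the obstruction (Yoneda square) map $\Sym^2 H^1(\mathcal{H}om(F,F))\to H^2(\mathcal{H}om(F,F))$ vanishes, equivalently by exhibiting an explicit $24$-dimensional family of Horrocks--Mumford bundles through $[F]$ — coming from the $\mathrm{PGL}_5$-translates of the Heisenberg-invariant abelian surfaces of degree $10$ in the Horrocks--Mumford construction — which already fills out the full tangent space.
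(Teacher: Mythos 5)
The paper gives no proof of this statement at all --- it is imported verbatim from Decker \cite{Wolfram} and Decker--Schreyer \cite{Wolfram2} --- so your attempt is by construction a different route, and most of it checks out against the paper's own data. Indeed $\mathcal{H}om(F,F)\cong E\otimes E(-5)$, tensoring the resolution (\ref{freeresolutionofE}) by $E(-5)$ and evaluating with (\ref{eulerHM}) gives $\chi(\mathcal{H}om(F,F))=8+175-200-4=-21$, and in the hypercohomology spectral sequence the only $E_1$-terms of total degree $3$ or $4$ are $H^3$ and $H^4$ of $E(-6)^{\oplus 15}\oplus E(-5)^{\oplus 4}$ and $H^4$ of $E(-7)^{\oplus 35}$, all zero by Table \ref{table}; together with simplicity ($h^0=1$) this does force $h^1-h^2=22$ and $h^3=h^4=0$, consistent with Table \ref{table2}. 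The smoothness argument is also the correct one (and is in substance Decker--Schreyer's): $\mathrm{PGL}_5$ acts on $\mathscr{M}_{\mathbb{P}^4}(-1,4)$ by pullback, the stabilizer of $[F]$ is the finite group of order $15000$, so the orbit is a smooth $24$-dimensional subvariety through $[F]$, and $24\le\dim_{[F]}\mathscr{M}_{\mathbb{P}^4}(-1,4)\le h^1(\mathcal{H}om(F,F))=24$ gives smoothness; once you have the orbit you do not need to verify vanishing of the Yoneda square separately.

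The genuine gap --- which you flag, but should not understate --- is that the two numbers carrying all the content, namely $h^2(\mathcal{H}om(F,F))=2$ (equivalently $h^1=24$) and the injectivity of the differentials needed for $h^1(\mathcal{H}om(F,F)(k))=0$ when $k\le -2$, are never derived. Saying that $\Gamma_{1,5}$-equivariance ``pins down'' the ranks of the connecting maps is too optimistic: equivariance decomposes each $E_1$-term into isotypic pieces but does not by itself determine the ranks of equivariant maps between them; Decker's computation genuinely uses the explicit matrices of the monad (\ref{monada}). So what you have is a correct reduction of the theorem to a handful of rank computations that are then re-imported from \cite{Wolfram,Wolfram2} --- which is defensible, since the paper imports the entire theorem from the same sources, but it should be stated as a citation rather than as a proof. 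Two smaller points: you never address the second listed value (the statement's ``$h^1(\mathcal{H}om(F,F))=5$'' is a typo for $h^1(\mathcal{H}om(F,F)(-1))=5$, i.e.\ $h^1(E\otimes E(-6))=5$), and the $24$-dimensional family through $[F]$ consists of the $\mathrm{PGL}_5$-translates of the bundle $F$ itself, not of the abelian surfaces.
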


\subsection{Vanishing Lemmas}

\noindent

The following Vanishing Lemmas are consequences of having considered  Euler's exact sequence twisted by a convenient sheaf and its long exact sequence of Cohomology. Using the Cohomologies table (\ref{table}) of the Horrocks-Mumford bundle and the free resolution of $E$, (\ref{freeresolutionofE}), we obtain:

\begin{lema}\label{Vlema1}
Let $E$ be the Horrocks-Mumford bundle. Then for $j=1,2,3$ and $k\in\mathbb{Z}$ we have:

\begin{enumerate}
    \item $h^0(\Omega_{\mathbb{P}^4}^{j}\otimes E(k))=0$ \,\,\,\ \textrm{for} \,\,\,\ $k\leq j$.
    \item $h^1(\Omega_{\mathbb{P}^4}^{j}\otimes E(k))=0$ \,\,\,\ \textrm{for} \,\,\,\ $k\leq j-4$  \,\,\,\,\,\,\ \textrm{or} \,\,\,\ $k\geq j+1$.
    \item $h^2(\Omega_{\mathbb{P}^4}^{j}\otimes E(k))=0$ \,\,\,\ \textrm{for} \,\,\,\ $k\leq j-6$  \,\,\,\,\,\,\ \textrm{or} \,\,\,\ $k\geq j-3$.
    \item $h^3(\Omega_{\mathbb{P}^4}^{j}\otimes E(k))=0$ \,\,\,\ \textrm{for} \,\,\,\ $k\leq j-10$ \,\,\,\ \textrm{or} \,\,\,\ $k\geq j-5$.
    \item $h^4(\Omega_{\mathbb{P}^4}^{j}\otimes E(k))=0$ \,\,\,\ \textrm{for} \,\,\,\  $k\geq j-9$.
\end{enumerate}

\end{lema}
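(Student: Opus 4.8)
The plan is to reduce every group $H^i(\Omega_{\mathbb{P}^4}^j\otimes E(k))$ to cohomology of twists of $E$ — completely recorded in Table \ref{table} — using the exterior powers of the Euler sequence together with the minimal free resolution (\ref{freeresolutionofE}) tensored by $\Omega_{\mathbb{P}^4}^j(k)$; Bott's formula for $h^q(\Omega_{\mathbb{P}^4}^p(m))$ will feed the latter. The basic tools are the short exact sequences obtained from the exterior powers of $0\to\Omega_{\mathbb{P}^4}^1\to\mathcal{O}_{\mathbb{P}^4}(-1)^{\oplus5}\to\mathcal{O}_{\mathbb{P}^4}\to0$ (and of the dual sequence), tensored by the locally free sheaf $E(k)$:
$$
0\to\Omega_{\mathbb{P}^4}^{p}\otimes E(k)\to E(k-p)^{\oplus\binom{5}{p}}\to\Omega_{\mathbb{P}^4}^{p-1}\otimes E(k)\to0,\qquad 1\le p\le 4,
$$
with $\Omega_{\mathbb{P}^4}^0=\mathcal{O}_{\mathbb{P}^4}$ and $\Omega_{\mathbb{P}^4}^4=\mathcal{O}_{\mathbb{P}^4}(-5)$.

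First I would cut the work in half by Serre duality: since $(\Omega_{\mathbb{P}^4}^j)^{\vee}\cong\Omega_{\mathbb{P}^4}^{4-j}(5)$ and $E^{\vee}\cong E(-5)$, one has $H^i(\Omega_{\mathbb{P}^4}^j\otimes E(k))\cong H^{4-i}(\Omega_{\mathbb{P}^4}^{4-j}\otimes E(-5-k))^{\vee}$, and a numerical check shows this identifies item (5) with item (1), item (4) with item (2), and carries item (3) to itself with its two ranges interchanged. So it remains to prove item (1), item (2), and only the range $k\ge j-3$ of item (3).

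For item (1) (hence (5)) I would tensor (\ref{freeresolutionofE}) by $\Omega_{\mathbb{P}^4}^j(k)$ to get an exact complex of bundles with terms $\bigoplus\Omega_{\mathbb{P}^4}^j(k-a)$, and compute $H^0(\Omega_{\mathbb{P}^4}^j\otimes E(k))$ from its hypercohomology spectral sequence; by Bott's formula, for $k\le j$ every $E_1$-term on the relevant anti-diagonal vanishes, so the conclusion is immediate (no differential needs to be understood). The same mechanism gives item (2) (hence (4)) for $k\ge j+1$, again because all pertinent $E_1$-terms vanish. For item (2) with $k\le j-4$ I would use instead the displayed sequence with $p=j$: its cohomology sequence contains $H^0(\Omega_{\mathbb{P}^4}^{j-1}\otimes E(k))\to H^1(\Omega_{\mathbb{P}^4}^j\otimes E(k))\to H^1(E(k-j))^{\oplus\binom5j}$, and for $k\le j-4$ the first term vanishes by item (1) and the last by Table \ref{table}. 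For the range $k\ge j-3$ of item (3) I would use the displayed sequence with $p=j+1$: its cohomology sequence sandwiches $H^2(\Omega_{\mathbb{P}^4}^j\otimes E(k))$ between a quotient of $H^2(E(k-j-1))^{\oplus\binom5{j+1}}$ and $H^3(\Omega_{\mathbb{P}^4}^{j+1}\otimes E(k))$; the former vanishes by Table \ref{table} when $k\ne j-4$, the latter by item (4) (or directly by Table \ref{table} when $j=3$), and both conditions hold once $k\ge j-3$. Thus the natural order is (1)$\Rightarrow$(5), then (2) (its easy half using (1)), then (4) by duality, then the $k\ge j-3$ half of (3) using (4), then the remaining half of (3) by duality.

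The step I expect to be the real obstacle is the boundary case $k=j$ of item (1) (dually $k=j-9$ of item (5)): here Table \ref{table} alone is insufficient — $h^0(E(0))\ne0$ — so one must exploit the finer structure carried by the resolution (\ref{freeresolutionofE}), the point being that after tensoring by $\Omega_{\mathbb{P}^4}^j(k)$ the Bott vanishings $h^q(\Omega_{\mathbb{P}^4}^j(m))=0$ conspire to annihilate every term on the relevant anti-diagonal of the spectral sequence. Apart from that, the proof is the bookkeeping of choosing, for each triple $(i,j,k)$, the exact sequence that reduces the computation to vanishings already in hand, so that no rank of a connecting homomorphism ever has to be computed.
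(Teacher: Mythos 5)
Your proposal is correct and uses essentially the same ingredients as the paper's (only partially written-out) proof: the exterior powers of the Euler sequence tensored with $E(k)$, Bott's formula, the cohomology table of $E$, and the minimal free resolution (\ref{freeresolutionofE}) for the boundary cases where the table alone fails (e.g.\ $k=j$ in item (1), exactly the point the paper also handles via the resolution). The Serre-duality reduction $H^i(\Omega_{\mathbb{P}^4}^j\otimes E(k))\cong H^{4-i}(\Omega_{\mathbb{P}^4}^{4-j}\otimes E(-5-k))^{\vee}$ and the hypercohomology packaging are just efficient bookkeeping on top of the same argument, and your numerical matchings of the ranges all check out.
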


\begin{proof}
Let us consider the Euler exact sequence: 
\begin{equation*}\label{Euler1}
    0\rightarrow\Omega_{\mathbb{P}^4}^{1}(1)\rightarrow\mathcal{O}_{\mathbb{P}^4}^{\oplus 5}\rightarrow\mathcal{O}_{\mathbb{P}^{4}}(1)\rightarrow 0.
\end{equation*}
    Then, twisting by $E(k-1)$ we have:
\begin{equation}\label{Euler1torcido}
    0\rightarrow\Omega_{\mathbb{P}^4}^{1}\otimes E(k)\rightarrow E(k-1)^{\oplus 5}\rightarrow E(k)\rightarrow 0.
\end{equation}
Now, taking a long exact sequence of cohomology, we get: 
$$0\rightarrow H^{0}(\mathbb{P}^4, \Omega_{\mathbb{P}^4}^{1}\otimes E(k))\rightarrow H^{0}(\mathbb{P}^4, E(k-1))^{\oplus 5}\rightarrow H^{0}(\mathbb{P}^4, E(k))\rightarrow\cdots$$
and by cohomology table \ref{table} of the Horrocks-Mumford bundle, we obtain:   
\begin{equation*}
    h^0(\mathbb{P}^4,E(k-1))=0 \,\,\ \textrm{for} \,\,\ k\leq 0,
\end{equation*}
then $h^{0}(\mathbb{P}^4, \Omega_{\mathbb{P}^4}^{1}\otimes E(k))=0$ for $k\leq 0$. By free resolution (\ref{freeresolutionofE}) we also see that $$h^{0}(\mathbb{P}^4, \Omega_{\mathbb{P}^4}^{1}\otimes E(1))=0,$$ thus: 
$$h^0(\mathbb{P}^4, \Omega_{\mathbb{P}^4}^{1}\otimes E(k))=0 \,\,\ \textrm{for} \,\,\,\ k\leq 1.$$
\end{proof}

\begin{lema}\label{Vlema3}
Let $E$ be the Horrocks-Mumford bundle and $k\in\mathbb{Z}$, then:
\begin{enumerate}
    \item $h^0(E\otimes E(k))=0$ \,\,\,\ \textrm{for} \,\,\,\ $k\leq -6$.
    \item $h^1(E\otimes E(k))=0$ \,\,\,\ \textrm{for} \,\,\,\ $k\leq -7$  \,\,\,\,\,\,\ \textrm{or} \,\,\,\ $k\geq 1$.
    \item $h^2(E\otimes E(k))=0$ \,\,\,\ \textrm{for} \,\,\,\ $k\leq -11$  \,\,\,\ \textrm{or} \,\,\,\ $k\geq -4$.
    \item $h^3(E\otimes E(k))=0$ \,\,\,\ \textrm{for} \,\,\,\ $k\leq -16$  \,\,\,\ \textrm{or} \,\,\,\ $k\geq -8$.
    \item $h^4(E\otimes E(k))=0$ \,\,\,\ \textrm{for} \,\,\,\ $k\geq -9$.
\end{enumerate}
\end{lema}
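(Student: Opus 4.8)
The plan is to follow the recipe already used for Lemma \ref{Vlema1}, feeding a twist of $E\otimes E$ into a resolution and chasing cohomology with Table \ref{table} (and with Lemma \ref{Vlema1} itself). Since $E$ is locally free, tensoring the minimal free resolution (\ref{freeresolutionofE}) by $E(k)$ stays exact and yields a four--term resolution of $E\otimes E(k)$ with terms direct sums of $E(k-5),E(k-3),E(k-2),E(k-1),E(k)$. Splitting it into the three short exact sequences
\[
0\to E(k-5)^{\oplus 2}\to E(k-3)^{\oplus 20}\to G_1\to 0,\qquad 0\to G_1\to E(k-2)^{\oplus 35}\to G_2\to 0,
\]
\[
0\to G_2\to E(k-1)^{\oplus 15}\oplus E(k)^{\oplus 4}\to E\otimes E(k)\to 0,
\]
and running the associated long exact sequences, every $H^i(\mathbb{P}^4,E\otimes E(k))$ is squeezed between sums of groups $H^j(\mathbb{P}^4,E(\ell))$ read off from Table \ref{table}. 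Equivalently one may write $E=F(3)$, so $E\otimes E(k)\simeq F\otimes F(k+6)$, tensor the monad (\ref{monada}) by $F(k+6)$, put $K=\Ker\beta$, and use $0\to E(k+2)^{\oplus 5}\to K\otimes F(k+6)\to E\otimes E(k)\to 0$ together with $0\to K\otimes F(k+6)\to \Omega^2_{\mathbb{P}^4}\otimes E(k+5)^{\oplus 2}\to E(k+3)^{\oplus 5}\to 0$, now invoking Lemma \ref{Vlema1} with $j=2$ for the middle term.

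Carrying out the chase gives the low--twist vanishings directly: in these sequences all the $H^0,H^1,H^2$ of the flanking terms vanish in the indicated ranges, producing (1) for $k\leq -6$, (2) for $k\leq -7$, and (3) for $k\leq -11$; each threshold is exactly the first twist at which the relevant entry of Table \ref{table} (resp.\ of Lemma \ref{Vlema1}) ceases to vanish, e.g.\ for (1) it is the twist where $h^1(E(k+2))$, equivalently $h^3(E(k-5))$, becomes nonzero. The high--twist half then comes for free from Serre duality: since $E$ has rank two with $c_1(E)=5$ one has $E^{\vee}\simeq E(-5)$, and with $\omega_{\mathbb{P}^4}\simeq\mathcal{O}_{\mathbb{P}^4}(-5)$ this gives
\[
H^i(\mathbb{P}^4,\,E\otimes E(k))\;\simeq\;H^{4-i}(\mathbb{P}^4,\,E\otimes E(-15-k))^{\vee}.
\]
Applying this to (1), (2), (3) yields (5) for $k\geq -9$, the $k\geq -8$ half of (4), and the $k\geq -4$ half of (3); moreover it identifies the two still-missing assertions, namely the $k\geq 1$ half of (2) and the $k\leq -16$ half of (4), with one another.

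So the whole statement reduces to the single vanishing $h^1(E\otimes E(k))=0$ for $k\geq 1$, and this is where the argument is most delicate. For $k\geq 2$ the chase above already closes (alternatively: $E$ is $2$-regular, hence $E\otimes E$ is $4$-regular, which disposes of $k\geq 3$); but for $k=1$ the group $H^1(E\otimes E(1))$ appears as the cokernel of a connecting homomorphism whose source and target no longer both vanish in Table \ref{table}, so one must prove that homomorphism is surjective. That surjectivity is the hard part. I would settle it either by using that the maps of the monad (\ref{monada}), hence all the connecting maps above, are equivariant for $H_5\rtimes SL(2,\mathbb{Z}_5)$ and decomposing the relevant cohomology into Heisenberg isotypic components, or, more economically, by invoking Decker's computation of $H^1(\mathcal{H}om(F,F)(m))$ for $m\geq 2$ (see Theorem \ref{Wolfram} and the references cited there) via the identification $E\otimes E(k)\simeq\mathcal{H}om(F,F)(k+5)$, which again follows from $E^{\vee}\simeq E(-5)$. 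With that input, all five vanishings of the lemma follow.
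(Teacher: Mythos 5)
Your proposal is correct and, in its monad incarnation, is essentially the paper's own argument: the paper also twists the display sequences of the monad (\ref{monada}) by $E(k+3)$ and feeds the result into Lemma \ref{Vlema1} and Table \ref{table}. The difference is one of completeness rather than of method. The paper writes out only item (1) (the bound $k\leq -6$ for $h^0$) and then disposes of everything else with the single sentence ``by Serre duality and Theorem \ref{Wolfram} we build Table \ref{table2}''; you instead carry the reduction to the end, and in doing so you expose the one point the paper glosses over. Your Serre-duality bookkeeping ($E^{\vee}\simeq E(-5)$, hence $H^i(E\otimes E(k))\simeq H^{4-i}(E\otimes E(-15-k))^{\vee}$) is exactly right and correctly pairs (1) with (5), the low half of (2) with the high half of (4), and the two halves of (3), leaving only $h^1(E\otimes E(k))=0$ for $k\geq 1$. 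You are also right that the cohomology chase closes for $k\geq 2$ but not for $k=1$, where $h^1(E(0))=2$ forces one to prove surjectivity of a connecting map; note that Theorem \ref{Wolfram} as stated in the paper only records $h^1(\mathcal{H}om(F,F)(m))$ for $m\leq 0$, so it does not by itself settle $m=k+5\geq 6$, and your proposed fixes (Decker's full computation of $H^{\bullet}(\mathcal{H}om(F,F)(m))$ via $E\otimes E(k)\simeq\mathcal{H}om(F,F)(k+5)$, or $\Gamma_{1,5}$-equivariance) are the legitimate ways to close it. In short, your write-up is a strictly more careful version of the paper's proof.
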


\begin{proof}

Let $\mathcal{K}=\Ker\beta$ of the monad (\ref{monada}) and consider the short exact sequence:
\begin{equation*}\label{seq2display}
     0\rightarrow\mathcal{O}_{\mathbb{P}^4}(-1)^{\oplus 5}\rightarrow\mathcal{K}\rightarrow E(-3)\rightarrow 0.
\end{equation*}
Twisting by $E(k+3)$ and passing to cohomology we have $h^0(E\otimes E(k))=h^0(\mathcal{K}\otimes E(k+3))$ for $k\leq -6$. 
Considering  the exact sequence of the monad (\ref{monada})
\begin{equation*}\label{seqdisplay1}
   0\rightarrow\mathcal{K}\rightarrow \Omega_{\mathbb{P}^4}^2(2)^{\oplus 2}\rightarrow\mathcal{O}_{\mathbb{P}^4}^{\oplus 5}\rightarrow 0,
\end{equation*}
  and  twisting by $E(k+3)$, passing to cohomology and by Lemma \ref{Vlema1} we have $h^0(\Omega_{\mathbb{P}^4}^2\otimes E(k+5))=0$ for $k\leq -3$, hence $h^0(\mathcal{K}\otimes E(k+3))=0$ for $k\leq -3$. Thus:
\begin{equation*}
    h^0(E\otimes E(k))=0 \,\,\,\ \textrm{for} \,\,\,\ k\leq -6.
\end{equation*}
\end{proof}
In addition, by using Serre's duality and Theorem \ref{Wolfram}, we build the table (\ref{table2}). 
\begin{table}[h!]
\centering
\begin{tabular}{| c | c | c | c | c | c | } 
 \hline
\textbf{$k$} & \textbf{$h^0$} & \textbf{$h^1$} & \textbf{$h^2$} & \textbf{$h^3$} & \textbf{$h^4$}\\ [0.5ex] 
 \hline\hline 
 $k\geq1$ & $\frac{k^4+30k^3+290k^2+975k+624}{6}$ & 0 & 0 & 0 & 0  \\
0 & 136 & 32 & 0 & 0 & 0  \\
-1 & 70 & 85 & 0 & 0 & 0  \\
-2 & 35 & 100 & 0 & 0 & 0  \\
-3 & 15 & 85 & 0 & 0 & 0  \\
-4 & 5 & 55 & 0 & 0 & 0 \\
-5 & 1 & 24 & 2 & 0 & 0  \\
-6 & 0 & 5 & 10 & 0 & 0  \\
-7 & 0 & 0 & 20 & 0 & 0  \\
-8 & 0 & 0 & 20 & 0 & 0  \\
-9 & 0 & 0 & 10 & 5 & 0  \\
-10 & 0 & 0 & 2 & 24 & 1  \\
-11 & 0 & 0 & 0 & 55 & 5  \\
-12 & 0 & 0 & 0 & 85 & 15  \\
-13 & 0 & 0 & 0 & 100 & 35  \\
-14 & 0 & 0 & 0 & 85 & 70  \\
-15 & 0 & 0 & 0 & 32 & 136  \\
$k\leq-16$ & 0 & 0 & 0 & 0 & $\frac{k^4+30k^3+290k^2+975k+624}{6}$  \\ [1ex] 
 \hline
\end{tabular}
\caption{Table  for  $h^i=\dim H^i(E\otimes E(k))$}
\label{table2}
\end{table}
Hence, by Hirzebruch- Riemann-Roch Theorem, we have:
\begin{equation}\label{eulerHM**HM}
    \chi\big(E\otimes E(k)\big)=\frac{1}{6}(k^4+30k^3+290k^2+975k+624), \,\,\ k\in\mathbb{Z}.
\end{equation}


\subsection{Horrocks-Mumford distributions as subsheaves of the  cotangent bundle}
In this section, we study dimension $2$ holomorphic distributions which are  induced by a Bertini-type Theorem \cite{MOJ, Ottaviani}. We determine the genus, degree and the Rao module of the singular scheme.  The techniques used are based on the study of the long exact cohomology sequence of (\ref{distCotangFam}) together with the knowledge of the cohomologies of the Horrocks-Mumford bundle.

We will use the next result proved in \cite[pag. 427]{Su}. 

\begin{propo}[H. Sumihiro - 1998]\label{Sumihiro}
    $E(1)$ is generated by global sections.
\end{propo}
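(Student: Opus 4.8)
The plan is to obtain the global generation of $E(1)$ directly from the minimal free resolution $(\ref{freeresolutionofE})$ of the Horrocks--Mumford bundle $E$. That resolution is in particular an exact sequence of sheaves, so it provides a surjection
\[
\mathcal{O}_{\mathbb{P}^4}(-1)^{\oplus 15}\oplus\mathcal{O}_{\mathbb{P}^4}^{\oplus 4}\longrightarrow E\longrightarrow 0 .
\]
First I would tensor this surjection with the line bundle $\mathcal{O}_{\mathbb{P}^4}(1)$. Tensoring by a line bundle is exact, hence preserves surjectivity, and one gets
\[
\mathcal{O}_{\mathbb{P}^4}^{\oplus 15}\oplus\mathcal{O}_{\mathbb{P}^4}(1)^{\oplus 4}\longrightarrow E(1)\longrightarrow 0 .
\]

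Now the source of this last map is globally generated: on $\mathbb{P}^4$ the sheaves $\mathcal{O}_{\mathbb{P}^4}$ and $\mathcal{O}_{\mathbb{P}^4}(1)$ are each generated by their global sections, hence so is any finite direct sum of copies of them. Since a quotient of a globally generated sheaf is again globally generated, we conclude that $E(1)$ is globally generated, which is precisely the statement of the Proposition.

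In this form there is no real obstacle to overcome, the whole content being the known fact that the graded module $\bigoplus_{k\in\mathbb{Z}}H^0(\mathbb{P}^4,E(k))$ is generated in degrees $0$ and $1$. I would only emphasize that the sharper twist cannot be obtained from Castelnuovo--Mumford regularity alone: by Table \ref{table} one has $h^1(\mathbb{P}^4,E)=2\neq 0$, so $E$ is not $1$-regular, while the $2$-regularity of $E$ (which does hold, again by Table \ref{table}) only gives global generation of $E(2)$. If one insisted on a proof independent of $(\ref{freeresolutionofE})$, I would instead work with the monad $(\ref{monada})$ and its display, realizing $E(-3)$ as a subquotient of $\Omega_{\mathbb{P}^4}^2(2)^{\oplus 2}$; the hard part would then be to show that the kernel bundle of the monad becomes globally generated after the twist by $\mathcal{O}_{\mathbb{P}^4}(4)$, for which one would combine the global generation of $\Omega_{\mathbb{P}^4}^2(3)$ with the vanishings recorded in Lemma \ref{Vlema1}.
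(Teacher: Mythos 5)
Your argument is correct, but it is not the route the paper takes: the paper offers no proof of this proposition at all, simply quoting Sumihiro's theorem from \cite{Su}, where the statement is obtained through a study of the determinantal varieties attached to rank-two bundles on projective spaces. You instead read the global generation of $E(1)$ off the minimal free resolution (\ref{freeresolutionofE}): the degree-zero term of that resolution exhibits $E$ as a sheaf quotient of $\mathcal{O}_{\mathbb{P}^4}(-1)^{\oplus 15}\oplus\mathcal{O}_{\mathbb{P}^4}^{\oplus 4}$, so after twisting, $E(1)$ is a quotient of the globally generated bundle $\mathcal{O}_{\mathbb{P}^4}^{\oplus 15}\oplus\mathcal{O}_{\mathbb{P}^4}(1)^{\oplus 4}$ and the conclusion is immediate. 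This is a sound deduction inside the logical framework of the paper, since (\ref{freeresolutionofE}) is stated there as a known fact from \cite{Wolfram3}; what it buys is brevity, and what it costs is that the entire difficulty is displaced into the computation of the Betti numbers of the module $\bigoplus_{k}H^0(E(k))$ --- in particular the fact that it is generated in degrees $0$ and $1$ --- which is itself a nontrivial result of Decker. Your closing remarks are also accurate: Table \ref{table} gives $h^1(E)=2$, so $E$ is $2$-regular but not $1$-regular, and Castelnuovo--Mumford regularity alone yields no more than the global generation of $E(2)$; this is precisely why the sharper twist requires either Sumihiro's argument or knowledge of the resolution.
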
     

Thus, $E(a)$, for $a\geq 1$, is a vector bundle generated by global sections. 
Since $\Omega_{\mathbb{P}^4}^1(2)$ is globally generated, then by the Bertini type Theorem such that $\mathcal{G}=E(a)$, $a\geq 1$, and $L=\mathcal{O}_{\mathbb{P}^4}(2)$, we have
$$N {\mathscr{F}_a}^*=E(-a-7), \,\,\ (a\geq 1)$$ 
is the conormal sheaf of a dimension $2$ holomorphic distribution in $\mathbb{P}^4$:
\begin{equation}\label{distCotangFam}
    \mathscr{F}_a : 0\rightarrow E(-a-7)\xrightarrow{\varphi} \Omega_{\mathbb{P}^4}^{1}\rightarrow \mathcal{Q}_{\mathscr{F}_a}\rightarrow 0, \,\,\ (a\geq 1).
 \end{equation}
of degree $d_a:=\deg(\mathscr{F}_a)=2a+6$. 
In addition, since $E(a)\otimes \Omega_{\mathbb{P}^4}^1(2)=\mathcal{H}om(E(-a-7)),\Omega_{\mathbb{P}^4}^1)$ is globally generated,  then by Theorem \ref{Bertini1}  a  generic morphism $$\varphi:E(-a-7)\rightarrow \Omega_{\mathbb{P}^4}^1$$  induces a distributions whose singular scheme  $\Sing(\mathscr{F}_a):=\Sing(\varphi)$ is a  subvariety on $\mathbb{P}^4$ of  expected  pure  dimension  equal to one.

\subsection{Geometric properties of the singular scheme}
Since $$c(E(-a-7))  = 1+(-2a-9)\textbf{h}+(a^2+9a+24)\textbf{h}^2,$$ then as a direct consequence of Theorem \ref{conormChern} we determine the numerical invariants of the singular scheme.

\begin{propo}[Numerical invariants of the singular locus]
Let $Z_a=\Sing(\mathscr{F}_a)$ the singular scheme, then for $a\geq 1$ we have:
\begin{enumerate}
    \item $\deg(Z_a)=4a^3+39a^2+113a+92.$
    \\
    \item $p_a(Z_a)=9a^4+107a^3+\dfrac{847}{2}a^2+\dfrac{1261}{2}a+260.$
\end{enumerate}
\end{propo}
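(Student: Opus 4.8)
The plan is to read the Chern classes of the conormal sheaf off the Horrocks--Mumford bundle and substitute them into Theorem \ref{conormChern}; the proposition then reduces to a polynomial identity in $a$. First I would record, from $c(E)=1+5\mathbf{h}+10\mathbf{h}^2$ and the twist formula for rank two bundles, that $c_1\big(E(-a-7)\big)=-2a-9$ and $c_2\big(E(-a-7)\big)=a^2+9a+24$, exactly as stated just before the proposition. Since $N\mathscr{F}_a^{*}=E(-a-7)$, the degree of $\mathscr{F}_a$ is $d:=\deg(\mathscr{F}_a)=-3-c_1(N\mathscr{F}_a^{*})=2a+6$. To invoke Theorem \ref{conormChern} I need $N\mathscr{F}_a^{*}$ locally free, which is clear, and $Z_a$ of pure dimension one; the latter is exactly what the Bertini-type construction of $\mathscr{F}_a$ provides, since with $m=\rk E(-a-7)=2$ and $n=\rk\Omega_{\mathbb{P}^4}^{1}=4$ one has $\dim\mathbb{P}^4=4<(m-1+1)(n-1+1)=8$, so by Theorem \ref{Bertini1} the degeneracy locus $D_1(\varphi)=\Sing(\mathscr{F}_a)$ is, for generic $\varphi$, smooth of the expected codimension $(m-1)(n-1)=3$ (it is nonempty by Theorem \ref{non-sing}, since $\mathbb{P}^4$ carries no nonsingular distribution).

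Next I would substitute. Rewriting the identity $c_2(N\mathscr{F}_a^{*})=\dfrac{d^3+4d^2+7d+2-\deg(Z_a)}{2d+1}$ of Theorem \ref{conormChern} as
\[
\deg(Z_a)=d^3+4d^2+7d+2-(2d+1)\,c_2(N\mathscr{F}_a^{*}),
\]
and plugging in $d=2a+6$, $2d+1=4a+13$, $c_2(N\mathscr{F}_a^{*})=a^2+9a+24$, one expands to $\deg(Z_a)=4a^3+39a^2+113a+92$, which is part (1). For part (2) I would insert $d=2a+6$ together with this value of $\deg(Z_a)$ into the displayed genus formula of Theorem \ref{conormChern}; the computation is organised by the factorization $8d^2+8d+2=2(2d+1)^2=2(4a+13)^2$ of the denominator, after which one checks that the numerator equals $(4a+13)^2\big(18a^4+214a^3+847a^2+1261a+520\big)$, so that $p_a(Z_a)=9a^4+107a^3+\tfrac{847}{2}a^2+\tfrac{1261}{2}a+260$.

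The only genuine labor is this last verification: a degree six polynomial identity in $a$ coming from the $d^6$ and $(\deg Z_a)^2$ terms of the genus formula. It requires no further geometry, and the factorizations $2d+1=4a+13$ and $8d^2+8d+2=2(4a+13)^2$ keep the bookkeeping under control. Alternatively, one could avoid Theorem \ref{conormChern} altogether and run the same Chern-class computation directly on the Eagon--Northcott resolution (\ref{ENR}) and Corollary \ref{chernclassidealsheaf}, but quoting Theorem \ref{conormChern} is the shortest route.
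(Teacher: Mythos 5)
Your proposal is correct and follows the paper's own route exactly: the paper likewise reads off $c(E(-a-7))=1+(-2a-9)\mathbf{h}+(a^2+9a+24)\mathbf{h}^2$, notes $d_a=2a+6$ and that the Bertini-type construction gives $Z_a$ pure of dimension one, and then obtains both formulas as a direct substitution into Theorem \ref{conormChern}. Your arithmetic checks out (e.g.\ at $a=1$ one gets $\deg(Z_1)=248$ and $p_a(Z_1)=1430$ from both the closed forms and the raw formulas), and the factorization $8d^2+8d+2=2(4a+13)^2$ is indeed the right way to organise the genus computation.
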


\begin{propo}\label{propo1}
The singular scheme $Z_a=\Sing(\mathscr{F}_a)$ is smooth and connected. 
\end{propo}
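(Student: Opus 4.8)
The plan is to prove smoothness with the Bertini-type Theorem~\ref{Bertini1} and connectedness by resolving $\mathscr{I}_{Z_a}$ with the Eagon--Northcott complex (\ref{ENR}) and running a short cohomology chase that feeds on the Horrocks--Mumford vanishing lemmas. So the order of business is: first smoothness (which also makes the Eagon--Northcott resolution available), then the resolution, then the chase.

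For smoothness I would start from the description of $Z_a=\Sing(\mathscr{F}_a)$ as the degeneracy locus $D_1(\varphi)$ where the generic morphism $\varphi\colon E(-a-7)\to\Omega^1_{\mathbb{P}^4}$ drops rank, recalling that $\mathcal{H}om(E(-a-7),\Omega^1_{\mathbb{P}^4})\simeq E(a)\otimes\Omega^1_{\mathbb{P}^4}(2)$ is globally generated for $a\geq 1$. Applying Theorem~\ref{Bertini1} with $m=\rk E(-a-7)=2$, $n=\rk\Omega^1_{\mathbb{P}^4}=4$ and $k=1$, and observing that $\dim\mathbb{P}^4=4<8=(m-k+1)(n-k+1)$, the locus $D_1(\varphi)=Z_a$ is empty or smooth for generic $\varphi$. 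It is nonempty, since otherwise $\mathscr{F}_a$ would be a nonsingular distribution of codimension two on $\mathbb{P}^4$, contradicting Theorem~\ref{non-sing}; hence $Z_a$ is smooth of pure dimension one.

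Now, since $Z_a$ is smooth of codimension $3$ and $\varphi^\vee\colon T\mathbb{P}^4\to E(-a-7)^\vee=E(a+2)$ is generically surjective, the Eagon--Northcott resolution (\ref{ENR}) applies with $G=T\mathbb{P}^4$ and $F=E(a+2)$. Using the identifications $\wedge^2 T\mathbb{P}^4\cong\Omega^2_{\mathbb{P}^4}(5)$, $\wedge^3 T\mathbb{P}^4\cong\Omega^1_{\mathbb{P}^4}(5)$, $F^\vee=E(-a-7)$, $\det F^\vee=\mathcal{O}_{\mathbb{P}^4}(-2a-9)$ and $\Sym^2(E(-a-7))=\Sym^2(E)(-2a-14)$, the resolution simplifies to
\[
0\to\Sym^2(E)(-4a-18)\to\Omega^1_{\mathbb{P}^4}\otimes E(-3a-11)\to\Omega^2_{\mathbb{P}^4}(-2a-4)\to\mathscr{I}_{Z_a}\to 0 .
\]
Splitting this into $0\to\mathscr{B}\to\Omega^2_{\mathbb{P}^4}(-2a-4)\to\mathscr{I}_{Z_a}\to 0$ and $0\to\Sym^2(E)(-4a-18)\to\Omega^1_{\mathbb{P}^4}\otimes E(-3a-11)\to\mathscr{B}\to 0$, the first gives $H^1(\mathscr{I}_{Z_a})\cong H^2(\mathscr{B})$ because $H^1(\Omega^2_{\mathbb{P}^4}(-2a-4))=H^2(\Omega^2_{\mathbb{P}^4}(-2a-4))=0$ for $a\geq 1$ by Bott's formula; the second gives $H^2(\mathscr{B})=0$ because $H^2(\Omega^1_{\mathbb{P}^4}\otimes E(-3a-11))=0$ by Lemma~\ref{Vlema1} (as $-3a-11\leq -5$) and $H^3(\Sym^2(E)(-4a-18))=0$ — the latter from $E\otimes E\cong\Sym^2(E)\oplus\mathcal{O}_{\mathbb{P}^4}(5)$ together with $H^3(\mathbb{P}^4,\mathcal{O}(k))=0$ and Lemma~\ref{Vlema3}, which gives $h^3(E\otimes E(-4a-18))=0$ since $-4a-18\leq -16$. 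Hence $H^1(\mathscr{I}_{Z_a})=0$. Finally, from $0\to\mathscr{I}_{Z_a}\to\mathcal{O}_{\mathbb{P}^4}\to\mathcal{O}_{Z_a}\to 0$ together with $H^0(\mathscr{I}_{Z_a})=0$ (as $Z_a\neq\emptyset$), $H^1(\mathscr{I}_{Z_a})=0$ and $H^1(\mathcal{O}_{\mathbb{P}^4})=0$, one reads off $h^0(\mathcal{O}_{Z_a})=1$; since $Z_a$ is smooth, hence reduced, this forces $Z_a$ to be connected.

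The argument is essentially bookkeeping, and I expect the only delicate point to be getting all the twists in (\ref{ENR}) correct (and proving smoothness first, so that (\ref{ENR}) is legitimate); once the displayed four-term sequence is set up, every cohomology group that must vanish lands in the numerical ranges covered by Lemmas~\ref{Vlema1} and \ref{Vlema3} and by Bott's formula, uniformly in $a\geq 1$, so the chase is immediate.
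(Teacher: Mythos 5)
Your proposal is correct and follows essentially the same route as the paper: smoothness via the Bertini-type Theorem~\ref{Bertini1}, then connectedness by breaking the Eagon--Northcott resolution of $\mathscr{I}_{Z_a}$ into two short exact sequences and using Bott's formula, Lemma~\ref{Vlema1}, and the splitting $E\otimes E\cong \mathrm{Sym}^2(E)\oplus\det(E)$ with Lemma~\ref{Vlema3} to force $H^1(\mathscr{I}_{Z_a})=0$, whence $h^0(\mathcal{O}_{Z_a})=1$. The only differences are cosmetic (you sandwich $H^2(\mathscr{B})$ directly between two vanishing groups instead of first identifying it with $H^3(S_2(E(-a-7))(-2a-4))$, and you spell out the non-emptiness via Theorem~\ref{non-sing}), and all your twists and numerical ranges check out.
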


\begin{proof}
Theorem \ref{Bertini1} tells us that $Z_a$ is smooth, so it suffices to show that $Z_a$ is connected. In fact, since $Z_a$ has pure expected codimension $3$ then the ideal sheaf admits the Eagon-Northcott resolution (\ref{ENR}). Consider this complex associated with the morphism $\varphi^\vee:T\mathbb{P}^4\rightarrow E(a+2)$:
    \begin{equation}\label{ENcotangFam}
        0\rightarrow S_2(E(-a-7))(-2a-4)\rightarrow\Omega_{\mathbb{P}^4}^1\otimes E(-3a-11)\rightarrow\Omega_{\mathbb{P}^4}^2(-2a-4)\xrightarrow{\alpha}\mathscr{I}_{Z_a}\rightarrow 0.
    \end{equation}
breaking into short exact sequences and passing to cohomology 
\begin{equation}\label{seqc1}
    0\rightarrow S_2(E(-a-7))(-2a-4)\rightarrow\Omega_{\mathbb{P}^4}^1\otimes E(-3a-11)\rightarrow K\rightarrow 0,
\end{equation}
and
\begin{equation}\label{seqc2}
    0\rightarrow K\rightarrow\Omega_{\mathbb{P}^4}^2(-2a-4)\rightarrow\mathscr{I}_{Z_a}\rightarrow 0.
\end{equation}
where $K=\Ker \alpha$.
From exact sequence (\ref{seqc1}) passing to cohomology
$$\cdots\to H^i(S_2(E(-a-7))(-2a-4))\to H^i(\Omega_{\mathbb{P}^4}^1\otimes E(-3a-11))\to H^i(K)\to\cdots.$$
By Lemma \ref{Vlema1} and since $a\geq 1$ we have 
$$H^2(K)\simeq H^3(S_2(E(-a-7))(-2a-4)).$$
On the other hand, if $V$ is a vector space over $\mathbb{C}$ then $V\otimes_{\mathbb{C}}V=S_2(V)\oplus\bigwedge^2(V)$, i.e, every $2$-tensor may be written uniquely as a sum of a symmetric and an alternating tensor, therefore the second tensor power of a vector bundle decomposes as the direct sum of the symmetric and alternating squares as vector bundles. So, twisting by $\mathcal{O}_{\mathbb{P}^4}(-2a-4)$ we have:
$$E\otimes E(-4a-18)\simeq S_2(E(-a-7))(-2a-4)\oplus\mathcal{O}_{\mathbb{P}^4}(-4a-13),$$
thus
$$h^i(E\otimes E(-4a-18))= h^i(S_2(E(-a-7))(-2a-4))+ h^i(\mathcal{O}_{\mathbb{P}^4}(-4a-13)), \,\,\ i=0,\dots ,4.$$
Hence, since $a\geq 1$ then using Lemma \ref{Vlema3} and by Bott's formula \cite[pag. 4]{Oko} we have $$h^3(S_2(E(-a-7))(-2a-4))=h^2(K)=0.$$

From exact sequence (\ref{seqc2}) passing to cohomology 
$$\cdots\to H^i(\Omega_{\mathbb{P}^4}^2(-2a-4))\to H^i(\mathscr{I}_{Z_a})\to H^{i+1}(K)\to H^{i+1}(\Omega_{\mathbb{P}^4}^2(-2a-4))\to\cdots$$
Since $a\geq 1$ using Bott's formula, $$H^2(K)\simeq H^1(\mathscr{I}_{Z_a}),$$ hence $h^1(\mathscr{I}_{Z_a})=0$. From exact sequence
$$0\to\mathscr{I}_{Z_a}\to\mathcal{O}_{\mathbb{P}^4}\to\mathcal{O}_{Z_a}\to 0$$
passing to cohomology, we have:
$$0\to H^0(\mathscr{I}_{Z_a})\to H^0(\mathcal{O}_{\mathbb{P}^4})\to H^0(\mathcal{O}_{Z_a})\to H^1(\mathscr{I}_{Z_a})\to\cdots$$
since $h^i(\mathscr{I}_{Z_a})=0$ for $i=0,1$ hence $h^0(\mathcal{O}_{\mathbb{P}^4})=h^0(\mathcal{O}_{Z_a})=1$. Therefore, $Z_a$ is connected.

\end{proof}

Now, we will bound the degree of a hypersurface which can contain $\Sing(\mathscr{F}_a)$. 
\begin{propo}
$Z_a=\Sing(\mathscr{F}_a)$ is  never contained in a hypersurface of degree  $\leq 2a+5= \deg(\mathscr{F}_a)$.
\end{propo}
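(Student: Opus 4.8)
The plan is to show that $h^0(\mathscr{I}_{Z_a}(k)) = 0$ for all $k \le 2a+5$, which is exactly the assertion that $Z_a$ lies on no hypersurface of degree $\le 2a+5$. First I would use the Eagon--Northcott resolution \eqref{ENcotangFam} of $\mathscr{I}_{Z_a}$, which was already set up in the proof of Proposition \ref{propo1}, now twisted by $\mathcal{O}_{\mathbb{P}^4}(k)$:
\begin{equation*}
0\rightarrow S_2(E(-a-7))(-2a-4+k)\rightarrow\Omega_{\mathbb{P}^4}^1\otimes E(-3a-11+k)\rightarrow\Omega_{\mathbb{P}^4}^2(-2a-4+k)\rightarrow\mathscr{I}_{Z_a}(k)\rightarrow 0.
\end{equation*}
Breaking this into the two short exact sequences
\begin{equation*}
0\rightarrow S_2(E(-a-7))(-2a-4+k)\rightarrow\Omega_{\mathbb{P}^4}^1\otimes E(-3a-11+k)\rightarrow K(k)\rightarrow 0,
\end{equation*}
\begin{equation*}
0\rightarrow K(k)\rightarrow\Omega_{\mathbb{P}^4}^2(-2a-4+k)\rightarrow\mathscr{I}_{Z_a}(k)\rightarrow 0,
\end{equation*}
one gets $H^0(\mathscr{I}_{Z_a}(k))$ sandwiched between $H^0(\Omega_{\mathbb{P}^4}^2(-2a-4+k))$ and $H^1(K(k))$, and in turn $H^1(K(k))$ is controlled by $H^1(\Omega_{\mathbb{P}^4}^1\otimes E(-3a-11+k))$ and $H^2(S_2(E(-a-7))(-2a-4+k))$.

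The key steps, in order, are: (1) By Bott's formula, $H^0(\Omega_{\mathbb{P}^4}^2(m)) = 0$ for $m \le 1$; here $m = k - 2a - 4 \le 1$ precisely when $k \le 2a+5$, so the middle term of the second sequence vanishes in the relevant range, giving an injection $H^0(\mathscr{I}_{Z_a}(k)) \hookrightarrow H^1(K(k))$. (2) From the first sequence, $H^1(K(k))$ injects into $H^2\big(S_2(E(-a-7))(-2a-4+k)\big)$ once $H^1\big(\Omega_{\mathbb{P}^4}^1\otimes E(-3a-11+k)\big) = 0$; by Lemma \ref{Vlema1}(2), with $j=1$, this holds when $-3a-11+k \le -3$, i.e.\ $k \le 3a+8$, which is implied by $k \le 2a+5$. (3) Finally I would bound $H^2\big(S_2(E(-a-7))(-2a-4+k)\big)$ using the decomposition $E\otimes E(-4a-18+k) \simeq S_2(E(-a-7))(-2a-4+k) \oplus \mathcal{O}_{\mathbb{P}^4}(-4a-13+k)$ already exploited in Proposition \ref{propo1}: the line-bundle summand has no $H^2$, so it suffices that $h^2\big(E\otimes E(-4a-18+k)\big) = 0$. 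By Lemma \ref{Vlema3}(3) this holds when $-4a-18+k \le -11$ (i.e.\ $k \le 4a+7$) or $-4a-18+k \ge -4$ (i.e.\ $k \ge 4a+14$); the first inequality is again implied by $k \le 2a+5$ since $a \ge 1$. Chaining the three vanishings gives $H^0(\mathscr{I}_{Z_a}(k)) = 0$ for $k \le 2a+5$.

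The main obstacle I anticipate is bookkeeping the twists so that all three cohomological vanishings genuinely overlap on the range $k \le 2a+5$ for every $a \ge 1$ — in particular checking the edge case $k = 2a+5$ and the smallest value $a = 1$, where the Bott-formula and Lemma \ref{Vlema3} thresholds are tightest. One should also double check that $\deg(\mathscr{F}_a) = 2a+5$ here (rather than $2a+6$ as in \eqref{distCotangFam}), i.e.\ that the statement refers to the saturated/normalized distribution, so that the bound in the statement is the sharp one; the numerology of Lemma \ref{Vlema3} is exactly consistent with the cutoff $2a+5$, which is reassuring. Once the three ranges are verified to contain $k \le 2a+5$, the argument is a routine diagram chase through the Eagon--Northcott complex, so no further delicate estimate is needed.
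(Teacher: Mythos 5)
Your proposal is correct and follows essentially the same route as the paper: twist the Eagon--Northcott resolution by $\mathcal{O}_{\mathbb{P}^4}(k)$, split it into two short exact sequences, and chain the vanishings $H^0(\Omega^2_{\mathbb{P}^4}(k-2a-4))=0$ (Bott), $H^1(\Omega^1_{\mathbb{P}^4}\otimes E(k-3a-11))=0$ (Lemma \ref{Vlema1}), and $H^2(S_2(E(-a-7))(k-2a-4))=0$ to kill $H^0(\mathscr{I}_{Z_a}(k))$ for $k\leq 2a+5$. Your justification of the last vanishing via the decomposition $E\otimes E(k-4a-18)\simeq S_2(E(-a-7))(k-2a-4)\oplus\mathcal{O}_{\mathbb{P}^4}(k-4a-13)$ together with Lemma \ref{Vlema3} is in fact more explicit than the paper's citation of Lemma \ref{Vlema1} at that step, and your observation about the $2a+5$ versus $2a+6$ degree convention is a genuine inconsistency in the paper's bookkeeping, not a flaw in your argument.
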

\begin{proof}
Consider the complex associated with a morphism $\varphi^\vee:T\mathbb{P}^4\rightarrow E(a+2)$ twisted by $\mathcal{O}_{\mathbb{P}^4}(d)$:
    \begin{equation}\label{ENcotangFamd}
        0\rightarrow S_2(E(-a-7))(d-2a-4)\rightarrow\Omega_{\mathbb{P}^4}^1\otimes E(d-3a-11)\rightarrow\Omega_{\mathbb{P}^4}^2(d-2a-4)\rightarrow \mathscr{I}_{Z_a}(d)\rightarrow 0
    \end{equation}
    with the respective    short exact sequences
$$
    0\rightarrow S_2(E(-a-7))(d-2a-4)\rightarrow\Omega_{\mathbb{P}^4}^{1}\otimes E(d-3a-11)\rightarrow K(d)\rightarrow 0,
$$
and
$$
    0\rightarrow K(d)\rightarrow\Omega_{\mathbb{P}^4}^{2}(d-2a-4)\rightarrow\mathscr{I}_{Z_a}(d)\rightarrow 0.
$$
Passing to cohomology we get that
$$\cdots\to H^0(\Omega_{\mathbb{P}^4}^2(d-2a-4))\to H^0(\mathscr{I}_{Z_a}(d))\to H^{1}(K(d))\to H^{1}(\Omega_{\mathbb{P}^4}^2(d-2a-4))\to\cdots$$
and 
$$\cdots\to
H^1(\Omega_{\mathbb{P}^4}^1\otimes E(d-3a-11)) \to H^1(K(d)) \to H^2(S_2(E(d-a-7))(d-2a-4))
\to\cdots.$$
On the one hand,  by Bott's formula, we have that $H^0(\Omega_{\mathbb{P}^4}^2(d-2a-4))=H^{1}(\Omega_{\mathbb{P}^4}^2(d-2a-4))=0$, if $d\leq 2a+5$. On the other hand, by Lemma \ref{Vlema1} we obtain that   $$H^1(\Omega_{\mathbb{P}^4}^1\otimes E(d-3a-11))= H^2(S_2(E(d-a-7))(d-2a-4))=0$$ if $d\leq 3a+6$. Then,
$$
H^0(\mathscr{I}_{Z_a}(d))\simeq  H^{1}(K(d))=0
$$
for all $d\leq 2a+5=\deg(\mathscr{F}_a)$. 
\end{proof}

\subsection{The Rao nodule of singular scheme.}
In  \cite{MSM} the authors have introduced a new invariant, called Rao module, which appears in the classification of locally complete intersection  codimension two distributions on $\mathbb{P}^3$.
In this section, Next, we will determine the  dimension of  Rao Module  of  Horrocks-Mumford distributions.

Let $\mathscr{F}$ be a holomorphic distribution on $\mathbb{P}^4$. Consider the following graded module
$$R_{\mathscr{F}}:=H^1_*(\mathscr{I}_Z)=\bigoplus_{l\in\mathbb{Z}}H^1(\mathscr{I}_Z(l));$$
called the \textit{Rao module} and
$$M_{\mathscr{F}}:=H^1_*(N {\mathscr{F}}^*)=\bigoplus_{l\in\mathbb{Z}}H^1(N {\mathscr{F}}^*(l));$$
called the \textit{first cohomology module} of $\mathscr{F}$ as a sub-sheaf of the conormal. 
Since $\mathscr{F}$ is locally free then $R_{\mathscr{F}}$ is finite-dimensional and $M_{\mathscr{F}}$ is always finite-dimensional.

\subsubsection{Rao Module dimension:}
Let $\mathscr{F}_a$ is a Horrocks-Mumford distribution
\begin{equation*}
    \mathscr{F}_a: 0\rightarrow E(-a-7)\xrightarrow{\varphi}\Omega_{\mathbb{P}^4}^1\rightarrow \mathcal{Q}_{\mathscr{F}_a}\rightarrow 0.
\end{equation*}
Considering the Eagon-Northcott complex associated to the morphism $\varphi^\vee:T\mathbb{P}^4\rightarrow E(a+2)$ and twisting by $\mathcal{O}_{\mathbb{P}^4}(q)$ we have:
\begin{equation}\label{ENdistCotang}
    0\rightarrow S_2(E(-a-7))(q-2a-4)\rightarrow\Omega_{\mathbb{P}^4}^1\otimes E(q-3a-11)\rightarrow\Omega_{\mathbb{P}^4}^2(q-2a-4)\rightarrow\mathscr{I}_{Z_a}(q)\rightarrow 0.
\end{equation}
In order to calculate $h^1(\mathscr{I}_{Z_a}(q))$ in (\ref{ENdistCotang}), for all $q\in\mathbb{Z}$ and $a\geq 1$, we  will prove some technical  Lemmas:

\begin{lema}\label{h1EEn02}
For all $q\in\mathbb{Z}$ and $a\geq 1$ we have:
\begin{enumerate}
    \item  If $\{q\neq 2a+4\}\cap\{q\leq 3a+2 \,\,\ \textrm{or} \,\,\ q\geq 3a+9\}\cap\{4a+2<q<4a+10\}$ then  $$h^1(\mathscr{I}_{Z_a}(q))=h^3(E\otimes E(q-4a-18))\neq 0.$$
    \item If $\{q\neq 2a+4\}\cap\{q\leq 3a+2 \,\,\ \textrm{or} \,\,\ q\geq 3a+9\}\cap\{q\leq 4a+2 \,\,\ \textrm{or} \,\,\ q\geq 4a+10\}$ then $$h^1(\mathscr{I}_{Z_a}(q))=h^3(E\otimes E(q-4a-18))=0.$$
\end{enumerate}
\end{lema}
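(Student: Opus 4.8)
The plan is to compute $h^1(\mathscr{I}_{Z_a}(q))$ directly from the twisted Eagon--Northcott resolution (\ref{ENdistCotang}), following the same strategy as in the proof of Proposition \ref{propo1}. First I would split (\ref{ENdistCotang}) into the two short exact sequences
\begin{equation*}
0\to S_2(E(-a-7))(q-2a-4)\to \Omega_{\mathbb{P}^4}^1\otimes E(q-3a-11)\to K(q)\to 0,
\end{equation*}
\begin{equation*}
0\to K(q)\to \Omega_{\mathbb{P}^4}^2(q-2a-4)\to \mathscr{I}_{Z_a}(q)\to 0,
\end{equation*}
where $K$ is the middle syzygy sheaf. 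From the long exact sequence of the second one, Bott's formula gives $H^1(\Omega_{\mathbb{P}^4}^2(m))=0$ for all $m$ and $H^2(\Omega_{\mathbb{P}^4}^2(m))=0$ for $m\neq 0$; since $q\neq 2a+4$ by hypothesis, this yields $H^1(\mathscr{I}_{Z_a}(q))\simeq H^2(K(q))$.

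Next I would run the long exact sequence of the first short exact sequence around $H^2(K(q))$, namely
\begin{equation*}
H^2(\Omega_{\mathbb{P}^4}^1\otimes E(q-3a-11))\to H^2(K(q))\to H^3(S_2(E(-a-7))(q-2a-4))\to H^3(\Omega_{\mathbb{P}^4}^1\otimes E(q-3a-11)).
\end{equation*}
By Lemma \ref{Vlema1}, items $(3)$ and $(4)$, the two flanking terms both vanish whenever $q\leq 3a+2$ or $q\geq 3a+9$, which is precisely the second interval condition in the statement, so $H^2(K(q))\simeq H^3(S_2(E(-a-7))(q-2a-4))$. To convert the symmetric square into $E\otimes E$ I would use $E\otimes E\simeq S_2E\oplus\bigwedge^2 E$ together with $\bigwedge^2 E=\det E=\mathcal{O}_{\mathbb{P}^4}(5)$; keeping track of the twists this reads $E\otimes E(q-4a-18)\simeq S_2(E(-a-7))(q-2a-4)\oplus\mathcal{O}_{\mathbb{P}^4}(q-4a-13)$, and since $H^3$ of any line bundle on $\mathbb{P}^4$ vanishes (Bott's formula) we obtain $H^3(S_2(E(-a-7))(q-2a-4))\simeq H^3(E\otimes E(q-4a-18))$. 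Composing the three isomorphisms gives $h^1(\mathscr{I}_{Z_a}(q))=h^3(E\otimes E(q-4a-18))$ under the two hypotheses $q\neq 2a+4$ and $\{q\leq 3a+2\}\cup\{q\geq 3a+9\}$.

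It then remains to decide when $h^3(E\otimes E(q-4a-18))$ is nonzero, and for this I would invoke Lemma \ref{Vlema3}$(4)$ (equivalently, read off Table \ref{table2}): this group is zero for $q-4a-18\leq -16$ or $q-4a-18\geq -8$, that is for $q\leq 4a+2$ or $q\geq 4a+10$, and it is nonzero exactly for $-15\leq q-4a-18\leq -9$, that is for $4a+2<q<4a+10$. Intersecting these two regimes with the hypotheses of items $(1)$ and $(2)$ yields exactly the asserted conclusions; in the regime of item $(1)$ one also checks that the hypothesis $q\neq 2a+4$ is never binding, since for $a\geq 1$ one has $2a+4<3a+9\leq q$. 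I expect the argument to be essentially bookkeeping: the only point that requires genuine care is matching the three interval conditions in the statement against the numerical vanishing ranges of Lemmas \ref{Vlema1} and \ref{Vlema3}, and propagating the shift $-4a-18$ correctly through the $S_2/\bigwedge^2$ splitting.
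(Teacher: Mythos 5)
Your proposal is correct and follows essentially the same route as the paper: split the twisted Eagon--Northcott resolution into two short exact sequences, use Bott's formula (with $q\neq 2a+4$) to identify $H^1(\mathscr{I}_{Z_a}(q))$ with $H^2(K(q))$, use Lemma \ref{Vlema1} to identify that with $H^3(S_2(E(-a-7))(q-2a-4))$, pass to $H^3(E\otimes E(q-4a-18))$ via the $S_2\oplus\wedge^2$ splitting, and read off the (non)vanishing from Table \ref{table2}. The numerical ranges all check out, so no changes are needed.
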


\begin{proof}
We break the complex (\ref{ENdistCotang}) into exact sequences:
\begin{equation}\label{sqtdtang1Fam}
    0\rightarrow S_2(E(-a-7))(q-2a-4)\rightarrow\Omega_{\mathbb{P}^4}^{1}\otimes E(q-3a-11)\rightarrow K(q)\rightarrow 0,
\end{equation}
and
\begin{equation}\label{sqtdcotang2Fam}
    0\rightarrow K(q)\rightarrow\Omega_{\mathbb{P}^4}^{2}(q-2a-4)\rightarrow\mathscr{I}_{Z_a}(q)\rightarrow 0.
\end{equation}
Passing to cohomology we have the exact sequence (\ref{sqtdcotang2Fam}):
\begin{equation*}
    \cdots\rightarrow H^1(\Omega_{\mathbb{P}^4}^{2}(q-2a-4))\rightarrow H^1(\mathscr{I}_{Z_a}(q))\rightarrow H^2(K(q))\rightarrow H^2(\Omega_{\mathbb{P}^4}^{2}(q-2a-4))\rightarrow\cdots,
\end{equation*}
and by Bott's formula we have that  $h^1(\Omega_{\mathbb{P}^4}^{2}(q-2a-4))=0=h^2(\Omega_{\mathbb{P}^4}^{2}(q-2a-4))$ for $q\neq 2a+4$, then 
\begin{equation}\label{h1h2K2}
  h^1(\mathscr{I}_{Z_a}(q))=h^2(K(q)) \,\,\,\ \textrm{for} \,\,\,\ q\neq 2a+4 \,\,\ \textrm{and} \,\,\ a\geq 1.  
\end{equation}
Now, from
$$\cdots\to H^2(\Omega_{\mathbb{P}^4}^1\otimes E(q-3a-11))\to H^2(K(q))\to$$  $$\to H^3(S_2(E(-a-7))(q-2a-4))\to H^3(\Omega_{\mathbb{P}^4}^3\otimes E(q-3a-11))\to\cdots$$
and by Lemma \ref{Vlema1} we obtain 
$$h^2(K(q))=h^3(S_2(E(-a-7))(q-2a-4))$$
for all $q\in\mathbb{Z}$ such that $\{q\leq 3a+2 \,\,\ \textrm{or} \,\,\ q\geq 3a+9\}.$
Thus $$h^1(\mathscr{I}_{Z_a}(q))=h^3(S_2(E(-a-7))(q-2a-4))$$ for all $q\in\mathbb{Z}$ such that $\{q\neq 2a+4\}\cap\{q\leq 3a+2 \,\,\ \textrm{or} \,\,\ q\geq 3a+9\}$. By the decomposition of the  tensor product and Bott's formula, we have:
$$h^3(E\otimes E(q-4a-18))= h^3(S_2(E(-a-7))(q-2a-4)).$$
So $h^1(\mathscr{I}_{Z_a}(q))=h^3(E\otimes E(q-4a-18))$ for all $q\in\mathbb{Z}$ such that $\{q\neq 2a+4\}\cap\{q\leq 3a+2 \,\,\ \textrm{or} \,\,\ q\geq 3a+9\}.$
It follows form the  table \ref{table2} that $h^3(E\otimes E(q-4a-18)\neq 0$ for all $q\in\mathbb{Z}$ such that $4a+2<q<4a+10$. Thus
\begin{equation*}
    h^1(\mathscr{I}_{Z_a}(q))=h^3(E\otimes E(q-4a-18))\neq 0
\end{equation*}
for all $q\in\mathbb{Z}$ and $a\geq 1$ such that $$\{q\neq 2a+4\}\cap\{q\leq 3a+2 \,\,\ \textrm{or} \,\,\ q\geq 3a+9\}\cap\{4a+2<q<4a+10\}.$$
On the other hand, by table $\ref{table2}$ we have $h^3(E\otimes E(q-4a-18))=0$ for $q\in\mathbb{Z}$ such that $$\{q\leq 4a+2 \,\,\ \textrm{or} \,\,\ q\geq 4a+10\}.$$
Therefore, $h^1(\mathscr{I}_{Z_a}(q))=h^3(E\otimes E(q-4a-18))=0$ for all $q\in\mathbb{Z}$ such that $$\{q\neq 2a+4\}\cap\{q\leq 3a+2 \,\,\ \textrm{or} \,\,\ q\geq 3a+9\}\cap\{q\leq 4a+2 \,\,\ \textrm{or} \,\,\ q\geq 4a+10\}.$$
\end{proof}

\begin{lema}\label{h1C202}
For all $q\in\mathbb{Z}$ and $a\geq 1$   such that
$$\{q\leq 4a+2 \,\,\ \textrm{or} \,\,\ q\geq 4a+10\}\cap\{q\leq 3a+6 \,\,\ \textrm{or} \,\,\ q\geq 3a+13\}$$
we have that 
\begin{equation*} 
h^1(\mathscr{I}_{Z_a}(q))=h^1(\Omega_{\mathbb{P}^4}^2(q-2a-4))=0.  
\end{equation*}
\end{lema}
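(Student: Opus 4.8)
The plan is to reuse the bookkeeping from the proof of Lemma~\ref{h1EEn02} on the Eagon--Northcott complex (\ref{ENdistCotang}), replacing the search for isomorphisms by a search for vanishings. First I would split (\ref{ENdistCotang}) into the two short exact sequences (\ref{sqtdtang1Fam}) and (\ref{sqtdcotang2Fam}), with $K(q)=\ker\alpha$. The equality $h^1(\Omega_{\mathbb{P}^4}^2(q-2a-4))=0$ needs no hypothesis at all: by Bott's formula \cite[pag. 4]{Oko}, $H^1(\mathbb{P}^4,\Omega_{\mathbb{P}^4}^2(k))=0$ for every $k$. Plugging this into the long exact sequence of (\ref{sqtdcotang2Fam}) yields an injection $H^1(\mathscr{I}_{Z_a}(q))\hookrightarrow H^2(K(q))$ valid for all $q$ (in particular at the boundary value $q=2a+4$, which the hypothesis does not exclude). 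So the whole problem reduces to showing $H^2(K(q))=0$ in the stated range.

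Next I would use the piece
$$H^2(\Omega_{\mathbb{P}^4}^1\otimes E(q-3a-11))\longrightarrow H^2(K(q))\longrightarrow H^3(S_2(E(-a-7))(q-2a-4))$$
of the long exact sequence of (\ref{sqtdtang1Fam}) and kill both ends. For the left-hand term, Lemma~\ref{Vlema1}(3) with $j=1$ gives vanishing whenever $q-3a-11\le -5$ or $q-3a-11\ge -2$, i.e.\ $q\le 3a+6$ or $q\ge 3a+9$; the hypothesis clause $\{q\le 3a+6\text{ or }q\ge 3a+13\}$ is contained in this. For the right-hand term, I would use the splitting $E(-a-7)^{\otimes 2}=S_2(E(-a-7))\oplus\det(E(-a-7))=S_2(E(-a-7))\oplus\mathcal{O}_{\mathbb{P}^4}(-2a-9)$, so that twisting by $\mathcal{O}_{\mathbb{P}^4}(q-2a-4)$ gives $E\otimes E(q-4a-18)=S_2(E(-a-7))(q-2a-4)\oplus\mathcal{O}_{\mathbb{P}^4}(q-4a-13)$; since $H^3$ of a line bundle on $\mathbb{P}^4$ vanishes, $h^3(S_2(E(-a-7))(q-2a-4))=h^3(E\otimes E(q-4a-18))$, which by Lemma~\ref{Vlema3}(4) is $0$ exactly when $q-4a-18\le -16$ or $q-4a-18\ge -8$, i.e.\ $q\le 4a+2$ or $q\ge 4a+10$ --- the first clause of the hypothesis. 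With both outer terms zero we get $H^2(K(q))=0$, hence $h^1(\mathscr{I}_{Z_a}(q))=0$.

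I do not expect a genuine obstacle: once (\ref{ENdistCotang}) is broken up, everything is dictated by Bott's formula together with the cohomology tables of $E$ and $E\otimes E$ (Tables \ref{table} and \ref{table2}) already recorded, i.e.\ by Lemmas \ref{Vlema1} and \ref{Vlema3}. The only point I would double-check carefully is the arithmetic at the endpoints of the allowed range of $q$: matching the numerical clauses $\{q\le 4a+2\text{ or }q\ge 4a+10\}$ and $\{q\le 3a+6\text{ or }q\ge 3a+13\}$ against the precise vanishing intervals of Lemmas \ref{Vlema1}(3) and \ref{Vlema3}(4), and in particular confirming that the (slightly weaker) intersection appearing in the statement still forces both ends of the displayed three-term sequence to vanish at the same time.
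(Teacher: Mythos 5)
Your proof is correct, and it follows the paper's skeleton (the Eagon--Northcott complex (\ref{ENdistCotang}) split into (\ref{sqtdtang1Fam}) and (\ref{sqtdcotang2Fam}), then Bott's formula plus Lemmas \ref{Vlema1} and \ref{Vlema3}), but your reduction is genuinely leaner than the paper's and this difference matters. The paper insists on proving the literal equality $h^1(\mathscr{I}_{Z_a}(q))=h^1(\Omega^2_{\mathbb{P}^4}(q-2a-4))$ by forcing $h^1(K(q))=h^2(K(q))=0$, and for that it asserts $h^i(E\otimes E(q-4a-18))=0$ for \emph{all} $i=1,2,3$ on the range $\{q\le 4a+2 \text{ or } q\ge 4a+10\}$; Table \ref{table2} does not support this for $i=1,2$ near the lower end of the second branch (e.g.\ $a=1$, $q=16$ gives $h^1(E\otimes E(-6))=5$ and $h^2(E\otimes E(-6))=10$, and this value of $q$ does satisfy both hypothesis clauses). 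Your observation that $H^1(\mathbb{P}^4,\Omega^2_{\mathbb{P}^4}(k))=0$ for every $k$ by Bott means only the surjectivity half is needed, i.e.\ only $H^2(K(q))=0$, which in turn needs only the two vanishings $h^2(\Omega^1_{\mathbb{P}^4}\otimes E(q-3a-11))=0$ (Lemma \ref{Vlema1}(3), valid for $q\le 3a+6$ or $q\ge 3a+9$, which contains the stated clause) and $h^3(S_2(E(-a-7))(q-2a-4))=h^3(E\otimes E(q-4a-18))=0$ (Lemma \ref{Vlema3}(4), exactly the clause $q\le 4a+2$ or $q\ge 4a+10$). Both hold on the entire stated range, so your argument not only proves the lemma but quietly repairs the over-strong intermediate claim in the paper's own bookkeeping; your endpoint arithmetic is exactly right and no further checking is needed.
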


\begin{proof}
From
$$\cdots\to H^1(K(q))\to H^1(\Omega_{\mathbb{P}^4}^2(q-2a-4))\to H^1(\mathscr{I}_{Z_a}(q))\to H^2(K(q))\to\cdots$$
we have
$$h^1(\mathscr{I}_{Z_a}(q))=h^1(\Omega_{\mathbb{P}^4}^2(q-2a-4))$$ if and only if $h^1(K(q))=0=h^2(K(q))$, and from 
$$\cdots\to H^i(S_2(E(-a-7))(q-2a-4))\to H^i(\Omega_{\mathbb{P}^4}^{1}\otimes E(q-3a-11))\to H^i(K(q))\to\cdots$$
we have $$h^1(K(q))=0=h^2(K(q))$$ if and only if  $h^i(S_2(E(-a-7))(q-2a-4))=0$ for all $i=1,2,3$ and  $$h^i(\Omega_{\mathbb{P}^4}^1\otimes E(q-3a-11))=0$$ for all $i=1,2$.
It follows from table \ref{table2} that $$h^i(S_2(E(-a-7))(q-2a-4))=h^i(E\otimes E(q-4a-18))=0$$ for all $i=1,2,3$ and $q\in\mathbb{Z}$ such that  $\{q\leq 4a+2 \,\,\ \textrm{or} \,\,\ q\geq 4a+10\}$.
 By Lemma \ref{Vlema1} we have that $h^i(\Omega_{\mathbb{P}^4}^1\otimes E(q-3a-11))=0$ for all $i=1,2$ and $q\in\mathbb{Z}$ such that $\{q\leq 3a+6 \,\,\ \textrm{or} \,\,\ q\geq 3a+13\}$. 
Thus $h^1(\mathscr{I}_{Z_a}(q))=h^1(\Omega_{\mathbb{P}^4}^2(q-2a-4))$ for all $q\in\mathbb{Z}$ such that $$\{q\leq 4a+2 \,\,\ \textrm{or} \,\,\ q\geq 4a+10]\cap[q\leq 3a+6 \,\,\ \textrm{or} \,\,\ q\geq 3a+13\}.$$ 

Since Bott's formula we have $h^1(\Omega_{\mathbb{P}^4}^2(q-2a-4))=0$, then $h^1(\mathscr{I}_{Z_a}(q))=0$ for all $q\in\mathbb{Z}$ and $a\geq 1$ such that $\{q\leq 4a+2 \,\,\ \textrm{or} \,\,\ q\geq 4a+10\}\cap\{q\leq 3a+6 \,\,\ \textrm{or} \,\,\ q\geq 3a+13\}.$
\end{proof}

\begin{lema}\label{h1+2}
For all $q\in\mathbb{Z}$ and $a\geq 1$ such that $$\{q\neq 2a+4\}\cap\{q\leq 3a+2 \,\,\ \textrm{or} \,\,\ q\geq 3a+7\}\cap\{q\leq 4a+7 \,\,\ \textrm{or} \,\,\ q\geq 4a+14\}$$
we have that 
\begin{equation*}
    h^1(\mathscr{I}_{Z_a}(q))=h^2(\Omega_{\mathbb{P}^4}^1\otimes E(q-3a-11))+h^3(E\otimes E(q-4a-18)).
\end{equation*}

\end{lema}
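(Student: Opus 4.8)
The plan is to run the same argument as in Lemmas \ref{h1EEn02} and \ref{h1C202}; only the numerical ranges change. First I would break the Eagon--Northcott complex (\ref{ENdistCotang}) into the two short exact sequences
\[ 0\rightarrow S_2(E(-a-7))(q-2a-4)\rightarrow\Omega_{\mathbb{P}^4}^{1}\otimes E(q-3a-11)\rightarrow K(q)\rightarrow 0, \]
\[ 0\rightarrow K(q)\rightarrow\Omega_{\mathbb{P}^4}^{2}(q-2a-4)\rightarrow\mathscr{I}_{Z_a}(q)\rightarrow 0, \]
exactly as in (\ref{sqtdtang1Fam})--(\ref{sqtdcotang2Fam}). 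From the second sequence and Bott's formula one has $h^1(\Omega_{\mathbb{P}^4}^{2}(q-2a-4))=h^2(\Omega_{\mathbb{P}^4}^{2}(q-2a-4))=0$ whenever $q\neq 2a+4$, hence $h^1(\mathscr{I}_{Z_a}(q))=h^2(K(q))$ under the first hypothesis of the statement.

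Next I would compute $H^2(K(q))$ from the long exact cohomology sequence of the first short exact sequence, namely
\[ H^2\big(S_2(E(-a-7))(q-2a-4)\big)\to H^2\big(\Omega_{\mathbb{P}^4}^{1}\otimes E(q-3a-11)\big)\to H^2(K(q))\to H^3\big(S_2(E(-a-7))(q-2a-4)\big)\to H^3\big(\Omega_{\mathbb{P}^4}^{1}\otimes E(q-3a-11)\big). \]
The point is that the two outer terms vanish precisely in the indicated range. Indeed, Lemma \ref{Vlema1} (item 4 with $j=1$) gives $h^3(\Omega_{\mathbb{P}^4}^{1}\otimes E(q-3a-11))=0$ when $q\leq 3a+2$ or $q\geq 3a+7$; and using the decomposition $E\otimes E\simeq S_2E\oplus\det E$, which after twisting reads
\[ S_2(E(-a-7))(q-2a-4)\oplus\mathcal{O}_{\mathbb{P}^4}(q-4a-13)\simeq (E\otimes E)(q-4a-18), \]
together with Bott's formula (which kills $H^2$ of a line bundle on $\mathbb{P}^4$) and Table \ref{table2} ($h^2(E\otimes E(k))=0$ for $k\leq -11$ or $k\geq -4$), one gets $h^2(S_2(E(-a-7))(q-2a-4))=h^2((E\otimes E)(q-4a-18))=0$ when $q\leq 4a+7$ or $q\geq 4a+14$. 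Thus the middle of the long exact sequence collapses to
\[ 0\to H^2\big(\Omega_{\mathbb{P}^4}^{1}\otimes E(q-3a-11)\big)\to H^2(K(q))\to H^3\big(S_2(E(-a-7))(q-2a-4)\big)\to 0, \]
so that $h^2(K(q))=h^2(\Omega_{\mathbb{P}^4}^{1}\otimes E(q-3a-11))+h^3(S_2(E(-a-7))(q-2a-4))$.

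Finally I would invoke the same decomposition once more: since $h^3$ of the line bundle $\mathcal{O}_{\mathbb{P}^4}(q-4a-13)$ vanishes by Bott's formula, $h^3(S_2(E(-a-7))(q-2a-4))=h^3((E\otimes E)(q-4a-18))$. Chaining the three identities produces $h^1(\mathscr{I}_{Z_a}(q))=h^2(\Omega_{\mathbb{P}^4}^{1}\otimes E(q-3a-11))+h^3(E\otimes E(q-4a-18))$, which is the assertion. There is no conceptual obstacle here: the work is entirely in the bookkeeping of twists, since the three numerical conditions in the hypothesis are calibrated so that $h^1(\Omega^2_{\mathbb{P}^4})$, $h^2(\Omega^2_{\mathbb{P}^4})$, $h^2(S_2(E(-a-7)))$ and $h^3(\Omega^1_{\mathbb{P}^4}\otimes E)$ all vanish at the same time; the only care needed is to recheck each range against Table \ref{table2}, Lemma \ref{Vlema1} and Bott's formula, and to keep the signs of the shifts straight.
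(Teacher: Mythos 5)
Your proposal is correct and follows essentially the same route as the paper: reduce $h^1(\mathscr{I}_{Z_a}(q))$ to $h^2(K(q))$ via Bott's formula on the second short exact sequence, then collapse the long exact sequence of the first one using Lemma \ref{Vlema1} for $h^3(\Omega^1_{\mathbb{P}^4}\otimes E(q-3a-11))$ and the splitting $E\otimes E\simeq S_2E\oplus\det E$ together with Table \ref{table2} for the symmetric-square terms. If anything, you are slightly more explicit than the paper in writing out the collapsed short exact sequence and in checking that the line-bundle summand contributes nothing in degrees $2$ and $3$.
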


\begin{proof}

By equation (\ref{h1h2K2}) we have $h^1(\mathscr{I}_{Z_a}(q))=h^2(K(q)) \,\,\,\ \textrm{for} \,\,\,\ q\neq 2a+4$.
Now, from
$$\cdots\to H^i(S_2(E(-a-7))(q-2a-4))\to H^i(\Omega_{\mathbb{P}^4}^1\otimes E(q-3a-11))\to H^i(K(q))\to\cdots$$
We note that $$h^2(K(q))=h^2(\Omega_{\mathbb{P}^4}^1\otimes E(q-3a-11))+h^3(S_2(E(-a-7)(q-2a-4)))$$ 
if and only if $h^2(S_2(E(-a-7))(q-2a-4))=0$ and $h^3(\Omega_{\mathbb{P}^4}^1\otimes E(q-3a-11))=0$.

On the one hand, by table \ref{table2} we have  that $$h^2(S_2(E(-a-7)(q-2a-4)))=h^2(E\otimes E(q-4a-18))=0$$ if and only if $q\leq 4a+7$ or $q\geq 4a+14$.

On the other hand, by Lemma \ref{Vlema1} we have $$h^3(\Omega_{\mathbb{P}^4}^1\otimes E(q-3a-11))=0$$ if and only if $q\leq 3a+2$ or $q\geq 3a+7$.
Thus, putting these last equations together, we have
$$h^1(\mathscr{I}_{Z_a}(q))=h^2(\Omega_{\mathbb{P}^4}^1\otimes E(q-3a-11))+h^3(S_2(E(-a-7)(q-2a-4)))$$
for all $q\in\mathbb{Z}$ and $a\geq 1$ such that $$\{q\neq 2a+4\}\cap\{q\leq 3a+2 \,\,\ \textrm{or} \,\,\ q\geq 3a+7\}\cap\{q\leq 4a+7 \,\,\ \textrm{or} \,\,\ q\geq 4a+14\}.$$
\end{proof}

\begin{theorem}
Let $\mathscr{F}_a$ is a dimension $2$ holomorphic distributions (\ref{distCotangFam}) on $\mathbb{P}^4$.  Then:
\begin{enumerate}
    \item $\dim_{\mathbb{C}} R_{\mathscr{F}_1}\geq 184$.
    \item $\dim_{\mathbb{C}} R_{\mathscr{F}_2}\geq 284$.
    \item $\dim_{\mathbb{C}} R_{\mathscr{F}_3}\geq 369$.
    \item $\dim_{\mathbb{C}} R_{\mathscr{F}_a}=401$,  for all $ a\geq4$.
\end{enumerate}

\end{theorem}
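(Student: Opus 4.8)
The plan is to compute $\dim_{\mathbb{C}}R_{\mathscr{F}_a}=\sum_{q\in\mathbb{Z}}h^1(\mathscr{I}_{Z_a}(q))$ term by term, evaluating each $h^1(\mathscr{I}_{Z_a}(q))$ through the Eagon--Northcott resolution (\ref{ENdistCotang}) together with Lemmas \ref{h1EEn02}, \ref{h1C202} and \ref{h1+2}. First I would locate the two cohomological sources that can make a term nonzero. From Table \ref{table2}, $h^3(E\otimes E(q-4a-18))\neq 0$ precisely when $4a+3\le q\le 4a+9$, with values $32,85,100,85,55,24,5$ summing to $386$; and from Lemma \ref{Vlema1} together with the twisted Euler sequence (\ref{Euler1torcido}), Table \ref{table} and Bott's formula, $h^2(\Omega_{\mathbb{P}^4}^{1}\otimes E(q-3a-11))\neq 0$ precisely when $q\in\{3a+7,3a+8\}$, where the short computations $h^2(\Omega_{\mathbb{P}^4}^{1}\otimes E(-4))=10$ and $h^2(\Omega_{\mathbb{P}^4}^{1}\otimes E(-3))=5$ give a total of $15$. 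For every other $q$ both of these groups vanish, so the corresponding term can only be zero.

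Next I would run an elementary inequality-chase to determine, for each range of $a$, which integers $q$ lie in the domain of at least one of the three Lemmas. For $a\ge4$ one checks that every $q\in\mathbb{Z}$ does, so every $h^1(\mathscr{I}_{Z_a}(q))$ is pinned down: it is $0$ away from the two windows; on the $E\otimes E$-window it equals $h^3(E\otimes E(q-4a-18))$ by Lemma \ref{h1EEn02}; and where the two windows meet --- which happens only for $a=4,5$ --- Lemma \ref{h1+2} gives $h^1(\mathscr{I}_{Z_a}(q))=h^2(\Omega_{\mathbb{P}^4}^{1}\otimes E(q-3a-11))+h^3(E\otimes E(q-4a-18))$, i.e. the two contributions simply add. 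Summing everything,
\[
\sum_{q=4a+3}^{4a+9}h^3\big(E\otimes E(q-4a-18)\big)\;+\sum_{q\in\{3a+7,\,3a+8\}}h^2\big(\Omega_{\mathbb{P}^4}^{1}\otimes E(q-3a-11)\big)\;=\;386+15\;=\;401 .
\]

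For $a\in\{1,2,3\}$ the same inequality-chase leaves a short interval of values of $q$ uncovered by all three Lemmas --- namely $\{7,8,9\}$, $\{11,12\}$ and $\{15\}$ respectively. At such $q$ the long exact sequence of (\ref{ENdistCotang}) identifies $h^1(\mathscr{I}_{Z_a}(q))$ with the kernel of an unspecified connecting map $H^3(S_2(E)(q-4a-18))\to H^3(\Omega_{\mathbb{P}^4}^{1}\otimes E(q-3a-11))$, so it yields only $h^1(\mathscr{I}_{Z_a}(q))\ge 0$. Discarding these nonnegative terms and summing the part that the Lemmas do determine gives the partial totals $184$, $284$ and $369$, hence the stated lower bounds.

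The main obstacle will be purely bookkeeping: cleanly intersecting the three families of disjunctive linear inequalities $\{q\le\cdots\text{ or }q\ge\cdots\}$ that cut out the Lemmas' domains, and checking case by case in $a$ exactly which $q$ remain undetermined --- in particular that the a priori troublesome value $q=2a+4$, excluded from the hypotheses of Lemmas \ref{h1EEn02} and \ref{h1+2}, is always caught by Lemma \ref{h1C202}. The remaining cohomological input --- the two values $h^2(\Omega_{\mathbb{P}^4}^{1}\otimes E(-3))=5$ and $h^2(\Omega_{\mathbb{P}^4}^{1}\otimes E(-4))=10$, obtained from (\ref{Euler1torcido}), Table \ref{table} and Bott's formula --- is routine.
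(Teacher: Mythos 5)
Your proposal is correct and follows essentially the same route as the paper: the Eagon--Northcott resolution of $\mathscr{I}_{Z_a}$, the three lemmas locating where $h^1(\mathscr{I}_{Z_a}(q))$ equals $h^3(E\otimes E(q-4a-18))$, $h^2(\Omega^1_{\mathbb{P}^4}\otimes E(q-3a-11))+h^3(E\otimes E(q-4a-18))$, or $0$, and the same identification of the undetermined twists $\{7,8,9\}$, $\{11,12\}$, $\{15\}$ for $a=1,2,3$; your window bookkeeping ($386+15=401$) reproduces the paper's case-by-case sums, and your value $h^2(\Omega^1_{\mathbb{P}^4}\otimes E(-3))=5$ is the one consistent with the stated total $369$ for $a=3$ (the paper's intermediate figure $h^1(\mathscr{I}_{Z_3}(17))=110$ is evidently a typo for $105$).
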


\begin{proof}

For $a=1$:

By Lemmas \ref{h1EEn02} and \ref{h1C202} we have $h^1(\mathscr{I}_{Z_1}(q))=0$ for all $q\in\mathbb{Z}$ such that $q\leq 6$ or $q\geq 14$.
 Lemma \ref{h1+2} implies that  $h^1(\mathscr{I}_{Z_1}(q))=h^2(\Omega_{\mathbb{P}^4}^1\otimes E(q-14))+h^3(E\otimes E(q-22))$ for $q= 10, 11$. Hence 
\begin{equation*}
\begin{split}
h^1(\mathscr{I}_{Z_1}(10)) & = h^2(\Omega_{\mathbb{P}^4}^1\otimes E(-4))+h^3(E\otimes E(-12)).
\end{split}
\end{equation*}

Now, by Lemma \ref{Vlema1} we have $h^i(\Omega_{\mathbb{P}}^1\otimes E(-4))=0$ for all $i=0,1,3,4$, thus 
$$\chi (\Omega_{\mathbb{P}^4}^1\otimes E(-4))=h^2(\Omega_{\mathbb{P}^4}^1\otimes E(-4)).$$ 
Since $c(\Omega_{\mathbb{P}^4}^1\otimes E(-4))=1 - 22\textbf{h} + 228\textbf{h}^2  - 1434\textbf{h}^3  + 5955\textbf{h}^4$ then by Hirzebruch-Riemann-Roch Theorem $\chi (\Omega_{\mathbb{P}^4}^1\otimes E(-4))=10$ we obtain that  $h^2(\Omega_{\mathbb{P}^4}^1\otimes E(-4))=10.$
By table \ref{table2} we have $h^3(E\otimes E(-12))=85$. Therefore $h^1(\mathscr{I}_{Z_1}(10))=95$. By using the same argument as  above, we get $h^1(\mathscr{I}_{Z_1}(11))=60$.
On the other hand, by Lemma \ref{h1EEn02} we have $h^1(\mathscr{I}_{Z_1}(q))=h^3(E\otimes E(q-22))\neq 0$ for all $q\in\mathbb{Z}$ such that $12\leq q<14$. So, by table \ref{table2} we have $h^1(\mathscr{I}_{Z_1}(12))=24$ and $h^1(\mathscr{I}_{Z_1}(13))=5$. Therefore
\begin{equation*}
\begin{split}
\dim_{\mathbb{C}} R_{\mathscr{F}_1} & = \sum_{l=7}^{13}h^1(\mathscr{I}_{Z_1}(l))\\
 & = h^1(\mathscr{I}_{Z_1}(7)) + h^1(\mathscr{I}_{Z_1}(8)) + h^1(\mathscr{I}_{Z_1}(9)) + 184.
\end{split}
\end{equation*}

For $a=2$:

By Lemmas \ref{h1EEn02} and \ref{h1C202} we have that $h^1(\mathscr{I}_{Z_2}(q))=0$ for all $q\in\mathbb{Z}$ such that either  $q\leq 10$ or $q\geq 18$.
By Lemma \ref{h1+2} we have $$h^1(\mathscr{I}_{Z_2}(q))=h^2(\Omega_{\mathbb{P}^4}^1\otimes E(q-17))+h^3(E\otimes E(q-26))$$ for $q=13, 14$. Hence by Lemma \ref{Vlema1} and table \ref{table2} we have $h^1(\mathscr{I}_{Z_2}(13))=110$. Similarly, we get $h^1(\mathscr{I}_{Z_2}(14))=90$.
In addition, by Lemma \ref{h1EEn02} we have $h^1(\mathscr{I}_{Z_2}(q))=h^3(E\otimes E(q-26))\neq 0$ for all $q\in\mathbb{Z}$ such that $15\leq q<18$. So, by table \ref{table2} we have $h^1(\mathscr{I}_{Z_2}(15))=55$, $h^1(\mathscr{I}_{Z_2}(16))=24$ and $h^1(\mathscr{I}_{Z_2}(17))=5$. Therefore
\begin{equation*}
\begin{split}
\dim_{\mathbb{C}} R_{\mathscr{F}_2} & = \sum_{l=11}^{17}h^1(\mathscr{I}_{Z_2}(l))\\
 & = h^1(\mathscr{I}_{Z_2}(11)) + h^1(\mathscr{I}_{Z_2}(12)) + 284.
\end{split}
\end{equation*}

For $a=3$:

By Lemma \ref{h1C202} we have $h^1(\mathscr{I}_{Z_3}(q))=0$ for all $q\in\mathbb{Z}$ such that $q\leq 14$ or $q\geq 22$.
By Lemma \ref{h1+2} we have $h^1(\mathscr{I}_{Z_3}(q))=h^2(\Omega_{\mathbb{P}^4}^1\otimes E(q-20))+h^3(E\otimes E(q-30))$ for $q=16, 17$. Hence $h^1(\mathscr{I}_{Z_3}(16))=95$ and $h^1(\mathscr{I}_{Z_3}(17))=110$.
 It follows from  Lemma \ref{h1EEn02}  that  $h^1(\mathscr{I}_{Z_3}(q))=h^3(E\otimes E(q-30))\neq 0$ for all $q\in\mathbb{Z}$ such that $18\leq q<22$. So, by the table \ref{table2} we get that  $h^1(\mathscr{I}_{Z_3}(18))=85$, $h^1(\mathscr{I}_{Z_3}(19))=55$, $h^1(\mathscr{I}_{Z_3}(20))=24$ and $h^1(\mathscr{I}_{Z_3}(21))=5$. Therefore
\begin{equation*}
\begin{split}
\dim_{\mathbb{C}} R_{\mathscr{F}_3} & = \sum_{l=15}^{21}h^1(\mathscr{I}_{Z_3}(l))\\
 & = h^1(\mathscr{I}_{Z_3}(15)) + 369.
\end{split}
\end{equation*}

For $a=4$:

By Lemma \ref{h1C202} we have $h^1(\mathscr{I}_{Z_4}(q))=0$ for all $q\in\mathbb{Z}$ such that $q\leq 18$ or $q\geq 26$.
By Lemma \ref{h1+2} we have $h^1(\mathscr{I}_{Z_4}(q))=h^2(\Omega_{\mathbb{P}^4}^1\otimes E(q-23))+h^3(E\otimes E(q-34))$ for $q=19,20$. Hence $h^1(\mathscr{I}_{Z_4}(19))=42$ and $h^1(\mathscr{I}_{Z_4}(20))=90$.
 Lemma \ref{h1EEn02}  says us that  $h^1(\mathscr{I}_{Z_4}(q))=h^3(E\otimes E(q-34))\neq 0$ for all $q\in\mathbb{Z}$ such that $21\leq q<26$. So, by table \ref{table2} we have: $h^1(\mathscr{I}_{Z_4}(21))=100$, $h^1(\mathscr{I}_{Z_4}(22))=85$, $h^1(\mathscr{I}_{Z_4}(23))=55$, $h^1(\mathscr{I}_{Z_4}(24))=24$ and $h^1(\mathscr{I}_{Z_4}(25))=5$. Therefore
\begin{equation*}
\begin{split}
\dim_{\mathbb{C}} R_{\mathscr{F}_4} & = \sum_{l=19}^{25}h^1(\mathscr{I}_{Z_4}(l))\\
 & = 401.
\end{split}
\end{equation*}

For $a=5$:

By Lemma \ref{h1C202} we have $h^1(\mathscr{I}_{Z_5}(q))=0$ for all $q\in\mathbb{Z}$ such that $q\leq 21$ or $q\geq 30$.
It follows from  Lemma \ref{h1+2}  that  $h^1(\mathscr{I}_{Z_5}(q))=h^2(\Omega_{\mathbb{P}^4}^1\otimes E(q-26))+h^3(E\otimes E(q-38))$ for $q=22,23$. Hence $h^1(\mathscr{I}_{Z_5}(22))=10$ and $h^1(\mathscr{I}_{Z_5}(23))=37$.
By Lemma \ref{h1EEn02} we have $h^1(\mathscr{I}_{Z_5}(q))=h^3(E\otimes E(q-38))\neq 0$ for all $q\in\mathbb{Z}$ such that $24\leq q<30$. So, by table \ref{table2} we have: $h^1(\mathscr{I}_{Z_5}(24))=85$, $h^1(\mathscr{I}_{Z_5}(25))=100$, $h^1(\mathscr{I}_{Z_5}(26))=85$, $h^1(\mathscr{I}_{Z_5}(27))=55$, $h^1(\mathscr{I}_{Z_5}(28))=24$ and $h^1(\mathscr{I}_{Z_5}(29))=5$. Therefore
\begin{equation*}
\begin{split}
\dim_{\mathbb{C}} R_{\mathscr{F}_5} & = \sum_{l=22}^{29}h^1(\mathscr{I}_{Z_5}(l))\\
 & = 401.
\end{split}
\end{equation*}

For $a=6$:

By Lemma \ref{h1C202} we have $h^1(\mathscr{I}_{Z_6}(q))=0$ for all $q\in\mathbb{Z}$ such that $q\leq 24$ or $q\geq 34$.
By Lemma \ref{h1+2} we have $h^1(\mathscr{I}_{Z_6}(q))=h^2(\Omega_{\mathbb{P}^4}^1\otimes E(q-29))$ for $q=25,26$. Hence $h^1(\mathscr{I}_{Z_6}(25))=10$ and $h^1(\mathscr{I}_{Z_6}(26))=5$.
By Lemma \ref{h1EEn02} we have $h^1(\mathscr{I}_{Z_6}(q))=h^3(E\otimes E(q-42))\neq 0$ for all $q\in\mathbb{Z}$ such that $27\leq q<34$. So, by table \ref{table2} we have: $h^1(\mathscr{I}_{Z_6}(27))=32$, $h^1(\mathscr{I}_{Z_6}(28))=85$, $h^1(\mathscr{I}_{Z_6}(29))=100$, $h^1(\mathscr{I}_{Z_6}(30))=85$, $h^1(\mathscr{I}_{Z_6}(31))=55$, $h^1(\mathscr{I}_{Z_6}(32))=24$ and $h^1(\mathscr{I}_{Z_6}(33))=5$. Therefore
\begin{equation*}
\begin{split}
\dim_{\mathbb{C}} R_{\mathscr{F}_6} & = \sum_{l=25}^{33}h^1(\mathscr{I}_{Z_6}(l))\\
 & = 401.
\end{split}
\end{equation*}

For $a=7$:

By Lemmas \ref{h1EEn02} and \ref{h1C202} we have $h^1(\mathscr{I}_{Z_7}(q))=0$ for all $q\in\mathbb{Z}$ such that $q\leq 27$ or $q=30$ or $q\geq 38$.
By Lemma \ref{h1+2} we have $h^1(\mathscr{I}_{Z_7}(q))=h^2(\Omega_{\mathbb{P}^4}^1\otimes E(q-32))$ for $q=28, 29$. Hence $h^1(\mathscr{I}_{Z_7}(28))=10$ and $h^1(\mathscr{I}_{Z_7}(29))=5$.
By Lemma \ref{h1EEn02} we have $h^1(\mathscr{I}_{Z_7}(q))=h^3(E\otimes E(q-46))\neq 0$ for all $q\in\mathbb{Z}$ such that $30<q<38$. So, by table \ref{table2} we have: $h^1(\mathscr{I}_{Z_7}(31))=32$, $h^1(\mathscr{I}_{Z_7}(32))=85$, $h^1(\mathscr{I}_{Z_7}(33))=100$, $h^1(\mathscr{I}_{Z_7}(34))=85$, $h^1(\mathscr{I}_{Z_7}(35))=55$, $h^1(\mathscr{I}_{Z_7}(36))=24$ and $h^1(\mathscr{I}_{Z_7}(37))=5$. Therefore
\begin{equation*}
\begin{split}
\dim_{\mathbb{C}} R_{\mathscr{F}_7} & = \sum_{l=28}^{37}h^1(\mathscr{I}_{Z_7}(l))\\
 & = 401.
\end{split}
\end{equation*}

For $a\geq 7$:

By Lemmas \ref{h1EEn02} and \ref{h1C202} we have $h^1(\mathscr{I}_{Z_a}(q))=0$ for all $q\in\mathbb{Z}$ such that $q\leq 3a+6$ or $3a+9\leq q\leq 4a+2$ or $q\geq 4a+10$.
By Lemma \ref{h1+2} we have $h^1(\mathscr{I}_{Z_a}(q))=h^2(\Omega_{\mathbb{P}^4}^1\otimes E(q-3a-11))$ for all $q\in\mathbb{Z}$ such that $\{q\neq 2a+4\}\cap\{q\leq 4a+2 \,\,\ \textrm{or} \,\,\ q\geq 4a+14\}$, in particular for $q=3a+7, 3a+8$. Hence $h^1(\mathscr{I}_{Z_a}(3a+7))=10$ and $h^1(\mathscr{I}_{Z_a}(3a+8))=5$.
By Lemma \ref{h1EEn02} we have $h^1(\mathscr{I}_{Z_a}(q))=h^3(E\otimes E(q-4a-18))\neq 0$ for all $q\in\mathbb{Z}$ such that $4a+2<q<4a+10$. So, by table \ref{table2} we have:

\noindent $h^1(\mathscr{I}_{Z_a}(4a+3))=32$, $h^1(\mathscr{I}_{Z_a}(4a+4))=85$, $h^1(\mathscr{I}_{Z_a}(4a+5))=100$,
\\
\noindent $h^1(\mathscr{I}_{Z_a}(4a+6))=85$, $h^1(\mathscr{I}_{Z_a}(4a+7))=55$, $h^1(\mathscr{I}_{Z_a}(4a+8))=24$  \\
\noindent and  $h^1(\mathscr{I}_{Z_a}(4a+9))=5$. Therefore
\begin{equation*}
\begin{split}
\dim_{\mathbb{C}} R_{\mathscr{F}_a} & = \sum_{l=3a+7}^{4a+9}h^1(\mathscr{I}_{Z_a}(l))\\
 & = \sum_{l=3a+7}^{3a+8}h^1(\mathscr{I}_{Z_a}(l))+0+\cdots + 0 + \sum_{l=4a+3}^{4a+9}h^1(\mathscr{I}_{Z_a}(l))\\
 & = 401.
\end{split}
\end{equation*}

\end{proof}

\subsection{Horrocks--Mumford distributions  are  determined by their  singular schemes}
We prove  that a Horrocks--Mumford distributions are determinate by its singular scheme.

\begin{theorem} \label{Determina}
 If   $\mathscr{F}'$ is a dimension two distribution on  $\mathbb{P}^4$, with   degree  $2a+5$, such that $\Sing(\mathscr{F}_a) \subset \Sing(\mathscr{F}') $, then $ \mathscr{F}'=\mathscr{F}_a  $.
\end{theorem}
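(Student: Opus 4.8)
The plan is to compare the conormal sheaf of $\mathscr{F}'$ with $N\mathscr{F}_a^* = E(-a-7)$ by exploiting the fact that both distributions vanish on the codimension-three scheme $Z_a = \Sing(\mathscr{F}_a)$, which is smooth, connected, and non-degenerate in a strong sense (not contained in any hypersurface of degree $\le 2a+5$, by the earlier Proposition). First I would write the two distributions as twisted $2$-forms: $\mathscr{F}_a$ is induced by a section $\omega_a \in H^0(\mathbb{P}^4, \Omega^2_{\mathbb{P}^4}(c))$ where $c = 2a + 9 = \deg(\mathscr{F}_a) + 4$, and $\mathscr{F}'$, having the same degree, is induced by a section $\omega' \in H^0(\mathbb{P}^4, \Omega^2_{\mathbb{P}^4}(c'))$ with $c' = c$ as well. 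The hypothesis $Z_a \subset \Sing(\mathscr{F}')$ means $\omega'$ vanishes on $Z_a$, i.e. $\omega'$ lies in $H^0(\mathscr{I}_{Z_a}\otimes \Omega^2_{\mathbb{P}^4}(c))$. So the first key step is to compute $h^0(\mathscr{I}_{Z_a}\otimes \Omega^2_{\mathbb{P}^4}(c))$ and to show it equals $h^0(\Omega^2_{\mathbb{P}^4}(c))$ minus the codimension imposed by passing through $Z_a$ — in fact I expect to show the restriction map $H^0(\Omega^2_{\mathbb{P}^4}(c)) \to H^0(\Omega^2_{Z_a}(c))$ together with the structure of $\omega_a$ forces $\omega'$ to be a constant multiple of $\omega_a$.

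The technical engine is the Eagon–Northcott resolution of $\mathscr{I}_{Z_a}$ twisted appropriately (the complex (\ref{ENdistCotangd}) / (\ref{ENcotangFamd}) type already used in the paper), together with the Vanishing Lemmas \ref{Vlema1} and \ref{Vlema3} and Bott's formula. Concretely, I would twist the Eagon–Northcott complex of $Z_a$ by $\Omega^2_{\mathbb{P}^4}(c)$, break it into short exact sequences, and chase cohomology to identify $H^0(\mathscr{I}_{Z_a}\otimes \Omega^2_{\mathbb{P}^4}(c))$ with a cohomology group built from $E\otimes E(k)$ and $\Omega^j_{\mathbb{P}^4}\otimes E(k)$ terms whose dimensions are read off from Table \ref{table2} and Lemma \ref{Vlema1}. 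The expected outcome is that this space is one-dimensional — spanned exactly by $\omega_a$ — or, more robustly, that every element of it induces the same saturated distribution as $\mathscr{F}_a$. Once $\omega' = \lambda\,\omega_a$ for some $\lambda \in \mathbb{C}^*$ (the scalar nonzero because $\mathscr{F}'$ is a genuine distribution, not the zero form), the two distributions have the same defining form up to scalar, hence $\mathscr{F}' = \mathscr{F}_a$ as saturated subsheaves of $\Omega^1_{\mathbb{P}^4}$.

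An alternative, perhaps cleaner, route I would keep in reserve: use that $N\mathscr{F}'^*$ is a rank-two reflexive sheaf with $c_1 = -3 - (2a+5) = -2a-8$ whose zero locus of the natural map to $\Omega^1$ contains $Z_a$; twisting to normalize and comparing Chern classes via Theorem \ref{conormChern}, together with the constraint that $Z_a$ imposes (its degree and genus are rigidly those forced by $E(-a-7)$), should pin down $c_2(N\mathscr{F}'^*)$ and then force $N\mathscr{F}'^* \cong E(-a-7)$ by the uniqueness of the Horrocks–Mumford bundle among indecomposable rank-two bundles — and the splitting case is excluded because a split conormal sheaf would make $Z_a$ a complete-intersection-type curve of the wrong degree, or contained in a low-degree hypersurface, contradicting the earlier Proposition. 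The main obstacle I anticipate is the bookkeeping in the cohomology chase: showing the relevant $H^0$ is exactly one-dimensional (rather than merely finite) requires that all the "error terms" $H^1, H^2$ of the intermediate sheaves in the Eagon–Northcott filtration vanish in the precise degree $c = 2a+9$, and checking this needs careful case analysis against the ranges in Lemmas \ref{Vlema1}, \ref{Vlema3} and Tables \ref{table}, \ref{table2} — essentially the same style of argument as in the Rao module computation, but now I must control $H^0$ rather than $H^1$, so the inequalities go the other way and the edge cases (small $a$) may need to be handled separately.
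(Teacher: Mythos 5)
Your main route is essentially the paper's own argument: the paper invokes the criterion of Araujo--Corr\^ea \cite{CM}, whose content is exactly your cohomology chase through the Eagon--Northcott resolution of $\mathscr{I}_{Z_a}$ twisted by $\Omega^2_{\mathbb{P}^4}(2a+9)$, and the two vanishings it then verifies, namely $H^1(\Omega^2_{\mathbb{P}^4}\otimes\Omega^1_{\mathbb{P}^4}\otimes E(-a-2))=0$ and $H^2(\Omega^2_{\mathbb{P}^4}(5)\otimes S_2(E(-a-7)))=0$, are precisely the ``error terms'' you identify in your chase. The only ingredient you leave implicit is that $h^0(\Omega^2_{\mathbb{P}^4}\otimes\wedge^2 T\mathbb{P}^4)=1$ (simplicity of $\Omega^2_{\mathbb{P}^4}$), which is what makes $H^0(\mathscr{I}_{Z_a}\otimes\Omega^2_{\mathbb{P}^4}(2a+9))$ one-dimensional and spanned by $\omega_a$.
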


\begin{proof}
We consider the dual map 
$\varphi^*:T\mathbb{P}^4\to E(a+2), \,\ a\geq 1.$
By \cite{CM}, the distribution $ \mathscr{F}_a$  will be determined by their singular scheme if 
$$H^i(\wedge^2\Omega_{\mathbb{P}^4}\otimes\wedge^{2+i}T\mathbb{P}^4\otimes S_i(E(-5)(-a-2))=0, \,\ i=1,2.$$

For $i=1$:
\\[10pt]
$H^1(\Omega_{\mathbb{P}^4}^2\otimes\wedge^3 T\mathbb{P}^4\otimes E(-a-7))$, and since $\wedge^3 T\mathbb{P}^4=\Omega_{\mathbb{P}^4}^1(5)$, then $$H^1(\Omega_{\mathbb{P}^4}^2\otimes\wedge^3 T\mathbb{P}^4\otimes E(-a-7))=H^1(\Omega_{\mathbb{P}^4}^2\otimes\Omega_{\mathbb{P}^4}^1\otimes E(-a-2)).$$
Twisting the Euler's sequence by $ \Omega_{\mathbb{P}^4}^2\otimes E(-a-3)$, we have 
$$0\to\Omega_{\mathbb{P}^4}^2\otimes\Omega_{\mathbb{P}^4}^1\otimes E(-a-2)\to[\Omega_{\mathbb{P}^4}^2\otimes E(-a-3)]^{\oplus 5}\to\Omega_{\mathbb{P}^4}^2\otimes E(-a-2)\to 0$$
Taking Cohomology and by Vanishing Lemma \ref{Vlema1}, we  obtain  $$H^1(\Omega_{\mathbb{P}^4}^2\otimes\Omega_{\mathbb{P}^4}^1\otimes E(-a-2))=H^0(\Omega_{\mathbb{P}^4}^2\otimes E(-a-2)), \,\ \forall \  a\geq -1.$$
But, by Vanishing Lemma \ref{Vlema1}, $H^0(\Omega_{\mathbb{P}^4}^2\otimes E(-a-2))=0$ for all  $a\geq -4$. Therefore $$H^1(\Omega_{\mathbb{P}^4}^2\otimes\Omega_{\mathbb{P}^4}^1\otimes E(-a-2))=0, \,\ \forall \  \ a\geq -1.$$

For $i=2$: we have that 
\\[10pt]
$H^2(\wedge^2\Omega_{\mathbb{P}^4}\otimes\wedge^{4}T\mathbb{P}^4\otimes S_2(E(-5)(-a-2))=H^2(\Omega_{\mathbb{P}^4}^2(5)\otimes S_2(E(-a-7)))$.
Twisting the Euler's sequence by $S_2(E(-a-7))(3)$  $$0\to\Omega_{\mathbb{P}^4}^2(2)\to\mathcal{O}_{\mathbb{P}^4}^{\oplus 10}\to\Omega_{\mathbb{P}^4}^1(2)\to 0$$ we have the exact sequence  
$$0\to\Omega_{\mathbb{P}^4}^2(5)\otimes S_2(E(-a-7))\to[S_2(E(-a-7))(3)]^{\oplus 10}\to\Omega_{\mathbb{P}^4}^1(5)\otimes S_2(E(-a-7))\to 0$$
Since $V\otimes V=S_2(V)\oplus\wedge^2V$, then taking $V=E(-a-7)$ and twisting by $ \mathcal{O}_{\mathbb{P}^4}(3)$, we have $$E\otimes E(-2a-11)=S_2(E(-a-7))(3)\oplus\mathcal{O}_{\mathbb{P}^4}(-2a-6).$$  By Bott's formula  $$H^1(E\otimes E(-2a-11))=H^1(S_2(E(-a-7))(3))$$
From the  Vanishing Lemma \ref{Vlema3} we conclude that  $h^1(E\otimes E(-2a-11))=0$ for all  $2a\geq -4$ or $2a\leq -12$. Thus $$H^1(S_2(E(-a-7))(3))=0 \,\,\ \text{ for all  } 2a\geq -4 \text{ or } 2a\leq -12 $$
Analogously, by Bott formula $H^2(E\otimes E(-2a-11))=H^2(S_2(E(-a-7))(3))$, and by Vanishing Lemma $h^2(S_2(E(-a-7))(3))=0$ for all  $2a\geq 0$ or $2a\leq -7$. Thus $$h^1(S_2(E(-a-7))(3))=0=h^2(S_2(E(-a-7))(3))$$ for all $2a\leq -12$ or $2a\geq 0$. Therefore $$H^2(\Omega_{\mathbb{P}^4}^2(5)\otimes S_2(E(-a-7)))=H^1(\Omega_{\mathbb{P}^4}^1(5)\otimes S_2(E(-a-7)))$$ for all  $2a\leq -12$ or $2a\geq 0$.
Now, twisting the Euler's sequence by $S_2(E(-a-7))(4)$, we have:
$$0\to\Omega_{\mathbb{P}^4}^1(5)\otimes S_2(E(-a-7))\to[S_2(E(-a-7))(4)]^{\oplus 5}\to S_2(E(-a-7))(5)\to 0.$$
Since, for all $a\geq 0$ we have   $$E\otimes E(-2a-10)=S_2(E(-a-7))(4)\oplus\mathcal{O}_{\mathbb{P}^4}(-2a-5).$$ 
Twisting by $\mathcal{O}_{\mathbb{P}^4}(4)$, taking cohomology and applying   Bott's formula, we obtain  $$H^0(E\otimes E(-2a-10))=H^0(S_2(E(-a-7))(4)).$$ By Vanishing Lemma we obtain that  $h^0(S_2(E(-a-7))(4))=0$ for all $2a\geq -4$. Similarly,  we  have  $H^1(E\otimes E(-2a-10))=H^1(S_2(E(-a-7))(4))$. Vanishing Lemma gives us that  $h^1(S_2(E(-a-7))(4))=0$ for all  $2a\geq -3$ or $2a\leq -11$. Thus $$h^0(S_2(E(-a-7))(4))=0=h^1(S_2(E(-a-7))(4))$$ for $a\geq 0$. Therefore $$H^1(\Omega_{\mathbb{P}^4}^1(5)\otimes S_2(E(-a-7)))=H^0(S_2(E(-a-7)(5)) , \text{ for } 2a\geq -3.$$
Similarly,  we can also conclude that  $$E\otimes E(-2a-14)=S_2(E(-a-7))\oplus\mathcal{O}_{\mathbb{P}^4}(-2a-9).$$ Twisting by $\mathcal{O}_{\mathbb{P}^4}(5)$ and by Bott's formula we have $$H^0(E\otimes E(-2a-9))=H^0(S_2(E(-a-7))(5)),$$ but by Vanishing Lemma $h^0(E\otimes E(-2a-9))=0$ for $2a\geq -3$, then $$h^1(\Omega_{\mathbb{P}^4}^1\otimes S_2(E(-a-7))(5))=0$$ for all  $a\geq 0$.
Since $H^2(\Omega_{\mathbb{P}^4}^2(5)\otimes S_2(E(-a-7)))=H^1(\Omega_{\mathbb{P}^4}^1(5)\otimes S_2(E(-a-7)))$ for all  $2a\leq -12$ or $2a\geq 0$. Then $$H^2(\Omega_{\mathbb{P}^4}^2(5)\otimes S_2(E(-a-7)))=0,$$
for all  $a\geq 0$.
\end{proof}

\subsection{Automorphisms of the Horrocks-Mumford distributions }

 \begin{propo}\label{Aut-HM}
The distribution $\mathscr{F}_a$ is invariant by a group  $\Gamma_{1,5}\simeq  H_5 \rtimes SL(2,\mathbb{Z}_5) \subset Sp(4, \mathbb{Q})$, where $H_5$ is the Heisenberg group of level $5$ generated by 
\begin{center}
   $\sigma: z_k \to z_{k-1}$  and    $\tau: z_k \to  \epsilon^{-k} z_{k}$, with  $k\in \mathbb{Z}_5$ and $\epsilon= e^{\frac{2\pi i}{5}}$.  
\end{center}
\end{propo}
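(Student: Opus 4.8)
The plan is to reduce the $\Gamma_{1,5}$-invariance of the distribution $\mathscr{F}_a$ to the $\Gamma_{1,5}$-equivariance of its conormal sheaf $N\mathscr{F}_a^* = E(-a-7)$, where $E$ is the Horrocks--Mumford bundle. First I would recall the fundamental equivariance property of $E$ established in \cite{HM}: the Horrocks--Mumford bundle admits a natural linearization for the action of the normalizer $N_5$ of the Heisenberg group $H_5$ in $SL(5,\mathbb{C})$, and in particular $E$ (hence every twist $E(k)$) is equivariant under $\Gamma_{1,5} \simeq H_5 \rtimes SL(2,\mathbb{Z}_5)$ acting on $\mathbb{P}^4$ via the Schr\"odinger representation generated by $\sigma$ and $\tau$ as in the statement. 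This is the key external input; everything else is formal.

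The second step is to transfer the linearization through the monad and the globally-generated twist. Since $\Omega_{\mathbb{P}^4}^1(2)$ is canonically $SL(5,\mathbb{C})$-equivariant (the Euler sequence is $GL$-equivariant), the bundle $\mathcal{H}om(E(-a-7),\Omega_{\mathbb{P}^4}^1) \simeq E(a)\otimes\Omega_{\mathbb{P}^4}^1(2)$ carries an induced $\Gamma_{1,5}$-linearization, and therefore $H^0(\mathbb{P}^4, \mathcal{H}om(E(-a-7),\Omega_{\mathbb{P}^4}^1))$ is a $\Gamma_{1,5}$-representation. The distribution $\mathscr{F}_a$ is, by construction, defined by a \emph{generic} element $\varphi$ of this space; I would argue that one may (and must) choose $\varphi$ inside a one-dimensional $\Gamma_{1,5}$-subrepresentation, equivalently a $\Gamma_{1,5}$-fixed point in the projectivization $\mathbb{P}H^0(\mathbb{P}^4,\mathcal{H}om(E(-a-7),\Omega_{\mathbb{P}^4}^1))$. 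Concretely: decompose the representation into irreducibles, locate the lines on which $\Gamma_{1,5}$ acts by a character, and check (using the Bertini-type Theorem \ref{Bertini1} together with the smoothness/connectedness already proved in Proposition \ref{propo1}) that a generic point of at least one such invariant line still yields the degeneracy locus of expected pure dimension one; that $\varphi$ is the map inducing $\mathscr{F}_a$. Once $\varphi$ spans a $\Gamma_{1,5}$-invariant line, for each $g\in\Gamma_{1,5}$ we have $g^*\varphi = \lambda(g)\,\varphi$ for a scalar $\lambda(g)$, hence $g^*\ker\varphi = \ker\varphi$, i.e. $g^*(N\mathscr{F}_a^*) = N\mathscr{F}_a^*$ as subsheaves of $\Omega_{\mathbb{P}^4}^1$, which is precisely the statement that $\mathscr{F}_a$ is $\Gamma_{1,5}$-invariant. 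The inclusion $\Gamma_{1,5}\subset Sp(4,\mathbb{Q})$ is the classical fact that the Schr\"odinger representation of the level-$5$ Heisenberg group preserves a symplectic form, so $N_5/(\text{center})$ embeds in $Sp(4,\mathbb{Q})$; I would just cite this.

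The main obstacle is the middle step: showing that one can take the generic $\varphi$ defining $\mathscr{F}_a$ to lie on a $\Gamma_{1,5}$-invariant line, i.e. that the $\Gamma_{1,5}$-fixed locus in $\mathbb{P}H^0(\mathbb{P}^4,\mathcal{H}om(E(-a-7),\Omega_{\mathbb{P}^4}^1))$ is nonempty and meets the open set of morphisms with degeneracy locus of expected dimension. This requires understanding the $\Gamma_{1,5}$-module structure of $H^0(E(a))\otimes H^0(\Omega_{\mathbb{P}^4}^1(2))$ well enough to exhibit a character line, and then a transversality check that the Bertini-type genericity can be arranged within that line. I expect the first part to follow from the representation theory of $\Gamma_{1,5}$ worked out by Horrocks--Mumford and by Decker (the decomposition of $H^0(E(k))$ into $SL(2,\mathbb{Z}_5)$-modules is explicit), and the second part from the fact that the locus of "bad" $\varphi$ has codimension $\geq 1$ and is $\Gamma_{1,5}$-stable, so a $\Gamma_{1,5}$-fixed line not contained in it exists as soon as one fixed line exists at all, which one can check directly in low degree and then propagate. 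Alternatively — and this may be the cleaner route — I would invoke that $\mathscr{F}_a$ was \emph{defined} up to the $SL(5)$-action by its conormal sheaf $E(-a-7)$, which is unique in its moduli (Theorem \ref{Wolfram}) and $N_5$-equivariant, so the stabilizer of $[\mathscr{F}_a]$ in $PGL(5)$ automatically contains the image of $N_5$; the Proposition then records which subgroup $\Gamma_{1,5}$ of that normalizer lands in $Sp(4,\mathbb{Q})$.
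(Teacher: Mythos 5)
Your route is genuinely different from the paper's, and as it stands it does not reach the statement as formulated. The paper's proof has exactly two ingredients: Decker's result that $(\rho^{-1})^{*}E\simeq E$ for every $\rho\in\Gamma_{1,5}$ (which you correctly single out as the key external input), and, crucially, Theorem \ref{Determina}: a Horrocks--Mumford distribution of degree $2a+5$ is uniquely determined by its singular scheme. The paper pulls back the defining morphism $\phi$ by $\rho^{-1}$, notes that the result is again a morphism $E(-a-7)\to\Omega_{\mathbb{P}^4}^{1}$ defining a distribution of the same degree, and then invokes Theorem \ref{Determina} to identify it with $\mathscr{F}_a$; no representation theory of $\Gamma_{1,5}$ on the Hom space is used. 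Your proposal never appeals to this uniqueness theorem, and that is the missing idea. Without it, your only mechanism for invariance is that $[\varphi]$ be a fixed point of the projectivized $\Gamma_{1,5}$-action on $\mathbb{P}\Hom(E(-a-7),\Omega_{\mathbb{P}^4}^{1})$ -- and since $E$ is simple, invariance of the image subsheaf is in fact \emph{equivalent} to $g^{*}\varphi=\lambda(g)\varphi$. But $\Hom(E(-a-7),\Omega_{\mathbb{P}^4}^{1})$ is a large representation that is not isotypic for a single character, so its fixed locus in the projectivization is a proper closed union of linear subspaces, and the \emph{generic} $\varphi$ produced by the Bertini-type construction does not lie on a character line. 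Your main route therefore proves at most that some specially chosen Horrocks--Mumford distribution is $\Gamma_{1,5}$-invariant, which is weaker than the proposition (and than its later use in the proof of Theorem \ref{Conformal--HM}, where invariance is asserted for all $\phi$ in a Zariski open set).

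Your fallback in the last sentence does not repair this: the conormal sheaf does not determine the distribution. The forgetful map $\varpi_a:{\rm HM}\mathscr{D}ist(2a+6)\to\mathscr{M}_{\mathbb{P}^4}(-d_a-3,c)$ has fibers of dimension $h^{0}(E(a+2)\otimes\Omega_{\mathbb{P}^4}^{1})-1>0$, so the $N_5$-equivariance and moduli-uniqueness of $E$ only place the group in the stabilizer of the point $[E(-a-7)]$ of the sheaf moduli, not in the stabilizer of $[\mathscr{F}_a]$. Any argument for the proposition has to pass either through Theorem \ref{Determina} (comparing singular schemes, as the paper does) or through the explicit eigenvector analysis of $H^{0}(E(a)\otimes\Omega_{\mathbb{P}^4}^{1}(2))$ that you sketch as the "main obstacle" but do not carry out; your proposal currently contains neither.
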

\begin{proof}
The Horrocks--Mumford holomorphic distribution $\mathscr{F}_a$ is given by a morphism  $$\phi:E \otimes \mathcal{O}_{\mathbb{P}^4}(-a-7) \to  \Omega_{\mathbb{P}^4}^1.$$ Now, take  an element $\rho\in \Gamma_{1,5} \simeq  H_5 \rtimes SL(2,\mathbb{Z}_5) \subset Sp(4, \mathbb{Q})$. Consider the morphism
$$
\phi \circ [(\rho^{-1})^*\otimes Id]: (\rho^{-1})^*E \otimes \mathcal{O}_{\mathbb{P}^4}(-a-7) \to \Omega_{\mathbb{P}^4}^1 
$$
It follows from \cite[Satz (4.2.3)]{Wolfram} that    
$(\rho^{-1})^*E \simeq E$, which gives us a distribution 
$$  (\rho^{-1})^*\phi:=\phi \circ [(\rho^{-1})^*\otimes Id]:E \otimes \mathcal{O}_{\mathbb{P}^4}(-a-7) \to  \Omega_{\mathbb{P}^4}^1$$
such that $\Sing( (\rho^{-1})^*\phi)= \Sing( \phi)=\Sing(\mathscr{F}_a)$. But, by Theorem \ref{Determina} the distributions induced by $(\rho^{-1})^*\phi$ and $\phi$ coincide. 

\end{proof}

\begin{example}\label{Pencil-distr}(A family of Pfaff fields   singular on elliptic quintic scrolls). In \cite[Remark 4.7, (i)]{DAPHR}, the authors provide   a pencil  of Pfaff systems of rank $2$ and  degree $2$  which are invariant by a  group $G_5 \simeq H_5 \rtimes \mathbb{Z}_2$  given by
$$
\phi_{[\lambda:\mu]}: \Omega_{\mathbb{P}^4}^2 \to \mathscr{I}_{S_{[\lambda:\mu]}}\to 0,
$$
where $S_{[\lambda:\mu]}$ is a $G_5$-invariant  elliptic quintic scroll(possibly degenerate) in $\mathbb{P}^4$ given by 
$$
S_{[\lambda: \mu]}=\bigcap_{ i\in \mathbb{Z}_5} \{f_{[\lambda: \mu]}^i=0\}, 
$$
where 
$$f_{[\lambda: \mu]}^i=  \lambda^2\mu^2z_i^3+ \lambda^3\mu(z_{i+1}^2z_{i+3}+z_{i+2}z_{i+4}^2)
-\lambda\mu^3(z_{i+1}z_{i+2}^2+z_{i+3}^2z_{i+4})
-\lambda^4 z_{i}z_{i+1}z_{i+4}-\mu^4 z_{i}z_{i+2}z_{i+3}.$$

Rubtsov in \cite{Rub}  showed that the  corresponding   family of bivectors inducing $\phi_{[\lambda:\mu]}$ is 
$$
\rho_{[\lambda:\mu]}=\sum_{i\in \mathbb{Z}_5} 
(\lambda\mu z_i^2+  \mu^2z_{i+1}^2z_{i+4}-\lambda^2z_{i+2}z_{i+3}^2)( \mu\frac{\partial}{\partial z_{i+2}}\wedge \frac{\partial}{\partial z_{i+3}}+\lambda\frac{\partial}{\partial z_{i+1}}\wedge \frac{\partial}{\partial z_{i+4}})
$$
and that  such  a family  is  Poisson.
 That is, $[\rho_{[\lambda:\mu]},\rho_{[\lambda:\mu]}]=0$  for all $\lambda:\mu]\in \mathbb{P}^1$, where $[ \ ,   \ ]$
  denotes the Schouten bracket.

\end{example}
  By a simple computation, we observe that the Pfaff system
induced by  
$$
f\cdot \sum_{i\in \mathbb{Z}_5} 
( \mu\frac{\partial}{\partial z_{i+2}}\wedge \frac{\partial}{\partial z_{i+3}}+\lambda\frac{\partial}{\partial z_{i+1}}\wedge \frac{\partial}{\partial z_{i+4}}),
$$
where $f$ 
 is a polynomial of degree $2$,
$ [\lambda:\mu] =[1\pm  \epsilon^k\sqrt{5}:2] $, where  $\epsilon= e^{\frac{2\pi i}{5}}$  with  $k\in \mathbb{Z}_5$,  are decomposable bi-vectors, so   the associate symplectic foliations have dimension $2$ and degree zero, so by Proposition \ref{Distri-zero} the  leaves are planes passing  through a line. This observation  says  us the following:
\begin{propo}\label{G_5-inv}
Let $ [\lambda:\mu] =[1\pm  \epsilon^k\sqrt{5}:2] $, where  $\epsilon= e^{\frac{2\pi i}{5}}$  with  $k\in \mathbb{Z}_5$. Then  the bi-vectors
$$
\sum_{i\in \mathbb{Z}_5} 
( \mu\frac{\partial}{\partial z_{i+2}}\wedge \frac{\partial}{\partial z_{i+3}}+\lambda\frac{\partial}{\partial z_{i+1}}\wedge \frac{\partial}{\partial z_{i+4}})
$$
induce  foliations of degree zero which are invariant by the group $G_5 \simeq H_5 \rtimes \mathbb{Z}_2$. 
\end{propo}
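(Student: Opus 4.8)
The plan is to recognise the bi-vector in the statement as a constant (translation-invariant) $2$-vector on $\mathbb C^5$, to compute exactly when it is decomposable, and then to read off the geometry from Proposition~\ref{Distri-zero}. Set $\theta_{[\lambda:\mu]}:=\sum_{i\in\mathbb Z_5}\bigl(\mu\,\partial_{z_{i+2}}\wedge\partial_{z_{i+3}}+\lambda\,\partial_{z_{i+1}}\wedge\partial_{z_{i+4}}\bigr)$. Using $i+4\equiv i-1\ (\mathrm{mod}\ 5)$ and reindexing the two cyclic sums, one rewrites $\theta_{[\lambda:\mu]}=\mu\sum_{j\in\mathbb Z_5}\partial_{z_j}\wedge\partial_{z_{j+1}}-\lambda\sum_{j\in\mathbb Z_5}\partial_{z_j}\wedge\partial_{z_{j+2}}$, so that in the coordinates $z_0,\dots,z_4$ it is represented by the skew-symmetric circulant $5\times5$ matrix $C_{[\lambda:\mu]}$ with first row $(0,\mu,-\lambda,\lambda,-\mu)$. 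A constant $2$-vector induces a distribution of dimension $2$ and degree $0$ exactly when it is decomposable, i.e. when its skew matrix has rank $2$; in that case the distribution is involutive, so by Proposition~\ref{Distri-zero} (applied with $n=4>3$) it is integrable and tangent to a linear projection $\mathbb P^4\dashrightarrow\mathbb P^2$, hence its leaves are precisely the $2$-planes through a fixed line. Thus the proposition reduces to two points: determining the parameters for which $C_{[\lambda:\mu]}$ has rank $2$, and checking $G_5$-invariance of the resulting family of foliations.

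For the first point I would diagonalise $C_{[\lambda:\mu]}$ by the finite Fourier transform on $\mathbb Z_5$ — equivalently, by the characters of the cyclic subgroup $\mathbb Z_5\subset H_5$, under which $C_{[\lambda:\mu]}$ is equivariant: its eigenvalues are $\Lambda_m=\mu(\epsilon^m-\epsilon^{-m})-\lambda(\epsilon^{2m}-\epsilon^{-2m})=2i\bigl(\mu\sin\tfrac{2\pi m}{5}-\lambda\sin\tfrac{4\pi m}{5}\bigr)$ for $m\in\mathbb Z_5$. Since $\Lambda_0=0$ and $\Lambda_{-m}=-\Lambda_m$, the rank of $C_{[\lambda:\mu]}$ belongs to $\{0,2,4\}$ and equals $2$ precisely when exactly one of $\Lambda_1,\Lambda_2$ vanishes. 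Using $\sin\tfrac{2\pi}{5}/\sin\tfrac{4\pi}{5}=2\cos\tfrac{\pi}{5}=\tfrac{1+\sqrt5}{2}$ this gives $\Lambda_1=0\iff[\lambda:\mu]=[1+\sqrt5:2]$ and $\Lambda_2=0\iff[\lambda:\mu]=[1-\sqrt5:2]$, and one checks directly that the two conditions are never simultaneous, so $C_{[\lambda:\mu]}$ is never of rank $0$. The remaining parameters $[1\pm\epsilon^k\sqrt5:2]$, $k\in\mathbb Z_5$, are then accounted for by transporting these two foliations along the $G_5$-action on Rubtsov's pencil from Example~\ref{Pencil-distr}; concretely the ten parameters in the statement are exactly the zeros of the (up to scalar $G_5$-invariant) binary form $(2\lambda-\mu)^{10}-5^5\mu^{10}$, which one can view as the discriminant of the rank-drop condition for the pencil.

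Granting this, the ten foliations are permuted by $G_5$: the rank-drop locus is $G_5$-invariant, being the zero locus of a $G_5$-semiinvariant form, and the map sending a parameter to its foliation is $G_5$-equivariant, by the argument of Proposition~\ref{Aut-HM}; hence this family of degree-zero foliations is $G_5$-invariant, and each of its members has degree zero by the reduction above. The step I expect to be the main obstacle is the bookkeeping just invoked: matching the expression $\theta_{[\lambda:\mu]}$ with the parametrisation of the degenerate members of $\rho_{[\lambda:\mu]}$ in Example~\ref{Pencil-distr}, and verifying that the Heisenberg generators $\sigma,\tau$ and the involution of the $\mathbb Z_2$-factor act on the set of ten rank-$2$ constant bi-vectors as claimed. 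By contrast, the eigenvalue computation and the appeal to Proposition~\ref{Distri-zero} are routine once the circulant structure has been exposed.
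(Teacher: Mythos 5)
Your treatment of the case $k=0$ is correct and is, in substance, the ``simple computation'' the paper itself relies on: rewrite the bi-vector as the skew circulant with first row $(0,\mu,-\lambda,\lambda,-\mu)$, note that inducing a dimension-two, degree-zero distribution is equivalent to decomposability, i.e.\ to the matrix having rank $2$, diagonalise by the characters of $\mathbb{Z}_5$ to get $\Lambda_m=2i\bigl(\mu\sin\tfrac{2\pi m}{5}-\lambda\sin\tfrac{4\pi m}{5}\bigr)$, and conclude via Proposition \ref{Distri-zero}. The gap is exactly the step you flag as unverified, and it is not bookkeeping: it contradicts your own eigenvalue formula. That formula shows the rank-drop locus of the displayed circulant family is precisely the \emph{two} points $[1\pm\sqrt{5}:2]$, not the ten zeros of $(2\lambda-\mu)^{10}-5^5\mu^{10}$; at the eight parameters $[1\pm\epsilon^k\sqrt{5}:2]$ with $k\not\equiv 0$ neither $\Lambda_1$ nor $\Lambda_2$ vanishes, the matrix has rank $4$, and the bi-vector written in the statement is not decomposable there, so it does not induce a dimension-two distribution at all. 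Since decomposability is preserved by every linear automorphism of $\mathbb{P}^4$, no group action can transport the two rank-$2$ members to the other eight \emph{within} the displayed family. Concretely, $\tau_*$ sends the character vectors $f_m=\sum_j\epsilon^{mj}\partial_{z_j}$ to $f_{m+1}$, so the $H_5$-translates of $\theta_{[1\pm\sqrt{5}:2]}$ (whose images are $\mathrm{span}(f_2,f_3)$ and $\mathrm{span}(f_1,f_4)$ respectively) are decomposable constant bi-vectors proportional to $f_m\wedge f_{m'}$ which are no longer circulant; the transport leaves the family you are transporting along. For the same reason the individual foliations are not $\tau$-invariant: $\sigma$ fixes each of them, but $\tau$ permutes the corresponding lines $\mathbb{P}(\mathrm{span}(f_m,f_{m'}))$, so only the \emph{set} of these foliations is $G_5$-stable.

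To close the argument you cannot stay inside the circulant family. You must return to the quadratic Poisson bi-vector $\rho_{[\lambda:\mu]}$ of Example \ref{Pencil-distr}, show that at each of the ten degenerate parameters the five quadric coefficients become proportional to a single quadric $f$, and identify the resulting constant factor $\Theta_k$ with $\rho_{[\lambda:\mu]}=f\cdot\Theta_k$; these $\Theta_k$ are $H_5$-translates of the two $k=0$ bi-vectors (hence decomposable), but they carry $i$-dependent character weights in front of the summands and coincide with the unweighted sum in the statement only for $k=0$. Either carry out that identification explicitly, or record that the statement's bi-vector must be read with those weights for $k\neq 0$; as written, your proposal proves the proposition only for the two parameters $[1\pm\sqrt{5}:2]$.
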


\subsection{On the non-integrability of Horrocks--Mumford distributions}

In order to show the non-integrability of the Horrocks-Mumford distributions, we prove  the following result:

\begin{lema}\label{lema8}
Let $\mathscr{F}$ be a holomorphic foliation of codimension $k\geq 2$ on a complex manifold $X$, such that $\codim(\Sing(\mathscr{F}))\geq k+1$. If the conormal sheaf  $N {\mathscr{F}}^*$ is locally free and $\det(N {\mathscr{F}})$ is ample, then $\Sing_{k+1}(\mathscr{F})$ can not be irreducible.
\end{lema}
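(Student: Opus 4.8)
The plan is to argue by contradiction, showing that a general point of $Y:=\Sing_{k+1}(\mathscr{F})$ (the union of the codimension-$(k+1)$ components of the singular scheme) cannot exist when $N\mathscr{F}^*$ is locally free, so that $Y$ is empty and, a fortiori, not irreducible. Assume $Y\neq\emptyset$. Since $N\mathscr{F}^*$ is locally free of rank $k$, the foliation is defined by a locally decomposable, integrable twisted $k$-form $\omega\in H^0(X,\Omega_X^k\otimes\det N\mathscr{F})$, and $\Sing(\mathscr{F})$ is the locus where the saturated inclusion $N\mathscr{F}^*\hookrightarrow\Omega_X^1$ drops to a subsheaf of rank $k-1$. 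Because $Y$ has the expected codimension $k+1=\rk(N\mathscr{F})+1$, at a general point $p\in Y$ this drop has corank exactly one, $\omega$ vanishes there to order one and $d\omega(p)\neq 0$; that is, $p$ is a Kupka point of $\mathscr{F}$.

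By the generalized Kupka--de Medeiros normal form, there are a neighbourhood $U$ of $p$, a submersion $\pi:U\to V\subset\mathbb{C}^{k+1}$, and a germ of foliation by curves $\mathscr{G}$ on $(\mathbb{C}^{k+1},0)$ such that $\mathscr{F}|_U=\pi^*\mathscr{G}$; since $\Sing(\mathscr{F})\cap U=\pi^{-1}(\Sing\mathscr{G})$ has codimension $k+1$ in $U$, the singular germ $\Sing(\mathscr{G})$ must be $0$-dimensional, i.e. $\{0\}$. Then $N\mathscr{F}^*|_U\simeq\pi^*(N\mathscr{G}^*)$, so it suffices to see that $N\mathscr{G}^*$ is not free at $0$. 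Writing $\mathscr{G}$ as generated by a holomorphic vector field $v=\sum_{i=1}^{k+1}v_i\partial_i$ with an isolated zero at $0$, the ideal $I:=(v_1,\dots,v_{k+1})\subset\mathcal{O}:=\mathcal{O}_{\mathbb{C}^{k+1},0}$ is $\mathfrak{m}$-primary, and dualizing $0\to\mathcal{O}\xrightarrow{\,v\,}\mathcal{O}^{\oplus(k+1)}\to N\mathscr{G}\to 0$ identifies $N\mathscr{G}^*$ with the syzygy module of $I$, equivalently the second syzygy of the finite-length module $\mathcal{O}/I$. By Auslander--Buchsbaum, $\mathrm{pd}_{\mathcal{O}}(\mathcal{O}/I)=\dim\mathcal{O}=k+1$, hence $N\mathscr{G}^*$ has projective dimension $k-1\geq 1$ and is not free. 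Therefore $N\mathscr{F}^*=\pi^*(N\mathscr{G}^*)$ is not locally free near $p$, contradicting the hypothesis; so $Y=\emptyset$ and $\Sing_{k+1}(\mathscr{F})$ is not irreducible.

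The main obstacle is the reduction in the first paragraph: one must prove that a general point of $Y$ is genuinely a Kupka point, i.e. that the local freeness of $N\mathscr{F}^*$ together with the hypothesis $\codim\Sing(\mathscr{F})\geq k+1$ (so that $Y$ is of expected codimension for the corank-one stratum) forces $\omega$ to vanish to order exactly one along $Y$ with $d\omega\neq 0$ generically, and to be in the precise position required by the de Medeiros structure theorem. If instead $\omega$ vanished to higher order everywhere along $Y$, a separate short argument is needed showing this would make $N\mathscr{F}^*$ non-saturated or non-locally-free, or force $\codim\Sing(\mathscr{F})<k+1$; organizing this case analysis cleanly, and citing the codimension-$k$ Kupka normal form in the exact form needed, is the only delicate step, the rest being the homological computation above.
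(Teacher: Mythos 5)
Your proposal has a genuine gap, and in fact the strategy cannot be repaired: the stronger statement you aim for ($\Sing_{k+1}(\mathscr{F})=\emptyset$) is false, and the step you flag as ``the only delicate step'' --- that a general point of $Y$ is a Kupka point --- is precisely where it breaks. Consider on $(\mathbb{C}^4,0)$ the integrable $2$-form $\omega=dx\wedge dv$ with $v=y^2+z^2+w^2$ (first integrals $x$ and $v$). Here $k=2$, the singular scheme is the line $\{y=z=w=0\}$ of pure codimension $k+1=3$, and the conormal sheaf $\langle dx,dv\rangle$ is free of rank $2$ and saturated in $\Omega^1$ (the Koszul relations for the regular sequence $(y,z,w)$ show any $1$-form annihilating the leaves lies in $\langle dx,dv\rangle$). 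Yet $d\omega\equiv 0$, so no point of the singular line is a Kupka point, your appeal to the de Medeiros normal form never gets off the ground, and $Y\neq\emptyset$. The degeneration of $N\mathscr{F}^*\hookrightarrow\Omega^1$ to corank one along a codimension-$(k+1)$ set is perfectly compatible with local freeness of the subsheaf; what fails to be locally free there is the quotient, not the conormal sheaf. A second warning sign is that your argument never invokes the ampleness of $\det(N\mathscr{F})$, which the example above shows is indispensable (also, your ``expected codimension'' count for the corank-one stratum is $n-k+1$, which equals $k+1$ only when $n=2k$).

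The paper's proof goes in a different direction and uses both hypotheses essentially: near a generic point of the (assumed irreducible) component $Z$, local decomposability plus Malgrange's theorem (valid since $\codim\Sing(\mathscr{F})\geq 3$) give $\omega=h\,df_1\wedge\cdots\wedge df_k$ with $h$ a unit, so the Bott trace $\theta=dh/h$ is holomorphic and the Baum--Bott residue $\Res(\mathscr{F},c_1^{k+1},Z)=\frac{1}{(2\pi i)^{k+1}}\int_{L_p}\theta\wedge(d\theta)^k$ vanishes; since $Z$ is the only codimension-$(k+1)$ component, the Baum--Bott formula forces $c_1(\det N\mathscr{F})^{k+1}=0$, contradicting ampleness. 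Note how irreducibility enters only to guarantee that a single (zero) residue accounts for the whole Chern number --- the example above shows the local vanishing of residues alone proves nothing about the singular set being empty. Your homological computation in the last part (the second syzygy of an $\mathfrak{m}$-primary ideal in $\mathcal{O}_{\mathbb{C}^{k+1},0}$ has projective dimension $k-1$) is correct but is applied to a situation that does not occur generically along $Y$.
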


\begin{proof}

Suppose by contradiction that $\Sing_{k+1}(\mathscr{F}):=Z$ is irreducible and take $p\in Z$ be a generic point, i.e., $p$ is a point where $Z$ is smooth. Since the conormal sheaf $N {\mathscr{F}}^*$ is locally free,  then $\mathscr{F}$ is given by a locally decomposable holomorphic twisted and integrable $k$-form $\omega\in H^0(X,\Omega_X^k\otimes\det(N {\mathscr{F}}))$. Take a neighborhood $U$ of $p\in Z$  such that $(N {\mathscr{F}}^*)_{|U}\simeq \mathcal{O}_U^{\oplus k}$. Then 
there  exist holomorphic $1$-forms $\omega_1, \dots ,\omega_k\in H^0(U,\Omega_U^1)$ such that
$$\omega|_U=\omega_1\wedge\cdots\wedge\omega_k$$
and 
$$d\omega_i\wedge\omega_1\wedge\cdots\wedge\omega_k=0, \,\,\,\ \forall i=1,...,k.$$ 
Since $\codim(\Sing(\mathscr{F}))\geq 3$,  then by Malgrange's Theorem \cite{Malgrange1, Malgrange2},  there are  $f_1,...,f_k\in\mathcal{O}_n$ and $h\in\mathcal{O}_n^*$ such that 
$$\omega=h\cdot df_1\wedge\cdots\wedge df_k.$$
Hence $d\omega=dh\wedge df_1\wedge\cdots\wedge df_k$
$=\frac{dh}{h}\wedge\big(h.df_1\wedge\cdots\wedge df_k\big)=\theta\wedge\omega$, where $\theta=\frac{dh}{h}$ is the trace of the Bott connection, see \cite{MF}.
Now, consider $B_p$ a ball centered at $p$, of dimension $k+1$  sufficiently small and transversal to $Z$ in $p$. Then we can integrate the De Rham's class over an oriented $(k+1)$-sphere $L_p\subset B_p^*$ positively linked with $S(B_p)$. It follows from \cite{M-A} and \cite{MF} that:
$$\Res(\mathscr{F},c_1^{k+1},Z)=\frac{1}{(2\pi i)^{k+1}}\int_{L_p}\theta\wedge(d\theta)^k\cdot [Z],$$
is the Baum-Bott residue for $\mathscr{F}$ along $Z$ with respect to $c_1^{k+1}$ and  $[Z]\in H^{2k+2}(X,\mathbb{C})$ denotes  the fundamental class of the irreducible component $Z$ of $\Sing_{k+1}(\mathscr{F})$. 

On  the one hand,  since $h\in\mathcal{O}_n^*$ then $\theta=\frac{dh}{h}$ is a holomorphic $1$-form, hence $\int_{L_p}\theta\wedge(d\theta)^k=0$, so $\Res(\mathscr{F},c_1^{k+1},Z)=0$.  On the other hand, since $\mathscr{F}$ is a holomorphic foliation of codimension $k$ then by Baum-Bott formula  we have:
$$c_1^{k+1}(\det(N {\mathscr{F}}))= \Res(\mathscr{F},c_1^{k+1},Z)=0.$$
A contradiction, since $\det(N {\mathscr{F}})$ is ample. 

\end{proof}

With this in mind, we have the following result.

\begin{theorem}
Let $\mathscr{F}_a$ be  a Horrocks-Mumford distribution (\ref{distCotangFam}). Then $\mathscr{F}_a$ is  a  maximally non–integrable distribution, for $a\geq 1$.
That is, $\mathscr{F}_a$ is an Engel distribution. 
\end{theorem}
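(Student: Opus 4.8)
\textit{Proof proposal.} The plan is to reduce everything to a contradiction argument based on Lemma~\ref{lema8}, using the smoothness and connectedness of $Z_a=\Sing(\mathscr{F}_a)$ from Proposition~\ref{propo1}. First I would observe that $\mathscr{F}_a$ is genuinely singular — its singular scheme $Z_a$ has pure dimension one, hence positive degree — and that its conormal sheaf $N\mathscr{F}_a^{*}=E(-a-7)$ is locally free by construction. Therefore, by Proposition~\ref{Engel-conormal}, it suffices to prove that $\mathscr{F}_a$ is \emph{not} integrable; once this is known, Proposition~\ref{Engel-conormal} immediately gives that $\mathscr{F}_a$ is an Engel distribution, which is precisely the meaning of ``maximally non-integrable'' for a dimension-two distribution on a four-fold.

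To establish non-integrability, I would argue by contradiction. Suppose $\mathscr{F}_a$ is integrable, so that it is a codimension $k=2$ holomorphic foliation on $X=\mathbb{P}^4$ with locally free conormal sheaf $N\mathscr{F}_a^{*}=E(-a-7)$. Then $\det(N\mathscr{F}_a)=\det\!\big(E(-a-7)\big)^{\vee}=\mathcal{O}_{\mathbb{P}^4}(2a+9)$, which is ample for every $a\geq 1$. Moreover $\codim\big(\Sing(\mathscr{F}_a)\big)=\codim(Z_a)=3=k+1$, since $Z_a$ is the degeneracy locus of the generic morphism $\varphi$, of expected codimension three by Theorem~\ref{Bertini1}. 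Thus all hypotheses of Lemma~\ref{lema8} are satisfied, and we conclude that $\Sing_{k+1}(\mathscr{F}_a)=\Sing_3(\mathscr{F}_a)$ cannot be irreducible. On the other hand, because $Z_a$ is pure of codimension three we have $\Sing_3(\mathscr{F}_a)=Z_a$, and by Proposition~\ref{propo1} the scheme $Z_a$ is smooth and connected, hence irreducible. This contradiction shows that $\mathscr{F}_a$ is not integrable.

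Putting the two steps together: $\mathscr{F}_a$ is a singular, non-integrable, dimension-two distribution on $\mathbb{P}^4$ whose conormal sheaf $E(-a-7)$ is locally free and whose singular scheme has pure dimension one, so Proposition~\ref{Engel-conormal} yields that $\mathscr{F}_a$ is an Engel distribution, completing the proof.

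The substantive content is concentrated in Lemma~\ref{lema8} — whose proof invokes Malgrange's singularity theorem and a Baum--Bott residue identity to rule out an irreducible codimension-$(k{+}1)$ component — so the only genuinely delicate point in the present argument is the bookkeeping: one must verify that local freeness of $E(-a-7)$ makes $\mathscr{F}_a$ a foliation of the type Lemma~\ref{lema8} requires, that $\codim Z_a$ is \emph{exactly} $k+1=3$ so that $\Sing_{k+1}(\mathscr{F}_a)$ is all of $Z_a$, and that $\mathcal{O}_{\mathbb{P}^4}(2a+9)$ is ample throughout the stated range $a\geq 1$. Each of these checks is routine.
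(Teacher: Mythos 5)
Your proposal is correct and follows essentially the same route as the paper: a contradiction argument via Lemma~\ref{lema8} (using the ampleness of $\det(N\mathscr{F}_a)=\mathcal{O}_{\mathbb{P}^4}(2a+9)$ and the irreducibility of $Z_a$ supplied by Proposition~\ref{propo1} and Theorem~\ref{Bertini1}), followed by Proposition~\ref{Engel-conormal} to upgrade non-integrability to the Engel condition. The bookkeeping you flag at the end is exactly what the paper checks as well.
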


\begin{proof}
Suppose by contradiction that $\mathscr{F}_a$ is a  foliation.  By Proposition \ref{propo1} and Theorem \ref{Bertini1} we have that $\Sing(\mathscr{F}_a)$ is  irreducible  and has pure codimension $3$.  Since $$c_1(\det(N {\mathscr{F}_a}))=-c_1(\det(N {\mathscr{F}_a}^*))=2a+9>0,$$ for all $a\geq 1$, then $\det(N {\mathscr{F}})$ is ample sheaf. The result follows from    Lemma \ref{lema8}. Finally, since $\mathscr{F}_a$ is not integrable, has conormal sheaf locally free and singular locus of pure dimension, then by Proposition \ref{Engel-conormal} it is a
 Engel distribution. 

\end{proof}

\section{Moduli spaces of Horrocks-Mumford  distributions}

In this section, we study the moduli spaces  of   Horrocks-Mumford distributions. 

\subsection{Moduli spaces of holomorphic distributions}
In \cite{MOJ}, the authors described the moduli space of holomorphic distributions of codimension one on $\mathbb{P}^3$, in terms of Grothendieck's Quot-scheme for the tangent bundle, and determined under which  conditions these  varieties  are smooth, irreducible and they calculated  their  dimension. 

\begin{lema}[ \cite{MOJ} ]\label{dimMod}
Let $\mathscr{M}^{P,r,st}$ denote the open subset of $\mathscr{M}^{P}$ consisting of stable reflexive sheaves. Assume that the forgetful morphism $\varpi:\mathscr{D}ist^{P,st}\rightarrow\mathscr{M}^{P,r,st}$ is surjective, and that $\mathscr{M}^{P,r,st}$ is irreducible. If $\dim\Hom(F,TX)$ is constant for all $[F]\in\mathscr{M}^{P,r,st}$, then $\mathscr{D}ist^{P,st}$ is irreducible and 
\begin{equation*}
  \dim\mathscr{D}ist^{P,st}=\dim\mathscr{M}^{P,r,st}+\dim\Hom(F,TX)-1.    
\end{equation*}
\end{lema}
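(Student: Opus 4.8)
The plan is to exhibit $\mathscr{D}ist^{P,st}$ as a dense open subscheme of a Zariski-locally trivial projective bundle over $\mathscr{M}^{P,r,st}$ and to read off irreducibility and dimension from there. Recall that $\mathscr{D}ist^{P,st}$ parametrizes pairs $(F,[\varphi])$, where $F$ is a stable reflexive sheaf on $X$ with Hilbert polynomial $P$ and $[\varphi]$ is the class, up to a scalar, of an injective morphism $\varphi\colon F\to TX$ with torsion free cokernel (so that $\varphi$ realizes $F$ as the tangent sheaf $T\mathscr{F}$ of a distribution $\mathscr{F}$), and $\varpi$ is the forgetful map $(F,[\varphi])\mapsto [F]$. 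The first step is to identify the fibres of $\varpi$: since $F$ is stable it is simple, so $\Hom(F,F)=\mathbb{C}$ and $\mathrm{Aut}(F)=\mathbb{C}^{\ast}$, acting on $\Hom(F,TX)$ by homotheties; two injective morphisms $F\to TX$ determine the same subsheaf of $TX$ precisely when they differ by such a homothety. Hence $\varpi^{-1}([F])$ is naturally an open subscheme of $\mathbb{P}\Hom(F,TX)$, non-empty by the surjectivity hypothesis, and therefore irreducible of dimension $\dim\Hom(F,TX)-1$.

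Next I would carry out the same construction in families over $\mathscr{M}^{P,r,st}$. Following \cite{MOJ}, $\mathscr{D}ist^{P,st}$ is an open subscheme of the relative projective space $\mathbb{P}(\mathcal{H})$ of a coherent sheaf $\mathcal{H}$ on $\mathscr{M}^{P,r,st}$ whose fibre at $[F]$ is $\Hom(F,TX)$; when $\mathscr{M}^{P,r,st}$ carries no universal sheaf one works instead over a smooth atlas, e.g. a $GL$-invariant open subscheme of a suitable Quot scheme, and descends the conclusions, the residual $\mathbb{C}^{\ast}$ being harmless since every member of $\mathscr{M}^{P,r,st}$ is simple. By Grothendieck's cohomology-and-base-change theorem, the hypothesis that $\dim\Hom(F,TX)$ is constant on $\mathscr{M}^{P,r,st}$ forces $\mathcal{H}$ to be locally free of that rank, so $\mathbb{P}(\mathcal{H})\to\mathscr{M}^{P,r,st}$ is a Zariski-locally trivial bundle with fibre $\mathbb{P}^{\,h-1}$, where $h:=\dim\Hom(F,TX)$.

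Then I would conclude formally: a Zariski-locally trivial bundle with irreducible base and irreducible fibre has irreducible total space, so $\mathbb{P}(\mathcal{H})$ is irreducible of dimension $\dim\mathscr{M}^{P,r,st}+h-1$; the subscheme $\mathscr{D}ist^{P,st}\subset\mathbb{P}(\mathcal{H})$ is open and non-empty (again by surjectivity of $\varpi$), hence dense, so it is irreducible of the same dimension,
\[
\dim\mathscr{D}ist^{P,st}=\dim\mathscr{M}^{P,r,st}+\dim\Hom(F,TX)-1 .
\]

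The step I expect to cost the most effort is the globalization: producing the sheaf $\mathcal{H}$ (or its twisted/atlas surrogate when no universal family exists) and verifying that the locus of injective morphisms with torsion free cokernel of the expected rank is genuinely open in $\mathbb{P}(\mathcal{H})$, so that the fibrewise analysis glues into a statement over $\mathscr{M}^{P,r,st}$. With those facts —- already available from the construction in \cite{MOJ} —- the remaining arguments are standard.
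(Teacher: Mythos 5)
This lemma is imported verbatim from \cite{MOJ}; the present paper gives no proof of it, so there is nothing internal to compare against line by line. Your reconstruction is essentially correct and matches both the spirit of the source and the way the paper actually uses the lemma in the Horrocks--Mumford moduli theorem: there the fibre of $\varpi$ over $[F]$ is identified (via simplicity of a stable sheaf) with a non-empty open subset of $\mathbb{P}\Hom(F,TX)$, and irreducibility plus the dimension count are extracted from the theorem on dimensions of fibres (Shafarevich). The one place where you go beyond what is strictly needed, and where some care is required, is the globalization step: you invoke cohomology and base change to conclude that the relative Hom sheaf $\mathcal{H}$ is locally free, hence that $\mathbb{P}(\mathcal{H})$ is a Zariski-locally trivial projective bundle. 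Grauert's criterion applies to $R^0p_{2*}$ of a sheaf \emph{flat} over the base, and $\mathcal{H}om(\mathbf{F},p_1^*TX)$ need not be flat when the members of $\mathscr{M}^{P,r,st}$ are only reflexive rather than locally free; in \cite{MOJ} this is circumvented by the linear-scheme (EGA-style $\mathbb{V}(\mathcal{Q})$) construction, which represents the relative Hom functor for an arbitrary coherent family. Alternatively, one can avoid the bundle structure altogether: surjectivity of $\varpi$, irreducibility of the base, and the fact that every fibre is irreducible of the same dimension $\dim\Hom(F,TX)-1$ already force $\mathscr{D}ist^{P,st}$ to be irreducible of the stated dimension by the standard corollary of the fibre-dimension theorem, which is the lighter route the paper follows. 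So your argument buys a slightly stronger structural statement at the cost of a flatness verification, while the cited route gets the lemma with less machinery.
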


In \cite{MSM} the authors studied foliations by curves on $\mathbb{P}^3$ with locally free conormal sheaf and described their moduli spaces. We adapt this general theory to describe the moduli spaces of the Horrocks-Mumford distributions as subsheaves of the cotangent bundle.

\subsection{Moduli spaces of Horrocks-Mumford distributions }

Consider the Horrocks-Mumford  distribution:
\begin{equation*}
    \mathscr{F}_a:0\rightarrow E(-a-7)\rightarrow\Omega_{\mathbb{P}^4}^1\rightarrow \mathcal{Q}_{\mathscr{F}}\rightarrow 0 \,\,\,\, , \,\,\,\ a\geq 1.
\end{equation*}

Let $P=P_{\mathscr{F}_a}(t)=\chi(N {\mathscr{F}_a}^*(t))$ be the Hilbert polynomial of the stable bundle $N {\mathscr{F}_a}^*=E(-a-7)$, and we denoted by $d_a=\deg(\mathscr{F}_a)=2a+6$ and $c=c_2(N {\mathscr{F}_a}^*)=c_2(E(-a-7))$, for $a\geq 1$. Then:
\begin{equation*}
\begin{aligned}
P: ={} & P_{\mathscr{F}_a}(t) \\
     ={} & 2 + \frac{25}{12}(-d_a-3+2t)+\frac{35}{24}\big((-d_a-3+2t)^2-2(c-2at+t^2-9t)\big)+ \\
      & + \frac{5}{12}\big((-d_a-3+2t)^3-3(-d_a-3+2t)(c-2at+t^2-9t)\big) +\\
      & + \frac{1}{24}\big((-d_a-3+2t)^4-4(-d_a-3+2t)^2(c-2at+t^2-9t)+2(c-2at+t^2-9t)^2\big).
\end{aligned}
\end{equation*}
Now, we  denoted by
\begin{equation*}
     {\rm HM}\mathscr{D}ist(2a+6)
\end{equation*}
the moduli spaces of Horrocks--Munford holomorphic distribution of  degree $d_a=2a+6$. We have that $ {\rm HM}\mathscr{D}ist(2a+6)$ is a subspace of the moduli space of dimension two holomorphic distributions, of degree $2a+6$, on $\mathbb{P}^4$ whose conormal sheaves have 
 Hilbert polynomial equal to  $P$. Denote by 
\begin{equation*}
    \mathscr{M}_{\mathbb{P}^4}(-d_a-3,c)=\mathscr{M}_{\mathbb{P}^4}(-2a-9, a^2+9a+24)    
\end{equation*}
the moduli space of  the Horrocks-Mumford  bundles, with Chern classes $c_1=-d_a-3$ and  $c_2=c$.
Let us consider the forgetful morphism
\begin{alignat*}{2}
  \varpi_a:  {\rm HM}\mathscr{D}ist(2a+6)&\longrightarrow& \mathscr{M}_{\mathbb{P}^4}(-d_a-3,c) \\
  [\mathscr{F}_a]&\longmapsto&[E(-a-7)]. 
\end{alignat*}
Twisting by $\mathcal{O}_{\mathbb{P}^4}(a+4)$ we obtain the isomorphism $$\mathscr{M}_{\mathbb{P}^4}(-2a-9, a^2+9a+24)\simeq\mathscr{M}_{\mathbb{P}^4}(-1,4).$$ 
Hence by Theorem \ref{Wolfram} we have that $\mathscr{M}_{\mathbb{P}^4}(-d_a-3,c)$ is a nonsingular variety of dimension $24$. In addition, by Bertini type Theorem we have that  each $E(-a-7)$ is the conormal sheaf of a dimension two distribution $\mathscr{F}_a$, thus $\varpi_a$ is a surjective map. Therefore $\Ima(\varpi_a)=\mathscr{M}_{\mathbb{P}^4}(-d_a-3,c)$ is irreducible.

\begin{theorem}
The moduli space $  {\rm HM}\mathscr{D}ist(2a+6)$ of dimension two holomorphic distributions (\ref{distCotangFam}) is an irreducible quasi-projective variety of dimension
\begin{equation*}
    \frac{1}{3}a^4+\frac{23}{3}a^3+\frac{343}{6}a^2+\frac{899}{6}a+98,
\end{equation*}
for $a\geq 1$.
\end{theorem}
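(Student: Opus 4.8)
The plan is to apply Lemma \ref{dimMod} directly to the forgetful morphism $\varpi_a\colon {\rm HM}\mathscr{D}ist(2a+6)\to\mathscr{M}_{\mathbb{P}^4}(-d_a-3,c)$. We have already verified the three hypotheses of that lemma in the discussion preceding the statement: the target moduli space $\mathscr{M}_{\mathbb{P}^4}(-d_a-3,c)\simeq\mathscr{M}_{\mathbb{P}^4}(-1,4)$ is a nonsingular, hence irreducible, variety of dimension $24$ by Theorem \ref{Wolfram}; the Bertini-type theorem (Theorem \ref{Bertini1}) guarantees that a generic morphism $E(-a-7)\to\Omega_{\mathbb{P}^4}^1$ is injective with degeneracy locus of the expected dimension, so $\varpi_a$ is surjective; and since every fibre of $\varpi_a$ over $[E(-a-7)]$ is (an open subset of) the projectivization of $\Hom(E(-a-7),\Omega_{\mathbb{P}^4}^1)$, it suffices to show that $\dim\Hom(E(-a-7),\Omega_{\mathbb{P}^4}^1)$ is independent of the point — but here there is only one isomorphism class of Horrocks--Mumford bundle of the given Chern classes (up to twist), so constancy is automatic once this dimension is finite. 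Then Lemma \ref{dimMod} gives
\begin{equation*}
\dim {\rm HM}\mathscr{D}ist(2a+6)=24+\dim\Hom(E(-a-7),\Omega_{\mathbb{P}^4}^1)-1,
\end{equation*}
and it remains to compute $h^0(\Omega_{\mathbb{P}^4}^1\otimes E(a+2))=\dim\Hom(E(-a-7),\Omega_{\mathbb{P}^4}^1)$.

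The computation of $h^0(\Omega_{\mathbb{P}^4}^1\otimes E(a+2))$ is the technical heart of the argument. First I would use the Vanishing Lemma \ref{Vlema1}(2),(3),(4),(5): for $j=1$ and $k=a+2$ with $a\geq 1$ one has $h^i(\Omega_{\mathbb{P}^4}^1\otimes E(a+2))=0$ for $i=1,2,3,4$ (the relevant ranges $k\geq j+1$, $k\geq j-3$, $k\geq j-5$, $k\geq j-9$ are all satisfied since $a+2\geq 3>2=j+1$). Hence $h^0(\Omega_{\mathbb{P}^4}^1\otimes E(a+2))=\chi(\Omega_{\mathbb{P}^4}^1\otimes E(a+2))$, and this Euler characteristic is a polynomial in $a$ computable by Hirzebruch--Riemann--Roch from the Chern classes. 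One tensors the Euler sequence $0\to\Omega_{\mathbb{P}^4}^1\to\mathcal{O}_{\mathbb{P}^4}(-1)^{\oplus 5}\to\mathcal{O}_{\mathbb{P}^4}\to 0$ with $E(a+2)$ to get $\chi(\Omega_{\mathbb{P}^4}^1\otimes E(a+2))=5\chi(E(a+1))-\chi(E(a+2))$, and then substitutes the closed form $\chi(E(k))=\frac{1}{12}((k+5)^2-1)((k+5)^2-24)$ from \eqref{eulerHM}. Carrying this out and simplifying yields a degree-four polynomial in $a$; adding $23$ (that is, $24-1$) should reproduce $\frac{1}{3}a^4+\frac{23}{3}a^3+\frac{343}{6}a^2+\frac{899}{6}a+98$.

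I expect the main obstacle to be purely bookkeeping: verifying that the hypotheses of Lemma \ref{dimMod} apply verbatim to the conormal-sheaf setting (the lemma as quoted is phrased for distributions as subsheaves of $TX$, so one needs the evident dual version, in which $TX$ is replaced by $\Omega_{\mathbb{P}^4}^1$ and ``reflexive tangent sheaf'' by ``reflexive conormal sheaf''; this dualization is routine and was already used implicitly in the moduli-space subsection), and then doing the $\chi$-computation without arithmetic slips. A secondary point worth a sentence is that ${\rm HM}\mathscr{D}ist(2a+6)$ is genuinely a \emph{locally closed}, not closed, subvariety: it is the locus of $[\mathscr{F}]$ in the ambient distribution moduli space whose conormal sheaf is (a twist of) the Horrocks--Mumford bundle, which is an open condition inside the (irreducible) image of $\varpi_a$; combined with irreducibility from Lemma \ref{dimMod} this gives the quasi-projective irreducible conclusion. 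No deep new input beyond what is already in the excerpt is needed.
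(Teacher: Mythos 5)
Your proposal follows essentially the same route as the paper: apply the forgetful morphism to the ($24$-dimensional, irreducible) Horrocks--Mumford moduli space, identify the fibres with open subsets of $\mathbb{P}\Hom(E(-a-7),\Omega^1_{\mathbb{P}^4})$, kill the higher cohomology of $\Omega^1_{\mathbb{P}^4}\otimes E(a+2)$ with the vanishing lemma, and compute $h^0=\chi=5\chi(E(a+1))-\chi(E(a+2))$ via the Euler sequence, which indeed gives $\tfrac{1}{3}a^4+\tfrac{23}{3}a^3+\tfrac{343}{6}a^2+\tfrac{899}{6}a+75$ and hence $24+h^0-1=\cdots+98$. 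The only imprecision is your remark that there is ``only one isomorphism class'' of Horrocks--Mumford bundle --- the moduli space is $24$-dimensional, so its points are pairwise non-isomorphic sheaves (they are merely $\mathrm{PGL}_5$-translates of one another by Decker--Schreyer) --- but this is harmless, since your subsequent identification of $h^0$ with an Euler characteristic already establishes the required constancy of $\dim\Hom$, exactly as in the paper.
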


\begin{proof}
The fibers of $\varpi_a$ over a point $[E(-a-7)]\in\mathscr{M}_{\mathbb{P}^4}(-d_a-3,c)$ is the set $\mathscr{D}ist(E(-a-7))$ of all distributions whose conormal sheaf is Horrocks-Mumford:
\begin{equation*}
\mathscr{D}ist(E(-a-7)):=\{\varphi\in\mathbb{P}\Hom(E(-a-7),\Omega_{\mathbb{P}^4}^1); \,\,\ \ker\varphi=0 \,\,\ \textrm{and} \,\,\ \Coker\varphi \,\,\ \textrm{is torsion free}\}.    
\end{equation*}
That is an open subset of $\mathbb{P}\Hom(E(-a-7),\Omega_{\mathbb{P}^4}^1)$, see \cite[Section 2.3]{MOJ}. Hence
\begin{equation*}
    \dim\mathscr{D}ist(E(-a-7))=\dim\Hom(E(-a-7),\Omega_{\mathbb{P}^4}^1)-1.
\end{equation*}
We claim that $\dim\Hom(E(-a-7),\Omega_{\mathbb{P}^4}^1)=h^0(E(-a-7)\otimes \Omega_{\mathbb{P}^4}^1)$ is constant.
Indeed, twisting Euler's exact sequence by $E(a+2)$ bundle and taking cohomology, we have:
\begin{equation*}
    \cdots\rightarrow H^i(E(a+2)\otimes\Omega_{\mathbb{P}^4}^{1})\rightarrow H^i(E(a+1))^{\oplus 5}\rightarrow H^i(E(a+2))\rightarrow\cdots
\end{equation*}
By table \ref{table}, for $a\geq 1$, we have that $h^i(E(a+1))=0$ for $i=1,2,3,4$ and $h^i(E(a+2))=0$ for $i=2,3,4$, thus $h^i(E(a+2)\otimes \Omega_{\mathbb{P}^4}^{1})=0$ for $i=1,2,3,4.$
Therefore by (\ref{eulerHM}) we have: 
\begin{eqnarray*}
    h^0(E(a+2)\otimes\Omega_{\mathbb{P}^4}^1) &=& 5\chi(E(a+1))-\chi(E(a+2)) \\ 
    \\
      &=& \frac{1}{3}a^4+\frac{23}{3}a^3+\frac{343}{6}a^2+\frac{899}{6}a+75,
\end{eqnarray*}
for $a\geq 1$.
So, all the fiber in $\mathscr{D}ist(E(-a-7))$ has the same dimension. Since $\Ima\varpi_a=\mathscr{M}_{\mathbb{P}^4}(-d_a-3,c)$ is irreducible, then by fiber dimension theorem (see \cite[pag. 77]{Shafarevich}), we have that $  {\rm HM}\mathscr{D}ist(2a+6)$ is an irreducible variety.
Finally, by fiber dimension theorem, Lemma \ref{dimMod} and Theorem \ref{Wolfram}, we have:
\begin{equation*}
    \begin{split}
        \dim   {\rm HM}\mathscr{D}ist(2a+6) &= \dim\mathscr{M}_{\mathbb{P}^4}(-d_a-3,c)+\dim\Hom(E(-a-7),\Omega_{\mathbb{P}^4}^1)-1 \\
        \\
        &= 24 + h^0(E(a+2)\otimes\Omega_{\mathbb{P}^4}^1) -1 \\
        \\
        &= \frac{1}{3}a^4+\frac{23}{3}a^3+\frac{343}{6}a^2+\frac{899}{6}a+98,
    \end{split}
\end{equation*}
for $a\geq 1$.

\end{proof}

\subsection{Proof of Theorem \ref{Conformal--HM}}
  Consider the smooth quadric 3-fold   $\mathbb{Q}_3\in\mathbb{P}^{4}$   defined  as   
$$
\mathbb{Q}_3=\{ -2z_0z_{4}+\sum_iz_i^2=0\}.
$$
We have on $\mathbb{Q}_3$  a flat  holomorphic conformal structure given by 
$$
\omega_0 = -2dz_0\otimes dz_{4}   + \sum_{i=0}^4 dz_i\otimes dz_i \in  H^0 ( \mathbb{Q}_3, \mbox{Sym}^2\Omega^1_{\mathbb{Q}_3}(2)). 
$$
The Lie ball is the  manifold defined  by 
$
\mathbb{D}^{IV}_3 = \{ z\in\mathbb{C}^3: 2\sum\mid z_i\mid^2<1 +\mid\sum z_i\mid^2<2\}.
$
There exists an embedding
$
\Phi: \mathbb{D}^{IV}_3  \to  \mathbb{Q}_3
$
 given by $\Phi(z)= [ 1 : z : \frac{1}{2}\sum z_i^2 ]$. 
 The image $\Phi(\mathbb{D}^{IV}_3):=\mathcal{H}_2$ is called by the  Siegel upper half space  degree and $\omega:=(\omega_0)|_{\mathcal{H}_2}$ defines a flat holomorphic conformal structure on   $\mathcal{H}_2$. Moreover, $g$ 
  is invariant by a group  $\Gamma_{1,5}\simeq  H_5 \rtimes SL(2,\mathbb{Z}_5) \subset Sp(4, \mathbb{Q})$. In particular, this induces a flat holomorphic conormal structure, let us call by $g_0$,  on the quotient space $\mathcal{H}_2/\Gamma_{1,5}$.
There is an isomorphism $h: \mathcal{A}_{1,5}\to \mathcal{H}_2/\Gamma_{1,5} $   between  $\mathcal{A}_{1,5}$
   the  moduli space  of the abelian surfaces with $(1,5)$-polarization and level-5-structure is isomorphic to  $\mathcal{H}_2/\Gamma_{1,5}$, and in particular  $g_0:=h^*\omega$  is  a flat holomorphic conormal structure on $\mathcal{A}_{1,5}$.   Now,  take  $E$ a Horrocks-Mumford bundle on $\mathbb{P}^4$.  The space  $H^0(E)$ has dimension $4$ and the projective space  $\mathbb{P}(H^0(E))\simeq \mathbb{P}^3$ is birational to the 
 the moduli space    $\mathcal{A}_{1,5}$
   of the abelian surfaces with $(1,5)$-polarization and level-5-structure \cite{HM}. Denote  by $f: \mathbb{P}(H^0(E))
    \dashrightarrow \mathcal{A}_{1,5}\simeq \mathcal{H}_2/\Gamma_{1,5} $ the  corresponding $\Gamma_{1,5}$-equivariant birational map. For all $a\geq 1$, we consider a distribution  
$$
  \phi: E(-a-7)\rightarrow \Omega_{\mathbb{P}^4}^1, 
$$
where $ \phi\in \mathcal{A}\subset  \mathbb{P} H^0(\mbox{Hom}(E(-a-7),\Omega_{\mathbb{P}^4}^1))\simeq \mathbb{P}^M$  is a Zariski open  with 
$$
 M= \frac{1}{3}(d-5)^4+\frac{23}{3}(d-5)^3+\frac{343}{6}(d-5)^2+\frac{899}{6}(d-5)+74. 
$$
Twisting by  $\mathcal{O}_{\mathbb{P}^4}(a+7)$ and take cohomology we obtain a  four-dimensional linear subspace  
$H^0( \phi(E))  \subset H^0(\Omega_{\mathbb{P}^4}^1(d+2)),
$
with $d:=a+5$, generated by  twisted 1-forms  $\alpha_0:=\phi\circ s_0, \alpha_1:=\phi\circ s_1,\alpha_2:=\phi\circ s_2,\alpha_3:=\phi\circ s_3 \in  H^0(\Omega_{\mathbb{P}^4}^1(d+2))$, where  $ \{s_0,   s_1,  s_2, s_3\}$ is a base  for $H^0(E)$. Complete a base $\alpha_0, \alpha_1,\alpha_2,\alpha_3, \omega_1,\cdots , \omega_{N-4}$, where $N=\dim H^0(\Omega_{\mathbb{P}^4}^1(d+2))$,
and consider the following  linear projection  
$\psi_{ \phi}:\mathbb{P}(H^0(\Omega_{\mathbb{P}^4}^1(d+2)))  \dashrightarrow \mathbb{P}(H^0(E))$ defined by 
$$\pi_{ \phi}(\lambda_0\alpha_0+\lambda_1\alpha_1+\lambda_2\alpha_2+\lambda_3\alpha_3+\dots+ \lambda_{N-4} \omega_{N-4})=(\lambda_0:\lambda_1:\lambda_2:\lambda_3).$$
Then, by composing  these maps we get  the diagram 
\begin{center}
\begin{tikzcd}[row sep=large]
 \mathbb{P}(H^0(\Omega_{\mathbb{P}^4}^1(d+2)))  \ar[dd, " \psi_{ \phi} "]  \ar[ddrr, bend left=20,dashed,  " \pi_{ \phi}"]&&    \\
 &&    \\
 \mathbb{P}(H^0(\phi(E))) \simeq  \mathbb{P}(H^0(E))     \ar[r, dashed, "  f"]  \ar[uu, bend left=45,dashed,  " s_{ \phi}"]  & \mathcal{A}_{1,5}  \ar[r, "  h"]& \mathcal{H}_2/\Gamma_{1,5}
\end{tikzcd}
\end{center}
with the rational section $s_{ \phi}(\lambda_0:\lambda_1:\lambda_2:\lambda_3)=[\lambda_0\alpha_0+\lambda_1\alpha_1+\lambda_2\alpha_2+\lambda_3\alpha_3] \in  \mathbb{P}(H^0(\Omega_{\mathbb{P}^4}^1(d+2)))$. Then,    by construction the  image of   $s_{ \phi}$
   consists   of codimension one distributions, of degree $d$,
   induced by 1-foms $$\omega_{\lambda}:=
   \lambda_0 (\phi\circ s_0)+\lambda_1 (\phi\circ s_1)+\lambda_2 (\phi\circ s_2)+\lambda_3 (\phi\circ s_3).
  $$
   It follows from  Proposition \ref{Aut-HM} that $\phi$ is  invariant by $\Gamma_{1,5}$, then for all $  \lambda \in   \mathbb{P}(H^0(E))$ we have that  $\omega_{\lambda}$ is so, since $s_i$ is $\Gamma_{1,5}$-invariant for all $i=0,1,2,3$. Moreover, the codimension one  distribution induced by $\omega_{\lambda}$  is singular along to the  abelian surface $S_{\lambda}=\{\lambda_0   s_0+\lambda_1   s_1+\lambda_2   s_2+\lambda_3   s_3=0\}$(with $(1,5)$-polarization and level-$5$-structure), since 
   $ S_{\lambda}\subset \mbox{Sing}(\omega_{\lambda})$. 
   We have  the  $\Gamma_{1,5}$-equivariant   conformal structure  $g_{\phi}=\pi_{\phi}^*g_0$ which   is  clearly flat.  
 From \cite{Sato} we have that  $g_0$   degenerates along a  surface $\Delta_0$ of degree $10$ which is a cone over a  rational sextic curve in $C\subset  \mathbb{P}(H^0(E)) \simeq   \mathbb{P}(H^0(\phi(E))) \simeq \mathbb{P}^3$, and    $\Delta_0$ is the locus of  singular  abelian surfaces with $(1,5)$-polarization and level-$5$-structure, see  \cite{BMoore}.
In particular, $g_{\phi}=\pi_{\phi}^*g_0$ degenerate along the hypersurface  $\Delta_{\phi}:=\psi_{\phi}^{-1}(\Delta_0)$     of degree $10$ which is a cone over  $C\subset \mathbb{P}(H^0(E))$ 
 and each  distribution which belongs to $\Delta_{\phi}$ is singular along  to singular abelian surfaces with $(1,5)$-polarization and level-$5$-structure. 
Therefore,  from \cite[Theorem 0.1]{BHM} we conclude that all distribution induced by a 1-form $\omega\in \Delta_{\phi}$ vanish along either:  
 \begin{itemize}
    \item a  translation scroll associated to a normal elliptic quintic curve;   
     \item  or the tangent scroll of a normal elliptic quintic curve;
     \item   or a quintic elliptic scroll carrying a multiplicity-2 structure;
     
     \item or a union of five smooth quadric surfaces;
     
     \item   or a union of five planes with a multiplicity-2 structure.
    \end{itemize}

\end{document}